\documentclass[11pt]{amsart}
\usepackage[utf8]{inputenc}
\usepackage{amsmath,amssymb,amsthm,colonequals,mathrsfs,mathtools}
\usepackage{array,enumitem}
\usepackage{color}
\usepackage[all,cmtip]{xy}
\usepackage{booktabs}
\usepackage[colorlinks,anchorcolor=blue,citecolor=blue,linkcolor=blue,urlcolor=blue,bookmarksdepth=2]{hyperref}

\usepackage{comment}
\hypersetup{
  pdftitle={Hassett-Keel-Looijenga program for the moduli spaces of rational elliptic surfaces},
  pdfauthor={Masafumi Hattori and Yota Maeda},
  pdfkeywords={rational elliptic surfaces, ball quotients, Looijenga compactifications, Borcherds products, invariant theory, K-moduli}
}
\usepackage{multirow}
\allowdisplaybreaks
\usepackage{makecell}

\usepackage[margin=1in]{geometry}

\usepackage{tikz}
\usetikzlibrary{cd,arrows}
\tikzset{>=stealth}
\tikzcdset{arrow style=tikz}
\tikzset{link/.style={column sep=1.8cm,row sep=0.16cm}}

\AtBeginDocument{%
	\def\MR#1{}
}
\usepackage{fancyhdr}
\usepackage{here}
\usepackage{amscd} 
\usepackage{scalefnt}
\usepackage{url}
\theoremstyle{plain}
\newtheorem{thm}{Theorem}[section]
\newtheorem{lem}[thm]{Lemma}
\newtheorem{cor}[thm]{Corollary}
\newtheorem{prop}[thm]{Proposition}
\newtheorem{conj}[thm]{Conjecture}
\theoremstyle{definition}

\newtheorem{rem}[thm]{Remark}

\newtheorem{defn}[thm]{Definition}

\newtheorem{ex}[thm]{Example}
\numberwithin{equation}{section}
\def\Q{{\mathbb Q}}
\def\R{{\mathbb R}}
\def\Z{{\mathbb Z}}
\def\C{{\mathbb C}}
\def\B{{\mathbb B}}

\def\id{\mathop{\mathrm{id}}\nolimits}

\def\Pic{\mathop{\mathrm{Pic}}\nolimits}

\def\SL{\mathop{\mathrm{SL}}\nolimits}

\def\Proj{\mathop{\mathrm{Proj}}\nolimits}

\def\det{\mathop{\mathrm{det}}\nolimits}
\def\dim{\mathop{\mathrm{dim}}\nolimits}

\def\Div{\mathop{\mathrm{Div}}\nolimits}

\def\ord{\mathrm{ord}}

\def\M{\mathcal{M}}

\def\PGL{\mathop{\mathrm{PGL}}\nolimits}

\def\U{\mathrm{U}}
\newcommand{\DWei}{\Delta_{\mathrm{Wei}}}

\newcommand{\Disc}{\operatorname{Disc}}

\newcommand{\marg}[1]{\normalsize{{\color{red}\footnote{#1}}}{\marginpar[\vskip -.3cm {\color{red}\hfill\tiny{\thefootnote}\normalsize$\implies$}]{\vskip -.3cm{ \color{red}$\impliedby$\tiny\thefootnote}}}}

\newcommand{\Yota}[1]{\marg{\color{blue}(Yota): #1}}

\allowdisplaybreaks[4]

\newcommand{\defeq}{\vcentcolon=}

\fancypagestyle{titlepage}
{
	\fancyhf{}

\fancyfoot[l]{
\href{https://mathscinet.ams.org/mathscinet/msc/msc2020.html}
	{\emph{2020 Mathematics Subject Classification}}
Primary 14J27; Secondary 14D20, 14E30, 14L24, 11F55, 32N15.
		\\
		\emph{Keywords}: rational elliptic surfaces; Hassett--Keel--Looijenga program;
K-moduli; wall crossing; complex ball quotients;
Looijenga arrangement compactifications; Borcherds products.
	}
    }

\begin{document}
\title[K-Moduli Wall Crossing and Automorphic Forms for the Moduli Space of Rational Elliptic Surfaces]{K-Moduli Wall Crossing and Automorphic Forms for the Moduli Space of Rational Elliptic Surfaces}
\author{Masafumi Hattori$^{1}$}
\email{hattori.masafumi.8n@kyoto-u.ac.jp}
\author{Yota Maeda$^{2,3}$}
\email{y.maeda.math@gmail.com}

\maketitle
\thispagestyle{titlepage}
\vspace{-1em}
\begin{center}
  \begin{minipage}{0.9\textwidth}
    \centering
    {\small
    $^{1}$ Department of Mathematics, Kyoto University, Japan\\
    $^{2}$ Fachbereich Mathematik, Technische Universität Darmstadt, Germany\\
    $^{3}$ Mathematical Institute, Tohoku University, Japan\\
    }
  \end{minipage}
\end{center}
\begin{abstract}
Using K-moduli spaces for log quasimaps
$q_t\colon\left(\mathbb P^1,\frac{1-t}{12}D\right)\to[\mathbb A^2/\mathbb G_m]$ of degree twelve with twelve points and weight $t/12$, we construct a modular interpolation $\{\mathcal M_t\}_{0\le t\le1}$ between the Baily--Borel compactification of the Heckman--Looijenga ball quotient $X_o$ and Miranda's GIT compactification of the moduli space of rational elliptic surfaces. We completely determine the wall-crossing and, for every rational $t\in[0,1]$, identify
\[
\mathcal M_t\cong\operatorname{Proj}R\!\left(X_o,\mathcal L+\frac t2\Delta(6)+\frac t3\Delta(9)\right),
\]
where $\mathcal L$ is the automorphic $\mathbb Q$-line bundle and $\Delta(6),\Delta(9)$ are distinguished Heegner divisors.

On the automorphic side, we construct a new automorphic form on $X_o$ via a Borcherds product, whose divisor gives an independent relation among the Heegner divisors.
This relation provides a key input for determining the birational transformations in the K-moduli wall-crossing. As part of this analysis, we show that the first positive chamber $\mathcal M_t$ for $t\in (0,1/7)$ is Looijenga's semi-toroidal compactification. 
\end{abstract}

% \setcounter{tocdepth}{1}
% \tableofcontents

\section{Introduction}
\label{sec:introduction}

\subsection{Compactifications of the moduli space of rational elliptic surfaces}

A single moduli problem may admit projective compactifications of very
different origins.  Geometric invariant theory (GIT cf.~\cite{GIT}) constructs compactifications
by imposing a stability condition on a parameter space, whereas period maps arising from Hodge theory can often identify a moduli space with an arithmetic quotient and hence
lead to its Baily--Borel compactification \cite{baily1966compactification}.  These constructions often
produce birational, but quite different, compactifications of the same
moduli space.  Rather than only comparing the two endpoints, the
Hassett--Keel--Looijenga philosophy, formulated by Laza--O'Grady \cite{LazaOGrady2016HKL,LazaOGrady2018DoubleCover} and
inspired by the Hassett--Keel program and the work of Looijenga \cite{Looijenga2003Ball}, asks
whether one can interpolate between these compactifications through a variation of birational models, with the intermediate models carrying natural modular interpretations.

A natural way to realize such an interpolation is to vary the stability
condition defining the moduli problem.  For log Fano pairs, K-stability gives rise to projective moduli spaces of
K-semistable objects, known as K-moduli spaces. Varying the boundary coefficients can produce several K-moduli spaces; the resulting variation is known as K-moduli wall crossing.
K-moduli wall crossing has proved useful in a number of cases in the study of Hassett--Keel--Looijenga programs. For example, K-moduli wall crossing was used to describe the relevant
birational models for degree-two and quartic K3 surfaces by Ascher--DeVleming--Liu \cite{ascher2024wall,AscherDeVlemingLiu2023QuarticK3}.

\begin{comment}

A moduli space of algebro-geometric objects often carries several natural projective
compactifications, reflecting different choices of data retained under
degeneration.  Geometric invariant theory \cite{GIT} retains an embedded or
linearized presentation, Baily-Borel and toroidal compactifications
retain Hodge-theoretic and arithmetic data \cite{baily1966compactification,ash2010smooth}, and KSBA or K-moduli retain
stability of the limiting geometric object (cf.~\cite{kollar-moduli,xu2025k}).  Looijenga's arrangement
compactifications provide a particularly precise bridge between GIT and
arithmetic compactifications.  Building on this perspective, the
Hassett-Keel-Looijenga philosophy studies a one-parameter variation of
log canonical or automorphic models interpolating between such
compactifications; see
\cite{Looijenga2003Ball,LazaOGrady2016HKL,LazaOGrady2018DoubleCover}.
K-moduli realizations of this philosophy have subsequently appeared in
other period-moduli problems, notably for quartic K3 surfaces
\cite{AscherDeVlemingLiu2023QuarticK3}.

\end{comment}

Rational elliptic surfaces with sections form an especially concrete and rich test case for the Hassett--Keel--Looijenga program.  A rational Weierstrass fibration over $\mathbb P^1$
is represented by
\[
 y^2z=x^3+3f_0xz^2+2f_1z^3,
 \qquad
 (f_0,f_1)\in
 H^0(\mathbb P^1,\mathcal O(4))\oplus
 H^0(\mathbb P^1,\mathcal O(6)),
\]
and its discriminant is the binary duodecic
\[
 \Delta_{\mathrm{Wei}}(f_0,f_1)=f_0^3+f_1^2.
\]
Let $\M$ be the moduli space
of rational elliptic surfaces with reduced discriminant.
Miranda constructed a projective GIT compactification $\M^M$ of $\M$ from the
space of Weierstrass equations \cite{Miranda1981}.  The discriminant defines a rational map from $\M^M$ to the GIT quotient $\mathcal{D}^*$ of twelve unordered points on $\mathbb P^1$. By the work of Deligne--Mostow \cite{DeligneMostow1986}, $\mathcal{D}^*$ is the Baily--Borel compactification of a nine-dimensional ball quotient.
Furthermore, the normalization $\M^*$ of the image in $\mathcal{D}^*$ of the rational map coincides with the Baily--Borel compactification $X_o^{\mathrm{bb}}$ of an eight-dimensional ball quotient $X_o$ by \cite{HeckmanLooijenga2002}.
$\M^*$ defines another compactification of $\M$.
In this case, $\mathcal{M}^M$ coincides with the Looijenga compactification $\widehat X_o^{\mathcal K}$ \cite{Looijenga2003Ball}, which is obtained from the
Baily--Borel compactification by a birational modification controlled by certain
distinguished divisors, and can also be described in terms of
meromorphic automorphic forms with prescribed poles along them.
Here $\mathcal K$ denotes the hyperplane arrangement on the ball
corresponding to $\Delta(6)\cup\Delta(9)$.
More precisely, $X_o$ has four irreducible Heegner divisors $\Delta(6)$, $\Delta(9)$, $\Delta(15)$, and $\Delta(18)$ with the automorphic $\Q$-line bundle $\mathcal{L}$.
According to \cite{HeckmanLooijenga2002,Looijenga2003Ball}, $\mathcal{M}^*$ and $\M^M$ are the ample models of $\mathcal{L}$ and $\mathcal{L}+\frac{1}{6}\Delta(6)+\frac{1}{3}\Delta(9)$, that is, $\mathcal{M}^*=\mathrm{Proj}\,R(X_o,\mathcal{L})$ and $\mathcal{M}^M=\mathrm{Proj}\,R(X_o,\mathcal{L}+\frac{1}{6}\Delta(6)+\frac{1}{3}\Delta(9))$, respectively. 

Therefore, we can formulate the Hassett--Keel--Looijenga program in this case as follows.

\begin{conj}[Hassett--Keel--Looijenga program]\label{HKLprogram}
 For any $t\in (0,1)\cap\Q$, $R(X_o,\mathcal{L}+\frac{t}{6}\Delta(6)+\frac{t}{3}\Delta(9))$ is finitely generated. Furthermore, $\mathrm{Proj}\,R(X_o,\mathcal{L}+\frac{t}{6}\Delta(6)+\frac{t}{3}\Delta(9))$ is a projective variety admitting a modular interpretation.  
\end{conj}

Several modular compactifications related to this birational map are known. Heckman--Looijenga \cite{HeckmanLooijenga2002} constructed another modular  compactification $\mathcal{M}^K$ using the theory of stable maps \cite{kontsevich1995enumeration} dominating both compactifications, and Ascher--Bejleri \cite{AscherBejleri2018StablePairRES,AscherBejleri2021WeightedElliptic} constructed a KSBA moduli space of surface pairs with ample canonical class admitting a contraction to the graph of the birational map. These constructions clarify the birational correspondence between the two classical compactifications, but they do not provide the intermediate models predicted by Conjecture \ref{HKLprogram}.

\subsection{A modular interpolation via quasimap K-moduli}

We construct a one-parameter family of projective moduli spaces
interpolating between the Baily--Borel compactification
$\mathcal M^*$ and Miranda's compactification $\mathcal M^M$.
The construction uses K-moduli of log Fano quasimaps to vary
the relative weights of the discriminant divisor and the
quasimap associated with a Weierstrass equation.
We determine all walls and identify the resulting models
with ample models of explicit automorphic divisor classes.
% We do not prove Conjecture \ref{HKLprogram}, which concerns the specific choice of the
% ray of polarizations above.  Instead, we construct a different natural interpolation
% between the same two compactifications.

Our construction uses K-moduli wall crossing for log Fano quasimaps, which is developed in the companion work \cite{HM1}.
For a general rational elliptic surface, we associate a K-semistable log Fano quasimap
\[
q_t:
\left(\mathbb P^1,\frac{1-t}{12}D\right)
\longrightarrow [\mathbb A^2/\mathbb G_m],
\qquad \text{with quasimap weight } \frac{t}{12} \text{ for } 0\le t\le1,
\]
whose data consist of $\mathbb P^1$ with twelve weighted points together with the pencil defined by $[f_0^3:f_0^3+f_1^2]$ associated with the Weierstrass fibration.
Here we put $D:=\operatorname{div}(f_0^3+f_1^2)$.
A basic difficulty is that the two moduli $\mathcal{M}^*$ and $\mathcal{M}^M$ naturally remember different kinds of data: one is
governed by the twelve-point discriminant in $\mathbb P^1$, while the other remembers the
Weierstrass equations of surfaces.  The quasimap framework provides a common setting in which both kinds of data are encoded: the boundary records the discriminant divisor, while the quasimap data record the pencil associated with the Weierstrass fibration. Varying $t$ changes their relative weights and therefore gives a natural interpolation from $\mathcal{M}^*$ at $t=0$ to $\mathcal{M}^M$ at $t=1$.
Let $\mathcal M_t$ denote the corresponding normalization of the closure of $\mathcal{M}$ in the projective K-moduli space.
The construction above gives a modular interpolation
\[
\mathcal M_0 \cong \mathcal M^*, \qquad \mathcal M_1 \cong \mathcal M^M.
\]

The second step is to understand this interpolation on the automorphic
side.  Each $\mathcal M_t$ for rational $t$ admits the Hodge line bundle 
$\Lambda_{\mathrm{Hodge},t}$, a natural ample $\Q$-line bundle coming from the
moduli construction, and at $t=0$ this polarization agrees with the automorphic $\Q$-line bundle 
$\mathcal L$.  
Guided by the variation of $\Lambda_{\mathrm{Hodge},t}$, we consider the ray
\[
A_t:=\mathcal L+\frac t2\Delta(6)+\frac t3\Delta(9)
\]
on the ball quotient $X_o$.
Conjecture~\ref{HKLprogram} concerns a different ray and
remains open. Our aim in this paper is to compare the K-moduli wall-crossing for log Fano quasimaps with the ray $\{A_t\}$ on the ball quotient side.
To make this comparison precise, we analyze the theory of divisors of the relevant compactifications of $X_o$ using automorphic forms, in particular Borcherds products \cite{Borcherds1998Grassmannians}, together
with the geometry of its cusps.

On $\mathcal M_1 \cong \mathcal M^M$, we note that a related comparison appears in the work of
Odaka--Oshima \cite{OO21} for elliptic K3 surfaces with a
section: they identify the GIT compactification of
Weierstrass models by Miranda \cite{Miranda1981} with the Baily--Borel compactification
of the corresponding period space.
After rescaling the quasimap weight into the log Fano
region, the same GIT compactification is identified, up to
seminormalization, with the image of the Weierstrass locus
in the corresponding quasimap K-moduli space
(Remark~\ref{rem:OO}).

\subsection{Main results}
\label{sec:main-results}

We now state our main results, retaining the notation introduced above.
We also use a canonical small birational modification $X_o^{j_\mathcal K}\to X_o^{\mathrm{bb}}$ of the Baily--Borel compactification at its cusps, introduced by Looijenga \cite{Looijenga2003Ball} and known as the semi-toroidal modification. In our setting, the strict transforms of the two distinguished Heegner divisors $\Delta(6)$ and $\Delta(9)$ become $\mathbb Q$-Cartier on this modification.
 Our first main result makes precise the relation between the models $\{\mathcal M_t\}$ arising from K-moduli of log Fano quasimaps and the divisors $\{A_t\}$ on the ball quotient $X_o$.

\begin{thm}[{Theorems \ref{thm:wallcrossing-miranda-to-8points}, \ref{thm:Hodge-line-bundle-stricttrans},
and Corollary \ref{cor-first-model}}]
\label{thm:main-HKL}
The bi-graded ring
\[
 R\left(X_o;A_0,A_1\right)
 =R\left(X_o;
 \mathcal L,
 \mathcal L+\frac12\Delta(6)+\frac13\Delta(9)\right)
\]
is finitely generated over $\C$.
% \Yota{In this paper, we gave a proof of the finite generation only for rational numbers. Is this enough for this theorem?} \Masa{I put the corresponding proof.}
For every
$t\in[0,1]\cap\Q$, there is a natural isomorphism
 \[\M_t\cong\Proj R(X_o,A_t).\]
In particular,
\[
 \M_0\cong\M^*\cong X_o^{\mathrm{bb}},
 \qquad
 \M_t\cong X_o^{j_{\mathcal K}}
 \quad\left(0<t<\frac17\right),
 \qquad
 \M_1\cong\M^M\cong\widehat X_o^{\mathcal K}.
\]
Moreover, for every rational $t\ge1$ one has natural isomorphisms of
section rings
\[
 R(X_o,A_t)
 \cong
 R\left(X_o,\mathcal L+\frac16\Delta(6)+\frac13\Delta(9)\right).
\]
Consequently, the ample model is constant for $t\geq1$ and equals Miranda's compactification
$\mathcal{M}^M$.
\end{thm}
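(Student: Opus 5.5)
The proof will run through the Hodge line bundle on the K-moduli side. The first step is to compute, for each rational $t\in[0,1]$, the pullback of $\Lambda_{\mathrm{Hodge},t}$ to the common open part $X_o$ (equivalently $\M$). Using the quasimap K-moduli theory of the companion work \cite{HM1}, $\Lambda_{\mathrm{Hodge},t}$ is, up to positive scaling, an explicit $\Q$-linear combination of the CM-type contribution of the weighted twelve-point boundary $\frac{1-t}{12}D$ and the quasimap weight contribution $\frac{t}{12}$. The first contribution restricts to the Deligne--Mostow automorphic bundle, pulled back along $X_o\to\mathcal D^*$. The quasimap contribution is the class of the pencil $[f_0^3:f_0^3+f_1^2]$. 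This class can be compared with $\mathcal L$ by tracking its vanishing loci: $f_0$ acquiring a zero where $f_1$ has a double zero gives $\Delta(6)$ type degenerations, and the cuspidal-type collisions give $\Delta(9)$. I would therefore aim for an identity $\Lambda_{\mathrm{Hodge},t}|_{X_o}\sim_\Q c_t\bigl(\mathcal L+\frac t2\Delta(6)+\frac t3\Delta(9)\bigr)$ with $c_t>0$. The coefficients are pinned down by restricting to test curves, or by the Borcherds product relation among $\Delta(6),\Delta(9),\Delta(15),\Delta(18)$ and $\mathcal L$. That relation is needed because the Picard group of $X_o$ has more generators than the classes appearing naively.

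The second step is to use that the complement of $X_o$ in $\M_t$ has codimension at least two, or is controlled. Because $\M_t$ is normal and $\Lambda_{\mathrm{Hodge},t}$ is ample, this gives $\M_t\cong\Proj R(\M_t,\Lambda_{\mathrm{Hodge},t})$. The ring should then be identified with $R(X_o,A_t)$ by Hartogs extension of sections across the boundary. This is exactly where the wall-crossing description (Theorem~\ref{thm:wallcrossing-miranda-to-8points}) enters. For $t$ in each chamber, the birational maps $\M_{t-\epsilon}\dashrightarrow\M_t\dashleftarrow\M_{t+\epsilon}$ are small or divisorial contractions whose exceptional loci lie over the cusps or over $\Delta(6)$ and $\Delta(9)$. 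Through the semi-toroidal model $X_o^{j_\mathcal K}$, on which $\Delta(6)$ and $\Delta(9)$ are $\Q$-Cartier, one checks that every $\M_t$ with $t>0$ is small over $X_o^{j_\mathcal K}$ in the relevant sense. Hence the section rings agree. At $t=0$ this recovers $X_o^{\mathrm{bb}}$. For $0<t<1/7$ there is no wall, so $\M_t$ is the first flip, namely the ample model of $\mathcal L+\epsilon(\ldots)$ on $X_o^{j_\mathcal K}$, which is $X_o^{j_\mathcal K}$ itself (Corollary~\ref{cor-first-model}). At $t=1$ one obtains Looijenga's $\widehat X_o^{\mathcal K}=\M^M$.

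Finite generation of the bigraded ring follows from the above. The ray $A_t$ meets only finitely many walls, and on each chamber the ample models are the projective varieties $\M_t$. The bigraded ring is therefore a finite union of finitely generated chamber rings, a Mori dream region argument on the cone spanned by $A_0$ and $A_1$. For $t\ge1$, I would show that $\Delta(6)$ and $\Delta(9)$ become contracted or fixed components on $\widehat X_o^{\mathcal K}$. Concretely, the divisor $A_t-A_1$ is supported on the exceptional, contracted loci of $X\dashrightarrow\widehat X_o^{\mathcal K}$, so adding it does not change the sections. This yields the stated isomorphism of section rings with $\mathcal L+\frac16\Delta(6)+\frac13\Delta(9)$, which is Looijenga's normalization and absorbs the fixed part.

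The main obstacle is the first step: proving the precise equality of $\Q$-classes $\Lambda_{\mathrm{Hodge},t}=c_t A_t$ on $X_o$. This requires the independent relation among the Heegner divisors coming from the new Borcherds product, and a careful computation of vanishing orders at $\Delta(6)$ and $\Delta(9)$. I would first try a local computation along general points of each Heegner divisor, using the explicit Weierstrass degenerations, and then fix the remaining ambiguity with the Borcherds relation.
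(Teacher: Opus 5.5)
Your architecture is the paper's: identify $\Lambda_{\mathrm{Hodge},t}$ with $A_t$, pass to section rings using ampleness, and split $[A_0,A_1]$ into chambers for finite generation. However, Step~2 fails for $t\ge\frac13$, which is exactly where the last part of the statement lives.

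\textbf{The contracted divisors for $t\ge\frac13$.} By Theorem~\ref{thm:wallcrossing-miranda-to-8points}, $\psi_3$ contracts the divisor $\mathcal M_{s_3}(I_9)$, which is the strict transform $D_6$ of $\Delta(6)$. Likewise $\psi_4$ contracts $\mathcal M_{s_4}(I^*_{4,4})$, the strict transform of $\Delta(9)$. So for $t\ge\frac13$ the model $\mathcal M_t$ is not small over, or isomorphic in codimension one to, $X_o^{j_{\mathcal K}}$. No Hartogs argument then identifies $R(X_o,A_t)$ with $R(\mathcal M_t,\Lambda_{\mathrm{Hodge},t})$.

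Your Step~1 identity is also false there if read as a pullback. For $\frac13<t<1$, the pullback of $\Lambda_{\mathrm{Hodge},t}$ to $\mathcal M_{s_3}$ (via $\psi_3$ and $\mathcal M_t\cong\mathcal M_{1/3}$) is $\mathcal L+\frac16D_6+\frac t3D_9$. Only the pushforward of $A_t$ equals $\Lambda_{\mathrm{Hodge},t}$.

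\textbf{What is missing: a fixed-part argument with explicit thresholds.} Work on $\mathcal M_{s_3}$, which is isomorphic in codimension one to $X_o^{j_{\mathcal K}}$ and on which $D_6\cap D_9=\emptyset$.
\begin{itemize}
\item The strict transform of $A_{1/3}=\mathcal L+\frac16\Delta(6)+\frac19\Delta(9)$ is $\psi_3^*\Lambda_{\mathrm{Hodge},1/3}$, by continuity of the Hodge class across the wall. Hence it is trivial on the $\psi_3$-contracted curves.
\item $-D_6$ is $\psi_3$-ample.
\item The negativity lemma then gives $\widetilde A_t=\psi_3^*\Lambda_{\mathrm{Hodge},t}+(\frac t2-\frac16)D_6$ with nonnegative exceptional excess. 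So $D_6\subset\mathbf B(\widetilde A_t)$ for $t>\frac13$, and the section rings agree.
\item The same argument, with threshold $\frac13$ for $D_9$ and $\psi_4$, handles $t\ge1$.
\end{itemize}
Your remark that $A_t-A_1$ is effective and exceptional yields at most $R(X_o,A_t)\cong R(X_o,A_1)$, and only granted the case $t=1$. Replacing $\frac12\Delta(6)$ by $\frac16\Delta(6)$ is precisely the threshold computation you have not supplied. Saying that Looijenga's normalization ``absorbs the fixed part'' presupposes it. The same step is needed for finite generation on the subcone spanned by $A_{1/3}$ and $A_1$. Its endpoints are semiample only after this fixed part is removed.

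\textbf{Step~1 misplaces the source of the $t$-dependence.} Your plan to compute via test curves is the right one, but the heuristic behind it is wrong.
\begin{itemize}
\item Along any family whose boundary $\Delta$ equals $\frac1{12}$ of the divisor $\mathcal D$ of the discriminant section of $\mathscr L$, the Hodge class is independent of $t$.
\item The variation is $\deg_0\mathrm{Der}=2\,\mathrm{ord}_{\mathcal C_0}(\frac1{12}\mathcal D-\Delta)$. This is nonzero only where the limiting Weierstrass discriminant vanishes identically.
\item Examples: $(A,B)\to(-3U^4,2U^6)$ over $\Delta(6)$ and $(A,B)\to(-3u^2v^2,2u^3v^3)$ over $\Delta(9)$, in Examples~\ref{ex:I9} and~\ref{ex:I44*}. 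These give $\frac12$ and $\frac13$.
\item The degenerations you name (a zero of $f_0$ at a double zero of $f_1$, cuspidal collisions) are of types $III$ and $II$. They lie in $\Delta(15)$ and contribute nothing.
\end{itemize}

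\textbf{Role of the Borcherds relations.} They do not fix residual ambiguity in these local coefficients. They show the rational class group has rank three (Corollary~\ref{cor:picard-semitoric-exhibition}). Hence the Hodge class is determined by its coefficients along $\Delta(6)$, $\Delta(9)$ and one further divisor such as $\Delta(18)$. That last coefficient is constant in $t$, by a test curve avoiding $\Delta(6)\cup\Delta(9)$.
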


Theorem~\ref{thm:main-HKL} provides compatible modular
and automorphic descriptions of the entire interpolation.
In particular, it gives Looijenga's semi-toroidal
compactification a modular interpretation as the model
associated with the first positive chamber of the
quasimap K-moduli wall crossing.

Another useful feature of the quasimap framework is that all the models $\mathcal M_t$ are dominated by a single modular compactification. Let $\mathcal M^K$ denote the Heckman--Looijenga compactification introduced above. The general wall-crossing theory of quasimaps \cite{HM1} gives, for every $0\leq t\leq1$, a projective birational morphism
\[
\rho_t:\mathcal M^K\longrightarrow \mathcal M_t,
\]
where $\rho_0$ and $\rho_1$ coincide with the morphisms constructed in \cite{HeckmanLooijenga2002} (cf.~Theorem \ref{thm:hm1-fromstable}).
The boundary strata of $\mathcal M^K$ were explicitly classified by \cite{HeckmanLooijenga2002}; we use their notation, such as $\mathcal M^K(I_9)$ and $\mathcal M^K(I^*_{4,4})$. For a boundary stratum $\mathcal M^K(\bullet)$, we write
\[
\mathcal M_t(\bullet):=\rho_t(\mathcal M^K(\bullet))
\]for its image in $\mathcal M_t$.
Tracking these boundary strata gives the following wall-crossing diagram, in which the morphisms between models are made explicit.

\begin{thm}[{K-moduli wall-crossing; Theorem \ref{thm:wallcrossing-miranda-to-8points}  and Corollary \ref{cor-first-model}}]
\label{thm:main-wall-crossing}
Choose arbitrary real numbers
\[
 s_1\in\left(0,\frac17\right),\qquad
 s_2\in\left(\frac17,\frac14\right),\qquad
 s_3\in\left(\frac14,\frac13\right),\qquad
 s_4\in\left(\frac13,1\right).
\]
Then there are projective birational morphisms between projective
normal good moduli spaces fitting into the diagram
\[
  \xymatrix{
  & \M_{s_1}\ar[ld]^{\phi_1} \ar[rd]^{\psi_1}
  && \M_{s_2}\ar[ld]^{\phi_2} \ar[rd]^{\psi_2}
  && \M_{s_3}\ar[ld]^{\phi_3} \ar[rd]^{\psi_3}
  && \M_{s_4}\ar[ld]^{\phi_4} \ar[rd]^{\psi_4}& \\
  \M_0 && \M_{\frac17} && \M_{\frac14}
  && \M_{\frac13} && \M_1 .
  }
\]
Each $\M_{s_i}$ depends only on the chamber containing $s_i$.
Under the identifications of Theorem \ref{thm:main-HKL},
$\phi_1$ coincides with the semi-toroidal modification
$X_o^{j_{\mathcal K}}\to X_o^{\mathrm{bb}}$.
The pair $(\psi_1,\phi_2)$ gives the flip associated with the $I_7$-stratum, while $(\psi_2,\phi_3)$ gives the two flips
associated with the $I_8'$ and $I_8''$-strata.  The morphism
$\psi_3$ is the divisorial contraction of $\mathcal{M}_{s_3}(I_9)$, which is the strict transform of $\Delta(6)$,
$\phi_4$ is an isomorphism, and $\psi_4$ is the divisorial
contraction of $\mathcal{M}_{s_4}(I_{4,4}^*)$, which is the strict transform of $\Delta(9)$, to $\mathcal{M}_1$.
Although $\phi_4$ is an isomorphism of the good moduli spaces, the corresponding stack structures are different.
 All other boundary strata $\mathcal{M}_t(\bullet)$ are
unchanged at their generic points as $t$ varies in $(0,1)$.
\end{thm}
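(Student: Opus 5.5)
The plan is to run the general wall-crossing machinery for log Fano quasimaps from \cite{HM1} on the specific family of quasimaps $q_t$, and to read off the birational transformations by tracking the Heckman--Looijenga boundary strata through the dominating morphisms $\rho_t\colon\mathcal M^K\to\mathcal M_t$.

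\textbf{Step 1: locate the walls.} A wall occurs at $t$ exactly when some polystable object changes, and this is detected by the K-semistability threshold of the boundary objects indexed by the strata $\mathcal M^K(\bullet)$. For each stratum in the Heckman--Looijenga classification ($I_7$, $I_8'$, $I_8''$, $I_9$, $I^*_{4,4}$, and the rest), I take the generic degenerate Weierstrass datum $(f_0,f_1)$, with its discriminant $D$ and the $\mathbb G_m$-action fixing it. I then compute the Futaki invariant, or equivalently the $\beta$-invariant, of $q_t$ along the one-parameter degeneration to the corresponding polystable quasimap. This is linear in $t$ with the weight-$t/12$ quasimap term, and its vanishing gives the critical value. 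The concrete target is that the $I_7$ configuration gives $t=1/7$, the $I_8$ configurations give $t=1/4$, and the $I_9$ and $I^*_{4,4}$ configurations give $t=1/3$ and $t=1$. For every other stratum I check that the sign of the Futaki invariant does not change on $(0,1)$. The general theory then says that $\mathcal M_t$ is constant on open chambers, and that there are wall maps $\phi_i,\psi_i$ which are projective birational morphisms of good moduli spaces. Normality is inherited because we take normalizations.

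\textbf{Step 2: identify the type of each wall map.} Here I compare the dimensions of the loci destabilized on either side of each wall.
\begin{itemize}
\item At $t=1/7$ and $t=1/4$, both exceptional loci have codimension at least two, so the wall maps give flips.
\item At $t=1/3$, the $I_9$ stratum in $\mathcal M_{s_3}$ is a divisor, and it is contracted to a lower-dimensional locus in $\mathcal M_{1/3}$, while $\mathcal M_{s_4}\to\mathcal M_{1/3}$ is small. By normality and Zariski's main theorem, $\phi_4$ is then an isomorphism, and the divisorial contraction is realized as $\psi_3$.
\item At $t=1$, the divisor $\mathcal M_{s_4}(I^*_{4,4})$ is contracted onto the Miranda locus.
\end{itemize}

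\textbf{Step 3: match with the automorphic side.} The divisors $\mathcal M_{s_3}(I_9)$ and $\mathcal M_{s_4}(I_{4,4}^*)$ must be identified with the strict transforms of $\Delta(6)$ and $\Delta(9)$. I would do this using the Heckman--Looijenga description of $\rho_0$ together with the cusp geometry. Finally, $\phi_1$ is identified with the semi-toroidal modification $X_o^{j_{\mathcal K}}\to X_o^{\mathrm{bb}}$. For this I use Theorem~\ref{thm:main-HKL} (Corollary~\ref{cor-first-model}): $\mathcal M_{s_1}$ is small over $X_o^{\mathrm{bb}}$ and has $\Delta(6),\Delta(9)$ $\mathbb Q$-Cartier and relatively ample, and these properties characterize Looijenga's modification.

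\textbf{Main obstacle.} I expect the hardest part to be Step 1, and specifically showing that no other strata produce walls. This needs a uniform computation of the thresholds over the whole classification, including the non-reduced discriminant strata. I would organize it by the vanishing orders of $(f_0,f_1)$ at the special fiber, since the $\beta$-invariant depends only on these orders. The stack-structure change at $t=1/3$ is then read off from the change in stabilizers of the polystable $I_9$ object.
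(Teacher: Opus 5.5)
Your overall strategy is the same as the paper's: track the Heckman--Looijenga strata through $\rho_t$ and locate thresholds by a $\beta$/Futaki computation. Step~1, however, has a genuine gap.

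\textbf{Step~1: locating the walls.} Checking the generic member of each stratum cannot show that no other walls occur, because a wall can be caused by a special, non-generic polystable object. The paper argues uniformly over all polystable objects for small $t$.
\begin{enumerate}
\item By Lemma~\ref{lem:K-ps-remaining}, any such object with $g_0^3+g_1^2\not\equiv0$ remains K-polystable for every $t\in(0,1)$.
\item For the constant ones $[g_0^3:0]$, Lemma~\ref{lem:useful-criterion} reduces K-semistability to $\mathrm{ord}_p((1-t)D+3t\,\mathrm{div}(g_0))\le6$. Since $\mathrm{ord}_p(g_0)\in\{0,2,4\}$, the only thresholds are $t=(6-k)/(12-k)$ with $k=\mathrm{ord}_p(D)\in\{0,\dots,5\}$.
\end{enumerate}
This is exactly the vanishing-order bookkeeping you propose. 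It produces six candidate walls, $\frac17,\frac14,\frac13,\frac25,\frac5{11},\frac12$. No local computation discards the last three; you must show that such objects do not lie in the closure of $\mathscr M$. The paper does this globally. The replacement $q'$ across the wall stays K-semistable up to $t=1$, so it is a Miranda-semistable Weierstrass fibration with a fiber where $\mathrm{ord}(D')=12-k\ge10$. By Miranda's classification this fiber is $I_n$ or $I^*_{n-6}$. The latter contradicts K-semistability of $q'$. The former is impossible because $\rho(\tilde W)=10$ forces $n\le9$. The case $g_0'^3+g_1'^2\equiv0$ is handled separately via the $I^*_{4,4}$ contraction. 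Without this input, your list of walls is not established.

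\textbf{Steps~2 and~3.} These also need more than you give.
\begin{enumerate}
\item Smallness, normality and Zariski's main theorem do not make $\phi_4$ an isomorphism, since flipping contractions are small too. You need quasi-finiteness. The paper gets it because across $t=\frac13$ only the representative of the single point $\mathcal M_{s_4}(I_9)$ changes.
\item To see that $\psi_3$ and $\psi_4$ contract exactly $\mathcal M_{s_3}(I_9)$ and $\mathcal M_{s_4}(I^*_{4,4})$, the paper uses $\mathbb Q$-factoriality and the Picard numbers $3,2,1$ with \cite[Corollary 2.63]{komo}. These come from Lemma~\ref{lem:putting-coeffs-Hodge}, which rests on the class-group computations of Theorem~\ref{thm:picard-numbers-corrected} and Corollary~\ref{cor:Miranda-class-group-corrected}. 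Dimension counting alone does not bound the exceptional locus, and your plan has no substitute.
\item For $\phi_1$, you attribute to $\mathcal M_{s_1}$ that the strict transforms of $\Delta(6),\Delta(9)$ are $\mathbb Q$-Cartier and relatively ample over $\mathcal M_0$. This does not follow from Steps~1--2. Citing Corollary~\ref{cor-first-model} for it is circular, since that corollary is part of the statement.
\end{enumerate}
The ingredient missing for $\phi_1$ is the Hodge-line-bundle formula of Theorem~\ref{thm:Hodge-line-bundle-stricttrans}. It is computed from the test families of Examples~\ref{ex:I9} and~\ref{ex:I44*} via Proposition~\ref{prop:determining}. Combine it with ampleness of $\Lambda_{\mathrm{Hodge},t}$, the relative ampleness of $D_6+cD_9$ from Corollary~\ref{cor:cusp-intersection-matrix-corrected}, and uniqueness of ample models \cite[Theorem 11.39]{kollar-moduli}. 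Together these give $\mathcal M_{s_1}\cong X_o^{j_{\mathcal K}}$.
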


We note that our $\{\mathcal{M}_t\}$ does not dominate both $\mathcal{M}^*$ and $\mathcal{M}^M$, and hence any $\mathcal{M}_t$ does not coincide with $\M^K$ \cite{HeckmanLooijenga2002} or the KSBA moduli space of Ascher--Bejleri \cite{AscherBejleri2021WeightedElliptic,AscherBejleri2018StablePairRES}.

The key link between the K-moduli wall crossing and the ray $\{A_t\}$ of divisors is provided by the Hodge polarization $\Lambda_{\mathrm{Hodge},t}$ on the models $\{\mathcal{M}_t\}$. We compute its variation as follows.

\begin{comment}

A key technical simplification in determining the K-moduli wall-crossing is that one can track the images of the fixed boundary strata of $\mathcal M^K$ as $t$ varies.

This is Theorem \ref{thm:wallcrossing-miranda-to-8points} together with
Remark \ref{rem:flipping contraction} and the semi-toroidal identification of Corollary
\ref{cor-first-model}.
The divisor governing this wall crossing is precisely the Hodge
polarization.

\end{comment}

\begin{thm}[Variation of the Hodge polarization; Theorem \ref{thm:Hodge-line-bundle-stricttrans}]
\label{thm:main-hodge}
For every $t\in[0,1]\cap\Q$, let $\widetilde{A_t}$ denote the strict
transform of $A_t$ on $\M_t$.  Then, the ample Hodge line bundle on
$\M_t$ satisfies $\Lambda_{\mathrm{Hodge},t}\sim_{\Q}\widetilde{A_t}$.
In particular, under the identifications
$\M_0\cong X_o^{\mathrm{bb}}$ and $\M_1\cong\M^M$, we have
$\Lambda_{\mathrm{Hodge},0}\sim_{\Q}\mathcal L$ and
$\Lambda_{\mathrm{Hodge},1}\sim_{\Q}\eta$,
where $\eta$ denotes the rational GIT polarization on $\M^M$
induced by $\mathcal O_{\mathbb P(2^5,3^7)}(1)$.
Thus the Hodge polarizations interpolate between
the automorphic polarization on the Baily--Borel compactification
and the GIT polarization on Miranda's compactification.
\end{thm}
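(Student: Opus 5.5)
The plan is to compute $\Lambda_{\mathrm{Hodge},t}$ on the common model $\mathcal M^K$ and push forward along $\rho_t\colon\mathcal M^K\to\mathcal M_t$. Since every $\mathcal M_t$ is normal and $\rho_t$ is birational, it suffices to establish
\[
\rho_t^*\Lambda_{\mathrm{Hodge},t}\sim_{\Q}\rho_0^*\mathcal L+\frac t2\widetilde\Delta(6)+\frac t3\widetilde\Delta(9)+(\text{$\rho_t$-exceptional}).
\]
Here $\widetilde\Delta(\cdot)$ denotes strict transforms on $\mathcal M^K$. Pushing forward then gives $\Lambda_{\mathrm{Hodge},t}\sim_\Q\widetilde{A_t}$, at least as Weil divisor classes.

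\textbf{Step 1: affine dependence on $t$.} The CM/Hodge line bundle of the log quasimap family $q_t$ is given by a Deligne pairing/leading-term formula in the boundary coefficient $\frac{1-t}{12}$ and the quasimap weight $\frac t{12}$. For the universal family over (a stack over) $\mathcal M^K$ this gives
\[
\rho_t^*\Lambda_{\mathrm{Hodge},t}=(1-t)\,\lambda_D+t\,\lambda_q ,
\]
where $\lambda_D$ is the Hodge-type class of the twelve weighted points and $\lambda_q$ is the class attached to the pencil $[f_0^3:f_0^3+f_1^2]$, i.e.\ the pullback of $\mathcal O_{\mathbb P(2^5,3^7)}(1)$ under the Weierstrass data. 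It is therefore enough to treat the endpoints. At $t=0$, Deligne--Mostow together with Heckman--Looijenga identifies $\lambda_D$ with $\rho_0^*\mathcal L$. At $t=1$, $\lambda_q=\rho_1^*\eta$ on Miranda's model.

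\textbf{Step 2: the endpoint comparison.} Looijenga's description $\mathcal M^M=\widehat X_o^{\mathcal K}$ gives $\eta\sim_\Q$ the image of $\mathcal L+a\Delta(6)+b\Delta(9)$ for some rational $a,b$. Pulled back to $\mathcal M^K$ this reads
\[
\rho_1^*\eta=\rho_0^*\mathcal L+a\widetilde\Delta(6)+b\widetilde\Delta(9)+(\text{exceptional}).
\]
To pin down $a=\frac12$ and $b=\frac13$ I would compare orders of vanishing along the generic points of $\mathcal M^K(I_9)$ and $\mathcal M^K(I_{4,4}^*)$, the strict transforms of $\Delta(6)$ and $\Delta(9)$. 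Concretely, take a one-parameter degeneration transverse to each divisor and compute the weight of the discriminant $f_0^3+f_1^2$ versus the weights of $(f_0,f_1)$. Equivalently, one can compute the orders of $\Delta_{\mathrm{Wei}}$ and of $\mathcal L$-sections; the ratio $12:6$ versus $12:4$ of discriminant degree to GIT weight should produce the $\frac12$ and $\frac13$.

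The Borcherds product relation among $\Delta(6),\Delta(9),\Delta(15),\Delta(18)$ is needed because $\mathcal L$ is only known up to $\Q$-linear equivalence with Heegner combinations. It lets me express the automorphic form defining the discriminant map, and hence $\lambda_D-\lambda_q$, as a combination of $\Delta(6)$ and $\Delta(9)$ only. This removes any $\Delta(15)$ and $\Delta(18)$ contributions, which otherwise would not be determined by the two test curves.

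\textbf{Main obstacle.} I expect the difficulty to be the coefficient computation in Step 2: controlling the boundary (cusp) contributions and making sure no $\Delta(15)$ or $\Delta(18)$ terms survive. This is exactly where the independent Heegner relation coming from the Borcherds product is used.

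Linearity in $t$ then gives the statement for all rational $t$. The specializations $\Lambda_{\mathrm{Hodge},0}\sim_\Q\mathcal L$ and $\Lambda_{\mathrm{Hodge},1}\sim_\Q\eta$ follow because on $\mathcal M_1$ the strict transforms of $\Delta(6)$ and $\Delta(9)$ are contracted, so $\widetilde{A_1}$ coincides with $\eta$ there.
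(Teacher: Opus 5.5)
\textbf{There is a genuine gap, in Step 1.} The class $\Lambda_{\mathrm{Hodge},t}$ is computed from the K-polystable (bpcy) replacement of a family, and this replacement changes at every wall. Over the generic point of $\mathcal M^K(I_9)$, for instance, one contracts the degree-twelve component for $t<\frac13$ but the nine-point tail for $t>\frac13$. So $\omega+\frac{1-t}{6}\mathcal D+\frac t6\mathscr L$ is evaluated on different families in different chambers, and $\rho_t^*\Lambda_{\mathrm{Hodge},t}$ is only piecewise affine in $t$ on $\mathcal M^K$.

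Your identity $\rho_t^*\Lambda_{\mathrm{Hodge},t}=(1-t)\rho_0^*\mathcal L+t\rho_1^*\eta$ cannot hold. Its right-hand side is the pullback under $(\rho_0,\rho_1)$ of an ample class on $\mathcal M_0\times\mathcal M_1$. Then every $\mathcal M_t$ with $0<t<1$ would be the same model, namely the normalized graph of $\mathcal M^*\dashrightarrow\mathcal M^M$. That contradicts Theorem \ref{thm:wallcrossing-miranda-to-8points}: $\mathcal M_t(I_9)$ is a divisor for $t<\frac13$ and a point for $t\ge\frac13$, whereas it stays a divisor in the graph, and there are flips at $\frac17$ and $\frac14$. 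So ``it is enough to treat the endpoints'' is false.

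\textbf{Step 2 is also off, and the two errors cancel only on $[0,\frac13)$.} Take the transverse germ of Example \ref{ex:I9}, where $4A_\tau^3+27B_\tau^2=\tau^3F_\tau$, and let $\eta_{\mathrm{naive}}$ be the pullback of $\mathcal O_{Y_{\mathrm{Mir}}}(1)$ along $\tau\mapsto(A_\tau,B_\tau)$. Then $\rho_0^*\mathcal L=\eta_{\mathrm{naive}}-\frac12[0]$. The Miranda limit needs the base change $\tau=\sigma^3$, $U=\sigma y$ and a rescaling, which gives $\rho_1^*\eta=\eta_{\mathrm{naive}}-\frac13[0]$.

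Hence the $\widetilde\Delta(6)$-coefficient of $\rho_1^*\eta-\rho_0^*\mathcal L$ is $\frac16$, not $\frac12$. This agrees with Looijenga's description of $\mathcal M^M$ as the ample model of $\mathcal L+\frac16\Delta(6)+\frac13\Delta(9)$. The corresponding coefficient of $\rho_t^*\Lambda_{\mathrm{Hodge},t}-\rho_0^*\mathcal L$ is $\min\{\frac t2,\frac16\}$. With the correct endpoint value, your Step 1 would produce the Hassett--Keel--Looijenga ray $\mathcal L+\frac t6\Delta(6)+\frac t3\Delta(9)$, which the paper explicitly distinguishes from $A_t$.

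\textbf{What is missing is a computation of the slope inside a chamber,} which is how the paper proceeds:
\begin{enumerate}
\item Work on models isomorphic in codimension one to $X_o^{j_{\mathcal K}}$. Their class group has rank three by Theorem \ref{thm:picard-numbers-corrected}; this is where the Borcherds relations are needed.
\item Write $\Lambda_{\mathrm{Hodge},s}$ in terms of the $I_9$, $I_{4,4}^*$ and $I_2$ divisors.
\item Show the $I_2$-coefficient is constant, using a test curve that avoids the other two divisors.
\item Obtain the derivatives of the remaining two coefficients from Lemma \ref{lem:Hodge-difference-over-curve}. Apply it to transverse families whose K-semistable model does not change (Examples \ref{ex:I9} and \ref{ex:I44*}). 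The vertical orders $3$ and $2$ of the discriminant give slopes $2\cdot\frac{3}{12}=\frac12$ and $2\cdot\frac{2}{12}=\frac13$.
\end{enumerate}
The strict-transform statement then holds on all of $[0,1]$, even though the pulled-back class is not affine, because $\Delta(6)$ is contracted for $t\ge\frac13$.
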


Theorem \ref{thm:main-hodge} is proved in Theorem
\ref{thm:Hodge-line-bundle-stricttrans}; the coefficients of $\Lambda_{\mathrm{Hodge},t}$ along $\Delta(6)$ and $\Delta(9)$ are computed by
the families of log quasimaps in Examples \ref{ex:I9} and \ref{ex:I44*}.
Together with the ampleness of the Hodge polarization, this is the key input identifying the models $\mathcal{M}_t$ arising from K-moduli with the ample models of combinations of the automorphic line bundle and the Heegner divisors in Theorem~\ref{thm:main-HKL}.

% We finally record the arithmetic features of the endpoint models that
% are most directly relevant to the birational program.
% \Yota{Memo for myself: refer to automorphic forms}
% \begin{thm}[{Theorem \ref{thm:picard-numbers-corrected}
% and Corollaries \ref{cor:Miranda-class-group-corrected}
% and \ref{cor:BB-not-Q-Gorenstein}}]
% \label{prop:main-arithmetic}
%  The Baily-Borel compactification
% $X_o^{\mathrm{bb}}$ is neither $\Q$-factorial nor
% $\Q$-Gorenstein.  By contrast, Miranda's compactification $\M^M$
% is $\Q$-factorial and klt and has Picard number one.
% \end{thm}
% \Yota{Shall we refer to a mistake in Dali's paper? $\to$ Masa, please see the explanation below and the remark.}
% \Masa{We have to add another Borcherd's relation and sell it because it was asked by Bruinier. (Yota) How about the following? BTW, do you remember what did Jan exactly say?}
A key input on the automorphic side is the control of the divisor classes of the Heegner divisors. One relation is obtained by restricting Allcock's automorphic form \cite{Allcock2000Leech}. To obtain an independent relation, motivated by a question of Jan Hendrik Bruinier, we construct, via a Borcherds product, a meromorphic automorphic form on the eight-dimensional ball whose divisor yields a second independent relation among the Heegner divisors. 
Together with the geometry at the cusps, these relations give the divisor-theoretic control needed for the birational analysis above. We use $\Delta(d)$ also for the closure of the corresponding reduced divisor in $X_o^{\mathrm{bb}}$.

\begin{thm}[{Theorems \ref{thm:allcock-restriction}, \ref{thm:another Borcherds lift},
and Lemma \ref{lem:two-automorphic-relations-corrected}}]
\label{thm:main-borcherds}
There exists a meromorphic automorphic form $\Psi_{D,o}$
of weight $108$ with respect to the arithmetic subgroup $\U(A_o)$ defining $X_o$
obtained by restricting a Borcherds product.
Together with the restriction of Allcock's automorphic form, it yields
the following two independent relations on
$X_o^{\mathrm{bb}}$:
\begin{align*}
 2\Delta(6)+2\Delta(9)+3\Delta(15)+\Delta(18)
 &\sim_{\Q}132\mathcal L,\\
 54\Delta(6)+30\Delta(9)-3\Delta(15)+3\Delta(18)
 &\sim_{\Q}108\mathcal L.
\end{align*}
\end{thm}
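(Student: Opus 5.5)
The plan is to work on the level of the Deligne--Mostow ball: realize $X_o$ as a sub-ball quotient of the nine-dimensional ball $\mathbb B^9$ attached to the Eisenstein lattice $\Lambda$, so that $\mathbb B^8=\mathbb B(A_o)$ is the orthogonal complement of a vector $r$ of fixed norm. I would then produce automorphic forms on $\mathbb B^9$ (or on the associated $\mathrm O(2,18)$-type domain obtained by forgetting the $\mathbb Z[\omega]$-structure) and restrict them. There are two steps. First, restrict Allcock's form and obtain the first relation. Second, build a new Borcherds lift, restrict it (quasi-pullback), and obtain the second relation. Finally, check that the two relations are independent.

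\emph{Step 1 (Allcock).} Allcock's automorphic form on the Leech-type ball vanishes exactly along the mirrors of short roots, with known weight. I pull it back through the chain of embeddings $\mathbb B^8\subset\mathbb B^9\subset\mathbb B^{13}$. Whenever $\mathbb B^8$ lies inside a zero divisor, I replace the restriction by the quasi-pullback: divide by the linear forms $(\cdot,\delta)$ for the roots $\delta\perp A_o$ and take the leading coefficient. Each such root contributes $+1$ to the weight. The remaining zeros are hyperplanes $\delta^\perp\cap\mathbb B^8$. I sort them by the projection of $\delta$ to $A_o^\vee$: its norm and its class in the discriminant group determine which of $\Delta(6),\Delta(9),\Delta(15),\Delta(18)$ it gives. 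The multiplicity of each divisor is the number of roots with that projection, up to units and up to $\U(A_o)$. This is a finite lattice-vector count in the orthogonal complement of $A_o$, giving the coefficients $2,2,3,1$ and weight $132$.

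\emph{Step 2 (new Borcherds product).} Using Bruinier's converse theorem (or simply an explicit construction), I choose a weakly holomorphic vector-valued modular form $F$ for the Weil representation of the discriminant form of the Hermitian lattice, viewed as an even $\mathbb Z$-lattice of signature $(2,2n)$. I want its principal part supported on a few classes with prescribed small negative exponents, and its constant term $c_0(0)=216$, which gives weight $108$. The divisor of the lift $\Psi_D$ is $\frac12\sum_{\gamma,m<0}c_\gamma(m)H(\gamma,m)$. I then quasi-pullback to $\mathbb B(A_o)$ as in Step 1 and identify each surviving Heegner divisor restricted to $X_o$ as a combination of the four $\Delta(d)$. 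Negative coefficients, such as the $-3\Delta(15)$, are allowed because $\Psi_{D,o}$ is only meromorphic. Since $\mathcal L$ is the weight-one bundle, the divisor of a weight-$k$ form is $\sim_\Q k\mathcal L$ on $X_o^{\mathrm{bb}}$. The cusps have codimension $\ge2$, because the boundary of a ball quotient of dimension $\geq 2$ consists of points. So both relations hold on $X_o^{\mathrm{bb}}$.

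\emph{Independence and main obstacle.} Independence is immediate, since the coefficient vectors $(2,2,3,1)$ and $(54,30,-3,3)$ are not proportional. The main obstacle is Step 2. Existence of $F$ requires the obstruction space, the cusp forms of weight $2-(1-n)$ in the dual Weil representation, to be orthogonal to the chosen principal part. That is checked via Eisenstein series coefficients or by an explicit construction, for example from theta-lifted eta quotients on the root lattice $E_8$. On top of this comes the delicate bookkeeping in the quasi-pullback: counting which Heegner vectors become orthogonal to $A_o$ and how they project. I would attempt the construction first with $F$ an explicit eta-quotient lifted to the discriminant group $\mathbb F_3^{k}$, and then verify the coefficients by computer.
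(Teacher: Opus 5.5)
Your overall architecture is the paper's: restrict Allcock's form, restrict a second Borcherds product, and use $\operatorname{div}\sim_{\Q}k\mathcal L$. However, the step that turns a divisor on the ball $\B_o$ into a relation among the \emph{reduced} divisors $\Delta(d)$ on $X_o^{\mathrm{bb}}$ is missing an essential input. That input is the ramification indices of $\B_o\to X_o$ along the four orbits of mirror traces: $e_6=6$, $e_9=3$, $e_{15}=1$, $e_{18}=3$ (Lemma \ref{lem:ramification-indices-eight-ball}). Suppose a weight-$k$ form vanishes to order $m_d$ along the hyperballs of $\mathcal H_o(d)$. The resulting relation is $\sum_d (m_d/e_d)\Delta(d)\sim_{\Q}k\mathcal L$, not $\sum_d m_d\Delta(d)\sim_{\Q}k\mathcal L$.

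Counting roots up to units gives the upstairs orders $(4,2,1,1)$ for Allcock's weight-$44$ form, i.e.\ $(12,6,3,3)$ at weight $132$ in the orthogonal normalization. The coefficients $(2,2,3,1)$ appear only after dividing by $(6,3,1,3)$; passing to orbits ``up to $\U(A_o)$'' does not accomplish this. The same division turns the upstairs divisor $(324,90,-3,9)$ of the second form into $(54,30,-3,3)$. Computing $e_d$ is a genuine lemma. You must determine which isometries of the rank-two lattice $N_H$ preserving $\C z_o$, combined with a unit scalar on $N_H^\perp$, satisfy the gluing condition in the unimodular lattice $A$.

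Your sorting rule also needs repair: the norm and discriminant class of the projection of a root do not determine $d$. A root lying in $A_o$ (class $0$, $Q=-1$) cuts out a $d=18$ trace, unless it lies in $\theta A_{o,\Z}^\vee$, in which case it cuts out a $d=6$ trace. This is why the order $4$ along $\mathcal H_o(6)$ splits as $3+1$ over two different shells.

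Step 2, as written, does not prove the second relation. Its whole content is the existence of a specific input with a specific principal part, which you never produce, and the setting you suggest is not where it can be found. On the unimodular $(2,18)$ lattice $A_{\Z}$, the only input of weight $-8$ with at most a simple pole is a multiple of $E_4/\Delta_{\rm mod}$, which only reproduces Allcock's form. Deeper poles bring in vectors with $\psi(r,r)=6$, among them $z_o$ itself, whose traces on $\B_o$ are in general not mirror traces. Nor is the relevant discriminant group an $\mathbb F_3$-vector space.

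The paper works directly on $A_{o,\Z}$, of signature $(2,16)$. Since $A_{o,\Z}$ and $-K$, with $K=A_2(2)$, are mutual orthogonal complements in $A_{\Z}$, the discriminant form of $A_{o,\Z}$ is that of $K$: order $12$, with values $0,1/6,1/2,2/3$. Hence $\Theta_K$ is a weight-$1$ form for the relevant Weil representation. The form $g_D$ in \eqref{eq:gA-gD-definitions} then has weight $-7$, principal part $3q^{-1}\mathbf e_0-q^{-5/6}\mathbf e_{1/6}+30q^{-1/2}\mathbf e_{1/2}+105q^{-1/3}\mathbf e_{2/3}$, and $c_0(0)=216$. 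No obstruction-space computation is needed. The same mechanism with $E_4\Theta_K/\Delta_{\rm mod}$ gives a form with the weight and divisor of $\Psi_{A,o}^3$.

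Even with the input in hand, you still need the bookkeeping of Lemma \ref{lem:gD-normal-vector-count} and Theorem \ref{thm:another Borcherds lift}:
\begin{itemize}
\item Every contributing $x$ has $0<-Q(x)\le 1$. Adding a minimal representative $y\in(-K)^\vee$ of the glued class therefore gives a root $r=x+y\in A$ with $H_A(r)\cap\B_o=H_{A_o}(x)$. This is what rules out Heegner divisors outside $\mathcal H_o$.
\item The three distinct divisors $D_x,D_{\omega x},D_{\omega^2x}$ all restrict to the same hyperball.
\item For $d=6$ both shells $x_6$ and $\theta x_6$ contribute, giving $3\cdot105+3\cdot3=324$.
\end{itemize}
Finally, automorphy under all of $\U(A_o)$, rather than only the discriminant kernel, requires observing that $\U(A_o)$ acts on the discriminant group through units of $\mathscr O$. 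These units preserve $\Theta_K$ and hence $g_D$.
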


The input for the additional Borcherds product is obtained
from the vector-valued theta series of $A_2(2)$ by applying
two Serre derivatives and dividing by the modular
discriminant; see \eqref{eq:gA-gD-definitions}.
Combining the two divisor relations with the geometry at
the cusps determines the rational divisor classes and Picard
groups of the compactifications.

\begin{cor}[{Theorem \ref{thm:picard-numbers-corrected}
and Corollaries \ref{cor:Miranda-class-group-corrected}
and \ref{cor:BB-not-Q-Gorenstein}}]
\begin{enumerate}
    \item  The Baily-Borel compactification $X_o^{\mathrm{bb}}$ satisfies
\begin{align*}
 \mathrm{Cl}(X_o^{\mathrm{bb}})_{\Q}
 =
 \Q[\mathcal L]\oplus
 \Q[\Delta(6)]\oplus
 \Q[\Delta(9)],\qquad 
 \Pic(X_o^{\mathrm{bb}})_{\Q}=\Q[\mathcal L].
\end{align*}
It is neither $\Q$-factorial nor $\Q$-Gorenstein.

\item The semi-toroidal compactification
$X_o^{j_{\mathcal K}}$ is $\Q$-factorial and satisfies
\[ \Pic(X_o^{j_{\mathcal{K}}})_{\Q}=\Q[\lambda_{\rm aut}]\oplus\Q[D_6]\oplus\Q[D_9].\]
Here $\lambda_{\rm aut}$ is the pullback of $\mathcal L$
under $X_o^{j_{\mathcal K}}\to X_o^{\mathrm{bb}}$, and
$D_d$ is the strict transform of $\Delta(d)$.
\item 
Miranda's compactification $\M^M$ is $\Q$-factorial and klt
and has Picard number one.
\end{enumerate}
\end{cor}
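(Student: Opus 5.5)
The plan is to work first on the open ball quotient $X_o$ and then pass to each compactification through its boundary. For the class group of $X_o^{\mathrm{bb}}$ I use that the Baily--Borel boundary has codimension $\ge 2$ (here $\dim X_o=8$, and the cusps are points or curves). Hence $\mathrm{Cl}(X_o^{\mathrm{bb}})_\Q=\mathrm{Cl}(X_o)_\Q$. For $\mathrm{Cl}(X_o)_\Q$ I would invoke the known result that, for a unitary group of signature $(n,1)$ with $n$ large, the rational Picard group is spanned by $\mathcal L$ and the Heegner divisors; this is a Bruinier-type converse theorem, in the spirit of Bergeron--Li--Millson--Moeglin, where the relevant cohomology is generated by special cycles. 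So $\mathrm{Cl}(X_o)_\Q$ is spanned by $\mathcal L,\Delta(6),\Delta(9),\Delta(15),\Delta(18)$. The two relations of Theorem~\ref{thm:main-borcherds} let me solve for $\Delta(15)$ and $\Delta(18)$ in terms of the other three; the coefficient matrix $\begin{pmatrix}3&1\\-3&3\end{pmatrix}$ has determinant $12\neq0$. This gives the spanning set $\mathcal L,\Delta(6),\Delta(9)$.

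Next I show these three classes are independent, and this is also where non-$\Q$-Cartierness comes from. I test a relation $a\mathcal L+b\Delta(6)+c\Delta(9)\sim_\Q0$ against curves and against the cusp geometry. The ample class $\mathcal L$ is positive on compact curves in the interior, and curves inside $\Delta(6)$ and $\Delta(9)$ together with the induced ball quotients let me compute intersection numbers. A more robust route is to pull back to the semi-toroidal model $X_o^{j_{\mathcal K}}$. There $D_6$ and $D_9$ are $\Q$-Cartier, as stated before Theorem~\ref{thm:main-HKL}, and the modification is small, so $\mathrm{Cl}$ is unchanged. On this model I restrict to a fiber curve $C$ of $X_o^{j_{\mathcal K}}\to X_o^{\mathrm{bb}}$ over a cusp. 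On such a curve $\lambda_{\rm aut}\cdot C=0$, while $D_6\cdot C$ and $D_9\cdot C$ can be computed from the local toric/arrangement description of Looijenga's modification. Finding two such exceptional curves whose vectors $(D_6\cdot C, D_9\cdot C)$ are independent forces $b=c=0$, and then $a=0$ because $\mathcal L$ is big. The same computation shows that no nonzero combination $b\Delta(6)+c\Delta(9)$ is $\Q$-Cartier on $X_o^{\mathrm{bb}}$: a $\Q$-Cartier divisor pulls back to something trivial on the exceptional curves. This gives $\Pic(X_o^{\mathrm{bb}})_\Q=\Q\mathcal L$ and proves non-$\Q$-factoriality. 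For non-$\Q$-Gorenstein, I write $K$ via Hirzebruch--Mumford proportionality plus ramification: $K_{X_o}\sim_\Q 9\mathcal L-\tfrac12(\text{reflective Heegner divisors})$, up to the paper's normalisation. The reflective divisors presumably include $\Delta(6)$ or $\Delta(9)$ with a coefficient vector not proportional to zero modulo the relations, so $K$ is not in $\Q\mathcal L$.

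For part (2), $X_o^{j_{\mathcal K}}$ is $\Q$-factorial because the exceptional locus over the cusps is small and locally toric/$\Q$-factorial, and $D_6,D_9$ become $\Q$-Cartier. Combined with the class group computation this gives the basis $\lambda_{\rm aut},D_6,D_9$. For part (3), $\M^M=\widehat X_o^{\mathcal K}$ is obtained by contracting $D_6$ and $D_9$; this matches the divisorial contractions $\psi_3,\psi_4$ of Theorem~\ref{thm:main-wall-crossing}, and the flips are isomorphisms in codimension one. So $\mathrm{Cl}(\M^M)_\Q$ has rank $3-2=1$. Miranda's GIT quotient of a weighted projective space is $\Q$-factorial with Picard number one, since quotients of $\Q$-factorial varieties by reductive groups with stable points are $\Q$-factorial. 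It is klt by Boutot-type arguments or as a K-moduli good moduli space.

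The main obstacle is the independence step: computing $D_6\cdot C$ and $D_9\cdot C$ on exceptional curves of the semi-toroidal modification, equivalently the local structure of Looijenga's arrangement near the cusps. The other delicate input is justifying that Heegner divisors span $\mathrm{Cl}(X_o)_\Q$.
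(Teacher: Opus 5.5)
\textbf{Main gap: the spanning step.} A theorem of Bergeron--Millson--Moeglin type does not give what you need. Grant it in this non-compact setting, together with the passage from $H^{1,1}$ to $\mathrm{Cl}$. It still only says that $\mathrm{Cl}(X_o)_\Q$ is spanned by $\mathcal L$ and \emph{all} special divisors $H_{A_o}(x)$, with $x$ running over vectors of $A_o^\vee$ of every positive norm. These form infinitely many $\Gamma_o$-orbits. The four mirror traces $\Delta(6),\Delta(9),\Delta(15),\Delta(18)$ are only the ones coming from norm-$3$ roots of $A$. To cut down to these four you would need either a Borcherds relation for every other Heegner divisor or a separate computation of the rank of the Heegner subgroup, and you supply neither. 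So rank $3$ is not established, and parts (2) and (3), which rest on it, are unsupported.

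\textbf{The missing idea.} It is geometric and short. By \eqref{eq:open-res-ball-quotient},
\[
X_o\setminus\textstyle\bigcup_d\Delta(d)\cong\M\cong U/G,
\]
where $G=\SL_2\times\mathbb G_m$ and $U\subset H_4\oplus H_6$ is the principal open set where $\Disc(f_0^3+f_1^2)\neq0$. The ring $\mathcal O(U)$ is factorial, so $\Pic(U)=0$. The invertible discriminant is a semi-invariant of nonzero $\mathbb G_m$-weight, so it spans $X^*(G)_\Q$ and $\Pic^G(U)_\Q=0$. Hence $\mathrm{Cl}(\M)_\Q=\Pic([U/G])_\Q=0$, and excision makes $\mathrm{Cl}(X_o)_\Q$ a quotient of $\bigoplus_d\Q[\Delta(d)]$. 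From there, your elimination by the two Borcherds relations and your independence test against contracted curves over the two cusps are exactly the paper's argument.

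\textbf{Other steps that need repair.}

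\emph{Non-$\Q$-Gorensteinness.} The branch coefficients are $1-1/e_d$ with $(e_6,e_9,e_{15},e_{18})=(6,3,1,3)$, not $\tfrac12$. It is also not enough that $\Delta(6)$ or $\Delta(9)$ occur in $B_{\rm br}$. The divisor $\Delta(18)$ also feeds into the $D_6$- and $D_9$-coordinates through \eqref{eq:D18-corrected}, so you must actually compute $u(B_{\rm br})=-17/2$ and $v(B_{\rm br})=-14/3$.

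\emph{Part (2).} The ``locally toric'' claim is unjustified and unnecessary. $\Q$-factoriality follows once $\mathrm{Cl}$ is spanned by the $\Q$-Cartier classes $\lambda_{\rm aut},D_6,D_9$.

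\emph{Part (3), $\Q$-factoriality and Picard number.} The principle that reductive quotients of $\Q$-factorial varieties with stable points are $\Q$-factorial is false: $\mathbb A^4/\!/\mathbb G_m$ with weights $(1,1,-1,-1)$ is the non-$\Q$-factorial threefold node. Invoking Theorem~\ref{thm:main-wall-crossing} is circular, since its proof uses $\rho(\M_1)=1$ and the $\Q$-factoriality of $\M_1$ from Corollary~\ref{cor:Miranda-class-group-corrected}. Two non-circular routes are available:
\begin{itemize}
\item The count $3-2=1$ can be obtained directly. Both $X_o^{j_{\mathcal K}}\setminus(D_6\cup D_9)$ and $\M^M$ contain $X_o^\circ$ with complement of codimension at least two, so $\mathrm{Cl}(\M^M)_\Q\cong\mathrm{Cl}(X_o^{j_{\mathcal K}})_\Q/\langle D_6,D_9\rangle$.
\item Alternatively, argue as the paper does: $\mathrm{Cl}(\M^M)_\Q$ is generated by $D_{15}^M\sim_\Q24\eta$ and $D_{18}^M\sim_\Q60\eta$.
\end{itemize}

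\emph{Part (3), klt.} Boutot gives only rational singularities, which do not imply klt even for $\Q$-Gorenstein varieties. You need that reductive quotients preserve klt type \cite{braun2024reductive,zhuang2024direct}, and then $\Q$-factoriality to conclude that $\M^M$ itself is klt.
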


%The klt property of $\M^M$ is incompatible with the non-lc
%assertion in \cite[Corollary 6.2]{Shen2025MMP}.
% We discuss the relation with other results in
% %We discuss this comparison in
% Remark \ref{rem:comparison-shen}.
The images of $\Delta(6)$ and $\Delta(9)$ form a basis of $\mathrm{Cl}(X_o^{\mathrm{bb}})_{\Q}/\Pic(X_o^{\mathrm{bb}})_{\Q}$.
Their strict transforms on $X_o^{j_{\mathcal K}}$ give the two relative Picard
directions used in the family $A_t$.
Thus these calculations explain how the divisor
variation defining our interpolation of compactifications of $\M$ arises.

% The local $\Q$-Cartier criterion at the two cusps is given in
% Proposition \ref{prop:qcartier-at-cusps-corrected}.
% The exceptional divisors and Picard numbers of the full
% arrangement blow-up are determined in
% Proposition \ref{prop:picard-arrangement-blowup}.

% The stronger arithmetic statements are established in Section
% \ref{sec:semitoroidal}.  Lemma
% \ref{lem:two-automorphic-relations-corrected} gives the two Borcherds
% divisor relations, Proposition
% \ref{prop:qcartier-at-cusps-corrected} gives the local
% $\Q$-Cartier criterion at the two cusps, and Theorem
% \ref{thm:picard-numbers-corrected} computes the full rational class
% and Picard groups.  Proposition
% \ref{prop:picard-arrangement-blowup} further determines the exceptional
% divisors and Picard numbers of the full arrangement blow-up. 

\subsection*{Organization of the paper}

Section \ref{sec:preliminaries} recalls good moduli spaces, the relevant
K-stability and quasimaps.  Section \ref{sec:moduli-res} reviews Miranda's GIT compactification,
the discriminant compactification $\mathcal{M}^*$, and the Kontsevich compactification $\mathcal{M}^K$
of $\mathcal{M}$.  Section \ref{sec:Heckman--Looijenga's ball quotient}
introduces the ball quotient and its arrangement divisors,
compares the automorphic and Hodge polarizations, and
describes the cusp geometry and Looijenga's compactification.
Section \ref{sec:automorphic-forms}
computes the ramification indices and canonical divisors
and constructs two Borcherds products.  Section \ref{sec:semitoroidal} determines the geometry of several birational models of ball quotients.  Section \ref{sec:K-moduli wall crossing for quasimaps and the HKL program} computes the K-moduli
wall crossing and proves the comparison of the Hodge line bundle from which
Theorem \ref{thm:main-HKL} follows.

\section*{Notations and convention}
We work over the complex number field $\mathbb{C}$ throughout this paper.

\begin{enumerate}
\item For schemes and fundamental notations, we refer to \cite{Ha}.
Here, we say that $X$ is a variety (over $\C$) if $X$ is an irreducible, reduced, and separated scheme of finite type over $\mathbb{C}$.
We mainly deal with quasi-projective schemes in this paper.
\item Let $f\colon X\to Y$ be a projective morphism of normal varieties such that $f_*\mathcal{O}_X\cong\mathcal{O}_Y$.
Let $\mathrm{Pic}(X/Y)$ and $\mathrm{Cl}(X/Y)$ denote the relative Picard group and the relative class group.
Let $\mathrm{Pic}(X/Y)_{\mathbb{Q}}$ and $\mathrm{Cl}(X/Y)_{\mathbb{Q}}$ denote the localization of $\mathbb{Z}\to\mathbb{Q}$.
If $Y=\mathrm{Spec}\,\mathbb{C}$, then we write them as $\mathrm{Pic}(X)$, $\mathrm{Cl}(X)$, $\mathrm{Pic}(X)_{\mathbb{Q}}$, and $\mathrm{Cl}(X)_{\mathbb{Q}}$. 
For more details and the definition of $\mathbb{Q}$-factoriality, see \cite{komo}.
    \item For singular fibers of smooth minimal elliptic surfaces, we follow Kodaira's notation as \cite{Miranda1981}.
    For relatively minimal elliptic surfaces $f\colon X\to C$ with only canonical singularities, we define the type of $f^{-1}(p)$ for any $p\in C$ as the type of the corresponding fiber of the minimal resolution of $X$.
    \item Let $X$ be a normal variety with an $\mathbb{R}$-divisor $\Delta$.
    Then set the following graded ring over $\mathbb{C}$
    \[
    R(X,\Delta):=\bigoplus_{m\ge0}H^0(X,\mathcal{O}_{X}(\lfloor m\Delta\rfloor)).
    \]
    For any $\Q$-divisor $\Delta_1,\Delta_2$ on $X$, we set 
    \[
    R(X;\Delta_1,\Delta_2):=\bigoplus_{m,l\ge0}H^0(X,\mathcal{O}_{X}(\lfloor m\Delta_1+l\Delta_2\rfloor)).
    \]
\item Let $X$ be a projective normal variety and let $D$ be a $\R$-Cartier $\R$-divisor.
Then we set $\mathbf{B}(D)$ as $\bigcap\mathrm{Supp}(D')$, where $D'$ runs over all effective $\R$-divisors $\R$-linearly equivalent to $D$.
We call $\mathbf{B}(D)$ the {\it stable fixed locus} of $D$.
\item If we want to impose the affine scheme structure of $\mathbb{C}^n$, then we write it as $\mathbb{A}^n$.
\item For $\mathbb{P}^1$, we use the canonical affine coordinate of $\mathbb{A}^1\cong \mathbb{P}^1\setminus\{[1:0]\}$.
We say that $\infty:=[1:0]\in \mathbb{P}^1$.
\end{enumerate}

\section{Preliminaries}
\label{sec:preliminaries}

\subsection{Good moduli space}

In this paper, we will use the terminologies of stacks.
Let $\mathsf{Sch}$ be the category of schemes over $\mathbb{C}$.
    We say that $\mathscr{F}$ is a {\it fibered category} if $\mathscr{F}$ is a fibered category over $\mathsf{Sch}$ with a functor $p\colon \mathscr{F}\to \mathsf{Sch}$ as defined in \cite[Definition 3.1.1]{olsson2016algebraic}.
    For any scheme $X$ over $\mathbb{C}$, which can be regarded as an object of $\mathsf{Sch}$, we can set the category $\mathscr{F}(X)$ in the same way as \cite[3.1.2]{olsson2016algebraic}. 
    If $\mathscr{F}(X)$ is a groupoid for any $X$, then we say that $\mathscr{F}$ is a {\it category fibered in groupoids}.
    In this paper, we will only deal with categories fibered in groupoids $\mathscr{F}$ such that there exists a canonical category equivalence $\mathscr{F}(\coprod_{i\in I}X_i)\to \prod_{i\in I}\mathscr{F}(X_i)$ because such $\mathscr{F}$ behaves well due to \cite[Lemma 4.2.7]{olsson2016algebraic}, where $I$ is an arbitrary set.

    \begin{defn}
        Let $\mathscr{F}$ be a category fibered in groupoids.
        We say that $\mathscr{F}$ is a {\it stack} if for every object $X$ and every \'etale surjective morphism $Y\to X$, the functor
        \[
        \mathscr{F}(X)\to \mathscr{F}(Y\to X)
        \]
        is an equivalence of categories.
        Here, $\mathscr{F}(Y\to X)$ is the natural category of the descent data with respect to $Y\to X$ (cf. \cite[4.2.1]{olsson2016algebraic}). See also the equivalent definition \cite[Proposition 4.6.2]{olsson2016algebraic}.

        If $\mathscr{F}(X)$ is further a set for any $X$, then we say that $\mathscr{F}$ is an {\it \'etale sheaf}. 
    \end{defn}

\begin{defn}
Let $\mathscr{F}$ be an \'etale sheaf over the category of schemes over $\mathbb{C}$.    
We say that $\mathscr{F}$ is an {\it algebraic space} over $\mathbb{C}$ if 
\begin{enumerate}
    \item The diagonal morphism $\Delta\colon \mathscr{F}\to \mathscr{F}\times_{\mathbb{C}}\mathscr{F}$ is representable by schemes (cf. \cite[Definition 5.1.5]{olsson2016algebraic}). Equivalently, for any two morphisms $X\to \mathscr{F}$ and $Y\to \mathscr{F}$ from $\mathbb{C}$-schemes, $X\times_{\mathscr{F}}Y$ is a $\C$-scheme again (cf. \cite[Lemma 5.1.9]{olsson2016algebraic}). 
    \item there exists an \'etale surjective morphism $U\to \mathscr{F}$ from a $\mathbb{C}$-scheme. 
\end{enumerate}
\end{defn}
According to \cite[Proposition 5.2.5]{olsson2016algebraic}, an algebraic space $X$ is just a quotient of a $\mathbb{C}$-scheme $Y$ by an \'etale equivalence relation $R\hookrightarrow Y\times Y$.
Note that the category of algebraic spaces proper over $\mathbb{C}$ is equivalent to the category of compact Moishezon spaces \cite[Theorem 7.3]{artin1970algebraization}.

\begin{defn}
Let $\mathscr{F}$ be a stack.
We say that $\mathscr{F}$ is an {\it Artin stack} if 
\begin{enumerate}
    \item The diagonal morphism $\Delta\colon \mathscr{F}\to \mathscr{F}\times_{\mathbb{C}}\mathscr{F}$ is representable by algebraic spaces,
    \item there exists a smooth surjective morphism $\pi\colon U\to \mathscr{F}$ from a $\mathbb{C}$-scheme. 
\end{enumerate}
If we can further take $\pi$ as an \'etale morphism, then we say that $\mathscr{F}$ is a {\it Deligne-Mumford stack}.
\end{defn}

See \cite{olsson2016algebraic} for properties of algebraic spaces, Artin stacks and Deligne-Mumford stacks for more details. 

In this paper, we will only use quotient stacks, that is, $[X/G]$, where $X$ is a scheme of finite type over $\mathbb{C}$ and $G$ is a linear algebraic group acting on $X$.
It is well-known that $[X/G]$ is naturally an Artin stack (cf. \cite[Example 8.1.12]{olsson2016algebraic}).

\begin{defn}[\cite{Alper}]
Let $\mathfrak{X}$ be an Artin stack of finite type over $\mathbb{C}$. 
Let $\pi\colon \mathfrak{X}\to X$ be a morphism to an algebraic space.
We say that $\pi$ is the morphism of the {\it good moduli space} if
\begin{enumerate}
    \item the natural homomorphism $\mathcal{O}_X\to\pi_*\mathcal{O}_{\mathfrak{X}}$ is isomorphic, and 
    \item $\pi_*\colon\mathrm{QCoh}_{\mathfrak{X}}\to\mathrm{QCoh}_X$ is an exact functor, where $\mathrm{QCoh}_{\mathfrak{X}}$ is the Abelian category of quasi-coherent sheaves over $\mathfrak{X}$.
\end{enumerate}

For any open subset $\mathcal{U}\subset \mathfrak{X}$ of the form that $\mathcal{U}=\pi^{-1}(U)$ for some open subset $U\subset X$, we say that $\mathcal{U}$ is {\it saturated}.
\end{defn}

We note that the above $\pi$ is unique if it exists up to isomorphism and has the universal mapping property (cf. \cite[Theorem 6.6]{Alper}).

In \cite{keel1997quotients}, if $\mathfrak{X}$ is a separated Deligne-Mumford stack of fintie type over $\mathbb{C}$, then they show that there exists the coarse moduli space of $\mathfrak{X}$. We note that the coarse moduli space of $\mathfrak{X}$ is nothing but the good moduli space of $\mathfrak{X}$ in this case. It follows from the assertion of the Keel-Mori theorem \cite[Theorem 11.1.2]{olsson2016algebraic} and \cite[Proposition 11.3.4]{olsson2016algebraic}.

\begin{ex}
    Let $X=\mathrm{Spec}A$ and $G$ a reductive linear algebraic group over $\mathbb{C}$ acting on $X$, where $A$ is a $\mathbb{C}$-algebra of finite type.
    Then we can consider the categorical quotient $X/\!/G=\mathrm{Spec}A^G$, where $A^G$ is the invariant ring (see \cite[Theorem 1.1]{GIT}).
    Note that 
    \[
    [X/G]\to X/\!/G
    \]    
    is the morphism of the good moduli space (cf. \cite[Remark 13.3]{Alper}).
If an Artin stack $\mathfrak{Y}$ of finite type over $\mathbb{C}$ with affine diagonal admits a separated good moduli space $Y$ of finite type over $\C$, then \'etale locally around $Y$, the morphism of the good moduli space is isomorphic to a certain form of $[X/G]\to X/\!/G$ as above. See \cite[Theorem 4.12]{alper2020luna} for more details.

For any $\mathbb{C}$-algebra $A$, we set $\Theta_A:=[\mathbb{A}^1_A/\mathbb{G}_{m,A}]$.
If $A=\mathbb{C}$, then we write it as $\Theta$ simply.
Note that $\Theta$ consists of the image of $1\in\mathbb{A}^1$ and $0\in\mathbb{A}^1$.
We also denote them by $1\in \Theta$ and $0\in \Theta$, respectively.
\end{ex}

The equivalent conditions for the existence of good moduli spaces is given in \cite{AHLH}, which are called $\mathsf{S}$-completeness and $\Theta$-reductivity.
We will not discuss them in this paper (see \cite[Definition 2.3 and Theorem 2.4]{HM1}).

\subsection{Birational geometry}

First, we collect fundamental notions from birational geometry. For more details, see \cite{komo,birkar2010existence}.

Let $X$ be a quasi-projective normal variety with an effective $\mathbb{R}$-divisor $\Delta$ on $X$.
We say that $(X,\Delta)$ is a log pair if $K_X+\Delta$ is $\mathbb{R}$-Cartier.
In this case, for any prime divisor over $X$, we can set the following number
\[
A_{X,\Delta}(E):=\mathrm{coeff}_E(K_Y-\pi^*(K_X+\Delta))+1,
\]
where $\pi\colon Y\to X$ is a log resolution of $(X,\Delta)$ such that $E$ is a prime divisor on $Y$.
We call $A_{X,\Delta}(E)$ the log discrepancy of $E$ with respect to $(X,\Delta)$ and note that this number is independent of the choice of $\pi$.
We say that $(X,\Delta)$ is 
\begin{itemize}
    \item {\it Kawamata log terminal (klt)} if $A_{X,\Delta}(E)>0$ for any prime divisor $E$ over $X$,
    \item {\it log canonical (lc)} if $A_{X,\Delta}(E)\ge0$ for any prime divisor $E$ over $X$, and 
    \item {\it canonical} if $A_{X,\Delta}(E)\ge1$ for prime divisor $E$ over $X$ that is not a prime divisor on $X$. 
\end{itemize}

For the above classes, we can consider the log minimal model program (MMP). See \cite{birkar2010existence} for fundamental terminologies and more details.

If a quasi-projective normal variety $X$ has an effective $\R$-divisor $\Delta$ such that $(X,\Delta)$ is klt, then we say that $X$ is {\it of klt type}.
By \cite[Theorem 1]{braun2024reductive} and \cite[Corollary 1.2]{zhuang2024direct}, for any affine variety $V$ of klt type with a reductive group $G$ that acts on $V$, the categorical quotient $V/\!/G$ (cf. \cite[Definition 0.5]{GIT}) is again of klt type.

Moreover, let $Z$ be an equidimensional reduced projective scheme with only nodal singularities in codimension one. 
Then, we say that $Z$ is {\it deminormal} if $Z$ satisfies Serre's $S_2$-condition.
Suppose that $Z$ is deminormal.
Let $\nu\colon X\to Z$ be the normalization and $\mathfrak{Cond}$ the conductor scheme on $X$.
Let $\Delta_Z$ be an effective $\mathbb{R}$-divisor on $Z$ whose irreducible components are not contained in the singular locus of $Z$.
We say that $(Z,\Delta_Z)$ is {\it semi log canonical (slc)} if $(X,\nu_*^{-1}\Delta_Z+\mathfrak{Cond})$ is lc and $K_Z+\Delta_Z$ is $\mathbb{R}$-Cartier. See \cite[Section 5]{kollar2013singularities} for more details.

\subsection{K-stability of quasimaps and K-moduli}

K-stability of the log pairs is introduced by \cite{tian1997kahler} for Fano varieties and generalized by \cite{donaldson2002scalar,odaka2015testing}. 
See \cite{boucksom2017uniform,HM1} for the precise definition of the K-stability of the log pairs, which is not necessary in this paper.

\begin{defn}
    Let $C$ be a connected proper nodal curve with an effective $\mathbb{R}$-Cartier divisor $\Delta$.
    If $(C,\Delta)$ is slc and $K_C+\Delta$ is ample, we say that $(C,\Delta)$ is {\it stable}.
    If $-K_C-\Delta$ is ample, $C$ is smooth, and $2\mathrm{mult}_p(\Delta)\le \mathrm{deg}\Delta$ (resp. $2\mathrm{mult}_p(\Delta)< \mathrm{deg}(\Delta)$) for any closed point $p\in\mathbb{P}^1$, then we say that $(C,\Delta)$ is a K-semistable (resp. K-stable) log Fano pair. 
    If $(C,\Delta)$ with $-(K_C+\Delta)$ ample is not K-semistable, we say that $(C,\Delta)$ is a {\it K-unstable} pair.
    If $(C,\Delta)$ is klt and a K-unstable log Fano pair, then it is easy to check that there is a unique closed point $p$ such that $2\mathrm{mult}_p(\Delta)> \mathrm{deg}(\Delta)$ (see \cite{zhuang2021optimal} for more general statement).
    We say that such $p$ is the {\it destabilizing center}.
\end{defn}

We set the moduli stacks as follows.

\begin{defn}
    Set $a\in\mathbb{R}_{>0}$ and $d\in\mathbb{Z}_{>0}$.
    Set $\mathcal{M}^{\mathrm{Kss}}_{d,a}$ as the category fibered in groupoids such that for any scheme $S$, the objects of $\mathcal{M}^{\mathrm{Kss}}_{d,a}(S)$ are flat projective morphisms $\pi\colon\mathcal{C}\to S$ with a relative Cartier divisor $\mathcal{D}$ such that for any $s\in S$, $(\mathcal{C}_s,a\mathcal{D}_s)$ is a K-semistable log Fano pair and $\mathrm{deg}(\mathcal{D}_s)=d$. We will write this as $(\mathcal{C},a\mathcal{D})$.
    Isomorphisms $\phi\colon (\mathcal{C},a\mathcal{D})\to (\mathcal{C}',a\mathcal{D}')$ are defined as isomorphisms $\phi\colon \mathcal{C}\to\mathcal{C}'$ over $S$ such that $\phi^*\mathcal{D}'=\mathcal{D}$.

    Set $\mathcal{M}^{\mathrm{st}}_{d,a}$ as the category fibered in groupoids such that for any scheme $S$, the objects of $\mathcal{M}^{\mathrm{st}}_{d,a}(S)$ are flat projective morphisms $\pi\colon\mathcal{C}\to S$ with a relative Cartier divisor $\mathcal{D}$ such that for any $s\in S$, $(\mathcal{C}_s,a\mathcal{D}_s)$ is a stable pair, $p_a(\mathcal{C}_s)=0$, and $\mathrm{deg}(\mathcal{D}_s)=d$. We will write this as $(\mathcal{C},a\mathcal{D})$.
    Isomorphisms $\phi\colon (\mathcal{C},a\mathcal{D})\to (\mathcal{C}',a\mathcal{D}')$ are defined as isomorphisms $\phi\colon \mathcal{C}\to\mathcal{C}'$ over $S$ such that $\phi^*\mathcal{D}'=\mathcal{D}$.
\end{defn}

\begin{thm}
    $\mathcal{M}^{\mathrm{Kss}}_{d,a}$ is an Artin stack of finite type over $\mathbb{C}$ and admits a projective good moduli space ${M}^{\mathrm{Kps}}_{d,a}$.
    $\mathcal{M}^{\mathrm{st}}_{d,a}$ is a Deligne-Mumford stack of finite type over $\mathbb{C}$ and admits a projective coarse moduli space ${M}^{\mathrm{st}}_{d,a}$.
\end{thm}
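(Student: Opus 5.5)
The statement has two halves, and I would treat them separately: the K-moduli of log Fano pairs on curves, and the stable (KSBA-type) pairs of arithmetic genus zero.

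\emph{The K-semistable log Fano half.} The only log Fano curve is $\mathbb P^1$, and $-K_C-a\mathcal D$ ample means $ad<2$. The K-semistability condition $2a\,\mathrm{mult}_p\mathcal D\le ad$ is equivalent to $\mathrm{mult}_p\mathcal D\le d/2$ for every $p$. This is exactly the GIT semistability of binary forms of degree $d$ under $\SL_2$. I would proceed in three steps.
\begin{enumerate}
\item Identify $\mathcal M^{\mathrm{Kss}}_{d,a}$ with the quotient stack $[\mathbb P(H^0(\mathbb P^1,\mathcal O(d)))^{ss}/\PGL_2]$. A family $\mathcal C\to S$ with all fibres $\mathbb P^1$ is a Brauer--Severi scheme, étale locally trivial. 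Choosing a local frame gives a $\PGL_2$-torsor mapping to the semistable locus, which produces the equivalence of fibered categories.
\item Conclude that the stack is Artin of finite type, since it is a quotient of a finite-type scheme by a linear algebraic group.
\item Apply the example recalled above (Alper): $[X/G]\to X/\!/G$ is a good moduli space. Since the semistable locus is covered by affine invariant opens, this glues to $\mathbb P^{d,ss}\to \Proj\,\C[V_d]^{\SL_2}$. The target is projective by Mumford.
\end{enumerate}
Alternatively, one can simply quote the general K-moduli theorem (Alper--Blum--Halpern-Leistner--Xu, Liu--Xu--Zhuang), but the GIT route is self-contained in dimension one.

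\emph{The stable half.} Here I would identify $\mathcal M^{\mathrm{st}}_{d,a}$ with Hassett's moduli of weighted pointed stable rational curves, after the following reduction. Fibres are nodal genus-zero curves with $\mathcal D_s$ a Cartier divisor supported in the smooth locus, by the slc condition. Ampleness of $K+a\mathcal D$ and the slc bound $a\,\mathrm{mult}_p\le1$ then hold. Write $\mathcal D_s=\sum m_ip_i$. The data is almost that of Hassett's $\overline M_{0,\mathcal A}$ with weights, except that the points are unordered and can collide up to multiplicity $1/a$.

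To handle this, I would work with the stack of divisors rather than points. Locally on $S$, the relative Cartier divisor of degree $d$ becomes étale-locally a sum of sections after base change to the finite flat cover given by the Hilbert scheme of $d$ points. This expresses $\mathcal M^{\mathrm{st}}_{d,a}$ as a finite quotient of the Hassett stack $\overline{\mathcal M}_{0,(a^d)}$ by $S_d$. More precisely, it is a quotient stratified by collision patterns, or equivalently it is $[\overline{\mathcal M}_{0,(a,\dots,a)}/S_d]$ when $a\le 1$ allows the collisions. The Hassett stack is a smooth proper Deligne--Mumford stack with projective coarse space, and the $S_d$-quotient inherits both properties. Keel--Mori then gives the coarse space, which is projective because it is a finite quotient of a projective scheme.

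\emph{Main obstacle.} The hardest step is the precise matching of unordered Cartier divisors having multiplicities with Hassett's ordered weighted points in families. One must check that a relative Cartier divisor with $a\,\mathrm{mult}\le1$ in every fibre is étale locally a sum of sections whose coincidences are allowed by the weights. Where the collision of two points would violate the weight bound, the family must instead sprout a new component, and this is exactly what the stability condition forces.

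If this bookkeeping becomes messy, I would instead prove the properties directly. Boundedness follows since there are finitely many dual graphs and each component is a $\mathbb P^1$. The stack is DM because $\mathrm{Aut}$ is finite, using ampleness of $K+a\mathcal D$ together with at least three special points weighted appropriately. Properness follows from stable reduction for curves with divisor, a semistable reduction followed by contraction of components where $K+a\mathcal D$ fails to be ample. Keel--Mori then gives the coarse space, and projectivity follows by Kollár's ampleness lemma applied to $\det\pi_*\mathcal O(m(K+a\mathcal D))$.
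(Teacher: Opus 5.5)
\textbf{The K-semistable half.} This half is correct, and it takes a more elementary route than the paper. The paper proves nothing here. It quotes \cite{kollar-moduli} for $\mathcal{M}^{\mathrm{st}}_{d,a}$, the general K-moduli theorem \cite{xu2025k} for $\mathcal{M}^{\mathrm{Kss}}_{d,a}$ with $a\in\mathbb{Q}$, and Zhou's wall crossing \cite{zhou2023wall} (via \cite[Lemma 3.4]{HM1}) to reach irrational $a$.

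Your key observation is that on $\mathbb{P}^1$ the condition $2a\,\mathrm{mult}_p\mathcal{D}\le ad$ is just $\mathrm{mult}_p\mathcal{D}\le d/2$, which does not depend on $a\in(0,2/d)$; the stack is empty for $ad\ge 2$. This gives $\mathcal{M}^{\mathrm{Kss}}_{d,a}\simeq[\mathbb{P}(H^0(\mathcal{O}(d)))^{ss}/\PGL_2]$ for every real $a$ at once, with good moduli space the classical projective quotient of binary $d$-ics. It therefore makes the irrational-coefficient reduction unnecessary, and it is consistent with how the paper later identifies $\mathscr{D}^*$.

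\textbf{The stable half: the main route fails.} For $a\le 1/2$, which is exactly when collisions are allowed, $\mathcal{M}^{\mathrm{st}}_{d,a}$ is \emph{not} $[\overline{\mathcal{M}}_{0,(a^d)}/S_d]$. The step you flag as the main obstacle is false: a relative Cartier divisor with non-reduced fibres is not \'etale-locally a sum of sections. For example, $\{x^2=t\}\subset\mathbb{P}^1\times\mathbb{A}^1_t$ splits only after the ramified base change $t=s^2$.

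Correspondingly, at a point $(C;p_1,\dots,p_d)$ with $p_i=p_j$, the quotient stack has the automorphism $(\mathrm{id}_C,(ij))$. This automorphism dies under $(C;p_\bullet)\mapsto (C,\sum_k p_k)$. So the comparison morphism is a non-representable coarsening, like $[C^d/S_d]\to\mathrm{Sym}^dC$, not an equivalence.

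\textbf{What the comparison still gives.} The morphism is surjective, bijective on geometric points, and an isomorphism over the reduced-divisor locus, and its source is proper. Hence it does give universal closedness of $\mathcal{M}^{\mathrm{st}}_{d,a}$. Once the coarse space is known to exist and be normal, it also gives $M^{\mathrm{st}}_{d,a}\cong\overline{M}_{0,(a^d)}/S_d$, and hence projectivity. It cannot show that $\mathcal{M}^{\mathrm{st}}_{d,a}$ is algebraic, Deligne--Mumford, or separated. Those properties need an independent argument, for instance:
\begin{enumerate}
\item realize $\mathcal{M}^{\mathrm{st}}_{d,a}$ as an open substack of the relative Hilbert scheme of length-$d$ subschemes of the universal curve over the stack of prestable genus-zero curves (being Cartier and disjoint from the nodes, $a\,\mathrm{mult}\le 1$, and ampleness of $K+a\mathcal{D}$ are all open conditions);
\item use finiteness of $\mathrm{Aut}(C,D)$ in characteristic zero to get the Deligne--Mumford property;
\item use the uniqueness part of stable reduction to get separatedness.
\end{enumerate}

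\textbf{The fallback.} Your fallback is essentially this KSBA-type construction, which is what the paper invokes through \cite{kollar-moduli}, so the proof should rest on it rather than on the Hassett identification. Two points need filling in. First, algebraicity needs a Hilbert-scheme presentation, which your sketch omits. Second, $\det\pi_*\mathcal{O}(m(K+a\mathcal{D}))$ only makes sense for rational $a$. You should first replace $a$ by a rational number in the same chamber. The walls are $1/m$ and $(2-n)/k$ with $m,n,k$ integers, so they are rational and this replacement does not change the stack.
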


For the above theorem, more general statements are shown in \cite{kollar-moduli} for $\mathcal{M}^{\mathrm{st}}_{d,a}$. For $\mathcal{M}^{\mathrm{Kss}}_{d,a}$ where $a\in \mathbb{Q}$, the assertion is shown in  \cite{xu2025k} (see \cite[Theorem 1.3]{liu2022finite}).
For the case $a\not\in  \mathbb{Q}$, we can reduce to the rational coefficient case using the result of \cite{zhou2023wall} (see \cite[Lemma 3.4]{HM1}).
See Theorem \ref{thm:HM1:construction} for the statement generalized to quasimaps later.

On the other hand, we explain the K-stability of quasimaps, which is one of the key tools in this paper.
First of all, we explain the definition of quasimaps to $\mathbb{P}^1$.
\begin{defn}
Let $C$ be a proper connected nodal curve with an effective $\mathbb{R}$-divisor $\Delta$ such that $(C,\Delta)$ is slc.
We say that $q\colon C\to [\mathbb{A}^2/\mathbb{G}_{m}]$ is a {\it quasimap} of degree $d$ if all generic points of irreducible components of $C$ are mapped to $\mathbb{P}^1\subset [\mathbb{A}^2/\mathbb{G}_{m}]$ and $q$ corresponds to a homomorphism $\mathcal{O}_{C}^{\oplus2}\to \mathscr{L}$, where $\mathscr{L}$ is a line bundle of degree $d$. 
We say that $\mathscr{L}$ is the line bundle associated with $q$.
For any general $e\in\mathbb{C}^2$, let $f$ be the corresponding section, which is defined by the homomorphism $\mathcal{O}_{C}^{\oplus2}\to \mathscr{L}$.
Note that such $f$ is a non-zero divisor and we can define $\mathrm{div}(f)$ as a Cartier divisor.
We say that $q$ with weight $w\in\mathbb{R}_{>0}$ is {\it slc} (resp. {\it klt}) if for any closed point $p\in C$,  $(C,\Delta+w\mathrm{div}(f))$ is slc (resp. klt) around $p$ for any general $e\in \mathbb{C}^2$.
\end{defn}

We note that the above quasimaps can be regarded as a linear system $[f_0: f_1]$ defining a rational map $C\dashrightarrow \mathbb{P}^1$, where $f_0,f_1\in H^0(C,\mathscr{L})$ is defined as the images of the canonical free basis via the canonical homomorphism $\mathcal{O}_{C}^{\oplus2}\to \mathscr{L}$.
We say that {\it the quasimap $q$ is defined by $[f_0: f_1]$} in this case.

\begin{defn}[K-stability]
Put $q$ as the above definition.
    If $w=0$, then we regard $q$ as the log pair $(C,\Delta)$.
We say that $q$ with weight $w\in\mathbb{R}_{\ge0}$ is {\it stable} (resp. {\it a K-semistable log Fano quasimap}) if $K_C+\Delta+w\mathscr{L}$ is ample and $q$ is slc (resp. $C\cong\mathbb{P}^1$, $-(K_C+\Delta+w\mathscr{L})$ is ample, and for any $p\in C$, there exists an element $e\in\mathbb{A}^2$ such that $2\mathrm{mult}_p(\Delta+w\mathrm{div}(f))\le \mathrm{deg}(\Delta+w\mathscr{L})$, where $f$ is the corresponding section to $e$).
We say that a K-semistable quasimap $q$ is {\it K-stable} if $2\mathrm{mult}_p(\Delta+w\mathrm{div}(f))<\mathrm{deg}(\Delta+w\mathscr{L})$ and $C$ is smooth.
Furthermore, we say that $q$ is a {\it boundary polarized Calabi-Yau (CY) quasimap} with weight $w$ if $q$ is slc, $K_C+\Delta+w\mathscr{L}\sim_{\mathbb{R}}0$ and $w\mathscr{L}+\Delta$ is ample.
\end{defn}

\begin{rem}
    In \cite{HM1}, we consider boundary polarized CY quasimaps with weight $(w_1,w_2)$.
    In this paper, we only need the case where $w_1=0$ and therefore we only write $w_2$ in the above definition.
\end{rem}

We put the following lemma, which is useful to detect the K-moduli wall crossing later.

\begin{lem}\label{lem:useful-criterion}
Let $q\colon (\mathbb{P}^1,aD)\to [\mathbb{A}^2/\mathbb{G}_{m}]$ be a quasimap that is klt for some $a\in (0,1)$ with weight $0<w<1$.
Let $e_1,e_2\in\mathbb{C}^2$ such that $e_1$ and $e_2$ generate $\mathbb{C}^2$ and $f_1$ and $f_2$ the corresponding section to $e_1$ and $e_2$, respectively.
Suppose that $-(K_{\mathbb{P}^1}+(a+w)D)$ is ample. Then the following hold.
\begin{enumerate}
    \item If $\mathrm{div}(f_1)=D$ and $f_2\not\equiv0$, then $q$ is K-semistable if and only if $2\min\{\mathrm{ord}_p(aD+w\mathrm{div}(f_2)),(a+w)\mathrm{ord}_p(D)\}\le(a+w)\mathrm{deg}(D)$ for any closed point $p\in\mathbb{P}^1$.
    \item If $f_1\equiv0$ and $\mathrm{div}(f_2)\sim_{\mathbb{Q}}D$, then $q$ is K-semistable if and only if $2\mathrm{ord}_p(aD+w\mathrm{div}(f_2))\le(a+w)\mathrm{deg}(D)$ for any closed point $p\in\mathbb{P}^1$.
    \label{eq-eq-2}
\end{enumerate}
\end{lem}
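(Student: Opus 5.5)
The plan is to unwind the definition of K-semistability for log Fano quasimaps directly; no deeper input is needed. Here $C=\mathbb{P}^1$ and $\Delta=aD$. By hypothesis $-(K_{\mathbb{P}^1}+(a+w)D)$ is ample. In both cases the associated line bundle $\mathscr{L}$ satisfies $\deg\mathscr{L}=\deg D$: in (1) because $\mathrm{div}(f_1)=D$, and in (2) because $\mathrm{div}(f_2)\sim_{\mathbb{Q}}D$. So $w\mathscr{L}\sim_{\mathbb{Q}} wD$, the ampleness condition in the definition is exactly the assumed one, and
\[
\deg(\Delta+w\mathscr{L})=(a+w)\deg D .
\]
K-semistability is therefore equivalent to the following: for every closed point $p$ there is some $e\in\mathbb{A}^2$ with $f_e\not\equiv0$ (so that $\mathrm{div}(f_e)$ makes sense) such that
\[
2\bigl(a\,\mathrm{ord}_p D+w\,\mathrm{ord}_p(f_e)\bigr)\le (a+w)\deg D .
\]
Equivalently, the condition is $2\bigl(a\,\mathrm{ord}_pD+w\,m_p\bigr)\le(a+w)\deg D$, where $m_p:=\min_{e}\mathrm{ord}_p(f_e)$ is taken over all $e$ with $f_e\not\equiv 0$.

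The key step is to compute $m_p$. Write $e=\lambda e_1+\mu e_2$, so that $f_e=\lambda f_1+\mu f_2$.

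\emph{Case (1).} Both $f_1$ and $f_2$ are nonzero sections. The ultrametric inequality $\mathrm{ord}_p(\lambda f_1+\mu f_2)\ge\min\{\mathrm{ord}_pf_1,\mathrm{ord}_pf_2\}$ holds for every $(\lambda,\mu)$, and the bound is attained at $e=e_1$ or $e=e_2$. Hence
\[
m_p=\min\{\mathrm{ord}_pD,\ \mathrm{ord}_p f_2\}.
\]
Substituting gives
\[
a\,\mathrm{ord}_pD+w\,m_p=\min\{\mathrm{ord}_p(aD+w\,\mathrm{div}(f_2)),\ (a+w)\,\mathrm{ord}_pD\},
\]
which is the stated inequality.

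\emph{Case (2).} Since $f_1\equiv0$, every nonzero $f_e$ is a nonzero scalar multiple $\mu f_2$, so $m_p=\mathrm{ord}_pf_2$. The condition becomes $2\,\mathrm{ord}_p(aD+w\,\mathrm{div}(f_2))\le(a+w)\deg D$.

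I expect no real obstacle here. The only points needing care are the bookkeeping that $\mathrm{mult}_p$ in the definition means the coefficient at $p$, that $e$ must give a nonzero section, and the identification of $\deg\mathscr{L}$ with $\deg D$. The klt hypothesis only guarantees that we are in the setting of the definition.
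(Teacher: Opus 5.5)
Your proof is correct and follows essentially the same route as the paper. You unwind the definition, use $\deg\mathscr{L}=\deg D$, and bound $\mathrm{ord}_p(b_1f_1+b_2f_2)$ from below by $\min\{\mathrm{ord}_pf_1,\mathrm{ord}_pf_2\}$, with the minimum attained at $e_1$ or $e_2$. The paper argues the converse through a witness $e_3$ rather than your minimum $m_p$, which is the same argument (and yours states the ultrametric step a little more precisely).
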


\begin{proof}
 In \eqref{eq-eq-2}, we note that $f_2\not\equiv0$.    
 By definition, the condition that $2\min\{\mathrm{ord}_p(aD+w\mathrm{div}(f_2)),(a+w)\mathrm{ord}_p(D)\}\le(a+w)\mathrm{deg}(D)$ for any closed point $p\in\mathbb{P}^1$ implies the K-semistability of $q$.
 Therefore, it suffices to deal with the converse direction.
 Suppose that $q$ is K-semistable and then for any closed point $p\in \mathbb{P}^1$, there exists a section $f_3$ that corresponds to general $e_3\in \mathbb{C}^2$ and satisfies that $2\mathrm{ord}_p(aD+w\mathrm{div}(f_3))\le(a+w)\mathrm{deg}(D)$.
 In the case \eqref{eq-eq-2}, $\mathrm{div}(f_3)=\mathrm{div}(f_2)$ and we obtain the assertion.
 In the other case, $e_3=b_1e_1+b_2e_2$ for some $b_1,b_2\in\mathbb{C}$. 
 Therefore, $f_3=b_1f_1+b_2f_2$.
Since $2\mathrm{ord}_p(aD+w\mathrm{div}(f_3))\le(a+w)\mathrm{deg}(D)$, $\mathrm{ord}_p(f_3)=\mathrm{ord}_p(f_1)$ or $\mathrm{ord}_p(f_3)=\mathrm{ord}_p(f_2)$
implies that $$2\min\{\mathrm{ord}_p(aD+w\mathrm{div}(f_2)),(a+w)\mathrm{ord}_p(D)\}\le(a+w)\mathrm{deg}(D).$$ 
We complete the proof.
\end{proof}

\begin{defn}
% \Yota{This is a general remark: the definition environment seems to be a bit long. Could we remove the relevant properties or remarks outside?}
Let $\pi\colon\mathcal{C}\to S$ be a flat projective morphism with each geometric fiber a connected nodal curve.    
Let $\Delta$ be an effective relative Mumford $\mathbb{R}$-divisor $\Delta$ (i.e. an $\R_{\ge0}$-combination of relative Cartier divisors) such that for any $s\in S$, $(\mathcal{C}_s,\Delta_s)$ is slc.
Let $w\in\mathbb{R}_{>0}$.
We say that $q\colon (\mathcal{C},\Delta)\to [\mathbb{A}^2_S/\mathbb{G}_{m,S}]$ is 
\begin{enumerate}
    \item a {\it locally stable family of log quasimaps} of degree $d$ if $q_s\colon \mathcal{C}_s\to[\mathbb{A}^2_{\kappa(s)}/\mathbb{G}_{m,\kappa(s)}]$ is a quasimap of degree $d$ for any $s\in S$;
    \item a {\it stable family of quasimaps} (resp. {\it family of K-semistable log Fano quasimaps}) if $q_s$ is a stable quasimap (resp. a K-semistable log Fano quasimap);
    \item a {\it family of boundary polarized CY quasimaps with weight $w$} if $q_s$ is a boundary polarized CY quasimap for any $s\in S$ such that $w\mathscr{L}+\Delta$ is relatively ample.
    Here, we set $\mathscr{L}$ as the associated line bundle such that $\mathcal{O}_{\mathcal{C}}^{\oplus 2}\to \mathscr{L}$ corresponds to the morphism $q$.
\end{enumerate}

%We note that $q$ is a family of boundary polarized CY quasimaps with weight $w_1,w_2$ if and only if $q$ is a family of boundary polarized CY quasimaps with weight $w_1',w_2'>0$ such that $w'_1+w'_2=w$.
%Therefore, we simply call $q$ a family of boundary polarized CY quasimaps with weight $w$.

In the above cases, if $w>1$, then we see that $q$ always factors through $\mathbb{P}^1_S\subset [\mathbb{A}^2_S/\mathbb{G}_{m,S}]$.
\end{defn}

Next, we will deal with the following moduli stacks.

\begin{defn}
Set $d_1\in\mathbb{Z}_{>0},d_2\in\mathbb{Z}_{>0}$ and $a,w\in\mathbb{R}_{>0}$.
Set $\mathcal{M}^{\mathrm{Kssqmaps}}_{d_1,a,d_2,w}$ as the category fibered in groupoids such that for any scheme $S$, the objects of $\mathcal{M}^{\mathrm{Kssqmaps}}_{d_1,a,d_2,w}(S)$ are locally stable families of log quasimaps
$q\colon(\mathcal{C},a\mathcal{D})\to [\mathbb{A}^2_S/\mathbb{G}_{m,S}]$ with a relative Cartier divisor $\mathcal{D}$ and weight $w$ and of degree $d_1$ such that for any $s\in S$, $q_s$ is a K-semistable log Fano quasimap and $\mathrm{deg}(\mathcal{D}_s)=d_2$. 
    Isomorphisms $\phi\colon q\to (q'\colon(\mathcal{C}',a\mathcal{D}')\to [\mathbb{A}^2_S/\mathbb{G}_{m,S}])$ are defined as isomorphisms $\phi\colon \mathcal{C}\to\mathcal{C}'$ over $S$ such that $q'\circ\phi=q$ and $\phi^*\mathcal{D}'=\mathcal{D}$.

Set $\mathcal{M}^{\mathrm{stqmaps}}_{d_1,a,d_2,w}$ as the category fibered in groupoids such that for any scheme $S$, the objects of $\mathcal{M}^{\mathrm{stqmaps}}_{d_1,a,d_2,w}(S)$ are locally stable families of log quasimaps $q\colon(\mathcal{C},a\mathcal{D})\to [\mathbb{A}^2_S/\mathbb{G}_{m,S}]$ with a relative Cartier divisor $\mathcal{D}$ and weight $w$ and of degree $d_1$ such that for any $s\in S$, $p_a(\mathcal{C}_s)=0$, $q_s$ is a stable quasimap and $\mathrm{deg}(\mathcal{D}_s)=d_2$. 
    Isomorphisms $\phi\colon q\to (q'\colon(\mathcal{C}',a\mathcal{D}')\to [\mathbb{A}^2_S/\mathbb{G}_{m,S}])$ are defined as isomorphisms $\phi\colon \mathcal{C}\to\mathcal{C}'$ over $S$ such that $q'\circ\phi=q$ and $\phi^*\mathcal{D}'=\mathcal{D}$.

    Set $\mathcal{M}^{\mathrm{bpcyqmaps}}_{d_1,a,d_2,w}$ as the category fibered in groupoids such that for any scheme $S$, the objects of $\mathcal{M}^{\mathrm{bpcyqmaps}}_{d_1,a,d_2,w}(S)$ are locally stable families of log quasimaps $q\colon(\mathcal{C},a\mathcal{D})\to [\mathbb{A}^2_S/\mathbb{G}_{m,S}]$ with a relative Cartier divisor $\mathcal{D}$ and weight $w$ and of degree $d_1$ such that for any $s\in S$, $q_s$ is a boundary polarized CY quasimap and $\mathrm{deg}(\mathcal{D}_s)=d_2$. 
    Isomorphisms $\phi\colon q\to (q'\colon(\mathcal{C}',a\mathcal{D}')\to [\mathbb{A}^2_S/\mathbb{G}_{m,S}])$ are defined as isomorphisms $\phi\colon \mathcal{C}\to\mathcal{C}'$ over $S$ such that $q'\circ\phi=q$ and $\phi^*\mathcal{D}'=\mathcal{D}$.
\end{defn}

\begin{thm}[\cite{HM1}]\label{thm:HM1:construction}
    Set $d_1\in\mathbb{Z}_{>0},d_2\in\mathbb{Z}_{>0}$ and $a,w\in\mathbb{R}_{>0}$.
    Then $\mathcal{M}^{\mathrm{Kssqmaps}}_{d_1,a,d_2,w}$ and $\mathcal{M}^{\mathrm{bpcyqmaps}}_{d_1,a,d_2,w}$ are Artin stacks of finite type over $\mathbb{C}$ with normal projective good moduli spaces $M^{\mathrm{Kpsqmaps}}_{d_1,a,d_2,w}$ and $M^{\mathrm{bpcyqmaps}}_{d_1,a,d_2,w}$, respectively.
    Furthermore, $\mathcal{M}^{\mathrm{stqmaps}}_{d_1,a,d_2,w}$ is a proper Deligne-Mumford stack of finite type over $\mathbb{C}$ with a normal projective coarse moduli space $M^{\mathrm{stqmaps}}_{d_1,a,d_2,w}$.
\end{thm}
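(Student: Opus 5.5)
This statement is cited from \cite{HM1}, so the plan is to reduce it to the general K-moduli machinery already available for log Fano pairs and for stable pairs, by encoding a quasimap into an auxiliary pair.

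\emph{Encoding.} Given a locally stable family of log quasimaps $q\colon(\mathcal C,a\mathcal D)\to[\mathbb A^2_S/\mathbb G_{m,S}]$ of degree $d_1$, consider the associated pencil $\mathcal O^{\oplus2}\to\mathscr L$. I would pass to the universal pencil member: over $S\times\mathbb P^1_{[e]}$, the divisor $\mathrm{div}(f_e)$ is a relative Cartier divisor away from the base locus. This lets me view $q$ as a $\mathbb P^1$-parametrized family of pairs $(\mathcal C,a\mathcal D+w\,\mathrm{div}(f_e))$. The definitions of slc, stable, K-semistable and boundary polarized CY quasimaps are all phrased via general $e$, so they translate into conditions on these pairs. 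For the Fano case on $\mathbb P^1$, the K-semistability condition is explicit, namely a multiplicity inequality. The stack $\mathcal M^{\mathrm{Kssqmaps}}_{d_1,a,d_2,w}$ is then the locally closed substack of $[\mathrm{Hom}\text{-}\mathrm{data}/\PGL_2]$ given by pairs $(D,(f_0,f_1))$ with $D\in|\mathcal O(d_2)|$ and $f_i\in H^0(\mathcal O(d_1))$, modulo scaling and $\PGL_2$. Openness of K-semistability follows from lower semicontinuity of the multiplicity condition of Lemma \ref{lem:useful-criterion} type. This gives an Artin stack of finite type.

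\emph{Good moduli spaces.} For the K-semistable log Fano case I would verify $\Theta$-reductivity and $\mathsf S$-completeness via \cite{AHLH}. Equivalently, since everything lives on $\mathbb P^1$, I would identify the K-semistable locus with a GIT-semistable locus for $\SL_2\times\mathbb G_m$ acting on $\mathbb P(H^0(\mathcal O(d_2)))\times(H^0(\mathcal O(d_1))^{\oplus2}\setminus0)/\mathbb G_m$ with linearization weighted by $(a,w)$. The Hilbert–Mumford criterion for one-parameter subgroups fixing a point $p$ reproduces exactly $2\,\mathrm{mult}_p(aD+w\,\mathrm{div}(f))\le\deg$ for the best $f$. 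The good moduli space is then a projective GIT quotient. It is normal because the parameter space is smooth and invariants of a normal ring are normal. For irrational $a,w$, I would reduce to rational ones by finiteness of walls, as in \cite[Lemma 3.4]{HM1}.

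\emph{Stable and CY cases.} For $\mathcal M^{\mathrm{stqmaps}}$, I would embed into a Kollár-type moduli of stable pairs. Take $(\mathcal C,a\mathcal D+w\,\mathrm{div}(f_{e}))$ over the space of general $e$, or more cleanly Abramovich–Vistoli-style maps to $[\mathbb A^2/\mathbb G_m]$. Finite automorphisms follow from ampleness of $K_C+\Delta+w\mathscr L$. Properness would come from stable reduction: run the relative MMP for $K+\Delta+w\mathscr L$ after semistable reduction, with slc limits unique by \cite{kollar-moduli}. Projectivity of the coarse space follows from Kollár's ampleness lemma applied to $\pi_*\mathcal O(m(K+\Delta+w\mathscr L))$. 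The boundary polarized CY stack is the hardest piece, since its good moduli space is not generally known to exist for pairs. Here I would use that curves are one-dimensional: the CY condition forces $\deg(\Delta+w\mathscr L)=2$ on each component in an explicit combinatorial way. I would then construct the good moduli space as the limit of the K-moduli $\mathcal M^{\mathrm{Kssqmaps}}$ as the weight approaches the CY threshold. Concretely, I would show the S-equivalence classes coincide with polystable objects, with chains collapsing to $\mathbb P^1$ with two points of half weight, and prove existence via \cite{AHLH}.

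\emph{Main obstacle.} I expect the main obstacle to be the bpcy case, specifically $\mathsf S$-completeness and projectivity. My first attempt would be to realize it as a GIT quotient at the boundary weight, where semistable equals CY-semistable on the normalized model.
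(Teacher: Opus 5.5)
The paper gives no argument of its own here; it imports the statement from \cite[Theorems 3.6, 5.6, 5.9, 4.6 and Corollary 4.17]{HM1}. Your K-semistable part is sound as a self-contained proof. On $\mathbb P^1$, the condition $2\,\mathrm{ord}_p(aD+w\,\mathrm{Bs}(q))\le ad_2+wd_1$ is exactly the Hilbert--Mumford criterion for the linearization $\mathcal O(a,w)$ on $\mathbb P^{d_2}\times\mathbb P^{2d_1+1}$, since $f_0,f_1$ enter only through $\min\{\mathrm{ord}_pf_0,\mathrm{ord}_pf_1\}$. This is consistent with the paper's identification of Miranda semistability with K-semistability. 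Your stable case is a plausible outline, but properness needs more than uniqueness of slc limits of pairs. You must extend $\mathscr L$ and both sections over the limit so that they do not both vanish on any component, and so that a general member of the pencil still gives an slc pair.

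The genuine gap is the boundary polarized CY stack: you give no argument there, and the routes you suggest fail as stated.

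\emph{The combinatorics.} On a component $C_i$ with $n_i$ nodes, $K_C+\Delta+w\mathscr L\sim_{\mathbb R}0$ gives $\deg(\Delta+w\mathscr L)|_{C_i}=2-n_i$, so ampleness forces $n_i\le 1$. The curves are therefore $\mathbb P^1$ or two copies of $\mathbb P^1$ meeting at one node with degree $1$ on each; longer chains do not occur.

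\emph{GIT at the boundary weight.} This cannot produce the stack. GIT semistability depends only on the ray of $(a,w)$, so at the CY weight you get back $\mathcal M^{\mathrm{Kssqmaps}}_{d_1,\alpha a,d_2,\alpha w}$ with $\alpha<1$. That is only the $\mathbb P^1$-locus of $\mathcal M^{\mathrm{bpcyqmaps}}_{d_1,a,d_2,w}$, and this open substack is not saturated. Blow up $(p,0)\in\mathbb P^1\times\mathbb A^1$ for a point $p$ with $\mathrm{ord}_p(aD+w\,\mathrm{Bs}(q))=1$, and twist $\mathscr L$ by a multiple of the exceptional curve so the sections do not vanish on it. This gives a $\Theta$-family degenerating a $\mathbb P^1$-object to a two-component one. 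So the polystable representatives of strictly semistable classes are the nodal objects, which is the reverse of your ``chains collapsing to $\mathbb P^1$''. Consequently a good moduli space for the whole stack does not follow from the GIT quotient of the open part.

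\emph{What is missing.} You need a proof of $\Theta$-reductivity and $\mathsf S$-completeness, plus existence of limits, for families with nodal fibres, so that \cite{AHLH} applies; you also need projectivity. The paper takes this from \cite[Theorem 4.6 and Corollary 4.17]{HM1}, with projectivity resting on the ampleness of the Hodge line bundle \cite[Theorem 4.16]{HM1}. Alternatively, once existence is known, you could get projectivity and normality another way. Show that the induced map from the GIT quotient to this good moduli space (which is normal, since the stack is smooth) is bijective and birational, hence an isomorphism.
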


\begin{proof}
    This follows from \cite[Theorems 3.6, 5.6, 5.9, 4.6, and Corollary 4.17]{HM1}.
\end{proof}

In \cite{HM1}, the authors studied the wall-crossing phenomena for these moduli stacks and moduli spaces.
We list the following theorems.

\begin{thm}\label{thm:wall-crossing-recall}
Set $d_1\in\mathbb{Z}_{>0},d_2\in\mathbb{Z}_{>0}$ and $a,w\in\mathbb{Q}_{>0}$.
Suppose that $\frac{2}{ad_2+wd_1}<\frac{1}{w}< 1$.
There exist finitely many rational numbers $\frac{2}{ad_2+wd_1}=\alpha_1<\alpha_2<\ldots<\alpha_l=\frac{1}{w}$ that satisfy the following.
\begin{enumerate}
    \item For any $\alpha,\alpha'\in (0,\alpha_1)$, we see that $\mathcal{M}^{\mathrm{Kssqmaps}}_{d_1,\alpha a,d_2,\alpha w}\cong \mathcal{M}^{\mathrm{Kssqmaps}}_{d_1,\alpha' a,d_2,\alpha' w}$.
    Furthermore, there exists an open immersion $\mathcal{M}^{\mathrm{Kssqmaps}}_{d_1,\alpha a,d_2,\alpha w}\hookrightarrow\mathcal{M}^{\mathrm{bpcyqmaps}}_{d_1,\alpha_1 a,d_2,\alpha_1 w}$, which induces an isomorphism $M^{\mathrm{Kpsqmaps}}_{d_1,\alpha a,d_2,\alpha w}\cong M^{\mathrm{bpcyqmaps}}_{d_1,\alpha_1 a,d_2,\alpha_1 w}$,
    \item For any $\alpha,\alpha'\in (\alpha_i,\alpha_{i+1}]$ with $1\le i\le l$, we see that $\mathcal{M}^{\mathrm{stqmaps}}_{d_1,\alpha a,d_2,\alpha w}\cong \mathcal{M}^{\mathrm{stqmaps}}_{d_1,\alpha' a,d_2,\alpha' w}$.
    Furthermore, there exist the canonical morphisms $\mathcal{M}^{\mathrm{stqmaps}}_{d_1,\alpha a,d_2,\alpha w}\to\mathcal{M}^{\mathrm{bpcyqmaps}}_{d_1,\alpha_1 a,d_2,\alpha_1 w}$ for any $\alpha\in (\alpha_1,\alpha_2]$ and $\mathcal{M}^{\mathrm{stqmaps}}_{d_1,\alpha a,d_2,\alpha w}\to\mathcal{M}^{\mathrm{stqmaps}}_{d_1,\alpha_i a,d_2,\alpha_i w}$ for any $\alpha\in (\alpha_i,\alpha_{i+1}]$ with $1<i\le l$.
Here, we set $\alpha_{l+1}:=1$.
\end{enumerate}
\end{thm}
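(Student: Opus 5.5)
This statement is a recollection of the wall-crossing theory developed in the companion paper \cite{HM1}, so my plan is to derive it from the general results there together with two standard K-moduli inputs: finiteness of walls and openness of K-semistability. I would first rescale. Put $a_\alpha=\alpha a$ and $w_\alpha=\alpha w$. Since $C\cong\mathbb P^1$ and $\deg(a_\alpha D+w_\alpha\mathscr L)=\alpha(ad_2+wd_1)$, the log Fano condition $-(K_C+a_\alpha D+w_\alpha\mathscr L)$ ample is exactly $\alpha<\alpha_1=\frac{2}{ad_2+wd_1}$. At $\alpha=\alpha_1$ the degree equals $2$, which is the boundary polarized CY condition. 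For $\alpha>\alpha_1$ the divisor $K_C+\dots$ is ample, so we are in the stable (KSBA-type) regime. The upper bound $\alpha\le 1/w$ keeps the quasimap weight at most $1$, so that the slc condition on $w\,\mathrm{div}(f)$ is meaningful.

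For (1), I would observe that on $\mathbb P^1$ the K-semistability condition of Lemma \ref{lem:useful-criterion} reads $2\,\mathrm{mult}_p(\Delta+w\,\mathrm{div}(f))\le\deg(\Delta+w\mathscr L)$. This inequality is homogeneous in the scaling $\alpha$, so the set of K-semistable log Fano quasimaps, and hence the stack $\mathcal{M}^{\mathrm{Kssqmaps}}$, is independent of $\alpha\in(0,\alpha_1)$. The same homogeneous inequality at $\alpha=\alpha_1$ is exactly the lc condition for a CY pair on $\mathbb P^1$, namely multiplicity at most $1$ at every point. This gives an open immersion into $\mathcal{M}^{\mathrm{bpcyqmaps}}$ whose image consists of the objects with $C$ smooth. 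To get the isomorphism of good moduli spaces, I would show that every bpcy quasimap with nodal $C$ is S-equivalent to one on $\mathbb P^1$, via the standard degeneration to a $\mathbb G_m$-fixed point. Then both good moduli spaces are normal and projective (Theorem \ref{thm:HM1:construction}), and the induced map is bijective and birational, hence an isomorphism by Zariski's main theorem.

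For (2), I would define the $\alpha_i$ as the finitely many values in $(\alpha_1,1/w]$ at which some stable object stops being slc or stops having ample log canonical class. Only finitely many coefficient vectors arise, namely multiplicities of points of $D$ together with degrees on the components of the nodal curve. These are bounded because $d_1,d_2$ are fixed and each component must carry positive degree. So the walls are the rational solutions of finitely many linear equations in $\alpha$. Between consecutive walls the stability conditions do not change, which gives the isomorphisms of stacks. The canonical morphisms to the wall $\alpha_i$ come from taking the relative log canonical model over the base. Decreasing the coefficients to $\alpha_i$ keeps $K+\Delta$ nef, and its ample model contracts the components on which the degree becomes $0$. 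By invariance of plurigenera this contraction commutes with base change, and it defines the morphisms of stacks. For $i=1$ the target is the CY model.

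I expect the main obstacle to be the functoriality of the contraction in families: one has to show that the relative ample model commutes with arbitrary base change and lands in the correct stack, including the quasimap line bundle. I would first handle this by explicit reasoning on genus-$0$ nodal curves, where the contracted components are rational tails or bridges and the contraction is just stabilization. Where the fine details are needed, I would defer to \cite[Section 5]{HM1}.
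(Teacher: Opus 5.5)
The paper gives no independent argument for this statement. Both parts are quoted from \cite{HM1}: Remark~3.5 and Theorem~4.20 for (1), and Theorems~5.10 and~5.11 for (2).

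Your treatment of (1) is a reasonable self-contained route. It uses homogeneity of the K-semistability inequality in $\alpha$, identifies that inequality at $\alpha_1$ with the lc condition on $\mathbb P^1$, and then compares good moduli spaces. Your phrase ``bijective and birational'' leaves two checks implicit. You need injectivity: a strictly K-polystable (constant) object, with two points of weight exactly $1$ at $\alpha_1$, must be recovered from its nodal bpcy degeneration. You also need birationality, i.e.\ the existence of a K-stable object.

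The genuine gap is the last chamber in (2). The statement includes $i=l$, i.e.\ the interval $(\alpha_l,\alpha_{l+1}]=(1/w,1]$, where the weight $\alpha w$ exceeds $1$. There the slc condition is perfectly meaningful; it simply forces the quasimap to be base-point free. Your remark that $\alpha\le 1/w$ keeps the weight at most $1$ misreads the range. You then \emph{define} the walls to lie in $(\alpha_1,1/w]$ without showing that nothing happens beyond $1/w$.

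Your own recipe does not give this. For $\alpha w>1$, the slc and ampleness thresholds $\alpha(ak+wm)=1$ with $m\ge 1$ (a positive base-locus order or a positive degree of $\mathscr L$ on a tail) are indeed $\le 1/w$. But the thresholds $\alpha=1/(ak)$, coming from $k$ points of $D$ coalescing on a component where the map is constant, are not controlled by $w$.

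Concretely, take $(d_1,a,d_2,w)=(2,1,2,3)$, so $\alpha_1=1/4<1/w=1/3$. Consider $\mathbb P^1$ with $D=2p$, $p=[1:1]$, and the degree-$2$ quasimap $[u^2:v^2]$. This object is stable for $\alpha\in(1/3,1/2]$ but fails to be slc for $\alpha>1/2$. With the definitions recalled in Section~2, this is an honest change of the stable stack inside $(1/w,1]$.

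So the constancy on the final chamber, equivalently $\alpha_l=1/w$, cannot be obtained from your argument. It rests on the precise setup of \cite[Section~5]{HM1}, which you would have to invoke explicitly.

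A smaller point concerns $i=1$. There the relative log canonical model is the wrong construction: $K+\alpha_1(aD+w\mathscr L)\equiv 0$, so its ample model is a point. What you actually need is that a stable object for $\alpha\in(\alpha_1,\alpha_2]$ is already $\mathbb P^1$ or a two-component chain with boundary degree $1$ on each component. Such an object is bpcy at $\alpha_1$ after rescaling, so the morphism to $\mathcal{M}^{\mathrm{bpcyqmaps}}_{d_1,\alpha_1 a,d_2,\alpha_1 w}$ is the identity on objects.
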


\begin{proof}
 (1) follows from \cite[Remark 3.5 and Theorem 4.20]{HM1}.
 (2) follows from \cite[Theorems 5.10 and 5.11]{HM1}.
\end{proof}

\begin{thm}\label{thm:hm1-fromstable}
 Set $a,w,a',w'\in\mathbb{Q}_{\ge0}$ and $d_1,d_2\in\mathbb{Z}_{>0}$.
 Suppose further that $a\ne0$ or $w\ne0$.
 We set ${M}_1$ as
 \begin{enumerate}
 \item ${M}^{\mathrm{Kps}}_{d_2,a}$ if $d_2a<2$ and $w=0$,
     \item ${M}^{\mathrm{bpcyqmaps}}_{d_1,a,d_2,w}$ if $d_2a+d_1w=2$, or 
     \item ${M}^{\mathrm{Kpsqmaps}}_{d_1,a,d_2,w}$ if $d_2a+d_1w<2$ and $w>0$.
 \end{enumerate}
 Suppose further that $d_2a'+d_1w'>2$, $d_2a+d_1w\le2$, and there exists a positive constant $\epsilon>0$ such that $\frac{2(1+\epsilon)}{d_2a+d_1w}a\le a'$ and $\frac{2(1+\epsilon)}{d_2a+d_1w}w\le w'$.
 Then there exists a morphism ${M}^{\mathrm{stqmaps}}_{d_1,a',d_2,w'}\to M_1$.
\end{thm}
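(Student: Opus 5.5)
We construct the morphism on the level of stacks and then descend it to good moduli spaces. Write $c:=d_2a+d_1w\le2$ and $\lambda:=\frac{2}{c}\ge1$; then $(\lambda a,\lambda w)$ lies on the CY boundary $d_2\lambda a+d_1\lambda w=2$. Since $a'\ge\lambda(1+\epsilon)a$ and $w'\ge\lambda(1+\epsilon)w$, the stable coefficients strictly dominate the rescaled ones.

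\textbf{Step 1: reduce to the CY boundary.} We may replace $M_1$ by $M^{\mathrm{bpcyqmaps}}_{d_1,\lambda a,d_2,\lambda w}$. For case (3) this is Theorem \ref{thm:wall-crossing-recall}(1): it gives an open immersion of $\mathcal{M}^{\mathrm{Kssqmaps}}_{d_1,a,d_2,w}$ into the CY stack at the first wall, together with an isomorphism of good moduli spaces. For case (1), with $w=0$, we use the classical analogue from \cite{HM1}, namely K-semistable log Fano pairs $(\mathbb{P}^1,aD)$ with $d_2a<2$ and the CY pairs $(C,\frac{2}{d_2}D)$. Here the K-moduli is again independent of $a$ and equals the bpcy moduli space. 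Case (2) needs nothing.

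\textbf{Step 2: construct the stack morphism by running an MMP in families.} Take a family $q\colon(\mathcal{C},a'\mathcal{D})\to[\mathbb{A}^2_S/\mathbb{G}_{m,S}]$ of stable quasimaps with weight $w'$. We decrease the coefficients linearly from $(a',w')$ to $(\lambda a,\lambda w)$. The scaling condition with $\epsilon>0$ guarantees that along this path $K+\Delta+w\mathscr{L}$ stays pseudo-effective and becomes numerically trivial only at the endpoint.

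On a nodal genus-$0$ curve, the relative MMP only contracts tails or bridges on which $K_C+\lambda a\mathcal{D}+\lambda w\mathscr{L}$ is negative. On such a component the degree of the boundary is $<2$ (tail) or $<0$ (bridge) after rescaling. Contracting these components gives the relatively ample model of $\lambda a\mathcal{D}+\lambda w\mathscr{L}$. The quasimap extends over the contracted locus as a homomorphism $\mathcal{O}^{\oplus2}\to\mathscr{L}$, with base points allowed, by pushing forward $\mathscr{L}$; the line bundle remains a line bundle because the contracted trees are rational.

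The output is slc, since the coefficient at the image point of a contracted tail is at most $1$ by the degree count. It is CY because $K+\lambda a\mathcal D+\lambda w\mathscr L$ is relatively trivial on genus-$0$ fibers of total degree $2$. It is boundary polarized because $\lambda a\mathcal{D}+\lambda w\mathscr{L}$ is ample by construction. Flatness and base change for the ample model hold fiberwise because $H^1$ vanishes on rational curves. The construction is functorial in $S$, so it defines a morphism of stacks $\mathcal{M}^{\mathrm{stqmaps}}_{d_1,a',d_2,w'}\to\mathcal{M}^{\mathrm{bpcyqmaps}}_{d_1,\lambda a,d_2,\lambda w}$.

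\textbf{Step 3: descend.} The source is a proper Deligne--Mumford stack with coarse space $M^{\mathrm{stqmaps}}$. Composing the stack morphism with the good moduli map to $M_1$ gives a map to an algebraic space. By the universal property of the coarse space (or of the good moduli space, \cite[Theorem 6.6]{Alper}), it factors through $M^{\mathrm{stqmaps}}_{d_1,a',d_2,w'}$.

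\textbf{Main obstacle.} The hardest part is Step 2 in families. We must show that the contraction commutes with base change and that the contracted family of quasimaps is well defined, in particular that base points created by contracted components keep the slc condition. If a direct argument fails, I would instead invoke the general wall-crossing morphisms of Theorem \ref{thm:wall-crossing-recall}(2). These allow decreasing $\alpha$ chamber by chamber down to $\alpha_1$, which corresponds exactly to the CY boundary; the hypothesis with $\epsilon$ places $(a',w')$ strictly beyond the first wall along the ray through $(a,w)$. The scaling hypothesis is used to move from $(a',w')$ onto that ray: when $a'/w'\neq a/w$, first lower one coefficient separately. Stable quasimap moduli for coefficients dominating the ray map to those on the ray by the same contraction argument, this time with no CY degeneration.
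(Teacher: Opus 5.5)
Your outline is correct, and the fallback you describe at the end is essentially the paper's proof. The paper gives no independent construction. It deduces the statement from Theorem \ref{thm:wall-crossing-recall} together with \cite[Theorem 5.11 and Remark 5.12]{HM1}. That is, it composes the reduction morphisms of Theorem \ref{thm:wall-crossing-recall}(2) chamber by chamber down the ray to the first chamber $(\alpha_1,\alpha_2]$ and maps that chamber into $\mathcal{M}^{\mathrm{bpcyqmaps}}$ at $\alpha_1$. It then identifies the bpcy good moduli space with $M_1$ via part (1), or its $w=0$ analogue for pairs, and handles $(a',w')$ off the ray through $(a,w)$ by the cited remark. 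Your Step 3 descent is the same.

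You are right to target the bpcy stack rather than $\mathcal{M}^{\mathrm{Kssqmaps}}$; the paper itself notes that the map to the K-moduli space does not lift to the K-semistable stack in general.

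Your primary route, a single Hassett-type contraction in families straight to the CY wall, differs only in packaging. However, it forces you to prove yourself the family-level facts (base change, and the quasimap on the contracted family) that the paper outsources to \cite{HM1}. The cleanest way to make it rigorous is as follows. Take the relative log canonical model at $(1+\delta)\lambda(a,w)$ with $0<\delta\ll1$; there $K+\Delta+w\mathscr{L}$ is relatively ample, so Hassett's base-change argument applies. Then regard the output with coefficients $\lambda(a,w)$. On each component $-2+n_i+(1+\delta)b_i>0$, while the total is $2\delta$. Hence $-2+n_i+b_i=0$ componentwise, and the object is automatically bpcy. This is exactly what Theorem \ref{thm:wall-crossing-recall}(2) encodes for the first chamber, so the two routes converge.

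Several fibrewise claims in your Step 2 need correcting.
\begin{itemize}
\item A tail $C_i$ has $\deg\omega_C|_{C_i}=-1$, so it is $K$-negative exactly when its boundary degree is $<1$, not $<2$. With your bound the slc claim would not follow. With the correct one, a contracted subtree $T$ with $b_T<1$ leaves a klt point of coefficient $b_T$.
\item Bridges have $\deg\omega_C|_{C_i}=0$ and are never $K$-negative. The ones to remove are the zero-boundary bridges, contracted by the ample model of $\lambda a\mathcal{D}+\lambda w\mathscr{L}$; these occur only if $a=0$ or $w=0$.
\item Total degree $2$ alone does not give the CY condition. You need that, after contracting all subtrees with $b_T<1$, every component has nonnegative degree. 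Then the vanishing total forces vanishing on each component, and the multidegree determines the class on a rational tree.
\item The line bundle on the contracted family is the reflexive hull of $\pi_*\mathscr{L}$, not $\pi_*\mathscr{L}$ itself. Over a one-parameter base it is $\pi_*(\mathscr{L}(eT))$ with $e=\deg\mathscr{L}|_T$. Its sections acquire a base point of order $e$ at the image of $T$.
\end{itemize}
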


\begin{proof}
This follows from Theorem \ref{thm:wall-crossing-recall} and \cite[Theorem 5.11 and Remark 5.12]{HM1}.    
\end{proof}

\begin{defn}
Let $q$ be an object of $\mathcal{M}^{\mathrm{bpcyqmaps}}_{d_1,a,d_2,w}$ that defines a closed $\mathbb{C}$-valued point.
Then we say that $q$ is {\it bpcy polystable}.
$q$ is bpcy polystable if and only if $q$ is a boundary polarized CY quasimap and for any $f\colon\Theta\to \mathcal{M}^{\mathrm{bpcyqmaps}}_{d_1,a,d_2,w}$
such that $f(1)$ is isomorphic to $q$, then $f(0)$ is also isomorphic to $q$ by \cite[Lemma 3.24]{AHLH}.

Let $q$ be an object of $\mathcal{M}^{\mathrm{Kssqmaps}}_{d_1,a,d_2,w}$ that defines a closed $\mathbb{C}$-valued point.
Then we say that $q$ is K-{\it polystable}.
As above, $q$ is K-polystable if and only if $q$ is a K-semistable log Fano quasimap and for any $f\colon\Theta\to \mathcal{M}^{\mathrm{Kssqmaps}}_{d_1,a,d_2,w}$ such that $f(1)$ is isomorphic to $q$, then $f(0)$ is also isomorphic to $q$.
\end{defn}

We put the following lemma, which is useful to track K-polystability of weighted quasimaps.

\begin{lem}\label{lem:K-ps-remaining}
Let $q\colon(\mathbb{P}^1,aD)\to [\mathbb{A}^2/\mathbb{G}_m]$ with weight $w(1-a)$ be a K-polystable log Fano quasimap for some $0<a<1$ and $w>0$.
Suppose that there exist $0<c_1<a$ and $a<c_2<1$ such that for any $c\in [c_1,c_2]$, $q\colon(\mathbb{P}^1,cD)\to [\mathbb{A}^2/\mathbb{G}_m]$ with weight $(1-c)w$ is a K-semistable log Fano quasimap.
Then, for any $c\in(c_1,c_2)$, $q\colon(\mathbb{P}^1,cD)\to [\mathbb{A}^2/\mathbb{G}_m]$ with weight $(1-c)w$ is K-polystable.
If $q\colon(\mathbb{P}^1,aD)\to [\mathbb{A}^2/\mathbb{G}_m]$ with weight $w(1-a)$ is further K-stable, then for any $c\in(c_1,c_2)$, $q\colon(\mathbb{P}^1,cD)\to [\mathbb{A}^2/\mathbb{G}_m]$ with weight $(1-c)w$ is K-stable.
\end{lem}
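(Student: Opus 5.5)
The plan is to reduce everything to an elementary statement about an affine function of $c$. On $\mathbb P^1$ every degeneration of a log Fano quasimap comes from a one-parameter subgroup of $\PGL_2$. So both the failure of K-polystability and the failure of K-stability are detected by an equality $2\,\mathrm{mult}_p(\cdots)=\deg(\cdots)$ at a single point $p$.

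For the K-stable statement, fix $c\in(c_1,c_2)$ and a closed point $p\in\mathbb P^1$. Let $f$ be the section attached to a general $e\in\C^2$, and set
\[
\varphi_p(c):=2\,\mathrm{mult}_p\bigl(cD+(1-c)w\,\mathrm{div}(f)\bigr)-\deg\bigl(cD+(1-c)w\mathscr L\bigr).
\]
Since $\mathrm{ord}_p(D)$, $\mathrm{ord}_p(f)$, $\deg D$ and $\deg\mathscr L$ do not depend on $c$, $\varphi_p$ is an affine function of $c$. By Lemma~\ref{lem:useful-criterion}, K-semistability on $[c_1,c_2]$ gives $\varphi_p\le0$ on $[c_1,c_2]$, where the minimum over the two pencil directions is taken at $p$. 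The relevant section does not change with $c$, because the minimum is realized by the section of minimal vanishing order at $p$, and that section is determined independently of $c$. Suppose K-stability fails at $c$. Then $\varphi_p(c)=0$ for some $p$, since $C=\mathbb P^1$ is smooth. An affine function that is $\le0$ on $[c_1,c_2]$ and vanishes at an interior point vanishes identically on the interval. Hence $\varphi_p(a)=0$, contradicting K-stability at $a$.

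For the K-polystable statement, suppose $q$ is K-semistable but not K-polystable at some $c\in(c_1,c_2)$. By the characterization of polystability via $\Theta$ (Definition above, using \cite[Lemma 3.24]{AHLH}), there is a map $f\colon\Theta\to\mathcal M^{\mathrm{Kssqmaps}}$ with $f(1)\cong q$ and $f(0)=q_0\not\cong q$. Because the underlying curve stays $\mathbb P^1$, $f$ is induced by a $1$-PS $\lambda\subset\PGL_2$ with attracting point $p$ and repelling point $p'$. The limit $q_0=\lim_{t\to0}\lambda(t)\cdot q$ is the $\lambda$-invariant quasimap obtained by pushing $D$ and the pencil to $\{p,p'\}$. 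This limit is computed purely from $D$ and the pencil $[f_1:f_2]$, so it does not depend on $c$.

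K-semistability of $q_0$ at $c$ forces the Futaki invariant of $\lambda$ to vanish, which is exactly $\varphi_p(c)=0$. The affine argument above then gives $\varphi_p(a)=0$, and likewise at $p'$ for $q_0$. So at the parameter $a$, the pair $(q_0,\lambda)$ satisfies $2\,\mathrm{mult}=\deg$ at both $p$ and $p'$. It has no other points of $D$ or base locus, so Lemma~\ref{lem:useful-criterion} applies to show that $q_0$ is K-semistable at $a$. Thus the same $\lambda$ gives a map $\Theta\to\mathcal M^{\mathrm{Kssqmaps}}_{\cdot,a,\cdot,(1-a)w}$ from $q$ to $q_0\not\cong q$, contradicting K-polystability at $a$.

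The main obstacle is justifying that every nontrivial $\Theta$-degeneration inside the K-semistable locus is of this $\PGL_2$ one-parameter form, with a limit independent of $c$. I would argue that the central fiber of a family of K-semistable log Fano quasimaps is still $\mathbb P^1$, so the $\mathbb G_m$-equivariant family is $\mathbb P^1\times\mathbb A^1$ with a linear action. A second, smaller check is that equality of $\varphi$ at both fixed points suffices for K-semistability of the invariant limit $q_0$ at $a$.
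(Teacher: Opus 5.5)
Your proof is correct and is essentially the paper's argument. The K-stable case is the same observation that the pointwise quantity $2\,\mathrm{mult}_p\bigl(cD+(1-c)w\,\mathrm{Bs}(q)\bigr)-\deg\bigl(cD+(1-c)w\mathscr L\bigr)$ is affine in $c$ and $\le 0$ on $[c_1,c_2]$. The polystable case likewise turns a destabilizing degeneration at $c$ into an equality at a single point, which persists at $a$ by affineness, so the same product-type degeneration is K-semistable at $a$ and contradicts polystability there.

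The only differences are as follows. The paper first uses \cite{HM1} to see that a K-polystable but not K-stable quasimap is constant, reduces to the log pair $(\mathbb P^1,cD+w(1-c)\mathrm{Bs}(q))$, and quotes the $\mathrm{ord}_p$-description of its special test configurations \cite{xu2025k,zhou2023wall}, whereas you treat all cases uniformly by computing the $\PGL_2$ one-parameter limit by hand. Also, the pointwise criterion you need comes directly from the definition of K-semistability, not from Lemma~\ref{lem:useful-criterion}, whose hypotheses are not assumed here.
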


\begin{proof}
First, suppose that $q\colon(\mathbb{P}^1,aD)\to [\mathbb{A}^2/\mathbb{G}_m]$ with weight $w(1-a)$ is K-stable.    
By \cite[Proposition 2.40 (3)]{HM1}, for any closed point $p\in \mathbb{P}^1$, $2(1-\mathrm{ord}_p(aD+w(1-a)\mathrm{Bs}(q)))>2-\mathrm{deg}(w(1-a)\mathscr{L}+aD)$, where $\mathscr{L}$ is the associated line bundle to $q$.
By assumption, we obtain that for any closed point $p\in \mathbb{P}^1$ and $c\in (c_1,c_2)$, $2(1-\mathrm{ord}_p(cD+w(1-c)\mathrm{Bs}(q)))>2-\mathrm{deg}(w(1-c)\mathscr{L}+cD)$.

Next, suppose that $q\colon(\mathbb{P}^1,aD)\to [\mathbb{A}^2/\mathbb{G}_m]$ with weight $w(1-a)$ is not K-stable.
Then, $q$ is constant by \cite{HM1}.
In this case, it is not hard to check that the K-polystability of $q\colon(\mathbb{P}^1,cD)\to [\mathbb{A}^2/\mathbb{G}_m]$ with weight $(1-c)w$ coincides with the K-polystability of the log pair $(\mathbb{P}^1,cD+w(1-c)\mathrm{Bs}(q))$. 
Assume that $(\mathbb{P}^1,cD+w(1-c)\mathrm{Bs}(q))$ is not K-polystable for some $c\in (c_1,c_2)$.
Then there exists a $\mathbb{G}_m$-equivariant flat projective morphism $\pi\colon (\mathcal{C},\Delta)\to \mathbb{A}^1$ such that $(\mathcal{C}_t,\Delta_t)\cong (\mathbb{P}^1,cD+w(1-c)\mathrm{Bs}(q))$ for any $t\in \mathbb{A}^1\setminus\{0\}$, $(\mathcal{C}_0,\Delta_0)$ is also a K-semistable log Fano pair but not isomorphic to $(\mathbb{P}^1,cD+w(1-c)\mathrm{Bs}(q))$, and $-K_{\mathcal{C}}-\Delta$ is $\pi$-ample.
This is a test configuration, which we defined in \cite[Definition 2.14]{HM1}.
Then, there exists a point $p$ such that $\mathcal{C}=\mathrm{Proj}_{\mathbb{A}^1}(\sum_{\lambda\in\mathbb{Z},m\in\mathbb{Z}_{\ge0}} t^{-\lambda}H^0(\mathbb{P}^1,\mathcal{O}_{\mathbb{P}^1}(m)(-\lambda\cdot p)))$ and $2(1-\mathrm{ord}_p(cD+w(1-c)\mathrm{Bs}(q)))=2-\mathrm{deg}(w(1-c)\mathscr{L}+cD)$
by \cite[Lemma 4.19]{xu2025k} and the same argument as the proof of \cite[Lemma 3.4]{zhou2023wall}, where $t$ is the canonical coordinate of $\mathbb{A}^1$.
By assumption that $(\mathbb{P}^1,c'D+w(1-c')\mathrm{Bs}(q))$ is K-semistable for any $c'\in [c_1,c_2]$, we see that $2(1-\mathrm{ord}_p(c'D+w(1-c')\mathrm{Bs}(q)))=2-\mathrm{deg}(w(1-c')\mathscr{L}+c'D)$.
Let $\Delta_{c'}$ be the Zariski closure of $\mathbb{G}_m\times (c'D+w(1-c')\mathrm{Bs}(q))$ in $\mathcal{C}$.
Then $(\mathcal{C}_0,\Delta_{c',0})$ is K-semistable by \cite[Lemma 4.19 and Proposition 5.37]{xu2025k}.
This contradicts the assumption that $q\colon(\mathbb{P}^1,aD)\to [\mathbb{A}^2/\mathbb{G}_m]$ with weight $w(1-a)$ is K-polystable.
We complete the proof.
\end{proof}

 Let $q\colon (\mathcal{C},\Delta)\to [\mathbb{A}^2_S/\mathbb{G}_{m,S}]$ be a family of boundary polarized CY quasimaps with weight $w\in\mathbb{Q}_{\ge0}$, $\Delta$ be a relative Mumford $\mathbb{Q}$-divisor and the canonical morphism $\pi\colon \mathcal{C}\to S$.
\begin{defn}\label{defn:Hogde-HM2}
    Let $\mathscr{L}$ be the line bundle associated with $q$.
    For $w>0$, as \cite[Definition 4.14]{HM1}, for any sufficiently divisible positive integer $N$, there exists a line bundle $\lambda_{\mathrm{Hodge},q,N}$ such that $\pi^*\lambda_{\mathrm{Hodge},q,N}\sim \omega^{\otimes N}_{\mathcal{C}/S}(N\Delta)\otimes\mathscr{L}^{\otimes wN}$.
    Let $\lambda_{\mathrm{Hodge},q}:=\frac{1}{N}\lambda_{\mathrm{Hodge},q,N}$ be the Hodge line bundle.
    When $w=0$, we also set $\lambda_{\mathrm{Hodge},q,N}$ as $\pi^*\lambda_{\mathrm{Hodge},q,N}\sim \omega^{\otimes N}_{\mathcal{C}/S}(N\Delta)$ and $\lambda_{\mathrm{Hodge},q}:=\frac{1}{N}\lambda_{\mathrm{Hodge},q,N}$.
\end{defn}
    We note that $\lambda_{\mathrm{Hodge},q}$ is a functorial $\mathbb{Q}$-line bundle, which is determined uniquely up to $\mathbb{Q}$-linear equivalence.
    Indeed, for any morphism $g\colon T\to S$, $\lambda_{\mathrm{Hodge},q_T}=g^*\lambda_{\mathrm{Hodge},q}$.

       As \cite[Definition 4.14 and Lemma 4.15]{HM1}, by the functoriality, we can define the Hodge $\mathbb{Q}$-line bundle $\lambda_{\mathrm{Hodge}}$ on $\mathcal{M}^{\mathrm{bpcyqmaps}}_{d_1,a,d_2,w}$ in a canonical way that for any family of boundary polarized CY quasimaps $q$ over $S$ that corresponds to a morphism $f\colon S\to \mathcal{M}^{\mathrm{bpcyqmaps}}_{d_1,a,d_2,w}$, $\lambda_{\mathrm{Hodge},q}=f^*\lambda_{\mathrm{Hodge}}$.
    Furthermore, $\lambda_{\mathrm{Hodge}}$ is descended to a $\mathbb{Q}$-line bundle $\Lambda_{\mathrm{Hodge}}$ on ${M}^{\mathrm{bpcyqmaps}}_{d_1,a,d_2,w}$ uniquely up to $\mathbb{Q}$-linear equivalence.
    As shown in \cite[Theorem 4.16]{HM1}, we see that $\Lambda_{\mathrm{Hodge}}$ is ample.
    Note that $\Lambda_{\mathrm{Hodge}}$ is affine in ${a}$ and $w$ and we will write it as $\Lambda_{\mathrm{Hodge},{a},w}$.

We note that via ${M}^{\mathrm{bpcyqmaps}}_{d_1,a,d_2,w}\cong {M}^{\mathrm{Kpsqmaps}}_{d_1,ta,d_2,tw}$ for any $0<t<1$ (cf. Theorem \ref{thm:wall-crossing-recall}), 
there exists another ample $\Q$-line bundle, which is known as the CM line bundle.
    By \cite[Proposition 4.23]{HM1}, the Hodge line bundle and the CM line bundle define the same polarization on the same good moduli space.
    Therefore, we will only consider the Hodge line bundle for the K-moduli of weighted quasimaps in this paper via the above isomorphism.

\section{Moduli of rational elliptic surfaces}
\label{sec:moduli-res}

In this section we recall the ball quotient description of the moduli space of rational elliptic surfaces and fix the notation used below.  The main references are Heckman-Looijenga's study of the moduli space of rational elliptic surfaces \cite{HeckmanLooijenga2002} and Miranda's GIT compactification of Weierstrass fibrations \cite{Miranda1981}.  We follow the notation of \cite{HeckmanLooijenga2002} as much as possible; when useful, we also indicate Looijenga's notation from \cite{Looijenga2003Ball}.

\subsection{Miranda's compactification of rational elliptic surfaces}\label{subsec-weierstraa-explanation}

We recall Miranda's compactification of the moduli space of rational elliptic surfaces.

In this paper, we mainly deal with rational elliptic surfaces with no multiple fiber.
They can be realized as blow-ups of the projective plane at special nine points, and it is well known that they always have a section over $\mathbb{P}^1$.

Consider the $\mathbb{P}^2$-bundle $$\pi\colon \mathbb{P}:=\mathbb{P}_{\mathbb{P}^1}(\mathcal{O}_{\mathbb{P}^1}(-2)\oplus\mathcal{O}_{\mathbb{P}^1}(-3)\oplus\mathcal{O}_{\mathbb{P}^1})\to \mathbb{P}^1.$$
Let $f_0\in H^0(\mathbb{P}^1,\mathcal{O}_{\mathbb{P}^1}(4))$ and $f_1\in H^0(\mathbb{P}^1,\mathcal{O}_{\mathbb{P}^1}(6))$.
Consider the following section $-y^2z+x^3+3f_0xz^2+2f_1z^3\in H^0(\mathbb{P},\mathcal{O}_{\mathbb{P}}(3)\otimes\pi^*\mathcal{O}_{\mathbb{P}^1}(6))$ and this section defines a Cartier divisor $W\subset \mathbb P$, where $x\in H^0(\mathcal{O}_{\mathbb P}(1)\otimes\pi^*\mathcal{O}_{\mathbb P^1}(2))$, $y\in H^0(\mathcal{O}_{\mathbb P}(1)\otimes\pi^*\mathcal{O}_{\mathbb P^1}(3))$ and $z\in H^0(\mathcal{O}_{\mathbb P}(1))$ are the canonical coordinates. 
Set the discriminant $D:=f^3_0+f_1^2\in H^0(\mathbb P^1,\mathcal{O}_{\mathbb P^1}(12))$.
For any $p\in \mathbb{P}^1$, if $D(p)\ne 0$, then $\pi^{-1}(p)\cap W$ is a smooth elliptic curve. By \cite{kas1977weierstrass}, we see that if $D\not\equiv0$, then $\mathrm{ord}_p(f_0)\le 3$ or $\mathrm{ord}_p(f_1)\le 5$ if and only if $W$ has only canonical singularities.
Let $S$ be the section of $W\to\mathbb P^1$ defined by $x=z=0$.
If $W$ has only canonical singularities and $\nu\colon \tilde{W}\to W$ is the minimal resolution, then $\tilde{W}$ is rational and $W$ is the relative log canonical model of $(\tilde{W},\nu^{-1}_*S)$ over $\mathbb P^1$.
See \cite{Miranda1981} for more details.
We say that $\pi|_W\colon (W,S)\to\mathbb P^1$ is a {\it rational Weierstrass fibration}.
We note that $\tilde{W}$ is expressed as a blow up at the base points of a general cubic pencil. See \cite{cossec2025enriques} for more details.

Let $Y_{\mathrm{Mir}}\defeq \mathbb P(2^5,3^7)$ be the weighted projective space parameterizing nonzero pairs $(f_0,f_1)$ up to the scaling
\[
(f_0,f_1)\sim(\lambda^2 f_0,\lambda^3 f_1).
\]
We call such elements $(f_0,f_1)$ {\it Weierstrass pairs}.
Miranda \cite{Miranda1981} considered the GIT quotient of $Y_{\mathrm{Mir}}$ by $\mathrm{SL}(2;\mathbb{C})$ as follows. 
The GIT semistable (resp. stable) locus $Y_{\mathrm{Mir}}^{\mathrm{ss}}$ (resp. $Y_{\mathrm{Mir}}^{\mathrm{s}}$) consists of all pairs $(f_0,f_1)$ such that, for every $p\in \mathbb P^1$, either $\mathrm{ord}_p(f_0)\le 2$ (resp. $<2$) or $\mathrm{ord}_p(f_1)\le 3$ (resp. $<3$) by \cite[Proposition 5.1]{Miranda1981}.
We call this GIT (semi)stability {\it Miranda-(semi)stability} in this paper.
We also note that the Miranda-(semi)stability of $(f_0,f_1)$ coincides with the K-(semi)stability of the log Fano quasimap $q\colon\mathbb{P}^1\to[\mathbb{A}^2/\mathbb{G}_m]$ with weight $\frac{1}{12}$ defined by $[f_0^3:f_0^3+f_1^2]$ (cf. \cite[Proposition 2.40]{HM1}). 
Then we can construct Miranda's moduli space $\M^M:=\SL(2)\backslash\!\!\backslash Y_{\mathrm{Mir}}^{\mathrm{ss}}=\PGL(2)\backslash\!\!\backslash Y_{\mathrm{Mir}}^{\mathrm{ss}}$.
Here, we set $\mathscr{M}^M:=[\PGL(2)\backslash Y_{\mathrm{Mir}}^{\mathrm{ss}}]$ as the quotient stack and note that $\mathscr{M}^M\to \mathcal{M}^M$ is the morphism of the good moduli space.
By the above, $[f_0^3:f_0^3+f_1^2]$ on $Y_{\mathrm{Mir}}^{\mathrm{ss}}$ defines a K-semistable quasimap $\mathbb{P}^1\to[\mathbb{A}^2/\mathbb{G}_m]$ of degree twelve with weight $\frac{1}{12}$.
\begin{prop}\label{prop:miranda-k-moduli}
    There exists a canonical morphism $\mathscr{M}^M:=[\PGL(2)\backslash Y_{\mathrm{Mir}}^{\mathrm{ss}}]\to \mathcal{M}^{\mathrm{Kssqmaps}}_{12,0,0,\frac{1}{12}}$ of Artin stacks that induces a homeomorphism of the good moduli space to the image, which is the normalization.
\end{prop}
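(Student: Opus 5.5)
The plan is to build the morphism on the level of families, check equivariance so that it descends to the quotient stack, and then compare good moduli spaces via the universal property. The notation $\mathcal{M}^{\mathrm{Kssqmaps}}_{12,0,0,\frac{1}{12}}$ means quasimaps of degree $12$ with weight $\frac{1}{12}$ and no boundary. Then $-(K_{\mathbb P^1}+\frac{1}{12}\mathscr L)$ has degree $2-1=1>0$, so these are log Fano quasimaps on $\mathbb P^1$.

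\textbf{Step 1: the family.} On the affine cone $V^{\mathrm{ss}}\subset H^0(\mathcal O(4))\oplus H^0(\mathcal O(6))$ over $Y^{\mathrm{ss}}_{\mathrm{Mir}}$, take the universal Weierstrass pair. Consider the homomorphism $\mathcal O^{\oplus2}\to\mathcal O_{\mathbb P^1}(12)$ on $\mathbb P^1\times V^{\mathrm{ss}}$ given by $(f_0^3,\,f_0^3+f_1^2)$. It is nonzero on every fiber: if $f_0\equiv0$, the second component is $f_1^2\neq0$, since $(f_0,f_1)$ is nonzero. So it is a locally stable family of degree-$12$ quasimaps with weight $\frac1{12}$.

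By the cited \cite[Proposition 2.40]{HM1}, Miranda-semistability coincides fiberwise with K-semistability of this quasimap. Concretely, Lemma~\ref{lem:useful-criterion} reduces this to the order condition $\min\{3\,\mathrm{ord}_p f_0,\ \mathrm{ord}_p D\}\le 6$, which is equivalent to $\mathrm{ord}_p f_0\le2$ or $\mathrm{ord}_p f_1\le3$. This gives a morphism $V^{\mathrm{ss}}\to\mathcal{M}^{\mathrm{Kssqmaps}}_{12,0,0,\frac1{12}}$.

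\textbf{Step 2: descent to the stack.} The scaling $(\lambda^2f_0,\lambda^3f_1)$ multiplies both sections by $\lambda^6$. This is absorbed by the $\mathbb G_m$-automorphism of $\mathcal O(12)$, so it gives isomorphic quasimaps. Similarly, $\mathrm{SL}(2)$ acts by automorphisms of $\mathbb P^1$ compatibly with these pullbacks. Together these provide a $\mathbb G_m\times\mathrm{SL}(2)$-equivariant structure, hence a morphism from $[V^{\mathrm{ss}}/\mathbb G_m\times \mathrm{SL}(2)]$. To land on $\mathscr M^M=[\PGL(2)\backslash Y^{\mathrm{ss}}_{\mathrm{Mir}}]$, note that $-1\in\mathrm{SL}(2)$ acts on $H^0(\mathcal O(4)), H^0(\mathcal O(6))$ trivially, so the $\mathrm{SL}(2)$-action factors through $\PGL(2)$. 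Thus the map descends.

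\textbf{Step 3: comparison of good moduli spaces.} By the universal property of good moduli spaces \cite[Theorem 6.6]{Alper}, we get a morphism $\M^M\to M^{\mathrm{Kpsqmaps}}_{12,0,0,\frac1{12}}$ of projective varieties, hence a proper morphism. The key points are the following.

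\begin{enumerate}
\item \emph{Injectivity on closed points.} Two K-polystable quasimaps $[f_0^3:f_0^3+f_1^2]$ and $[g_0^3:g_0^3+g_1^2]$ that agree up to $\mathrm{Aut}(\mathbb P^1)$ and scaling of $\mathcal O(12)$ satisfy $f_0^3=cg_0^3$ and $f_1^2=cg_1^2$ after moving by $\PGL(2)$. Hence $(f_0,f_1)$ and $(g_0,g_1)$ differ by the weighted scaling and by the sign $f_1\mapsto -f_1$. That sign is $\lambda=-1$ in $\mathbb G_m$, since $\lambda^2=1$ and $\lambda^3=-1$.
\item \emph{Polystable matches polystable.} Miranda-polystable points should map to K-polystable points. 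This holds because the destabilizing $\Theta$-degenerations on both sides are the same one-parameter subgroups of $\mathrm{SL}(2)$ fixing a point, and these act compatibly. It then follows that the induced map on closed points is injective.
\end{enumerate}

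A proper injective morphism is a homeomorphism onto its closed image. Since $\M^M$ is normal, being a GIT quotient of a normal variety, and the map is finite and birational onto its image, it is the normalization of the image.

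The main obstacle is this last comparison of closed points. One must check that a Miranda-polystable pair cannot isotrivially degenerate inside the quasimap stack to a quasimap not of the form $[f_0^3:f_0^3+f_1^2]$, and conversely. I would handle it by using that $\Theta$-degenerations of $\mathbb P^1$-quasimaps are induced by $\mathbb G_m\subset\PGL(2)$-actions, which is the content of the test-configuration description used in the proof of Lemma~\ref{lem:K-ps-remaining}. Such an action preserves the locus of quasimaps coming from Weierstrass pairs.
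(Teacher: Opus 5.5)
Your proposal is correct. Its outer structure matches the paper's: build the family $[f_0^3:f_0^3+f_1^2]$, descend it, pass to good moduli spaces by \cite[Theorem 6.6]{Alper}, prove injectivity, and conclude by properness and normality. The routes differ in how injectivity, and birationality onto the image, are obtained.

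The paper first changes coordinates by $[u:v]\mapsto[u:u-v]$, so that the quasimap becomes $[f_0^3:f_1^2]$. It then shows that over the open locus where both sections are nonzero the map is a closed immersion, because $f\mapsto f^3$ and $f\mapsto f^2$ are closed immersions $\mathbb{P}(H^0(\mathcal{O}(4)))\to\mathbb{P}(H^0(\mathcal{O}(12)))$ and $\mathbb{P}(H^0(\mathcal{O}(6)))\to\mathbb{P}(H^0(\mathcal{O}(12)))$. The remaining points, with $g_0\equiv0$ or $g_1\equiv0$, are checked separately using Miranda's description.

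You instead argue uniformly. You prove injectivity on isomorphism classes: the cube-root and square-root ambiguity is exactly $\mu_6\cong\mu_3\times\mu_2$, i.e.\ the weighted scaling. You add the fact that Miranda-polystable objects map to K-polystable ones, and birationality is then automatic in characteristic zero. Your version makes explicit the passage from objects to closed points of the good moduli spaces, which the paper handles only implicitly, and it needs no case distinction for $f_0\equiv0$ or $f_1\equiv0$. The paper's version gives a scheme-theoretic closed immersion over a dense open set, hence birationality directly, and needs no analysis of $\Theta$-degenerations.

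Your step (2), which you flag as the main obstacle, closes as follows.
\begin{itemize}
\item A morphism $\Theta\to\mathcal{M}^{\mathrm{Kssqmaps}}_{12,0,0,\frac{1}{12}}$ has all fibres $\mathbb{P}^1$ by definition. Hence it is $\mathbb{P}^1\times\mathbb{A}^1$ with $\mathbb{G}_m$ acting through a one-parameter subgroup $\lambda$ of $\PGL(2)$, after a finite cover of $\mathbb{G}_m$ if necessary.
\item Its central fibre is $\lim_{s\to0}\lambda(s)F(x)$ in $\mathbb{P}(H^0(\mathcal{O}(12))^{\oplus2})$.
\item Since $F$ extends to a $\PGL(2)$-equivariant morphism on the proper space $Y_{\mathrm{Mir}}$, this limit equals $F(x_0)$ with $x_0=\lim_{s\to0}\lambda(s)x$.
\item Your Step 1 equivalence holds for every nonzero pair, so $x_0$ is Miranda-semistable. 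It therefore lies in $\PGL(2)\cdot x$ when $x$ is polystable.
\end{itemize}

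Two minor remarks.
\begin{itemize}
\item Lemma~\ref{lem:useful-criterion} assumes $a\in(0,1)$. Here the order condition follows directly from the definition, because a general member of the pencil has order $\min\{3\,\mathrm{ord}_pf_0,\,2\,\mathrm{ord}_pf_1\}$ at $p$.
\item If $Y^{\mathrm{ss}}_{\mathrm{Mir}}$ denotes the coarse weighted projective space, the family is already defined on it. Indeed, $f_0^3$ and $f_0^3+f_1^2$ are sections of $\mathcal{O}_{\mathbb{P}^1}(12)\boxtimes\mathcal{O}_{Y_{\mathrm{Mir}}}(6)$, and $\mathcal{O}_{Y_{\mathrm{Mir}}}(6)$ is invertible.
\end{itemize}
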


\begin{proof}
    As the above argument, it is easy to construct a morphism $F\colon \mathscr{M}^M:=[\PGL(2)\backslash Y_{\mathrm{Mir}}^{\mathrm{ss}}]\to\mathcal{M}^{\mathrm{Kssqmaps}}_{12,0,0,\frac{1}{12}}$ by the description of $Y_{\mathrm{Mir}}^{\mathrm{ss}}$.
The morphism maps the point defined by $(f_0,f_1)$ to the quasimap defined by $[f_0^3:f_0^3+f_1^2]$. 
    Let $\overline{F}\colon \mathcal{M}^M\to{M}^{\mathrm{Kpsqmaps}}_{12,0,0,\frac{1}{12}}$ be the morphism induced by the good moduli spaces (cf. \cite[Theorem 6.6]{Alper}).
    By the linear transformation on $\mathbb P^1$ such that $[u:v]\mapsto[u:u-v]$, we can regard $F$ as the morphism, which maps $(f_0,f_1)$ to the quasimap defined by $[f_0^3:f_1^2]$. 
    From now on, we consider $F$ to map $(f_0,f_1)$ to the quasimap defined by $[f_0^3:f_1^2]$ in this proof.
    To show that this morphism is a closed immersion in the open locus $U\subset S$ such that $(F_0)_s\ne0$ and $(F_1)_s\ne0$ for any $s\in U$, it suffices to show that for any projective flat morphism $f\colon \mathcal{P}\to S$ such that each geometric fiber is isomorphic to $\mathbb P^1$ with sections $F_0,F_1\in f_*\omega_{\mathcal{P}/S}^{\otimes -6}$, there exists a closed subscheme $S'\subset U$ such that for any morphism $g\colon T\to U$, $g$ factors through $S'$ if and only if there exist local sections $f'_0\in f_*\omega_{\mathcal{P}/S}^{\otimes -2}$ and $f'_1\in f_*\omega_{\mathcal{P}/S}^{\otimes -3}$ such that $f'^3_0=F_0$ and $f'^2_1=F_1$ in the locus.
    To show this, we may replace $U$ with its affine \'etale cover and assume that $\mathcal{P}\cong \mathbb P^1_U$. 
    Note that $(F_0)_u\ne0$ and $(F_1)_u\ne0$ for any $u\in U$.
Note also the fundamental fact that the morphisms induced by cubic power $\mathbb{P}(H^0(\mathbb{P}^1,\mathcal{O}_{\mathbb{P}^1}(4)))\to \mathbb{P}(H^0(\mathbb{P}^1,\mathcal{O}_{\mathbb{P}^1}(12)))$ and induced by square power $\mathbb{P}(H^0(\mathbb{P}^1,\mathcal{O}_{\mathbb{P}^1}(6)))\to \mathbb{P}(H^0(\mathbb{P}^1,\mathcal{O}_{\mathbb{P}^1}(12)))$ are closed immersions because we are working over $\C$.
Therefore, using the closed immersions $\mathbb{P}_U(H^0(\mathbb{P}_U^1,\mathcal{O}_{\mathbb{P}_U^1}(4)))\to \mathbb{P}_U(H^0(\mathbb{P}_U^1,\mathcal{O}_{\mathbb{P}_U^1}(12)))$ and $\mathbb{P}_U(H^0(\mathbb{P}^1,\mathcal{O}_{\mathbb{P}_U^1}(6)))\to \mathbb{P}_U(H^0(\mathbb{P}_U^1,\mathcal{O}_{\mathbb{P}_U^1}(12)))$, we see the above claim.
To show the assertion, we have to check the points $s\in \mathscr{M}^M$ such that $(F_0)_s=0$ or $(F_1)_s=0$.
Let $U^M\subset {M}^{\mathrm{Kpsqmaps}}_{12,0,0,\frac{1}{12}}$ be the open subset that corresponds to quasimaps defined by $F_0,F_1\in H^0(\mathbb{P}^1,\mathcal{O}_{\mathbb{P}^1}(12))$ such that $F_0\ne0$ and $F_1\ne0$. 
By what we have shown, we see that $\overline{F}^{-1}(U^M)\hookrightarrow U^M$ is a closed immersion.
We claim that $\overline{F}$ is entirely injective.
To show this, it suffices to check that $\overline{F}^{-1}(s)$ consists of at most one point for any closed point $s\in {M}^{\mathrm{Kpsqmaps}}_{12,0,0,\frac{1}{12}}$ corresponding to a quasimap defined by the linear system $[g_0^3:g_1^2]$, where $g_0\in H^0(\mathbb{P}^1,\mathcal{O}(4))$ and $g_1\in H^0(\mathbb{P}^1,\mathcal{O}(6))$ such that $g_0\equiv0$ or $g_1\equiv0$.
Suppose $g_0\equiv0$ but $g_1\not\equiv0$.
By the description of $\mathcal{M}^M$ in \cite{Miranda1981}, $\overline{F}^{-1}(s)$ defined by the coefficients of $g_0$ and $g_1$ up to the action of $\PGL(2)$ and constant multiple.
In this case, it is not hard to check that $\overline{F}^{-1}(s)$ consists of at most one point.
On the other hand, we can see that $\overline{F}^{-1}(s)$ consists of at most one point when $g_1\equiv0$ but $g_0\not\equiv0$.
Because $\mathcal{M}^M$ is proper and $\overline{F}$ is injective, we see that this is a homeomorphism.
Since $Y^{\mathrm{ss}}_{\mathrm{Mir}}$ is normal, it follows that $\mathcal{M}^M$ is also normal.
We complete the proof.
\end{proof}
From now on, for a Weierstrass pair $(f_0,f_1)$, we always associate a quasimap defined by $[f_0^3:f_0^3+f_1^2]$.
Note that the symbol $\frac{f_0^3}{f_0^3+f_1^2}$ is different from the classical $j$-invariant by the multiple of $1728$. 
By Proposition \ref{prop:miranda-k-moduli}, we see that $\M^M$ is the normalization of a closed subscheme of $M^{\mathrm{Kpsqmaps}}_{12,0,0,\frac{1}{12}}$.

\begin{rem}\label{rem:OO}
  We can also consider the Weierstrass models defined by $-y^2z+x^3+3g_0xz^2+2g_1z^3\in H^0(\mathbb{P}',\mathcal{O}_{\mathbb{P}'}(3)\otimes\pi^*\mathcal{O}_{\mathbb{P}^1}(12))$, where $\mathbb{P}':=\mathbb{P}_{\mathbb{P}^1}(\mathcal{O}_{\mathbb{P}^1}(-4)\oplus\mathcal{O}_{\mathbb{P}^1}(-6)\oplus\mathcal{O}_{\mathbb{P}^1})$, $g_0\in H^0(\mathbb{P}^1,\mathcal{O}_{\mathbb{P}^1}(8))$, and $g_1\in H^0(\mathbb{P}^1,\mathcal{O}_{\mathbb{P}^1}(12))$.
  Miranda \cite{Miranda1981} also considered the GIT stability of the pairs $(g_0,g_1)\in \mathbb{P}(2^9,3^{13})$ and Odaka--Oshima \cite{OO21} showed that the corresponding
GIT moduli space $M^{\rm GIT}$ is isomorphic to the
Baily--Borel compactification of the moduli space of
elliptic K3 surfaces with sections.
  
  As in Proposition \ref{prop:miranda-k-moduli}, using Theorem \ref{thm:wall-crossing-recall}, one can show that there exists a canonical morphism
  \[
  {M}^{\mathrm{GIT}}\to M^{\mathrm{Kpsqmaps}}_{24,0,0,\frac{a}{12}}\cong M^{\mathrm{bpcyqmaps}}_{24,0,0,\frac{1}{12}} 
  \]
  for any $a\in(0,1)$ that induces an isomorphism of $({M}^{\mathrm{GIT}})^{\mathrm{sn}}$ to the seminormalization of its image.
\end{rem}

\subsection{The GIT moduli space of twelve points of $\mathbb{P}^1$}

Note that $\mathrm{SL}(2)$ naturally acts on $\mathbb{P}^{12}=\mathbb{P}(H^0(\mathbb{P}^1,\mathcal{O}_{\mathbb{P}^1}(12)))$ and we can consider the GIT quotient $\mathrm{SL}(2)$
\[
\mathcal{D}^*:=\mathrm{SL}(2)\backslash\!\backslash (\mathbb{P}^{12})^{\mathrm{ss}}.
\]
Let $\mathscr{D}^*:=[\mathrm{PGL}(2)\backslash (\mathbb{P}^{12})^{\mathrm{ss}}]$ be an Artin stack with the good moduli space $\mathcal{D}^*$.
Note that $\mathscr{D}^*\cong\mathcal{M}^{\mathrm{Kss}}_{12,\frac{\epsilon}{12}}\cong \mathcal{M}^{\mathrm{Kss}}_{12,\frac{1}{12}}$ for any sufficiently small $\epsilon>0$ by \cite[Theorem 3.2 and Remark 3.5]{HM1}.
% \Yota{These objects are out of scope of Definition 3.7. Perhaps the indices are converse.} \Masa{I did it.}
Let $\mathscr{M}\subset\mathscr{M}^M$ be an open substack that parameterizes all quasimaps defined by Weierstrass pairs $(f_0,f_1)$ such that $(f_0^3+f_1^2=0)$ is a reduced divisor.
As shown in \cite[Section 2.3]{HeckmanLooijenga2002}, we see that there exists a natural morphism $\mathcal{M}\to \mathcal{D}^*$ defined by $(f_0,f_1)\mapsto (f_0^3+f_1^2=0)$, where $\mathcal{M}$ is the coarse moduli space of $\mathscr{M}$, which is exhibited as an open subset of $\mathcal{M}^M$.
This morphism comes from the natural morphism $\mathscr{M}\to \mathscr{D}^*$ of Artin stacks.
Let $\mathscr{M}^{*}$ be the normalization of the scheme-theoretic closure of the image as an Artin stack.
Since $\mathscr{M}^*\to \mathscr{D}^*$ is finite, we see that $\mathscr{M}^*$ also admits a good moduli space $\mathcal{M}^*$ finite over $\mathcal{D}^*$.
This is nothing but $\mathcal{M}^*$ appeared in \cite{HeckmanLooijenga2002}.

\subsection{Kontsevich compactification}
\label{sec:Kontsevich compactification}

We recall the definition of the Kontsevich compactification of $\mathcal{M}$, which is named in \cite{HeckmanLooijenga2002}.
Consider $\mathcal{M}^{\mathrm{stqmaps}}_{12,1,12,1}$ the moduli stack.
This contains the closed substack that parameterizes all $J\colon (P,D)\to \mathbb{P}^1$ such that $D\subset J^{-1}(\infty)$ with reduced structure.
% Take the irreducible component of the normalization of this stack that is birational to $\mathscr{M}^M$ and we write it as $\mathscr{M}^{K}$.
Let $\mathscr{M}^K$ be the normalization of the closure of the image of $\mathcal{M}$ in the Kontsevich moduli space of degree-twelve genus-zero stable maps to $\mathbb{P}^1$ with twelve unordered marked poles.
Let $\mathcal{M}^K$ be its coarse moduli space and then $\mathcal{M}^K$ coincides with the one introduced in \cite{HeckmanLooijenga2002}.
By Theorem \ref{thm:hm1-fromstable}, we see that there exist canonical morphisms $\rho\colon \mathcal{M}^K\to \mathcal{M}^M$ and $\sigma\colon\mathcal{M}^K\to\mathcal{M}^*$.
We see that $\sigma$ and $\rho$ are birational by \cite[Section 2]{HeckmanLooijenga2002}.

As explained in \cite[Section 3]{HeckmanLooijenga2002}, $\mathcal{M}^K$ contains $\mathcal{M}$ as an open subset and $\mathcal{M}^K\setminus\mathcal{M}$ consists of finitely many divisors and all of them are classified.
For each irreducible component of the boundary divisor, we will use the same notation as \cite{HeckmanLooijenga2002} (e.g. $\mathcal{M}^K(I_8')$).
According to \cite[Section 3.3]{HeckmanLooijenga2002}, $\mathcal{M}^K$ has boundary divisors, $\mathcal{M}^K(I_{k})$ for $2\le k\le9$, $\mathcal{M}^K(II)$, $\mathcal{M}^K(III)$, $\mathcal{M}^K(IV)$, $\mathcal{M}^K(I_0^*)$, and $\mathcal{M}^K(I^*_{k,k'})$ for $1\le k,k'\le4$.
By the canonical morphisms $\rho$ and $\sigma$, we can define the locus of the boundary for example $\mathcal{M}^M(I_8'):=\rho(\mathcal{M}^K(I_8'))$ and $\mathcal{M}^*(I_8'):=\sigma(\mathcal{M}^K(I_8'))$. 
See also the table of \cite[Section 3]{HeckmanLooijenga2002}.

\section{Heckman-Looijenga's ball quotient}
\label{sec:Heckman--Looijenga's ball quotient}
\subsection{Looijenga's compactification}
\label{sec:Looijenga's compactification}
We use the following notation for ball quotients in this paper.
Put $\omega=e^{2\pi i/6}$ and
$\theta:=\omega-\bar\omega=\sqrt{-3}$.
Let $\mathscr{O}\defeq \mathbb Z[\omega]$ be the ring of integers of $E:=\Q(\omega)$.
\begin{defn}
\label{def:unitary-heegner-divisors}
Let $(L,\psi_L)$ be an arbitrary Hermitian $\mathscr O$-lattice
of signature $(n,1),$ and put
\[
V_L:=L\otimes_{\mathscr O}\mathbb C,
\qquad
\mathbb B_L:=
\{[z]\in\mathbb P(V_L):\psi_L(z,z)<0\}.
\]
For a vector $r\in L\otimes_{\mathscr O}E$ satisfying
$\psi_L(r,r)>0$, define
\[
H_L(r):=\mathbb P(r^\perp)\cap\mathbb B_L.
\]
This is a totally geodesic hypersurface of $\mathbb B_L$,
which we call a \emph{hyperball} in the ball. 
\end{defn}

We recall the general theory of Looijenga's compactification of ball quotients \cite{Looijenga2003Ball}. 
 Let $\B_L$ be a complex ball, let $\Gamma$ be an arithmetic subgroup of its automorphism group, and put $X\defeq \Gamma\backslash \B_L$.
The automorphic $\Q$-line bundle of weight one on $X$ will be denoted by $\mathcal L$.  Its Baily-Borel compactification is denoted by $X^{\mathrm{bb}}$.
The boundary $X^{\mathrm{bb}}\setminus X$ consists of a finite set of cusps since the Hermitian symmetric domain $\B_L$ is associated with $\U(n,1)$ \cite{baily1966compactification}; see also \cite[Section 4.1]{Looijenga2003Ball}. 

Following \cite[Definition 4.2]{Looijenga2003Ball},
an \emph{arithmetic system}
is a $\Gamma$-equivariant assignment $I\mapsto j(I)$,
where $I$ ranges over the rational isotropic lines
and $j(I)\subset V_L$ is defined over $E$ and satisfies
$I\subseteq j(I)\subseteq I^\perp$.
The associated compactification is denoted by $X^j$,
which we call the \emph{semi-toroidal compactification}.
The choices $j(I)=I^\perp$ and $j(I)=I$
recover the Baily--Borel and toroidal compactifications,
respectively, respectively \cite[Definition 4.2]{Looijenga2003Ball}.  
%Thus the word ``semi-toroidal'' will always mean Looijenga's intermediate cusp modification $X^j$, not the final arrangement compactification.

An \emph{arithmetic arrangement} is a $\Gamma$-invariant collection
of hyperballs, that is codimension $1$ subballs, arising from rational hyperplanes in the
underlying Hermitian space, with finitely many
$\Gamma$-orbits and locally finite union in the ball.
When writing $I\subset H$, we use $H$ for the corresponding
linear hyperplane in the Hermitian space.
Now, let $\mathcal H$ be a $\Gamma$-invariant arithmetic arrangement 
For a rational isotropic line $I$ corresponding to a cusp of $X^{\mathrm{bb}}$, define
\[
        j_{\mathcal H}(I)
        \defeq
        I^\perp\cap \bigcap_{\substack{H\in\mathcal H\\ I\subset H}} H.\]
Looijenga proved that $j_{\mathcal H}$ is an arithmetic system and that $X^{j_{\mathcal H}}\to X^{\mathrm{bb}}$
is the smallest normalized blow-up which makes the fractional ideals defined by the arrangement divisors invertible.  In particular, the strict transforms of the arrangement hypersurfaces become $\mathbb{Q}$-Cartier divisors on $X^{j_{\mathcal H}}$ \cite[Lemma 5.2]{Looijenga2003Ball}.

Starting from the semi-toroidal compactification $X^{j_{\mathcal H}}$, one separates the arrangement strata by a further arrangement blow-up and then contracts by a semiample line bundle.  We shall use the notation
\[
        X^{\mathrm{bb}}
        \longleftarrow
        X^{j_{\mathcal H}}
        \longleftarrow
        \widetilde X^{\mathcal H}
        \longrightarrow
        \widehat X^{\mathcal H}.
\]
Here, $\widetilde X^{\mathcal H}$ is the arrangement blow-up and $\widehat X^{\mathcal H}$ is Looijenga's compactification of the arrangement complement
        $X^\circ \defeq X-\B_L(\mathcal H)_\Gamma$.
Here $\B_L(\mathcal H)\defeq \sum_{H\in\mathcal H}H$
denotes the reduced arrangement divisor on $\B_L$, and $\B_L(\mathcal H)_\Gamma$ denotes its image in $X=\Gamma\backslash\B_L$, endowed with the reduced structure.  Let us introduce the graded algebra 
\[
        A_{\mathcal H,\bullet}^{\Gamma}
        \defeq
        \bigoplus_{k\in\mathbb Z} A_{\mathcal H,k}^{\Gamma},
        \qquad
        A_{\mathcal H,k}^{\Gamma}
        \defeq
        H^0\bigl(\B_L^\circ,
        \mathcal O_{\B_L^\circ}^{\mathrm{an}}(\mathcal L^{\otimes k})\bigr)^\Gamma,
\]
where, by abuse of notation, $\mathcal L$ is also the automorphic line bundle on $\B_L$ and  $\B_L^\circ\defeq \B_L\setminus \bigcup_{H\in\mathcal H}H$.
Thus, the degree-$k$ part consists of meromorphic automorphic forms, whose possible poles are supported on the arrangement $\mathcal {H}$.
% \Yota{Need to check; compare this with \cite[Section 2.2]{Looijenga2003Ball}.}
Assume that every nonempty intersection of members of
$\mathcal H$ in $\mathbb B_L$ has complex dimension at least two.
Then $A_{\mathcal H,\bullet}^{\Gamma}$
is finitely generated with positive-degree generators, and $\widehat X^{\mathcal H}
 \cong\Proj A_{\mathcal H,\bullet}^{\Gamma}$
by \cite[Corollary 5.8]{Looijenga2003Ball}.
%\Yota{Need to be defined}
%; on the semi-toroidal model its degree-$m$ part is the section space with pole order bounded by $m$ times the reduced arrangement divisor after coherent pull-back.  Thus this is the bounded-pole arrangement algebra, not the larger union of all finite pole orders in each fixed automorphic weight.  
The construction leaves $X^\circ$ unchanged, but changes the boundary and the arrangement strata in a way controlled by the intersection lattice of $\mathcal H$ \cite[Theorem 5.7]{Looijenga2003Ball}.

% In the rational elliptic surface case below, the arrangement relevant for Miranda's compactification is not the full mirror-trace arrangement.  After the four mirror-trace components have been introduced, we denote the Miranda arrangement by $\mathcal K=\mathcal H_o(6)\cup\mathcal H_o(9)$.  The corresponding semi-toroidal compactification will be denoted by $X_o^{j_{\mathcal K}}$, and the final Looijenga compactification by $\widehat X_o^{\mathcal K}$.  Looijenga's comparison theorem identifies $\widehat X_o^{\mathcal K}$ with Miranda's GIT compactification of rational elliptic surfaces.

\subsection{The Deligne-Mostow ball quotient and rational elliptic surfaces}
We recall the setting in \cite[Sections 7.3, 7.4]{Looijenga2003Ball}.
Let $P$ be a projective line $\mathbb{P}^1$ and put
\[
        H_k \defeq H^0(P,\mathcal O_P(k)),\qquad  P_k\defeq \mathbb P(H_k).
\]
Thus $P_{12}$ parametrizes effective divisors of degree twelve
on $P$. Denote its GIT-stable and semistable loci by
$P_{12}^{\mathrm s}$ and $P_{12}^{\mathrm{ss}}$, respectively.
The GIT quotient
$\mathcal{D}^*=\SL(H_1)\backslash\!\backslash P_{12}^{\mathrm{ss}}$
compactifies the geometric quotient
$\SL(H_1)\backslash P_{12}^{\mathrm s}$.
Its complement consists of a single point, represented by
$6p+6q$ for distinct points $p,q\in P$.
To describe the period map, let $D\subset P$ be a reduced divisor of degree $12$.
Consider the cyclic cover $C\to P$ of degree six totally
ramified along $D$.
Choose the deck transformation $\tau$ whose action on the
tangent space at each ramification point is multiplication
by $\omega$.
The {\it Eisenstein homology} of $C$ is the torsion-free quotient
\[
H_1(C,\mathbb Z)_{\mathscr O}
:=
\left(
H_1(C,\mathbb Z)
\otimes_{\mathbb Z[\langle\tau\rangle]}\mathscr O
\right)/\mathrm{torsion},
\]
where $\tau$ acts on homology by $\tau_*$ and on
$\mathscr O$ by multiplication by $\omega$.
This is a free $\mathscr O$-module of rank ten (see \cite[Section 4.2]{HeckmanLooijenga2002}).
The intersection pairing induces a Hermitian form on it
as described in \cite[Section 4.1]{HeckmanLooijenga2002}.
We denote the resulting
Hermitian lattice by $(A,\psi)$.  
We retain the normalization in which roots have
$\psi$-norm $3$. See \cite[Section 4]{HeckmanLooijenga2002} for more details.
We list the positive index first in the signature.
Thus $\psi$ has signature $(9,1)$, and the associated
ball parametrizes negative lines.

% Let $\Gamma\subset\mathrm U(L)$ be an arithmetic subgroup,
% and let
% $\pi_\Gamma:\mathbb B_L\to X_\Gamma
% :=\Gamma\backslash\mathbb B_L$
% be the quotient map.
% We call
% \[
% Z_\Gamma(r):=
% \bigl(\pi_\Gamma(H_L(r))\bigr)_{\mathrm{red}}
% \]
% the \emph{Heegner divisor} attached to the
% $\Gamma$-orbit of the rational line $Er$.
% In particular, $H_L(ar)=H_L(r)$ for every $a\in E^\times$.
Put
\[
X_A:=\mathrm U(A)\backslash\mathbb B_A.
\]
We denote its Baily-Borel compactification by
$X_A^{\mathrm{bb}}$.
This is called the ancestral Deligne-Mostow ball quotient \cite{DDH18,HKM24}; all other Eisenstein Deligne-Mostow ball quotients arise as sub-ball quotients of it and have attracted considerable interest in their own right \cite{HM25Universe}. 
Define
\[
\mathcal H_A:=
\{H_A(r):r\in A,\ \psi(r,r)=3\}.
\]
Here $\mathcal H_A$ is a set of distinct hyperballs:
vectors defining the same hyperball do not give
different members of the arrangement.
The associated reduced arrangement divisor is $\mathbb B_A(\mathcal H_A)=
\sum_{H\in\mathcal H_A}H$.
This is an arithmetic arrangement in the sense of
\cite[Example 1.8 and Lemma 5.4]{Looijenga2003Ball}.

By the Deligne-Mostow theory, the period map identifies
$\SL(H_1)\backslash P_{12}^{\mathrm s}$ with $X_A$
and extends to an isomorphism
$\mathcal{D}^*\cong X_A^{\mathrm{bb}}$
\cite[Theorems 7.7 and 7.8]{HeckmanLooijenga2002};
see also \cite{DeligneMostow1986,Mostow1986generalized}.
The class represented by $6p+6q$ maps to the unique cusp.
On the stable locus, the locus of nonreduced divisors
corresponds to the reduced image of
$\B_A(\mathcal H_A)$ in $X_A$.

% A rational Weierstrass fibration is represented by a Weierstrass equation 

For the Weierstrass pairs $(f_0,f_1)$ introduced in Subsection \ref{subsec-weierstraa-explanation}, write
\[
\Delta_{\mathrm{Wei}}(f_0,f_1):=[f_0^3+f_1^2]\in P_{12}\cup\{0\}.
\]
This defines a rational map
$\Delta_{\mathrm{Wei}}\colon Y_{\mathrm{Mir}}\dashrightarrow P_{12}$,
whose domain of definition consists of the pairs with
$f_0^3+f_1^2\not\equiv0$.
% \[
%         y^2=x^3+3f_0(t)x+2f_1(t),
%         \qquad f_0\in H_4,\quad f_1\in H_6,
% \]
% with discriminant
% \[
%         \Delta_{\mathrm{Wei}}(f_0,f_1)=f_0^3+f_1^2\in H_{12}.
% \]
Thus the discriminant of a rational elliptic surface gives a degree $12$ divisor on $P$, but it satisfies the additional condition of being expressible as a sum of a cube and a square.  On the ball quotient side this condition cuts out a hyperball.  Namely, Heckman-Looijenga \cite[Theorem 8.2]{HeckmanLooijenga2002} construct a distinguished norm-$6$ vector $z_o\in A$ and put $A_o\defeq z_o^\perp\subset A$.
The lattice $A_o$ has signature $(8,1)$, and so determines an $8$-dimensional ball $\B_{A_o}\subset \B_A$. For simplicity, we denote it by $\B_o$.  We denote the corresponding arithmetic quotient, \emph{Heckman-Looijenga's ball quotient}, by
\[
        \Gamma_o\defeq \U(A_o),\qquad X_o\defeq \Gamma_o\backslash \B_o .
\]

\subsection{Arrangement divisors}
The arrangement $\mathcal H_A$ on $\B_A$ restricts to an arrangement on the hyperball $\B_o$.  Heckman-Looijenga call the hyperplanes arising in this way \emph{mirror traces} \cite[Section 9]{HeckmanLooijenga2002}.  We denote the collection of these hyperplanes by $\mathcal H_o$.
More explicitly, put
\[
\mathcal H_o:=
\{H_A(r)\cap\mathbb B_o:
  r\in A,\ \psi(r,r)=3,\
  H_A(r)\cap\mathbb B_o\ne\emptyset\},
\]
again regarded as a set of distinct hyperballs.

For $H=H_A(r)\cap\mathbb B_o\in\mathcal H_o$, consider  an $\mathscr O$-basis $(e_1,e_2)$ of 
the primitive rank-two normal lattice $N_H:=A\cap(Ez_o+Er)$, the saturated lattice of $\mathscr{O} z_o+\mathscr{O} r$, and 
set
\[
d(H):=\det\bigl(\psi(e_i,e_j)\bigr)_{i,j=1}^{2},
\]
which is independent of the choice of the basis.
We call $d(H)$ the \emph{$d$-invariant} of $H$.
By \cite[Lemma A.9 and Proposition A.12]
{HeckmanLooijenga2002},
the possible values are $6,9,15,18$.

Unlike the original arrangement in $\B_A$, the arrangement $\mathcal H_o$ has four $\Gamma_o$-orbits, distinguished by the invariant $d(H)\in\{6,9,15,18\}$ attached to a mirror trace $H$.  We write
$\mathcal H_o(d)\defeq \{H\in \mathcal H_o : d(H)=d\}$ and $\mathcal H_o(S)\defeq \bigcup_{d\in S}\mathcal H_o(d)$ for $S\subset\{6,9,15,18\}$.  We use the convention
\[
        \B_o(\mathcal A)\defeq \sum_{H\in\mathcal A}H
\]
for any locally finite arrangement $\mathcal A$ on $\B_o$.  The corresponding divisors on the ball quotient are denoted by
\[
        \Delta(d)\defeq \Gamma_o\backslash \B_o(\mathcal H_o(d)),
        \qquad d=6,9,15,18.
\]
Their closures in $X_o^{\mathrm{bb}}$ will be denoted by the same symbols when no confusion is possible. Finally, we put
\[
        \mathcal K\defeq \mathcal H_o(\{6,9\})=\mathcal H_o(6)\cup\mathcal H_o(9)
\]
and call it the \emph{Miranda arrangement}.

Let $\M$ be the moduli space of rational
Weierstrass fibrations with reduced discriminant.  The period map identifies it with the complement of the full mirror-trace arrangement:
\begin{equation}\label{eq:open-res-ball-quotient}
        \M \cong X_o-\bigl(\Delta(6)\cup\Delta(9)\cup\Delta(15)\cup\Delta(18)\bigr).
\end{equation}
The compactification $\M^*$ considered in \cite{HeckmanLooijenga2002} is the normalization of the closure of $\M$ inside $\mathcal{D}^*$.  Under the period map it is identified with the Baily-Borel compactification of $X_o$:
        \[\M^*\cong X_o^{\mathrm{bb}}\]
by \cite[Theorem 9.2]{HeckmanLooijenga2002}.
% For a degeneration label $F$ in
% \cite[Section 3.3]{HeckmanLooijenga2002}, we write
% \[
%  \mathcal M^*(F)
%  :=\bigl(\sigma(\mathcal M^K(F))\bigr)_{\mathrm{red}},
% \]
% where $\sigma\colon\mathcal M^K\to\mathcal M^*$ is the
% morphism introduced in Section \ref{sec:Kontsevich compactification} and $\mathcal M^K(F)$
% is the boundary divisor whose general point represents
% a stable limit of type $F$ \Masa{I explained before.}.
Under this isomorphism, the four divisorial boundary components of $\M$ in $\M^*$ are matched with $\M^*(\bullet)$ given as above as follows \cite[Theorem 9.3 and Example 10.3]{HeckmanLooijenga2002}:
\[
\begin{array}{c|c}
\text{boundary component in }\M^* & \text{mirror-trace component} \\
\hline
\M^*(I_2) & \Delta(18)\\
\M^*(II) & \Delta(15)\\
\M^*(I_{4,4}^*) & \Delta(9)\\
\M^*(I_9) & \Delta(6).
\end{array}
\]
Moreover, the two zero-dimensional Baily-Borel boundary strata correspond to $\M^*(I_6)$ and $\M^*(I_0^*)$ \cite[Theorem 9.3]{HeckmanLooijenga2002}.

\begin{rem}
    We note that the canonical morphism $X_o^{\mathrm{bb}}\to X_A^{\mathrm{bb}}$ is not a closed immersion but finite and birational to the image as shown by Vakil \cite[Theorem 6.1]{vakil2001twelve}.
    Therefore, when we define $\mathcal{M}^*$, it is necessary to take the normalization of the closure of the image.
\end{rem}

Set $X_o^\circ:=X_o\setminus(\Delta(6)\cup\Delta(9))$.
The period map induces an isomorphism
\begin{equation}\label{eq:common-open-res}
\operatorname{PGL}_2\backslash
\left(
  Y_{\mathrm{Mir}}^{\mathrm s}
  \cap\Delta_{\mathrm{Wei}}^{-1}(P_{12}^{\mathrm s})
\right)
\cong X_o^\circ;
\end{equation}
see \cite[Sections 7.4-7.5]{Looijenga2003Ball}.
The inverse image is taken on the domain of definition
of $\Delta_{\mathrm{Wei}}$.
Thus, both the Weierstrass pair and its discriminant
are required to be GIT-stable.
Via this identification, we also regard $X_o^\circ$
as an open subset of $\mathcal M^M$.
The corresponding rational Weierstrass fibrations have only singular fibers of type $I_k$ with
$1\leq k\leq5$, $II$, $III$, and $IV$ (see \cite{HeckmanLooijenga2002,Miranda1981}).

We fix the normalization of the automorphic line
bundle. Let $h\colon X_o^{\mathrm{bb}}\to \mathcal{D}^*$
be the extended discriminant morphism, and put
$\xi:=\overline{\mathcal O}_{\mathcal{D}^*}(1)$.
Here $\xi$ is the GIT polarization descending from
the usual polarization $\mathcal O_{P_{12}}(1)$.
Let $\lambda_A$ be the natural automorphic $\Q$-line bundle
on the Baily-Borel
compactification of the ancestral ball quotient, and set
\[
 \lambda_{\rm aut}:=h^*\lambda_A=\mathcal L.
\]
On the ball quotient side, the natural automorphic line bundle is the restriction
of $\mathcal O(-1)$ on the compact dual of the ball.
Thus an automorphic form of weight $k$ is homogeneous of degree $-k$.
 In particular, the canonical weight, which contributes to the expression of the canonical bundle as a coefficient of $\mathcal{L}$, 
 is $n+1$ if the dimension of the ball is $n$.

% For a binary duodecic $F$, consider
% \[
%  C_F:\ y^6=F(x),
%  \qquad
%  \alpha_F=\frac{dx}{y}.
% \]
% The differential $\alpha_F$ spans the one-dimensional
% primitive Hodge eigenspace
 
Let $F$ be a binary duodecic with reduced zero divisor,
and let $C_F$ be the smooth projective curve with affine
equation $y^6=F(x)$.
Projection to the $x$-coordinate defines a cyclic cover
$\pi_F\colon C_F\to\mathbb{P}^1$ of degree six.
A deck transformation of this cover is an automorphism
of $C_F$ over $\mathbb{P}^1$.
The deck transformation group is cyclic of order six, generated by $\tau(x,y)=(x,\omega y)$.
The period line is the one-dimensional eigenspace \cite[Lemma 7.1]{HeckmanLooijenga2002} 
\[
\{\alpha\in H^0(C_F,\Omega^1_{C_F}):
  \tau^*\alpha=\omega^{-1}\alpha\}
=\mathbb C\alpha_F,
\qquad
\alpha_F:=\frac{dx}{y}.
\]
For $c\in\mathbb C^\times$, choose a sixth root $c^{1/6}$.
The isomorphism
$C_F\to C_{cF}$ given by $(x,y)\mapsto(x,c^{1/6}y)$
pulls $\alpha_{cF}$ back to $c^{-1/6}\alpha_F$.
Consequently, coefficient degree $r$ corresponds to
natural automorphic weight $6r$.
% defining the period line.
% Under $F\mapsto cF$ \Masa{What is $c$?}, the identification
% $y\mapsto c^{1/6}y$ gives
% $\alpha_{cF}=c^{-1/6}\alpha_F$
% after transport to $C_F$.
% Consequently, coefficient degree $r$ corresponds to
% natural automorphic weight $6r$.
% Equivalently, $\alpha_F^{\otimes6}=(dx)^6/F(x)$
% identifies the sixth power of the period line with
% $H^0(\mathbb P^1,\omega_{\mathbb P^1}^{\otimes6}(D_F))$.
% This is the equivariant construction of
% \cite[Section 7]{HeckmanLooijenga2002}. \Masa{Would it construct a correspondence of the period?}
% The comparison extends over the stable discriminant locus
% and then to the Baily-Borel compactification, since the
% remaining boundary has codimension at least two.
% Hence
% \begin{equation}\label{eq:O1-automorphic-normalization}
%  h^*\xi\sim_{\Q}6\lambda_{\rm aut}.
% % \end{equation}
% \Yota{I added an explanation for the coincidence, referring to HL.}
% Choose a marking of the primitive Eisenstein cohomology. 
Put $E_F:=
\ker\bigl(\tau^*-\omega^{-1}\mathrm{id}\bigr)
\subset H^1(C_F,\mathbb C)$.
Choose an identification of the integral first cohomology
of $C_F$ with that of a fixed reference cover, preserving
the intersection pairing and commuting with the deck action.
Such a marking identifies $E_F$ with $V_A=A\otimes_{\mathscr{O}}\mathbb{C}$, and changes
of marking act through $\mathrm U(A)$;
see \cite[Section 7]{HeckmanLooijenga2002}.
The cohomology class of $\alpha_F$ determines a negative
line in $E_F$, hence a point of $X_A$ independent of
the marking.
Under this period construction, the Hodge eigenline
$\mathbb C\alpha_F$ is identified with the tautological
automorphic line bundle.
% The cohomology class of $\alpha_F$ then spans a negative
% line in $V_A$, and its $\mathrm U(A)$-orbit is independent of the marking. 
% % The $\mathrm U(A)$-orbit of this negative line is the image
% % of the divisor defined by $F$ under the Deligne-Mostow
% % period map recalled at the beginning of this section.
% Its equivariant lift identifies the Hodge eigenline
% $\mathbb C\alpha_F$ with the tautological automorphic line.
The sixth tensor power $\alpha_F^{\otimes6}$ descends to
\[
 \frac{(dx)^6}{F(x)}
 \in H^0\bigl(
   \mathbb P^1,
   \omega_{\mathbb P^1}^{\otimes6}(\operatorname{div}(F))
 \bigr).
\]
By \cite[Lemma 7.2]{HeckmanLooijenga2002}, this identifies
the GIT polarization $\xi$ with the sixth power of the
automorphic line on the reduced-discriminant locus.

By \cite[Theorem 7.7]{HeckmanLooijenga2002},
the equivariant period isomorphism extends this comparison
over the GIT-stable locus.
The resulting identity of $\Q$-line bundles then extends
across the Baily-Borel boundary, which has codimension
at least two.
Pulling back by $h$, we obtain
\begin{equation}\label{eq:O1-automorphic-normalization}
 h^*\xi\sim_{\Q}6\lambda_{\rm aut}.
\end{equation}

To compare the Hodge line bundle with the GIT polarization, let $\mathscr D_{\rm univ}$ be
the universal divisor on $\mathbb P^1\times P_{12}^{\rm ss}$,
with projections $p_1,p_2$.
Since it is well-known that
\[
 \mathcal O(\mathscr D_{\rm univ})
 \cong
 p_1^*\mathcal O_{\mathbb P^1}(12)
 \otimes p_2^*\mathcal O_{P_{12}}(1),
\]
we have
\[
 \omega_{\mathbb P^1\times P_{12}^{\rm ss}/P_{12}^{\rm ss}}^{\otimes6}
 (\mathscr D_{\rm univ})
 \cong p_2^*\mathcal O_{P_{12}}(1).
\]
At $t=0$, the relevant boundary-polarized CY
family is
$(\mathbb P^1,\frac16\mathscr D_{\rm univ})$,
with quasimap weight zero.
Applying Definition \ref{defn:Hogde-HM2},
descending to $\mathcal{D}^*$, and pulling back by $h$, we obtain
\begin{equation}\label{eq:O1-Hodge-normalization}
 6\Lambda_{\rm H}\sim_{\Q}h^*\xi,
 \qquad
 \Lambda_{\rm H}:=\Lambda_{\mathrm{Hodge},0}.
\end{equation}
Combining the two comparisons, we obtain
\begin{equation}\label{eq:CY-Hodge-equals-natural-automorphic}
 \Lambda_{\rm H}\sim_{\Q}\lambda_{\rm aut}.
\end{equation}

Let $\eta$ be the rational GIT polarization on $\mathcal M^M$
induced by $\mathcal O_{Y_{\mathrm{Mir}}}(1)$.
More precisely, for a sufficiently divisible positive integer
$m$, the line bundle
$\mathcal O_{Y_{\mathrm{Mir}}}(m)|_{Y_{\mathrm{Mir}}^{\mathrm{ss}}}$
descends to a line bundle representing $m\eta$.
We use the grading in which the coefficients of $f_0$
and $f_1$ have weights two and three, respectively.

Under \eqref{eq:common-open-res}, the restriction
$h|_{X_o^\circ}$ is the morphism induced by
$\Delta_{\mathrm{Wei}}$.
Since $f_0^3+f_1^2$ has weighted degree six,
\[
  (h|_{X_o^\circ})^*\xi
  \sim_{\Q}6\eta|_{X_o^\circ}.
\]
Combining this with
\eqref{eq:O1-automorphic-normalization}, we obtain
$\eta|_{X_o^\circ}
  \sim_{\Q}\lambda_{\rm aut}|_{X_o^\circ}$.

\subsection{Cusp behaviour}

We apply the construction of
Subsection \ref{sec:Looijenga's compactification}
to the Miranda arrangement
$\mathcal K=\mathcal H_o(6)\cup\mathcal H_o(9)$.
The resulting arithmetic system is denoted by $j_{\mathcal K}(I)=I^\perp\cap\bigcap_{H\in\mathcal K,\ I\subset H}H$.
The semi-toroidal modification associated with $\mathcal K$ is the morphism $X_o^{j_{\mathcal K}}\to X_o^{\mathrm{bb}}$. Recall that it is the smallest normalized blow-up for which the strict transforms of $\Delta(6)$ and $\Delta(9)$ are $\Q$-Cartier.
For the rest of this paper, we write $D_d:=\Delta(d)^{j_{\mathcal K}}$ for the strict transform of $\Delta(d)$ on $X_o^{j_{\mathcal K}}$, and use the same notation $\lambda_{\rm aut}$ and $\Lambda_{\rm H}$ for their pullbacks to $X_o^{j_{\mathcal K}}$.

The Baily-Borel compactification $X_o^{\mathrm{bb}}$ has two cusps, denoted by $\infty_\theta$ and $\infty_0$.  They correspond to the two $\U(A_o)$-orbits of primitive isotropic lines of types $(\theta)$ and $(0)$ classified in \cite[Proposition A.6]{HeckmanLooijenga2002}.  Under the modular interpretation, they are the images of the $I_6$- and $I_0^*$-strata.

\begin{prop}
\label{prop:cusp-incidence-all-components}
The cusp $\infty_\theta$ belongs to all four divisors $\Delta(d)^{\rm bb}$, the closure of $\Delta(d)$ in $X_o^{\rm bb}$, with $d=6,9,15,18$.  The cusp $\infty_0$ belongs to $\Delta(9)^{\rm bb}$, $\Delta(15)^{\rm bb}$, and $\Delta(18)^{\rm bb}$, but not to $\Delta(6)^{\rm bb}$.
\end{prop}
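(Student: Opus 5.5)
The key observation is that a cusp $\infty_I$ lies in the closure $\Delta(d)^{\rm bb}$ exactly when some mirror trace $H\in\mathcal H_o(d)$ contains the corresponding isotropic line $I$. Indeed, the closure of a $\Gamma_o$-invariant arrangement divisor in $X_o^{\rm bb}$ meets the cusp of $I$ iff a member of the arrangement has $I$ in its closure in the compact dual. For a hyperball $H=\mathbb P(r^\perp)\cap\B_o$ this happens iff $I\subset r^\perp$. Local finiteness of the arrangement away from the cusps, and finiteness of orbits, make this well defined. So the statement reduces to a lattice question. Fix primitive isotropic $e\in A_o$ of type $(\theta)$ or $(0)$, as in \cite[Proposition A.6]{HeckmanLooijenga2002}. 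We must decide for which $d$ there is a root $r\in A$ with $\psi(r,r)=3$ such that $r\perp e$, $H_A(r)\cap\B_o\neq\emptyset$, and the rank-two lattice $N_H=A\cap(Ez_o+Er)$ has determinant $d$.

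\textbf{Positive cases: explicit roots.} Write $A_o$ near the cusp as $e^\perp/e\cong$ a positive definite Eisenstein lattice $K$ of rank $7$ (plus a hyperbolic part). The cusp type is recorded by $e^\perp\cap A$ modulo $e$, which contains $z_o$'s contribution. Decompose $r=r_o+c z_o/\psi(z_o,z_o)$ with $r_o\in A_o\otimes E$. Then $d(H)=\psi(z_o,z_o)\psi(r,r)-|\psi(r,z_o)|^2$, up to the index of saturation, and this can be computed as $6\cdot 3-|\psi(r,z_o)|^2$ divided by the square of that index. Since $|\psi(r,z_o)|^2\in\{0,3,9,\dots\}$, the values $18,15,9$ correspond to $|\psi(r,z_o)|^2=0,3,9$ respectively (with $d=6$ arising from a non-saturated case). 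To realize a given $d$ at a given cusp, I look for a root $r$ lying in $e^\perp\subset A$ with the prescribed value of $\psi(r,z_o)$. Such an $r$ automatically has $H$ passing through the cusp. It also meets $\B_o$, because $r_o$ has positive norm in $e^\perp\cap A_o\otimes E$ modulo $e$, and nonemptiness is guaranteed since $r_o^\perp$ inside the signature $(8,1)$ space contains negative vectors whenever $r_o$ is positive. For $\infty_\theta$ I produce four such roots, one per $d$. For $\infty_0$ I produce three, for $d=9,15,18$, using explicit models of $A$ as $E_8$-type Eisenstein lattices plus hyperbolic summands, as in \cite[Appendix]{HeckmanLooijenga2002}.

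As an independent cross-check, the modular interpretation gives the same incidences. $\infty_\theta$ is the image of the $I_6$ stratum and $\infty_0$ the image of the $I_0^*$ stratum. One can then exhibit, via $\sigma\colon\mathcal M^K\to\mathcal M^*$, degenerations of general members of $\mathcal M^K(I_9)$, $\mathcal M^K(I_{4,4}^*)$, $\mathcal M^K(II)$, $\mathcal M^K(I_2)$ to the cusp strata. For example, the $I_2$, $II$ and $I_{4,4}^*$ configurations specialize to fibrations with an $I_0^*$ fiber and to fibrations with an $I_6$ fiber.

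\textbf{Negative case and the main obstacle.} The hard part is showing $\infty_0\notin\Delta(6)^{\rm bb}$, i.e. that no norm-$6$-type mirror trace contains an isotropic line of type $(0)$. I will argue in the lattice. Suppose $H\in\mathcal H_o(6)$ and $e\in A_o$ is isotropic of type $(0)$ with $e\perp r$. Then $e\in N_H^\perp$, and $N_H$ is the rank-two positive definite lattice of determinant $6$ (an $A_2$-type sublattice containing $z_o$). So $e$ is an isotropic vector in $N_H^\perp$, a lattice of signature $(7,1)$ whose discriminant form is determined by $N_H$ and $A$.

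The classification of cusps in $N_H^\perp$ (equivalently, of cusps of the ball quotient $\Delta(6)$ itself, which is a smaller ball quotient) should show that the induced isotropic line is of type $(\theta)$ in $A_o$. Here the type is read off from the ideal $\psi(e,A_o)$, and I would compute that $\psi(e,A_o)\subseteq\theta\mathscr O$ is forced when $e\perp N_H$. Verifying this divisibility is the step I expect to be delicate; I would carry it out by using the explicit description of $N_H^\perp$ from \cite[Lemma A.9, Proposition A.12]{HeckmanLooijenga2002}. Geometrically, this matches the fact that the $I_9$ stratum only degenerates toward $I_6$-type cusps, never to $I_0^*$.
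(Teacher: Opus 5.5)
Your reduction matches the paper's: $\infty_I$ lies in $\Delta(d)^{\rm bb}$ if and only if some root $r\perp e$ cuts out a mirror trace with $d$-invariant $d$. So does your strategy for the positive cases. However, you only announce the roots (``I produce four such roots''), and the negative case, which you correctly identify as the crux, rests on an invariant that cannot work.

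\textbf{The gap.} You propose to read the cusp type off the ideal $\psi(e,A_o)$, and to show that $\psi(e,A_o)\subseteq\theta\mathscr O$ is forced when $e\perp N_H$. In this normalization $A$ is unimodular with respect to $\theta\mathscr O$, so $\psi(A,A)\subseteq\theta\mathscr O$. The inclusion therefore holds for every $e\in A_o$ and cannot separate the two cusps. In fact, after splitting off a hyperbolic plane $\mathscr O e+\mathscr O f$ with $\psi(e,f)=\theta$ and moving $z_o$ off $e$ by an Eichler transvection, one finds $\psi(e,A_o)=\theta\mathscr O$ at both cusps.

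The invariant that actually distinguishes them, and the one the paper uses following \cite[Proposition A.6]{HeckmanLooijenga2002}, is the position of the image of $z_o$ in $I_A^\perp/I\cong\Lambda'\perp\Lambda''$. For type $(\theta)$ it is $r_1'+r_2'$ inside one summand. For type $(0)$ it is $r_1'+r_1''$, with one root in each summand.

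With this invariant the negative case is short.
\begin{itemize}
\item A root of $A\cap I^\perp$ has nonzero image of norm $3$ in $\Lambda'\perp\Lambda''$. Since norms add and the minimum is $3$, that image lies in a single summand.
\item For type $(0)$, Cauchy--Schwarz then gives $|\psi(z_o,r)|^2\le 3\cdot 3=9$ for every root $r\in A\cap I^\perp$.
\item Suppose $H\in\mathcal H_o(6)$ contains $I$. Then $N_H\subset Ez_o+Er\subset I^\perp$. In the basis of \cite[Lemma A.9]{HeckmanLooijenga2002}, the root $e_1\in N_H$ satisfies $|\psi(z_o,e_1)|^2=|3-\sqrt{-3}|^2=12$.
\end{itemize}
This is a contradiction. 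The paper instead simply cites $I(6)=I$ from \cite[Proposition A.10]{HeckmanLooijenga2002}. The same standard forms are what produce your missing positive-case roots:
\begin{itemize}
\item at $\infty_\theta$: the vectors of \cite[Proposition A.10]{HeckmanLooijenga2002} for $d=6,9$; $r_3'$ with $\psi(z_o,r_3')=\theta$ for $d=15$; and roots of $\Lambda''$ for $d=18$;
\item at $\infty_0$: $r_1'$ for $d=9$; $r_2'$ for $d=15$; and $r_3'\perp z_o$ for $d=18$.
\end{itemize}

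Your dictionary between $d$ and $|\psi(r,z_o)|^2$ also needs correcting. The basic $d=6$ configuration is a saturated pair with $|\psi(r,z_o)|^2=12$, which is exactly what drives the argument above. Conversely, $\psi(r,z_o)=0$ occurs both for $d=18$ and for the non-saturated root $e_1-\bar\omega e_2$ of a $d=6$ lattice. So to certify $d=18$ you must also check that $\mathscr O z_o+\mathscr O r$ is saturated.

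Finally, the modular cross-check via degenerations in $\mathcal M^K$ is heuristic and cannot substitute for the lattice argument.
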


\begin{proof}
Let $I=\mathscr{O}e$ represent the cusp.  A mirror trace defined by a norm-three vector $r$ contains $I$ if and only if $r\in I^\perp$, and the image of $r$ in $I^\perp/I$ has the same $d$-invariant as $r$.

For a cusp of type $(\theta)$, take the standard representative $z_o=r_1'+r_2'$ and $I_A^\perp/I\cong\Lambda'\perp\Lambda''$.    \cite[Proposition A.10]{HeckmanLooijenga2002} gives vectors of $d$-invariants $6$ and $9$.  Since $\psi(z_o,r_3')=\sqrt{-3}$, Lemma A.9 of loc. cit. gives a vector of $d$-invariant $15$, while every norm-three vector in $\Lambda''$ is orthogonal to $z_o$ and has $d$-invariant $18$.  Thus all four divisors contain $\infty_\theta$.

For a cusp of type $(0)$, take $z_o=r_1'+r_1''$ and $I_A^\perp/I\cong\Lambda'\perp\Lambda''$.  The vector $r_1'$ has $d$-invariant $9$, the equality $\psi(z_o,r_2')=\theta$ gives $d$-invariant $15$, and $\psi(z_o,r_3')=0$ gives $d$-invariant $18$.  Finally, Proposition A.10 of loc. cit. gives $I(6)=I$ in their notation, so no mirror trace of $d$-invariant $6$ contains $\infty_0$.
\end{proof}

For a primitive isotropic line $I\subset A_o$ and $d\in\{6,9\}$, put 
\[V_{d,I}:=\operatorname{Span}_{\C}\{[\operatorname{pr}_{z_o^\perp}(r)]\in \bigl(I_{A_o}^{\perp}/I\bigr)
\otimes_{\mathscr O}\mathbb C : r\in A\cap I_A^{\perp},\ \psi(r,r) = 3,\ d(H_A(r)\cap \mathbb{B}_o) = d\}.\]
Choose a normal neat subgroup $\Gamma'\subset\Gamma_o$ of finite
index. Let $I$ represent a cusp, and let $\Gamma'_I$ be its
stabilizer in $\Gamma'$.
For the rational intermediate subspaces
$I\subseteq J\subseteq I^\perp$ occurring below, write
\[
 X(J):=
 \Gamma'_I\backslash
 \left(
 \mathbb P(V_{A_o}/J)
 \setminus \mathbb P(I^\perp/J)
 \right).
\]
This is an abelian torsor, as in
\cite[Section 4.3]{Looijenga2003Ball}.
Unless another intermediate subspace is specified, we take
$J=j_{\mathcal K}(I)$.

On this neat cover, $X(I)$ is the toroidal boundary stratum,
and the natural morphism $X(I)\to X(J)$ gives the intermediate
cusp modification.
The fibre over the corresponding cusp of $X_o^{\mathrm{bb}}$
is a finite quotient of $X(J)$.

For $H\in\mathcal K$ containing $I$, the boundary trace of its
strict transform is a fibre of $X(J)\to X(H\cap I^\perp)$,
whose target is a genus-one curve.
After choosing origins, this trace is a translate of the kernel
of a homomorphism to an elliptic curve.
Write $H=H_A(r)\cap\mathbb B_o$ and put
$r_o:=\operatorname{pr}_{z_o^\perp}(r)$.
Since $r_o$ is orthogonal to $I$, the formula $\ell_{H,I}([v]):=\psi(r_o,v)$
defines a complex-linear functional on
$(I_{A_o}^{\perp}/I)\otimes_{\mathscr O}\mathbb C$.
It is determined by $H$ up to a nonzero scalar.
Since $J\subset H$, this functional further descends
to $I^\perp/J$.

After choosing a nonzero vector in $V_{A_o}/I^\perp$,
the tangent space of $X(J)$ is identified with
$I^\perp/J$.
Under this identification, the descended functional
represents, up to a nonzero scalar, the differential
of $X(J)\to X(H\cap I^\perp)$;
see \cite[Section 4.3 and Lemma 5.2]{Looijenga2003Ball}. 
Restrictions of $D_d$ to $X(J)$ mean restrictions of their
pullbacks to this finite cover.
% Whenever we study a cusp fibre, we pass to a neat finite index subgroup of $\Gamma_o$ and denote by $\widetilde B_I$ the abelian torsor appearing in Looijenga's intermediate cusp modification. 
  
% For every mirror trace $H$ containing $I$, the closure of $H$ in the toroidal compactification cuts $\widetilde B_I$ in a translate of the kernel of a homomorphism from the abelian variety acting on $\widetilde B_I$ to an elliptic curve; its differential is, up to a nonzero scalar, the normal functional defined by $H$ \cite[Section 4 and Lemma 5.2]{Looijenga2003Ball}.

\begin{lem}
\label{lem:abelian-kernel-divisors}
Let $A$ be an abelian variety and let $\varphi_i\colon A\to E_i$, $1\leq i\leq r$, be nonzero homomorphisms to elliptic curves.  For points $a_i\in E_i$ and positive integers $m_i$, the divisor $\sum_i m_i\varphi_i^{-1}(a_i)$ is nef.  Its connected translation stabilizer is $\bigl(\bigcap_i\ker\varphi_i\bigr)^0$, and $\sum_i m_i\varphi_i^{-1}(a_i)$ is ample if and only if $\bigcap_i\ker\varphi_i$ is finite, equivalently, if and only if the differentials $d\varphi_i$ span the cotangent space of $A$.
Here, $(\bullet) ^0$ denotes the identity component.
\end{lem}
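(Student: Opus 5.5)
Work with $A\cong V/\Lambda$, $V=T_0A$, and linearize each $\varphi_i$. Write $\ell_i:=d\varphi_i\colon V\to T_0E_i\cong\C$; each $\ell_i$ is nonzero and carries $\Lambda$ into the lattice of $E_i$. Translation by $x\in A$ gives $t_x^*\varphi_i^{-1}(a_i)=\varphi_i^{-1}(a_i-\varphi_i(x))$, and all points of $E_i$ are algebraically equivalent. Hence the divisor $L:=\sum_i m_i\varphi_i^{-1}(a_i)$ is algebraically equivalent to $\sum_i m_i\varphi_i^*[0_{E_i}]$. Nefness, ampleness and the identity component of $K(L)$ depend only on the algebraic class, so we may take $a_i=0$.

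For nefness, note that $\varphi_i^*\mathcal O_{E_i}(0)$ is the pullback of an ample bundle, so it is globally generated after a positive multiple. A positive combination of nef classes is nef. For the Hermitian (Appell–Humbert) form, $\mathcal O_{E_i}(0)$ has form $H_i(u,v)=c_i u\bar v$ with $c_i>0$ on $T_0E_i$. Pulling back gives $H_L(v,w)=\sum_i m_ic_i\,\ell_i(v)\overline{\ell_i(w)}$, which is positive semidefinite.

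The kernel of $H_L$ is $\bigcap_i\ker\ell_i$. By the standard theory (Mumford, Abelian Varieties, §6 and §9), $K(L)^0$ is the image in $A$ of the kernel of $\operatorname{Im}H_L$. For a semidefinite form this kernel equals $\ker H_L$, so $K(L)^0$ is the image of $\bigcap_i\ker\ell_i$. That image is the identity component of $\bigcap_i\ker\varphi_i$, because the tangent space of a subgroup scheme $\bigcap\ker\varphi_i$ in characteristic zero is $\bigcap\ker d\varphi_i$. This gives $K(L)^0=(\bigcap_i\ker\varphi_i)^0$, the connected translation stabilizer. One could also argue directly: $t_x^*L\equiv L$ exactly when $\sum m_i\varphi_i^*(\text{translation class})$ vanishes, which holds for $x$ in the kernel; the converse uses $H_L$.

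Finally, an effective divisor $L$ on an abelian variety is ample if and only if $H_L$ is positive definite, equivalently if $K(L)$ is finite. This is Mumford's application of the theorem of the square. So $L$ is ample exactly when $\bigcap_i\ker\ell_i=0$, i.e. the $\ell_i$ span $V^\vee$, the cotangent space. By the tangent-space identification above, this happens exactly when $\bigcap_i\ker\varphi_i$ is finite. The only step needing care is identifying $K(L)^0$ with the image of $\ker H_L$; I would cite Mumford's result for it rather than reprove it.
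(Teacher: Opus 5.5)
Your argument is correct, but it takes a different route from the paper's.

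\textbf{The paper's route.} The paper packages the maps into $\Phi=(\varphi_1,\dots,\varphi_r)\colon A\to\prod_i E_i$. The divisor is then $\Phi^*$ of the restriction to $\Phi(A)$ of the ample divisor $\sum_i m_i p_i^*(a_i)$, so nefness is immediate. The connected stabilizer maps into the finite stabilizer of an ample divisor on $\Phi(A)$, hence lies in $(\ker\Phi)^0$. Ampleness holds exactly when $\Phi$ is finite onto its image. Tangent spaces enter only for the final equivalence with the spanning of the differentials.

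\textbf{Your route.} You compute the Appell--Humbert form $H_L=\sum_i m_ic_i\,\ell_i\otimes\overline{\ell_i}$. You then feed it into two standard facts: $K(L)^0$ is the image of $\ker\operatorname{Im}H_L$, and $L$ is ample iff $H_L$ is positive definite.

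\textbf{Comparison.} Your approach gives an explicit formula from which semipositivity and the kernel $\bigcap_i\ker d\varphi_i$ can be read off directly. The cotangent-spanning criterion becomes the primary output, and finiteness of $\bigcap_i\ker\varphi_i$ is derived from it. The price is heavier input: Appell--Humbert, Lefschetz's criterion, and the structure of $K(L)$ via $\operatorname{Im}H$, all tied to the complex-analytic description. The paper's argument is purely algebraic. It needs only that pullbacks of ample bundles under finite maps are ample, and that ample divisors on abelian varieties have finite stabilizers.

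\textbf{Two small points.} First, the equality $\ker\operatorname{Im}H=\ker H$ holds for every Hermitian form. Semidefiniteness is what you actually use, namely to identify that kernel with $\{v:H(v,v)=0\}=\bigcap_i\ker\ell_i$. Second, you identify the connected translation stabilizer with $K(L)^0$. If it is read instead as the stabilizer $\{x:t_x^*D=D\}$ of the divisor itself, add the inclusions $\bigcap_i\ker\varphi_i\subseteq\{x:t_x^*D=D\}\subseteq K(L)$. These show that the two identity components agree.
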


\begin{proof}
Let $\Phi=(\varphi_1,\ldots,\varphi_r)$ and let $p_i$ denote the $i$-th projection from $\prod_iE_i$.  The divisor $\sum_i m_i p_i^*(a_i)$ is ample on $\prod_iE_i$, and its restriction to the abelian subvariety $\Phi(A)$ is ample.  Its pullback to $A$ is the divisor in the statement, so that divisor is nef.  The image of its connected translation stabilizer under $\Phi$ lies in the translation stabilizer of an ample divisor on $\Phi(A)$, which is finite.  Hence the connected stabilizer is exactly $(\ker\Phi)^0$.  It follows that the divisor is ample when $\ker\Phi$ is finite, because then $\Phi$ is finite onto its image; conversely, ampleness forces its translation stabilizer, and hence $\ker\Phi$ to be finite.  The last equivalence follows by taking tangent spaces at the origin.
\end{proof}

\begin{prop}
\label{prop:cusp-fibres-corrected}
Let $I$ represent a cusp and put $J=j_{\mathcal K}(I)$.
If $I$ is of type $(\theta)$, the restrictions
$D_6|_{X(J)}$ and $D_9|_{X(J)}$ are nef but not ample,
whereas $(D_6+D_9)|_{X(J)}$ is ample.
If $I$ is of type $(0)$, then $X(J)$ is a curve and
$D_9|_{X(J)}$ has positive degree.
\end{prop}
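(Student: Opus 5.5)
The proof reduces everything to Lemma \ref{lem:abelian-kernel-divisors} applied to the abelian torsor $X(J)$. By the preceding discussion, for each $H\in\mathcal K$ containing $I$, the boundary trace of $D_{d(H)}$ on $X(J)$ is a fibre of a homomorphism (after choosing origins) $X(J)\to X(H\cap I^\perp)$ onto a genus-one curve. Its differential is, up to scalar, the functional $\ell_{H,I}$ descended to $I^\perp/J$. Modulo the finite group $\Gamma_I/\Gamma'_I$, the restriction $D_d|_{X(J)}$ is therefore a finite positive combination $\sum m_i\varphi_i^{-1}(a_i)$ over the $\Gamma'_I$-orbits of hyperplanes $H\in\mathcal H_o(d)$ through $I$. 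Nefness and ampleness can be checked on this finite cover.

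Applying Lemma \ref{lem:abelian-kernel-divisors}, $D_d|_{X(J)}$ is nef, provided the sum is nonempty. It is ample if and only if the functionals $\ell_{H,I}$ with $d(H)=d$ span the dual of $I^\perp/J$, that is, if and only if $V_{d,I}$ has full image in $(I^\perp/J)$. Since $J=j_{\mathcal K}(I)=I^\perp\cap\bigcap_{H\in\mathcal K, I\subset H}H$, we have $I^\perp/J\cong (V_{6,I}+V_{9,I})^\vee$ via $\psi$, using that $\psi$ is nondegenerate on $(I^\perp/I)\otimes\C$, which is positive definite. The statement thus becomes a linear-algebra statement about the spans $V_{6,I}$ and $V_{9,I}$ inside $(I_{A_o}^\perp/I)\otimes\C$:
\begin{enumerate}
\item $D_6+D_9$ is ample automatically, because the union of both families of functionals spans by the very definition of $J$.
\item $D_d$ alone fails to be ample exactly when $V_{d,I}\subsetneq V_{6,I}+V_{9,I}$.
\end{enumerate}
Nonemptiness for $d=6$ and $d=9$ at $\infty_\theta$ comes from Proposition \ref{prop:cusp-incidence-all-components}.

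For type $(\theta)$ I will use the explicit model from the proof of Proposition \ref{prop:cusp-incidence-all-components}: $z_o=r_1'+r_2'$ and $I_A^\perp/I\cong\Lambda'\perp\Lambda''$, with the norm-three vectors of $d$-invariant $6$ and $9$ described by \cite[Proposition A.10]{HeckmanLooijenga2002}. I expect to show that the projections to $z_o^\perp$ of the $d=6$ roots and of the $d=9$ roots span complementary nonzero subspaces, for instance one inside the $\Lambda'$-part orthogonal to $z_o$ and one involving the $z_o$-direction of $\Lambda'$. Then neither $V_{6,I}$ nor $V_{9,I}$ equals the sum. This computation is the main obstacle: identifying exactly which roots of $\Lambda'$ give $d=6$ versus $d=9$ and checking strict containment of their spans. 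The first thing to try is the Gram determinant of $(z_o,r)$, which forces $\psi(z_o,r)$ to take specific values for each $d$, and then reading off the spans inside $\Lambda'\otimes\C$.

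For type $(0)$, Proposition \ref{prop:cusp-incidence-all-components} shows that no $H\in\mathcal H_o(6)$ contains $I$, so $J=I^\perp\cap\bigcap_{H\in\mathcal H_o(9),I\subset H}H$. I then compute $\dim V_{9,I}=1$ from the model $z_o=r_1'+r_1''$. The $d=9$ roots through $I$ should all project to multiples of a single vector, essentially the projection of $r_1'$. Hence $\dim I^\perp/J=1$ and $X(J)$ is a curve. The $d=9$ hyperplanes through $I$ exist by Proposition \ref{prop:cusp-incidence-all-components}, so $D_9|_{X(J)}$ is a nonempty effective sum of points and has positive degree. Equivalently, this is Lemma \ref{lem:abelian-kernel-divisors} in dimension one.
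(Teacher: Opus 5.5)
Your reduction is the paper's own. You apply Lemma~\ref{lem:abelian-kernel-divisors} on the torsor $X(J)$, with the differentials of the boundary traces given by the $\ell_{H,I}$. Nefness is then automatic, and ampleness becomes the question whether $V_{d,I}$ exhausts $V_{6,I}+V_{9,I}$.

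The step you flag as the main obstacle is not computed in the paper. It is quoted from \cite[Proposition A.10]{HeckmanLooijenga2002}: at a $(\theta)$-cusp one has $\dim V_{6,I}=1$, $\dim V_{9,I}=2$ and $V_{6,I}\perp V_{9,I}$, and at a $(0)$-cusp one has $V_{6,I}=0$ and $\dim V_{9,I}=1$. With these facts your items (1)--(2) finish the proof, and they give $\dim X(J)=3$, resp.\ $1$. As written, your proposal does not establish the non-ampleness of $D_6|_{X(J)}$ and $D_9|_{X(J)}$ at $\infty_\theta$; it only expects it.

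If you want to derive these spans yourself, your plan needs two corrections.

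\begin{enumerate}
\item Your tentative picture is off. Everything is projected to $z_o^\perp$, so nothing involves ``the $z_o$-direction''. Since roots of $\Lambda''$ have $d=18$, both spaces lie in the $3$-dimensional space $z_o^\perp\cap(\Lambda'\otimes_{\mathscr O}\C)$, as a line and an orthogonal plane.
\item More seriously, the Gram determinant $18-|\psi(z_o,r)|^2$ of $(z_o,r)$ does not determine $d(H)$. The invariant $d(H)$ is the determinant of the saturation $N_H$, not of $\mathscr O z_o+\mathscr O r$. For example, the root $n_6=e_1-\bar\omega e_2$ from the proof of Lemma~\ref{lem:ramification-indices-eight-ball} is orthogonal to $z_o=e_1+e_2$, yet it lies in a lattice with $d=6$, not $18$. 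At a $(\theta)$-cusp, $z_o$ and the relevant roots lie in the same summand $\Lambda'$, so you would have to compute saturations rather than read $d$ off $\psi(z_o,r)$.
\end{enumerate}

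At a $(0)$-cusp your sketch works as written. Since $z_o=r_1'+r_1''$ is split across $\Lambda'\perp\Lambda''$ and roots are primitive, $\mathscr O z_o+\mathscr O r$ is already saturated for every root $r$, so $d=18-|\psi(z_o,r)|^2$. Cauchy--Schwarz then shows two things: the $d=9$ roots are exactly the unit multiples of $r_1'$ and $r_1''$, and there are no $d=6$ roots. Hence $V_{9,I}=\C(r_1'-r_1'')$.
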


\begin{proof}
Suppose first that $I$ is of type $(\theta)$.
By \cite[Proposition A.10]{HeckmanLooijenga2002},
\[
 \dim V_{6,I}=1,\qquad
 \dim V_{9,I}=2,\qquad
 V_{6,I}\perp V_{9,I}.
\]
On $I^\perp/I$, the common kernel of the functionals
$\ell_{H,I}$, for $H\in\mathcal K$ containing $I$,
is $J/I$ by the definition of $J$. Hence the abelian variety acting on $X(J)$ has dimension
three.

After multiplication by a positive integer, each restriction
$D_d|_{X(J)}$ is a positive sum of translates of kernels of
homomorphisms to elliptic curves.
The corresponding differentials span a space of dimension
$\dim V_{d,I}$.
Lemma \ref{lem:abelian-kernel-divisors} therefore shows that
$D_6|_{X(J)}$ and $D_9|_{X(J)}$ are nef, with connected
translation stabilizers of dimensions two and one, respectively.
Neither restriction is ample.
The combined differentials span the cotangent space of the
acting abelian variety, so the same lemma shows that
$(D_6+D_9)|_{X(J)}$ is ample.

If $I$ is of type $(0)$, then
$V_{6,I}=0$ and $\dim V_{9,I}=1$ by
\cite[Proposition A.10]{HeckmanLooijenga2002}.
Thus $X(J)$ is one-dimensional, and the differentials associated
with $D_9$ span its cotangent space.
Lemma \ref{lem:abelian-kernel-divisors} implies that
$D_9|_{X(J)}$ is ample.
The numerical assertions descend through the finite quotient.
\end{proof}

In particular,
\begin{equation}
 \dim X(J)=
 \begin{cases}
  3,&\text{if $I$ is of type $(\theta)$},\\
  1,&\text{if $I$ is of type $(0)$},
 \end{cases}
 \qquad J=j_{\mathcal K}(I).
 \label{eq:cusp-fibre-dimensions}
\end{equation}

\begin{cor}
\label{cor:cusp-intersection-matrix-corrected}
There are curves $C_{\theta,6}$ and $C_{\theta,9}$ contracted by $X_o^{j_{\mathcal K}}\to X_o^{\mathrm{bb}}$ over $\infty_\theta$, and a contracted curve $C_0$ over $\infty_0$, such that
\begin{equation}
\begin{array}{c|ccc}
 &C_{\theta,6}&C_{\theta,9}&C_0\\ \hline
D_6&>0&0&0\\
D_9&0&>0&>0.
\end{array}
\label{eq:cusp-intersection-matrix-corrected}
\end{equation}
In particular, $D_6$ and $D_9$ are relatively nef, neither is relatively ample, and $D_6+tD_9$ is relatively ample for $X_o^{j_{\mathcal K}}\to X_o^{\mathrm{bb}}$ for any $t\in\mathbb{R}_{>0}$.
\end{cor}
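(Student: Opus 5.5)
The plan is to extract the curves directly from Proposition \ref{prop:cusp-fibres-corrected} and the abelian-torsor description of the cusp fibres. Then the relative nefness and ampleness statements follow from the relative Kleiman criterion. The morphism $X_o^{j_{\mathcal K}}\to X_o^{\mathrm{bb}}$ is an isomorphism away from the two cusps. Its fibres over $\infty_\theta$ and $\infty_0$ are finite quotients of the torsors $X(J)$, with $J=j_{\mathcal K}(I)$. By \eqref{eq:cusp-fibre-dimensions} these torsors have dimensions three and one. So every curve contracted by the morphism lies in one of these fibres. We may therefore work on the neat cover and then push curves down through the finite quotient, which multiplies intersection numbers by positive constants and does not affect signs.

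Over $\infty_\theta$, the proof of Proposition \ref{prop:cusp-fibres-corrected} shows the following. The restriction $D_6|_{X(J)}$ is a positive combination of translates of kernels of homomorphisms to elliptic curves, and its connected translation stabilizer $K_6$ is two-dimensional. Likewise, $D_9|_{X(J)}$ has a one-dimensional stabilizer $K_9$. Since $V_{6,I}\perp V_{9,I}$ and together they span the cotangent space, $K_6$ is the common kernel of the $D_9$-differentials intersected appropriately. Concretely:
\begin{itemize}
\item take $C_{\theta,9}$ to be an orbit curve, i.e.\ a translate of a one-dimensional abelian subvariety (an elliptic curve) contained in $K_6$ but not in $K_9$;
\item take $C_{\theta,6}$ to be a translate of the elliptic curve $K_9^0$.
\end{itemize}
Then $D_6\cdot C_{\theta,6}>0$ and $D_9\cdot C_{\theta,6}=0$. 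Indeed, $D_9$ is invariant under translation by $C_{\theta,6}$, so it is numerically trivial on that curve: a nef divisor invariant under an elliptic curve has degree zero on its orbits, because a translate of the curve can be moved off the divisor. Moreover $D_6$ is positive on $C_{\theta,6}$, since $C_{\theta,6}$ is not contained in $K_6$, $D_6$ is pulled back from an ample divisor on the image $\Phi_6(A)$, and $C_{\theta,6}$ maps finitely onto a curve there. The symmetric argument with $C_{\theta,9}\subset K_6$ gives the other column.

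The main point to check is the linear algebra. We need $K_9^0\not\subset K_6$ and a one-dimensional subgroup of $K_6$ not contained in $K_9$. Both follow from $K_6\cap K_9$ being finite, which is exactly the ampleness of $(D_6+D_9)|_{X(J)}$ from Proposition \ref{prop:cusp-fibres-corrected}. There is one subtlety: $K_6$ is a two-dimensional abelian variety and we need an elliptic curve inside it. This is fine because $K_6$ is cut out by rational (over $E$) functionals, so it is isogenous to a product of CM elliptic curves and contains elliptic curves not contained in $K_9$. Over $\infty_0$, take $C_0$ to be the image of the curve $X(J)$ itself. By Proposition \ref{prop:cusp-incidence-all-components}, $\Delta(6)^{\mathrm{bb}}$ misses $\infty_0$, so $D_6\cdot C_0=0$. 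By Proposition \ref{prop:cusp-fibres-corrected}, $D_9\cdot C_0>0$.

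For the final assertions, note that $D_6$ and $D_9$ are $\Q$-Cartier on $X_o^{j_{\mathcal K}}$. Relative nefness follows because every contracted curve lies in a cusp fibre, on which both restrictions are nef by Proposition \ref{prop:cusp-fibres-corrected}. Neither divisor is relatively ample, since each has a zero in the table. For $D_6+tD_9$ with $t>0$, the restriction to the $\infty_\theta$ fibre is a positive combination of two nef divisors whose sum is ample, so it is ample. On the $\infty_0$ fibre it is ample of positive degree. Since the fibres over the cusps are the only positive-dimensional fibres, relative ampleness follows from the fibrewise criterion for a projective morphism.
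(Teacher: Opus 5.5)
Your proposal is correct and follows essentially the same route as the paper: $C_{\theta,6}$ and $C_{\theta,9}$ come from orbits of the connected translation stabilizers of $D_9|_{X(J)}$ and $D_6|_{X(J)}$ through a point off the supports, $C_0$ is the one-dimensional fibre over $\infty_0$ (which $D_6$ does not meet), and the relative statements follow from the fibrewise criterion. The only real difference is that the paper takes an arbitrary complete curve in the two-dimensional orbit for $C_{\theta,9}$ and gets $D_9\cdot C_{\theta,9}>0$ directly from the ampleness of $(D_6+D_9)|_{X(J)}$, so your detour through elliptic curves inside $K_6$ and CM isogenies is unnecessary; also, your aside that $K_6$ is a common kernel of the $D_9$-differentials is garbled, since $K_6$ is the common kernel of the $D_6$-differentials while the $D_9$-differentials cut out $K_9$.
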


\begin{proof}
Let $I$ be of type $(\theta)$ and put $J=j_{\mathcal K}(I)$.
Choose a point of $X(J)$ outside the supports of the pullbacks
of $D_6$ and $D_9$.
Its orbit under the one-dimensional connected translation
stabilizer of $D_9|_{X(J)}$ is a complete curve.
Take its finite image as $C_{\theta,6}$.
The restriction of $D_9$ to this orbit is trivial, while
the ampleness of $(D_6+D_9)|_{X(J)}$ gives
$D_6\cdot C_{\theta,6}>0$.
Similarly, a complete curve in the orbit of the connected
translation stabilizer of $D_6|_{X(J)}$ gives $C_{\theta,9}$
with
$D_6\cdot C_{\theta,9}=0$ and
$D_9\cdot C_{\theta,9}>0$.

The fibre over $\infty_0$ is a curve.  Proposition \ref{prop:cusp-fibres-corrected} gives $D_9\cdot C_0>0$, whereas $D_6\cdot C_0=0$ because $\Delta(6)$ does not meet $\infty_0$ by Proposition \ref{prop:cusp-incidence-all-components}.  The restrictions of $D_6$ and $D_9$ to every positive-dimensional fibre are nef, and their sum is ample; hence the relative nefness and ampleness assertions follow from the fibrewise criterion for a projective morphism (cf. \cite[Theorem 1.44]{komo}).
\end{proof}

\subsection{From the Baily-Borel model to Miranda's compactification}
\label{subsec:looijenga-res-construction}
We now apply Looijenga's construction to the common
open subset $X_o^\circ$ in \eqref{eq:common-open-res}.
For the Miranda arrangement
$\mathcal K=\mathcal H_o(6)\cup\mathcal H_o(9)$,
the construction gives
\begin{equation}\label{eq:looijenga-four-models-res}
 X_o^{\mathrm{bb}}
 \xleftarrow{c}X_o^{j_{\mathcal K}}
 \xleftarrow{\pi}\widetilde X_o^{\mathcal K}
 \xrightarrow{r}\widehat X_o^{\mathcal K}.
\end{equation}
The morphism $c$ modifies the cusps so that the arrangement
divisors become $\Q$-Cartier.
The morphism $\pi$ separates their intersection strata.
The morphism $r$ is defined by a globally generated line bundle
obtained as some multiple of the arrangement sheaf, as described below.

By \eqref{eq:cusp-fibre-dimensions}, the two cusp fibres
of $c$ have dimensions three and one.
Since $c$ is an isomorphism away from the cusps and
$\dim X_o=8$, it is small.
Following \cite[Section 4.3]{Looijenga2003Ball},
we also write $\mathcal L$ for the canonical extension
of the automorphic line bundle to $X_o^{j_{\mathcal K}}$.
This extension is the pullback of $\mathcal L$ on
$X_o^{\mathrm{bb}}$ under $c$.

For each arrangement divisor, let $D_H^j$ be the strict
transform on $X_o^{j_{\mathcal K}}$ of its closure in $X_o^{\mathrm{bb}}$.
These are Cartier divisors on the chosen cover by
\cite[Lemma 5.2]{Looijenga2003Ball}.
As in \cite[Section 2.2]{Looijenga2003Ball}, put
\[
\mathcal F_{\mathcal K}
:=\sum_H\mathcal L(D_H^j),
\]
where the sum is taken inside the sheaf of rational
sections of $\mathcal L$ and runs over the distinct
arrangement divisors.

On the arrangement blow-up $\pi\colon \widetilde X_o^{\mathcal K}\to X_o^{j_{\mathcal K}}$, the sheaf
$\pi^*\mathcal F_{\mathcal K}$ 
is $\Q$-Cartier and semiample.
Its associated contraction is the lift of $r$;
see \cite[Theorem 5.7]{Looijenga2003Ball}.

% Locally on an arrangement chart, the second step uses the
% fractional ideal
% \[
%  \mathcal F_{\mathcal K}
%  :=\sum_H\mathcal L^j(D_H^j).
% \]
% \Masa{Where are the definitions of $\mathcal L^j$ or $(D_H^j)$ before?}
% We use the full arrangement blow-up: first the strata of
% type $(6,9,9)$, and then the strict transforms of the strata
% of types $(6,9)$ and $(9,9)$.
% Their codimensions are $3,2,2$, respectively.
% The coherent pullback \Masa{What is ``coherent pullback''} of $\mathcal F_{\mathcal K}$ to $Z$
% is invertible and semiample, and defines $r$; see
% \cite[Theorem 5.7]{Looijenga2003Ball}.

The interior strata have the following images:
\[
\begin{array}{c|c|c}
 \text{arrangement type}&\text{Miranda stratum}&\text{dimension}\\
 \hline
 (6)&I_9&0\\
 (9)&I_4^*&0\\
 (6,9)&I'_8&1\\
 (9,9)&I''_8&1\\
 (6,9,9)&I_7&2.
\end{array}
\]
The two cuspidal contributions give the $I_6$- and
$I_0^*$-strata, of dimensions $3$ and $1$.
See \cite[Remark 9.5 and Example 10.3]{HeckmanLooijenga2002}.

Every nonempty intersection in the ball has dimension at
least $5$.
Moreover, the complement of $X_o^\circ$ in $\mathcal M^M$
has codimension $5$.
Together with the polarization comparison
$\eta|_{X_o^\circ}\sim_{\Q}\lambda_{\rm aut}|_{X_o^\circ}$,
these facts give
\[
 \widehat X_o^{\mathcal K}\cong\mathcal M^M
\]
by \cite[Theorem 7.1 and Corollary 7.2]{Looijenga2003Ball}.

\section{Automorphic forms}
\label{sec:automorphic-forms}
In this section, we compute the Borcherds products and resulting Borcherds relation.
For the basic notion of automorphic forms, see \cite{diamond2005first}.

\subsection{Ramification and canonical divisors}
\label{subsec:ramification-canonical}
For the Eisenstein lattices considered below, duals are
taken with respect to the underlying integral bilinear form
\[
B(x,y):=\frac13
\operatorname{Tr}_{E/\mathbb Q}\psi(x,y).
\]
Equivalently, we use
\[
L^\vee
=
\{x\in L\otimes_{\mathscr O}E:
      \psi(x,L)\subset\theta\mathscr O\}.
\]
With this convention, $A$ is unimodular;
see \cite[Appendix A]{HeckmanLooijenga2002}.
Let $C(L):=L^{\vee}/L$ be the discriminant group and $q_L$ be the discriminant quadratic form induced by $B$.
Let $q:\B_o\to X_o$ be the quotient map and $e_d$ the ramification index of $q$ along the analytic divisor $\mathcal H_o(d)$.

\begin{lem}\label{lem:ramification-indices-eight-ball}
At a general point of the four mirror-trace divisors $\Delta(d)$ for $d\in\{6,9,15,18\}$, the ramification indices $e_d$ of
$q$ are
\[
        e_6=6,\qquad e_9=3,\qquad e_{15}=1,\qquad e_{18}=3,
\]
which implies that the branch divisors of $q$ is
\begin{equation}\label{eq:branch-boundary-eight-ball}
        B_{\rm br}:=
\frac56\Delta(6)+\frac23\Delta(9)+\frac23\Delta(18).
\end{equation}
\end{lem}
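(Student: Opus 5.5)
The ramification index of $q\colon\B_o\to X_o$ along a mirror trace $H\in\mathcal H_o(d)$ equals the order of the pointwise stabilizer of $H$ in $\Gamma_o$, modulo the scalar subgroup $\mu_6\cdot\mathrm{id}$, which acts trivially on $\B_o$. Such a stabilizer consists of complex reflections of $A_o$ fixing $H$ pointwise. I would therefore compute, for each $d\in\{6,9,15,18\}$, the group of unitary reflections $x\mapsto x-(1-\zeta)\frac{\psi(x,s)}{\psi(s,s)}s$ with $\zeta\in\mu_6$ that lie in $\U(A_o)$, where $s$ is a generator of the line $H^\perp\cap A_o$. This line is spanned by $r_o=\mathrm{pr}_{z_o^\perp}(r)=r-\frac{\psi(r,z_o)}{6}z_o$. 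The reflection of order $k$ in $s$ preserves $A_o$ exactly when $(1-\zeta)\psi(x,s)/\psi(s,s)\in\mathscr O$ for all $x\in A_o$, that is, when $(1-\zeta)\psi(A_o,s)\subset\psi(s,s)\mathscr O$ for a primitive $s$. So $e_d$ is the largest $k\in\{1,2,3,6\}$ for which this divisibility holds, taking $\zeta=\omega^{6/k}$. Note that $1-\omega$ is a unit, $1-\omega^2$ has norm $3$, and $1-\omega^3=2$.

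The main step is to identify $s$, $\psi(s,s)$ and the ideal $\psi(A_o,s)$ in each case, using the rank-two normal lattice $N_H$ with $\det N_H=d$ from \cite[Lemma A.9, Proposition A.12]{HeckmanLooijenga2002}. Since $A$ is unimodular in the $\theta$-normalization, I would use the gluing $A_o^\vee/A_o\cong (\mathscr O z_o)^\vee/\mathscr O z_o$. Let $s$ be a primitive generator of $N_H\cap z_o^\perp$. Then $\psi(s,s)=d/6$ when $\psi$-values of $N_H$ with $z_o$ are computed appropriately, and $\psi(A_o,s)$ is $\theta\mathscr O$ or $\mathscr O$ according to whether $s/\theta\in A_o^\vee$. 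I would run through the four cases using the explicit representatives already appearing in the proof of Proposition~\ref{prop:cusp-incidence-all-components}:
\begin{itemize}
\item for $d=18$, $r\perp z_o$, so $s=r$ with norm $3$ and $\psi(A,r)=\theta\mathscr O$, which gives the order-$3$ reflection and hence $e_{18}=3$;
\item for $d=15$, $\psi(z_o,r)=\theta$, so $r_o$ has norm $3-3/6=5/2$ and is not primitive in the right way, leaving only the trivial reflection, $e_{15}=1$;
\item for $d=9$, I expect a norm-$3$ class in $A_o$ supporting the triflection, $e_9=3$;
\item for $d=6$, I expect $s$ of norm $1$ after rescaling with $\psi(A_o,s)=\mathscr O$, so that every $\zeta\in\mu_6$ works and $e_6=6$.
\end{itemize}
Any reflections in $\Gamma_o$ not coming from $A$ must be ruled out or included; since $\Gamma_o=\U(A_o)$ is the full unitary group, the divisibility criterion captures all of them.

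The computation for $d=6$ and $d=9$ is the main obstacle. The vectors $r$ there are not orthogonal to $z_o$, so one must find the correct primitive generator of $N_H\cap z_o^\perp$ and its inner products with $A_o$. I would handle this by writing $N_H$ in an explicit basis, say $z_o$ and $r$ saturated, with Gram determinant $d$. I would then compute the orthogonal complement generator directly and check the divisibility against the discriminant group of $A_o$, which is $\mathscr O/2\theta$-type of order $|6/\theta|^2$.

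Finally, the branch divisor formula follows from the standard relation $K_{\B_o}=q^*(K_{X_o}+\sum(1-1/e_d)\Delta(d))$, giving
\[
\frac56\Delta(6)+\frac23\Delta(9)+0\cdot\Delta(15)+\frac23\Delta(18).
\]
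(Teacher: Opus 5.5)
Your reduction is sound, and it is a genuinely different route from the paper's. Modulo $\mathscr O^\times$, an element of $\mathrm U(A_o)$ fixing $H$ pointwise is a unitary reflection $R_{s,\zeta}$ in the primitive generator $s$ of $A_o\cap\ell_H$, where $\ell_H=(N_H\otimes_{\mathscr O}\mathbb C)\cap z_o^\perp$. The scalar factor is a unit because it preserves a lattice. Hence $e_d=\#\{\zeta\in\mu_6:(1-\zeta)\psi(s,A_o)\subseteq\psi(s,s)\mathscr O\}$.

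The paper instead extends elements to $\mathrm U(A)$ via \cite[Proposition A.8]{HeckmanLooijenga2002}, classifies the isometries of $N_H$ preserving $\mathbb C z_o$, and tests the gluing condition $\bar g=\beta\,\mathrm{id}$ on $C(N_H)$. Your criterion is intrinsic to $A_o$ and needs neither step. However, all the content lies in the exact values of $\psi(s,s)$ and $\psi(s,A_o)$, and there your proposal is either wrong or missing.

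Since $A_o\subset A=A^\vee$, one always has $\psi(s,A_o)\subseteq\theta\mathscr O$, and every norm in $A_o$ lies in $3\mathbb Z$. So your dichotomy ``$\theta\mathscr O$ or $\mathscr O$'' is false. There is no norm-one $s$ for $d=6$, and for $d=9$ the line $A_o\cap\ell_H=\mathscr O(e_1-e_2)$ contains no norm-three vector. The vector of norm $d/6$ is $r_o=\operatorname{pr}(r)$, which lies in $A_o^\vee$ but lies in $A_o$ only for $d=18$. The primitive vector is $s=m r_o$ with $m=\theta,2,2\theta,1$ (up to units) for $d=6,9,15,18$, of norms $3,6,30,3$.

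The missing idea is the equality $A_o^\vee\cap\ell_H=\operatorname{pr}(N_H)=\mathscr O r_o$. It follows from $\operatorname{pr}(A)=A_o^\vee$ (unimodularity of $A$) together with the primitivity of $N_H$ in $A$, exactly as in the proof of Lemma~\ref{lem:gD-normal-vector-count}. It gives $\psi(r_o,A_o)=\theta\mathscr O$ exactly, hence $\psi(s,A_o)=3\mathscr O,\ 2\theta\mathscr O,\ 6\mathscr O,\ \theta\mathscr O$. Your criterion then becomes $1-\zeta\in\mathscr O,\ \theta\mathscr O,\ 5\mathscr O,\ \theta\mathscr O$, i.e.\ $e_d=6,3,1,3$.

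Without this equality you obtain only containments, and hence only lower bounds:
\begin{itemize}
\item For $d=18$, $\psi(A,r)=\theta\mathscr O$ shows that the triflection in $r$ lies in $\mathrm U(A_o)$. Excluding $\zeta=-1$, however, requires $\psi(r,A_o)\not\subseteq 3\mathscr O$, i.e.\ $r/\theta\notin A_o^\vee$, which is precisely the primitivity argument.
\item For $d=15$, the phrase ``not primitive in the right way'' must be replaced by $\psi(s,s)=30$ and $\psi(s,A_o)=6\mathscr O$. These force $1-\zeta\in 5\mathscr O$ and hence $\zeta=1$.
\end{itemize}
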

\begin{proof}
Since the scalar subgroup $\mathscr{O}^\times\subset \Gamma_o=\U(A_o)$
acts trivially on the projective ball $\mathbb B_o$, we put
$\overline{\Gamma}_o:=\Gamma_o/\mathscr{O}^\times$.
Let $H\in\mathcal H_o(d)$ be a mirror trace.  The ramification index of
the quotient map $q:\mathbb B_o\to X_o=\Gamma_o\backslash\mathbb B_o$
along $H$ is the order of the generic inertia group
$
I_H:=
\{\gamma\in\overline{\Gamma}_o:
\gamma|_H=\id_H\}.
$
% Equivalently, if $x\in H$ is a general point, then
% $I_H=(\overline{\Gamma}_o)_x$.
We now describe $I_H$ in lattice-theoretic terms. 
Let
$N_H=A\cap
\bigl(Ez_o+Er\bigr)$
introduced above and $M_H:=N_H^\perp\subset A$. 
Thus $N_H$ is the primitive rank-two positive-definite Eisenstein
lattice normal to $H$.  Since $z_o\in N_H$, we have $M_H\subset z_o^\perp=A_o$.
Moreover, putting one-dimensional subspace $\ell_H:=(N_H\otimes_{\mathscr{O}}\mathbb C)
       \cap (z_o^\perp\otimes_{\mathscr{O}}\mathbb C)$, we have the decomposition $V_{A_o}
=
(M_H\otimes_{\mathscr{O}}\mathbb C)\oplus\ell_H$.

The hyperball $H$ is the ball associated with
$M_H\otimes_{\mathscr{O}}\mathbb C$.
By \cite[Proposition A.8]{HeckmanLooijenga2002}, every element of $\U(A_o)$ admits a
unique extension to an element of $\U(A)$ that preserves $A_o$.  Thus let
$\gamma_0\in I_H$ and choose a representative in $\gamma\in\U(A_o)$, and let
$\widetilde\gamma\in \U(A)$ be its unique extension of $\gamma$.  Since $\gamma_0$ acts trivially on $H$, the restriction of $\widetilde\gamma$ to $M_H$ is a
scalar:
\[\widetilde\gamma|_{M_H}
=
\beta\id_{M_H}
\]
for some $\beta\in\mathscr{O}^\times$.
 Put $g:=\widetilde\gamma\vert_{N_H}\in \U(N_H)$.
Since $\widetilde\gamma$ preserves
$A_o=z_o^\perp$, it preserves its orthogonal line
$\mathbb Cz_o$, and hence $g(\mathbb Cz_o)=\mathbb Cz_o
$.
Conversely, suppose that $
g\in \U(N_H)$ with $g(\mathbb Cz_o)=\mathbb Cz_o$  and $\beta\in\mathscr{O}^\times$.
% The isometry $g\oplus\beta\id_{M_H}$ of $N_H\oplus M_H$ does not automatically extend to an isometry of
% $A$, because in general $N_H\oplus M_H\subsetneq A$.
% We therefore have to impose the gluing condition.
 Since $A$ is unimodular and
$N_H\subset A$ is primitive with orthogonal complement $M_H$, there is an $\mathscr O$-linear isomorphism $
\iota_H:C(N_H)\xrightarrow{\sim}C(M_H)$ with $q_{M_H}\circ\iota_H=-q_{N_H}$ 
such that the image of $A/(N_H\oplus M_H)$ is $\Gamma_{\iota_H}
:=
\{(x,\iota_H(x)):x\in C(N_H)\}
\subset C(N_H)\oplus C(M_H)$; see also \cite[Lemmas 1.4.4 and 1.4.5]{Dolgachev1983Quadratic}.
Consequently, $
g\oplus\beta\id_{M_H}
$
extends to an element of $\U(A)$ if and only if it preserves
$\Gamma_{\iota_H}$.  Letting $
\bar g:C(N_H)\to C(N_H)$
denote the action induced by $g$, then this condition is 
\begin{align}\label{eq:gluing-inertia}
\iota_H\circ\bar g=\beta\iota_H.
\end{align}
 Thus, the elements of $I_H$ are represented by the pairs $(g,\beta)$ satisfying $g\in \U(N_H)$, $g(\mathbb Cz_o)=\mathbb Cz_o$, and 
$\bar g=\beta\id_{C(N_H)}$,
modulo the diagonal scalar pairs
$(\epsilon\id_{N_H},\epsilon)$ with $\epsilon\in\mathscr{O}^\times$,
which act trivially on $\mathbb B_o$.

For the chosen lift $\widetilde\gamma$, let $\alpha$
and $\beta$ be its eigenvalues on $\ell_H$ and
$M_H\otimes_{\mathscr O}\mathbb C$, respectively.
Both belong to $\mathscr O^\times$.
For every $[v]\in H$, the orthogonal decomposition $V_{A_o}
=(M_H\otimes_{\mathscr O}\mathbb C)\oplus\ell_H$
gives $N_{H/\mathbb B_o,[v]}
\cong\operatorname{Hom}_{\mathbb C}(\mathbb Cv,\ell_H)$.
The induced action on this normal space is multiplication
by $\alpha\beta^{-1}$.
This ratio is unchanged when the lift $\gamma$ of $\gamma_0$ is multiplied
by $\mathscr{O}^\times$, and hence defines a character
\[
\chi_H\colon I_H\longrightarrow\mu_6,
\qquad
\chi_H(\gamma_0)=\alpha\beta^{-1}.
\]
The character is injective, so
$e_H=|I_H|=|\chi_H(I_H)|$.
For a generator of $I_H$, this is the multiplicative
order of $\alpha\beta^{-1}$ in $\mathbb C^\times$.

% Finally, suppose that $g$ acts on the one-dimensional normal space
% $\ell_H$ by $g\vert_{\ell_H}
% =
% \alpha\id_{\ell_H}$. \Masa{What is $\alpha$?}
% At a point $[v]\in H$ \Masa{Don't you need ``general'' condition for $[v]$?}, the normal space of $H$ in $\mathbb B_o$ is
% naturally identified with $N_{H/\mathbb B_o,[v]}
% \cong
% \operatorname{Hom}(\mathbb Cv,\ell_H)$.

% Since $\widetilde\gamma$ acts by $\beta$ on $\mathbb Cv\subset
% M_H\otimes\mathbb C$ and by $\alpha$ on $\ell_H$, its action on the
% normal line \Masa{What is it? I want you to use symbols.} is multiplication by $\chi_H(\gamma)=\alpha\beta^{-1}$.
% Hence the generic inertia group acts faithfully on the normal line,
% and the ramification index is $e_H=|\chi_H(I_H)|$.
% Equivalently, for a generator of $I_H$, $e_H$ is the order of the
% relative scalar $\alpha/\beta$. \Masa{It might be better to put $\alpha$ like $\alpha_d$ here for example.}
% \Masa{Where do you consider the order of $\alpha/\beta$?}

For each $d\in\{6,9,15,18\}$, choose an
$\mathscr O$-basis $(e_1,e_2)$ of $N_H$ as in
\cite[Lemma A.9]{HeckmanLooijenga2002}.
We write $G_d$ for the Gram matrix of the Hermitian form
in this basis, using column coordinates and the convention
$\psi(x,y)=^{t}\!\overline{x}G_dy$.
The corresponding expressions for $z_o$ and $G_d$
are listed below.

\medskip
\noindent
\textbf{The case $d=6$.}
Here, we put as \cite[Lemma A.9]{HeckmanLooijenga2002}
\[
    z_o=e_1+e_2,
    \qquad
    G_6=
    \begin{pmatrix}
        3&\sqrt{-3}\\
        -\sqrt{-3}&3
    \end{pmatrix}.
\]
Consider
\[
    g_6=
    \begin{pmatrix}
        0&-1\\
        -\bar\omega&-\omega
    \end{pmatrix}.
\]
A direct calculation gives
$g_6^*G_6g_6=G_6$ and $g_6(z_o)=-z_o$.
The vector
$n_6:=e_1-\bar\omega e_2$ spans $\ell_H$, and
$g_6(n_6)=\bar\omega n_6$.
Thus, if the action on $M_H$ is the identity,
the induced action on
$\operatorname{Hom}_{\mathbb C}(\mathbb Cv,\ell_H)$
is multiplication by $\bar\omega$.
We also obtain $N_H^\vee=\theta G_6^{-1}\mathscr O^2$. 
Moreover,
\[
(g_6-I_2)\theta G_6^{-1}
=
\begin{pmatrix}
\bar\omega&-\omega\\
0&\bar\omega
\end{pmatrix}.
\]
Hence $(g_6-I_2)N_H^\vee\subset N_H$, so $g_6$
acts trivially on $C(N_H)$.
By \eqref{eq:gluing-inertia}, the isometry
$g_6\oplus\id_{M_H}$ extends to $A$.
By letting $\gamma_6\in I_H$ be the image of $g_6\oplus\id_{M_H}|_{N_H}$, then $\chi_H(\gamma_6)$ has order six.
Since $\chi_H(I_H)\subset\mu_6$, we obtain $e_6=6$.

% A direct computation gives $g_6^*G_6g_6=G_6$ and $g_6(z_o)=-z_o$.
% The normal line is generated by $n_6=e_1-\bar\omega e_2$, and $g_6(n_6)=\bar\omega n_6$. \Masa{I thought that $N_{H/\mathbb B_o,[v]}$ is a quotient. Why it can be represented by an element of $A\otimes_{\mathscr{O}}\C$? Is it just the image?}
% The element $g_6$ is obtained from ambient triflections and
% therefore satisfies the gluing condition. \Masa{What is the gluing condition? I think we should put eqref... I suspect that it would be $\Gamma_{\iota_H}$ and $\tilde{\gamma}$ should satisfy...} Since $\bar\omega$
% has order six, one obtains $e_6=6$.

\medskip
\noindent
\textbf{The case $d=9$.}
Here, we put
\[
    z_o=e_1+e_2,
    \qquad
    G_9=
    \begin{pmatrix}
        3&0\\
        0&3
    \end{pmatrix},
    \qquad
    C(N_H)\cong(\mathscr{O}/\sqrt{-3})^2.
\]
Set $g_9:=-\omega I_2$.
Since $-\omega\equiv1\pmod{\theta}$, the pair $(g_9,1)$
satisfies \eqref{eq:gluing-inertia}.
Thus $g_9\oplus\id_{M_H}$ extends to an isometry of $A$
and induces an element $\gamma_9\in I_H$.
Since $\ell_H=\mathbb C(e_1-e_2)$, we have
$\chi_H(\gamma_9)=-\omega$, which has order three.

Conversely, the norm-$3$ vectors of $N_H$ are the unit multiples
of $e_1$ and $e_2$.
Hence every isometry preserving $\mathbb C(e_1+e_2)$
is of the form $a\id_{N_H}$ or $as$, where
$a\in\mathscr O^\times$ and $s$ is the linear automorphism interchanging $e_1$ and $e_2$.
The action of $as$ on $(\mathscr O/\theta)^2$ is not scalar,
so it cannot satisfy \eqref{eq:gluing-inertia}.
For $g=a\id_{N_H}$, that condition becomes
$a\beta^{-1}\equiv1\pmod{\theta}$.
The units satisfying this congruence belong to $\mu_3$.
Therefore, $\chi_H(I_H)=\mu_3$ and $e_9=3$.
% Set $g_9:=-\omega I_2$.
% Since $-\omega\equiv1\pmod{\sqrt{-3}}$,
% the induced action on $C(N_H)$ is trivial, and hence $g_9$
% satisfies the gluing condition. \Masa{What is the gluing condition again? $g_9\oplus \mathrm{id}$?} It induces an element of $I_H$ whose image of $\chi_H$ has order three. Thus $e_9=3$.
% \Masa{Is it necessary to check the image of $\chi_H$?}

% The involution interchanging $e_1$ and $e_2$ is an isometry of the
% abstract rank-two lattice, but it acts nontrivially on
% $(\mathcal O/\sqrt{-3})^2$ and does not belong to the generic
% ambient pointwise stabilizer.

\medskip
\noindent
\textbf{The case $d=18$.}
Here, we put
\[
    z_o=e_1,
    \qquad
    G_{18}=
    \begin{pmatrix}
        6&0\\
        0&3
    \end{pmatrix},
\]
and $C(N_H)
    \cong
    \mathscr{O}/(2\sqrt{-3})\oplus\mathscr{O}/\sqrt{-3}$.
    An isometry of $N_H$ preserving $\mathbb Ce_1$ has the form
$g=\operatorname{diag}(a,\alpha)$, with
$a,\alpha\in\mathscr O^\times$.
If $(g,\beta)$ satisfies \eqref{eq:gluing-inertia},
its action on the second discriminant-module summand gives
$\alpha\beta^{-1}\equiv1\pmod{\theta}$.
Thus $\chi_H(I_H)\subseteq\mu_3$.

Conversely, $g_{18}=\operatorname{diag}(1,-\omega)$
acts trivially on $C(N_H)$, so $(g_{18},1)$ satisfies
\eqref{eq:gluing-inertia}.
Since $\ell_H=\mathbb Ce_2$, the resulting element of $I_H$
has normal character $-\omega$.
Hence, $\chi_H(I_H)=\mu_3$ and $e_{18}=3$.
% It fixes $z_o$, acts on the normal line \Masa{$\ell_H$?} by $-\omega$, and satisfies
% the gluing condition because $-\omega\equiv1\pmod{\sqrt{-3}}$.  Only the
% cube-root subgroup of $\mu_6$ acts trivially on the relevant
% discriminant-module summand. \Masa{Isn't it necessary to check $\chi_H$?} Hence $e_{18}=3$.

\medskip
\noindent
\textbf{The case $d=15$.}
In this case, we put

\[
z_o=e_1,\qquad
G_{15}=
\begin{pmatrix}
6&\sqrt{-3}\\
-\sqrt{-3}&3
\end{pmatrix}.
\]

Let $g\in \U(N_H)$ preserve the line $\mathbb C e_1$. With respect to the basis $(e_1,e_2)$, write

$$
g=
\begin{pmatrix}
a&b\\
0&d
\end{pmatrix},
\qquad a,d\in\mathscr{O}^\times,\quad b\in\mathscr{O}.
$$

The relation $g^*G_{15}g=G_{15}$ gives $6b=\sqrt{-3}(a-d)$, and hence $2\sqrt{-3}\mid(a-d)$. Since no two distinct units of $\mathscr{O}$ are congruent modulo $2\sqrt{-3}$, we obtain $a=d$ and $b=0$. Thus $g=a\mathrm{id}_{N_H}$.

By \eqref{eq:gluing-inertia}, the isometry
$a\id_{N_H}\oplus\beta\id_{M_H}$ extends to $A$
if and only if $a\beta^{-1}$ acts trivially on $C(N_H)$.
Since $C(N_H)\cong\mathscr O/(5)$, this is equivalent to
$a\beta^{-1}\equiv1\pmod5$.
The only unit satisfying this congruence is $1$.
Thus $a=\beta$, so the extension is scalar and induces
the identity on $\mathbb B_o$.
Consequently, $I_H$ is trivial and $e_{15}=1$.
% At a general point of $H$, an element of the generic inertia group acts on $M_H=N_H^\perp$ by a scalar $\beta\in\mathscr{O}^\times$. By the gluing criterion above \Masa{This part is also}, the pair $a,\mathrm{id}*{N_H}\oplus\beta,\mathrm{id}*{M_H}$ extends to an isometry of $A$ if and only if the relative scalar $a\beta^{-1}$ acts trivially on $C(N_H)$. Since $C(N_H)\cong\mathscr{O}/(5)$, this is equivalent to $a\beta^{-1}\equiv1\pmod{5}$. Among the elements of $\mathscr{O}^\times$, this congruence forces $a=\beta$. Hence the resulting isometry of $A$ is the scalar $a\mathrm{id}_A$. Therefore the effective generic inertia group along $H$ is trivial, and $e_{15}=1$. \Masa{I think such an argument would be necessary for $d=9,15$.}
\end{proof}

Let $X_o^{\mathrm{tor}}$ be the unique toroidal compactification of $X_o$ and
$T$ be its reduced toroidal boundary.
Note that the centre of the unipotent part of the stabilizer of a cusp is one-dimensional, so the fan decomposition is unique, which implies that there is only one choice of toroidal compactifications in this setting. For more details of toroidal compactifications, see \cite{ash2010smooth}, especially \cite[III, Theorem 5.2]{ash2010smooth}.
Let $\Delta(d)^{\mathrm{tor}}$ (resp. $\Delta(d)^{\mathrm{bb}}$) be the closure of $\Delta(d)$ in $X_o^{\mathrm{tor}}$ (resp. $X_o^{\mathrm{bb}}$).  
% Since $\B_o$ has dimension eight, the
% canonical weight is $9$. 
% Hence the automorphic $\Q$-line bundle is 
% \[
%         K_{X_o^{\mathrm{tor}}}+T+
%         \sum_i\frac{e_d-1}{e_d}\Delta(d)^{\mathrm{tor}},
% \]
% and its descended Baily-Borel counterpart is
% \[
%         K_{X_o^{\mathrm{bb}}}+
%         \sum_d\frac{e_d-1}{e_d}\Delta(d)^{\mathrm{bb}};
% \]
% see \cite[Section 2]{MaedaOdaka2022FanoShimura}.  
Applying this to our setting gives the following formula; compare it with the nine-dimensional
ancestral Deligne-Mostow calculation in \cite[Remark 4.3]{HKM24}, where the
single branch component has ramification index three.

\begin{prop}\label{prop:toroidal-canonical-eight-ball}
One has the log-canonical relation
\begin{equation*}
        K_{X_o^{\mathrm{tor}}}+T
        +\frac56\Delta(6)^{\mathrm{tor}}
        +\frac23\Delta(9)^{\mathrm{tor}}
        +\frac23\Delta(18)^{\mathrm{tor}}
        \sim_\Q 9\mathcal L.
\end{equation*}
\end{prop}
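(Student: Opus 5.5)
The plan is to derive the relation from the standard Hirzebruch--Mumford proportionality for ball quotients, corrected by the branch divisor computed in Lemma \ref{lem:ramification-indices-eight-ball}.

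\emph{Step 1: the ball.} On the ball $\B_o$ of dimension $n=8$, the canonical bundle $K_{\B_o}$ is $\Gamma_o$-equivariantly isomorphic to $\mathcal L^{\otimes(n+1)}=\mathcal L^{\otimes 9}$, where $\mathcal L$ is the tautological bundle $\mathcal O(-1)$ restricted from the compact dual. Concretely, an invariant holomorphic volume form corresponds to an automorphic form of weight $9$; this is the ``canonical weight'' recorded earlier.

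\emph{Step 2: the interior.} Descend to $X_o=\Gamma_o\backslash\B_o$. Since $q\colon\B_o\to X_o$ is ramified in codimension one exactly along the mirror traces, with indices $e_d$, Hurwitz's formula gives
\[
q^*\Bigl(K_{X_o}+\sum_d\tfrac{e_d-1}{e_d}\Delta(d)\Bigr)=K_{\B_o}.
\]
Lemma \ref{lem:ramification-indices-eight-ball} supplies the coefficients $\tfrac56,\tfrac23,0,\tfrac23$.

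Two points need checking. First, there must be no other divisorial ramification. Generic inertia along a hyperplane is generated by complex reflections of $\Gamma_o$, and the fixed hyperplanes of such reflections are mirror traces. This is the reflection classification in \cite{HeckmanLooijenga2002}, which I would cite. Second, $\mathcal L$ descends only as a $\Q$-line bundle, because of the scalar group $\mathscr O^\times$ and the torsion in $\Gamma_o$. Working with $\sim_\Q$ absorbs both issues; alternatively, one can pass to a neat normal subgroup $\Gamma'$ and take invariants.

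\emph{Step 3: the toroidal boundary.} On the neat cover $X'=\Gamma'\backslash\B_o$, Mumford's extension result \cite{ash2010smooth} gives that the canonical extension of $\mathcal L^{\otimes 9}$ equals $K_{X'^{\mathrm{tor}}}+T'$. The local model near a cusp is a punctured disc bundle over an abelian torsor, and the volume form acquires exactly a logarithmic pole along the boundary.

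Descend this to $X_o^{\mathrm{tor}}$ via the finite map $X'^{\mathrm{tor}}\to X_o^{\mathrm{tor}}$. Two checks are needed here:
\begin{itemize}
\item the map is not ramified along $T$ beyond what the log form absorbs;
\item the branch divisors of the interior extend as the closures $\Delta(d)^{\mathrm{tor}}$.
\end{itemize}
The pair $K+T$ is invariant under log-étale pullback, so boundary ramification contributes nothing. This yields the stated relation.

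\emph{Main obstacle.} The delicate point is Step 3 at the boundary. One must ensure that no stabilizer element acts as a reflection along a toroidal boundary component, or if it does, that the logarithmic structure absorbs it. One must also check that the rank-one, uniquely determined fan gives a smooth (or at least $\Q$-factorial) boundary, so that $K+T$ makes sense. I would handle this by comparing with the analogous nine-dimensional computation in \cite[Remark 4.3]{HKM24}, using the uniqueness of the toroidal compactification noted above.
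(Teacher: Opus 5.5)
Your proposal is correct and follows essentially the same route as the paper: pass to a neat normal subgroup, use Mumford's proportionality (\cite[Proposition 3.4(a)]{mumford1977hirzebruch}, rather than \cite{ash2010smooth}) to get $K+T\sim 9\mathcal L$ on the smooth toroidal cover, and then descend by the logarithmic Riemann--Hurwitz formula with the indices of Lemma~\ref{lem:ramification-indices-eight-ball}, which makes boundary ramification harmless exactly as you say. Two small remarks: neatness already forces $\det\gamma=1$, since $\det\gamma$ is a root of unity lying in the torsion-free group generated by the eigenvalues, so the determinant twist in $K_{\mathbb B_o}\cong\mathcal L^{\otimes 9}$ disappears on the cover; and your explicit check that no reflective hyperplanes other than mirror traces occur is something the paper only uses implicitly.
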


\begin{proof}
This is a direct consequence of the Hirzebruch proportionality principle \cite[Proposition 3.4 (a)]{mumford1977hirzebruch}.
For completeness, we include the proof below.
Choose a neat normal subgroup
$\Gamma'\subset\Gamma_o\cap\mathrm{SU}(V_{A_o})$
of finite index, and let $p:Y^{\mathrm{tor}}\to X_o^{\mathrm{tor}}$
be the induced finite morphism.
Write $T_Y$ for the reduced boundary and
$\mathcal L_Y:=p^*\mathcal L$.

In affine ball coordinates $z\in\mathbb C^8$, let $j(\gamma,z)$ be the automorphic factor for $\gamma\in\Gamma'$.
The Jacobian is
\[
\det\left(\frac{\partial(\gamma z)}{\partial z}\right)
=
\frac{\det\gamma}{j(\gamma,z)^9}.
\]
Thus, for $\gamma\in\Gamma'$, a weight-$9$ automorphic
form $F$ satisfies
\[
\gamma^*\bigl(F(z)\,dz_1\wedge\cdots\wedge dz_8\bigr)
=
F(z)\,dz_1\wedge\cdots\wedge dz_8.
\]
This gives the canonical weight $9$;
compare \cite[Section 4.1]{Looijenga2003Ball}. 
Note that this is equivalent to the compact dual of $\B_o$ being $\mathbb{P}^8$.

At a cusp, direct computation shows that a weight-$9$ automorphic form 
determines a logarithmic top form.
If it is a cusp form, its vanishing
 cancels this logarithmic pole.
Consequently, $K_{Y^{\mathrm{tor}}}+T_Y
\sim 9\mathcal L_Y$.

Logarithmic Riemann-Hurwitz formula gives
\[
K_{Y^{\mathrm{tor}}}+T_Y
=
p^*\left(
K_{X_o^{\mathrm{tor}}}+T+
\sum_{d\in\{6,9,15,18\}}
\left(1-\frac1{e_d}\right)\Delta(d)^{\mathrm{tor}}
\right).
\]
Descending the preceding identity and using
Lemma \ref{lem:ramification-indices-eight-ball}
proves the assertion.
\end{proof}

On the Baily-Borel compactification, we have
\[
        K_{X_o^{\mathrm{bb}}}
        +\frac56\Delta(6)^{\mathrm{bb}}
        +\frac23\Delta(9)^{\mathrm{bb}}
        +\frac23\Delta(18)^{\mathrm{bb}}
        \sim_\Q 9\mathcal L.
\]
Since $9\mathcal L$ is $\Q$-Cartier, the Baily-Borel compactification
$X_o^{\mathrm{bb}}$ is $\Q$-Gorenstein if and only if the branch divisor
\[
        B_{\rm br}^{\rm bb}
        =\frac56\Delta(6)^{\mathrm{bb}}
        +\frac23\Delta(9)^{\mathrm{bb}}
        +\frac23\Delta(18)^{\mathrm{bb}}
\]
is $\Q$-Cartier.  The local computation in 
Proposition \ref{prop:qcartier-at-cusps-corrected} implies that $K_{X_o^{\rm bb}}$ is not $\Q$-Cartier.

\subsection{Allcock's Borcherds product}
In the rest of this section, we use the Borcherds product to produce appropriate relations of hyperballs in the Picard groups.
For details of the Borcherds product, see \cite{Borcherds1998Grassmannians,Bruinier2002BorcherdsProducts}.
Allcock constructed, by means of Borcherds products, an automorphic form on the
corresponding $9$-dimensional complex ball quotient whose divisor is
exactly $\mathbb{B}_A(\mathcal{H}_A)$ \cite[Theorem 7.1]{Allcock2000Leech}.  We denote it by $\Psi_A$.
It has weight $44$ and satisfies
        $\operatorname{div}_{\B_A}(\Psi_A)=\B_A(\mathcal H_A)$.
        In Allcock's construction, the orthogonal Borcherds
product has weight $132$.
Its restriction to $\mathbb B_A$ vanishes to order three
along each root hyperball.
Taking a cube root gives the weight-$44$ automorphic form $\Psi_A$
with simple zeros along these hyperballs;
see the proof of \cite[Theorem 7.1]{Allcock2000Leech}.
This convention is compatible with Heckman-Looijenga's comparison of the discriminant divisor with Allcock's weight convention; see \cite[Remark 7.10]{HeckmanLooijenga2002} and \cite[Remarks 4.3-4.4 and Section 8.6]{HKM24}.

We now restrict this to the hyperball $\B_o\subset \B_A$ attached to the norm-$6$ vector $z_o$.  By the extension property for the stabilizer of $\C z_o$ \cite[Proposition A.8]{HeckmanLooijenga2002}, every element of $\U(A_o)$ extends uniquely to an element of $\U(A)$ preserving $\C z_o$.  Therefore the restriction of $\Psi_A$ to $\B_o$ is again automorphic, now for the group $\U(A_o)$.
Let $\Psi_{A,o}\defeq \Psi_A|_{\B_o}$ be the restriction of $\Psi_A$ to the eight-dimensional ball $\B_o$.
\begin{thm}\label{thm:allcock-restriction}
The restriction $\Psi_{A,o}$ is a non-zero automorphic form of weight $44$ for $\U(A_o)$.  Its zero divisor on $\B_o$ is supported on the full mirror-trace arrangement with the following multiplicity:
\begin{equation}\label{eq:divisor-allcock-restriction}
        \operatorname{div}_{\B_o}(\Psi_{A,o})
        =4\B_o(\mathcal H_o(6))
        +2\B_o(\mathcal H_o(9))
        +\B_o(\mathcal H_o(15))
        +\B_o(\mathcal H_o(18)),
\end{equation}
\end{thm}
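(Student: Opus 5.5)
Automorphy of $\Psi_{A,o}$ for $\U(A_o)$ is already explained before the statement: by \cite[Proposition A.8]{HeckmanLooijenga2002} every $\gamma\in\U(A_o)$ extends uniquely to $\widetilde\gamma\in\U(A)$ preserving $\C z_o$. The automorphic factor of $\widetilde\gamma$ restricts to that of $\gamma$ on the cone over $\B_o$, so $\Psi_{A,o}$ has weight $44$. The character, if any, has finite order and does not affect the divisor. The restriction is nonzero because $\B_o\not\subset \B_A(\mathcal H_A)$: a general point of $\B_o$ corresponds to a Weierstrass fibration with reduced discriminant, hence lies off the nonreduced-discriminant locus $\B_A(\mathcal H_A)$. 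So the substance of the statement is the multiplicity computation.

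For the divisor, I work locally near a general point $x$ of a mirror trace $H=H_A(r)\cap\B_o$, with $\psi(r,r)=3$. Near $x$, the zero set of $\Psi_A$ is the union of the finitely many root hyperballs $H_A(r')$ through $x$, each with multiplicity one, because $\operatorname{div}\Psi_A=\B_A(\mathcal H_A)$ is reduced. For general $x\in H$, the hyperballs $H_A(r')$ that contain $x$ are exactly those containing $H$. Indeed, $H_A(r')\cap\B_o$ is a hyperball in $\B_o$ or empty, so if it does not contain $H$ it meets $H$ in codimension $\ge 1$, avoided by general $x$. Moreover $\B_o\not\subset H_A(r')$, since $z_o$ is not a multiple of a root. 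Each such $H_A(r')$ therefore meets $\B_o$ transversally along $H$, and the restriction of its defining linear form vanishes to order exactly one. Hence the multiplicity of $\Psi_{A,o}$ along $H$ equals the number $m(H)$ of distinct root hyperballs $H_A(r')$ with $H\subset H_A(r')$.

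Next I count these lattice-theoretically. $H_A(r')\supset H$ iff $r'\in (M_H)^\perp\otimes E$, i.e. $r'\in N_H=A\cap(Ez_o+Er)$. So $m(H)$ is the number of norm-$3$ vectors of $N_H$ up to $\mathscr O^\times$, excluding multiples of $z_o$; there are none, since $z_o$ has norm $6$. I then use the Gram matrices $G_6,G_9,G_{15},G_{18}$ from \cite[Lemma A.9]{HeckmanLooijenga2002} already used in Lemma \ref{lem:ramification-indices-eight-ball}, and minimize $\psi(x,x)={}^t\bar x G_d x$ over $x=(a,b)\in\mathscr O^2$:
\begin{itemize}
\item $d=18$: $6|a|^2+3|b|^2=3$ forces $a=0$, $|b|=1$, giving $1$ class.
\item $d=9$: the norm-$3$ vectors are the unit multiples of $e_1,e_2$, as noted in the proof of Lemma \ref{lem:ramification-indices-eight-ball}, giving $2$ classes.
\item $d=15$: $6|a|^2+3|b|^2+2\operatorname{Re}(\sqrt{-3}\bar a b)\cdot$ has only $\pm$unit multiples of $e_2$ at norm $3$, giving $1$ class. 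Completing the square, the form is $3|b+\tfrac{\sqrt{-3}}{3}a|^2+5|a|^2$, so $a=0$.
\item $d=6$: $N_H$ has determinant $6$ with minimal vectors of norm $3$. I expect it to be the rescaled $A_2$-type lattice (an $\mathscr O$-form of $D_4$/$E_6$ type) with $4$ classes of norm-$3$ vectors: $e_1$, $e_2$ and two further combinations such as $e_1+\omega^{\pm1}\cdots e_2$. I would find these by solving $3|a|^2+3|b|^2+2\operatorname{Re}(\sqrt{-3}\,\bar a b)=3$ over small units.
\end{itemize}
This gives multiplicities $4,2,1,1$.

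The main obstacle is the $d=6$ count: I must enumerate exactly, making sure no two solutions differ by a unit and none is missed. I would first reduce via the quadratic form $|a|^2+|b|^2+\tfrac{2}{3}\operatorname{Re}(\sqrt{-3}\bar ab)$, which bounds $|a|,|b|\le1$, and then check all unit pairs by hand. A secondary concern is justifying genericity, namely that the point $x$ lies on no other mirror trace. This follows from local finiteness of the arrangement.
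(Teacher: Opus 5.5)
Your proposal is correct and follows the paper's proof: the order of $\Psi_{A,o}$ along a mirror trace $H$ is the number of root hyperballs containing $H$, which is the number of $\mathscr O^\times$-classes of norm-$3$ vectors in $N_H$, read off from the Gram matrices of \cite[Lemma A.9]{HeckmanLooijenga2002}. For the $d=6$ case you left open, the form equals $3|b-\tfrac{\sqrt{-3}}{3}a|^2+2|a|^2$, and this gives exactly the four classes $e_1,e_2,e_1+\omega e_2,e_1+\omega^2e_2$; note that $e_1+\omega^{-1}e_2$, one of the combinations you guessed, has norm $9$ rather than $3$.
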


\begin{proof}
The restriction is non-zero because the hyperball $\B_o$ is not contained in any hyperball with respect to a norm $3$ vector.  Since $\U(A_o)$ is the restriction of the stabilizer of $\C z_o$ in $\U(A)$, the automorphy of $\Psi_{A,o}$ follows from the automorphy of $\Psi_A$.

It remains to determine the divisor.  A hyperball on $\B_A$ defined by a norm-$3$ vector $r\in A$ meets $\B_o$ in a mirror trace precisely when the rank-two lattice generated by $z_o$ and $r$ is positive definite.  Heckman-Looijenga's rank-two classification gives the four possible types $\delta_6,\delta_9,\delta_{15},\delta_{18}$, whose discriminants are $6,9,15,18$ respectively \cite[Lemma A.9 and Proposition A.12]{HeckmanLooijenga2002}.  

Fix a general point of a mirror trace $H$.
Let $H_1,\ldots,H_m$ be the distinct hyperballs in $\mathbb{B}_A$
whose intersections with $\mathbb B_o$ equal $H$ around the generic point.
In a local trivialization, we have $\Psi_A=u\prod_{i=1}^m\ell_i$,
where $u$ is a unit and $\ell_i$ is a local linear
equation for $H_i$ by \cite[Theorem 7.1]{Allcock2000Leech}. 
In an affine chart of $\mathbb P(V_A)$, the function $\ell_i$
is affine-linear and $\mathbb P(V_{A_o})$ is a linear subspace.
Because $\mathbb B_o$ is not contained in $H_i$,
the restriction $\ell_i|_{\mathbb B_o}$ is a nonzero
affine-linear equation defining $H$.
It therefore vanishes to order one along $H$.
Therefore
\[
\operatorname{ord}_H(\Psi_A|_{\mathbb B_o})=m.
\]
This number is the number of
$\mathscr O^\times$-orbits of norm-$3$ vectors in $N_H$ by definition.

The above discussion shows that for a fixed mirror trace $H\in\mathcal H_o(d)$, the vanishing order of the restriction is the number of distinct hyperballs whose restriction to $\B_o$ is $H$.  Equivalently, it is the number of orbits of $\{r\in N_H:\psi(r,r)=3\}$
under scalar multiplication by
$\mathscr O^\times$.  The finite count from the four rank-two Gram matrices is
\[
\begin{array}{c|c|c}
 d & \text{rank-two type of } N_H  & \#\{\text{norm-}3\text{ hyperballs restricting to }H\} \\
\hline
 6  & \delta_6   & 4 \\
 9  & \delta_9   & 2 \\
 15 & \delta_{15} & 1 \\
 18 & \delta_{18} & 1 .
\end{array}
\]
Here the last column is the number of unit-orbits of norm-$3$ vectors in the corresponding positive definite rank-two Eisenstein lattice; it is computed directly from the Gram matrices in \cite[Lemma A.9 and Proposition A.12]{HeckmanLooijenga2002}.  Since $\Psi_A$ vanishes with order one along each hyperball, this proves \eqref{eq:divisor-allcock-restriction}. 
\end{proof}

The multiplicities computed in
\eqref{eq:divisor-allcock-restriction} are $(4,2,1,1)$.  Dividing by
the ramification indices $(6,3,1,3)$ of
Lemma \ref{lem:ramification-indices-eight-ball} gives the 
multiplicities $(2/3,2/3,1,1/3)$.  
This shows the following corollary.

\begin{cor}
\label{cor:allcock-relation-corrected}
On the Baily-Borel compactification $X_o^{\rm bb}$, we have
\begin{equation}
 \frac23\Delta(6)+\frac23\Delta(9)+\Delta(15)
 +\frac13\Delta(18)
 \sim_{\Q}44\lambda_{\rm aut}.
\label{eq:allcock-linear-equivalence-BB}
\end{equation}
\end{cor}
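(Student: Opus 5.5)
The plan is to descend the divisor identity of Theorem \ref{thm:allcock-restriction} from $\B_o$ to $X_o$ and then extend it across the cusps of $X_o^{\rm bb}$.

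\textbf{Step 1: descend the divisor to $X_o$.} The form $\Psi_{A,o}$ has weight $44$ for $\U(A_o)$, so it is a meromorphic, in fact holomorphic, section of $\mathcal L^{\otimes 44}$ on $\B_o$ that is $\Gamma_o$-equivariant. After replacing $\Psi_{A,o}$ by a suitable power $\Psi_{A,o}^N$ to kill any character of $\Gamma_o$, it descends to a genuine section of $\mathcal L^{\otimes 44N}$ over $X_o$, as a $\Q$-line bundle.

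\textbf{Step 2: convert vanishing orders.} Let $q\colon\B_o\to X_o$ be the quotient map. If $q$ ramifies with index $e_d$ along $\mathcal H_o(d)$, then $q^*\Delta(d)=e_d\,\B_o(\mathcal H_o(d))$. A function on $X_o$ vanishing to order $a$ along $\Delta(d)$ therefore pulls back to one vanishing to order $e_d a$. It follows that the divisor of the descended section on $X_o$ is
\[
\sum_d \frac{m_d}{e_d}\Delta(d),
\]
where $(m_d)=(4,2,1,1)$ is the multiplicity vector from \eqref{eq:divisor-allcock-restriction} and $(e_d)=(6,3,1,3)$ comes from Lemma \ref{lem:ramification-indices-eight-ball}. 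This yields the coefficients $(2/3,2/3,1,1/3)$. The computation is pointwise at general points of each $\Delta(d)$, which suffices for Weil divisors. One subtlety must be checked: the weight normalization of $\mathcal L$ must be the same orbifold $\Q$-line bundle $\lambda_{\rm aut}$. This holds by the conventions fixed in Section \ref{sec:Heckman--Looijenga's ball quotient}, with $\lambda_{\rm aut}=h^*\lambda_A$, together with the compatibility of Allcock's weight with that of Heckman--Looijenga quoted above.

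\textbf{Step 3: extend across the cusps.} The boundary $X_o^{\rm bb}\setminus X_o$ consists of the two cusps $\infty_\theta$ and $\infty_0$, a set of codimension $8\ge 2$. Because $X_o^{\rm bb}$ is normal, Weil divisor classes and rational sections of the $\Q$-Cartier $\mathcal L$ extend uniquely across the cusps. Equivalently, Koecher's principle gives that the automorphic form extends holomorphically, as a section of $\mathcal L^{\otimes 44N}$ on $X_o^{\rm bb}=\Proj$ of the ring of automorphic forms.

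Its divisor on $X_o^{\rm bb}$ is then the closure of the divisor found on $X_o$, since no divisorial component is supported at the points. This gives \eqref{eq:allcock-linear-equivalence-BB} as an equality of $\Q$-Weil divisor classes. Note that the $\Delta(d)$ need not be $\Q$-Cartier, so the statement $\sim_\Q$ is meant in $\mathrm{Cl}(X_o^{\rm bb})_\Q$.

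\textbf{Main obstacle.} The only delicate point is Step 2: making sure the ramification indices enter by division, not multiplication. One also has to account for the scalar subgroup $\mathscr O^\times$, which acts trivially on $\B_o$ and could introduce a character. Passing to a power of the form, and working on the quotient group $\overline{\Gamma}_o$ exactly as in the proof of Lemma \ref{lem:ramification-indices-eight-ball}, handles both issues.
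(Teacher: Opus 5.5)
Your proposal is correct and follows the paper's argument. The paper descends the divisor of Theorem~\ref{thm:allcock-restriction} by dividing the multiplicities $(4,2,1,1)$ by the ramification indices $(6,3,1,3)$ of Lemma~\ref{lem:ramification-indices-eight-ball}, which is exactly your Step~2. Your Steps~1 and~3 (passing to a power to kill the character, and extending across the codimension-eight cusps) make explicit what the paper leaves implicit.
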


\subsection{Another Borcherds product}

We next construct a weakly holomorphic vector-valued modular form as an appropriate input of the Borcherds product. 
Here we recall that weakly holomorphic means holomorphic
on the upper half-plane and meromorphic at the cusp,
so its Fourier expansion has only finitely many
negative powers; see \cite[Sections 2 and 4]{Borcherds1998Grassmannians}.

For any nondegenerate even integral lattice $L_{\Z}$, let $L_{\Z}^\vee$
be its dual with respect to the specified integral bilinear form
$(\ ,\ )_{L_{\Z}}$, and write $C(L_{\Z}):=L_{\Z}^\vee/L_{\Z}$ for its discriminant group.
We put $Q_{L_{\Z}}(x):=(x,x)_{L_{\Z}}/2$ and denote the induced discriminant
quadratic form by $q_{L_{\Z}}:C(L_{\Z})\to\mathbb Q/\mathbb Z$,
where $q_{L_{\Z}}(x+L_{\Z}):=Q_{L_{\Z}}(x)\bmod\mathbb Z$. 
We will choose a representative of $q_{L_{\Z}}(x+L_{\Z})\in \mathbb Q/\mathbb Z$ from
$[0,1)$.

Let $A_{o,\mathbb Z}$ be the free abelian group
underlying $A_o$, equipped with the symmetric
bilinear form
\[
(x,y):=
-\frac13\operatorname{Tr}_{E/\mathbb Q}
\psi(x,y).
\]
It makes $A_{o,\mathbb Z}$ an even lattice of
signature $(2,16)$.
We abbreviate $Q_{A_{o,\mathbb Z}}$ to $Q$, so that
$Q(x)=(x,x)/2=-\psi(x,x)/3$. Equip the underlying abelian group of $A$ with
the same bilinear form and denote it by $A_{\mathbb Z}$.
By \cite[Appendix A]{HeckmanLooijenga2002},
it is even unimodular of signature $(2,18)$.
The orthogonal complement of $A_{o,\mathbb Z}$ in
$A_{\mathbb Z}$ is isometric to $-A_2(2)$. Here $A_2$ denotes the positive-definite root lattice
with Gram matrix
$\left(\begin{smallmatrix}2&-1\\-1&2\end{smallmatrix}\right)$,
and $A_2(2)$ is obtained by multiplying its bilinear
form by $2$.
Thus the positive-definite lattice $K:=A_2(2)$
has Gram matrix
\[
\begin{pmatrix}
4&-2\\
-2&4
\end{pmatrix}.
\] 
Fix an isometry from $-K$ to $\mathscr O z_o$, and give $K$
the induced $\mathscr O$-module structure.
Since $A_{o,\mathbb Z}$ and $-K$ are primitive orthogonal
complements in the even unimodular lattice $A_{\mathbb Z}$,
together with $q_{-K}=-q_K$, this gives
an isometry
\begin{equation}
\iota_o:
\bigl(C(A_{o,\mathbb Z}),q_{A_{o,\mathbb Z}}\bigr)
\xrightarrow{\sim}
\bigl(C(K),q_K\bigr).
\label{eq:discriminant-isometry-Ao-K}
\end{equation}
which we use throughout this subsection.

Now we explain the restriction of automorphic forms to ball quotients from orthogonal modular varieties, which was also implicitly used in the construction by Allcock above.
Let $\mathcal D_o$ be the connected component of
\[
\{[Z]\in\mathbb P(A_{o,\mathbb Z}\otimes\mathbb C):
  (Z,Z)=0,\ (Z,\overline Z)>0\}
\]
containing the image of $\mathbb B_o$.
The chosen embedding $E\hookrightarrow\mathbb C$
identifies $V_{A_o}$ with the corresponding eigenspace
for multiplication by $\omega$ in
$A_{o,\mathbb Z}\otimes\mathbb C$.
This induces a $\mathrm U(A_o)$-equivariant holomorphic
embedding $\iota:\mathbb B_o\hookrightarrow\mathcal D_o$.
The tautological line bundle on $\mathcal D_o$ restricts to the
tautological line bundle on $\mathbb B_o$.
This implies that the restriction of automorphic forms preserves their weight.
For $x\in A_{o,\mathbb Z}^{\vee}$ with $Q(x)<0$, put a divisor
$D_x:=\{[Z]\in\mathcal D_o:(Z,x)=0\}$.
Its local equation restricts, up to a nonzero scalar,
to $\psi(x,z)$ on $V_{A_o}$.
Consequently, $\iota^*D_x=H_{A_o}(x)$
as reduced divisors on $\mathbb B_o$; see \cite[Lemmas 6.1 and 6.2]{Hofmann2014}.

Let $\{\mathbf e_\gamma:\gamma\in C(K)\}$ be the
standard basis of the group algebra $\mathbb C[C(K)]
=\bigoplus_{\gamma\in C(K)}\mathbb C\mathbf e_\gamma$.
For $r\in\{0,1/6,1/2,2/3\}$, put \[\mathbf e_r:=
\sum_{\substack{\gamma\in C(K)\\q_K(\gamma)=r}}
\mathbf e_\gamma.\]

Let $\tau$ be an element of the upper-half plane and put
$q=e^{2\pi i\tau}$. 
Define
\[
 \Theta_K(\tau):=\sum_{\gamma\in C(K)}\left(\sum_{x\in K+\gamma}q^{Q_K(x)}\right)\mathbf e_\gamma.
\]
Via the identification of discriminant forms above,
let $\rho\colon\operatorname{Mp}_2(\mathbb Z)
\to\operatorname{GL}(\mathbb C[C(K)])$
be the Weil representation associated with
$A_{o,\mathbb Z}$.
Here, $\operatorname{Mp}_2(\mathbb Z)$ is the
metaplectic double cover of $\operatorname{SL}_2(\mathbb Z)$.
We use the convention of
\cite[Section 4]{Borcherds1998Grassmannians}; in particular,
the standard lift $T$ of $\tau\mapsto\tau+1$ acts by
$\rho(T)\mathbf e_\gamma
 =e^{2\pi i q_K(\gamma)}\mathbf e_\gamma$.
The theta series $\Theta_K$ is a holomorphic
vector-valued modular form of weight $1$ for $\rho$. Putting $\sigma_1(n):=\sum_{\substack{d\mid n\\d>0}}d$, let $E_2(\tau)=1-24\sum_{n\geq1}\sigma_1(n)q^n$ be the Eisenstein series of weight 2 and $\Delta_{\rm mod}(\tau)=q\prod_{n\geq1}(1-q^n)^{24}$ be the modular discriminant. 
For a weakly holomorphic vector-valued modular form
$F$ of weight $k$ for $\rho$, define its Serre derivative
componentwise by
\[
\mathcal D_kF
:=
\frac{1}{2\pi i}\frac{dF}{d\tau}
-\frac{k}{12}E_2F
=
q\frac{dF}{dq}-\frac{k}{12}E_2F.
\]
It is a weakly holomorphic modular form of weight
$k+2$ for the same representation.
Define
\begin{equation}
g_D:=
144\,\frac{\mathcal D_3\mathcal D_1\Theta_K}
{\Delta_{\rm mod}}.
\label{eq:gA-gD-definitions}
\end{equation}
To use $g_D$ as an input of the Borcherds product below, first we analyze its required property.

\begin{lem}
\label{lem:gD-fourier-expansion}
$g_D$ is a weakly holomorphic modular form of weight $-7$.  Its principal part, that is the terms with negative powers of $q$, is
\begin{equation}
3q^{-1}\mathbf e_0-q^{-5/6}\mathbf e_{1/6}+30q^{-1/2}\mathbf e_{1/2}+105q^{-1/3}\mathbf e_{2/3},
\label{eq:gD-principal-part-corrected}
\end{equation}
and the constant term of its zero component is $216$.
\end{lem}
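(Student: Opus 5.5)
The plan is to compute everything directly from the definition \eqref{eq:gA-gD-definitions}, splitting the proof into a modularity statement and a finite $q$-expansion computation.

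\emph{Modularity and weight.} $\Theta_K$ is a holomorphic vector-valued modular form of weight $1$ for $\rho$. Here $\rho$ is the Weil representation attached to $A_{o,\mathbb Z}$, transported to $C(K)$ via \eqref{eq:discriminant-isometry-Ao-K}. Since $K$ is positive definite of rank two, its theta series transforms under the Weil representation of $q_K$, which $\iota_o$ identifies with the one for $A_{o,\mathbb Z}$. The Serre derivative $\mathcal D_k$ sends weight $k$ to weight $k+2$ for the same representation. So $\mathcal D_3\mathcal D_1\Theta_K$ is holomorphic of weight $5$, including at the cusp. Since $\Delta_{\rm mod}$ is nowhere zero on the upper half-plane and has a simple zero at $q=0$, dividing by it gives a weakly holomorphic form of weight $5-12=-7$. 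Its poles at the cusp have order at most $1$ in $q$.

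\emph{Principal part.} I will first determine $C(K)$. It has order $\det K=12$ and splits as $\mathbb Z/2\times\mathbb Z/2\times\mathbb Z/3$. I will list $q_K$ on it: the value $0$ is taken by $\gamma=0$ only, $1/6$ by the two elements of order $3$, $1/2$ by three elements of order $2$, and $2/3$ by six elements of order $6$. I will then find, for each coset $K+\gamma$, the minimal value of $Q_K$ together with the number of vectors attaining it. Writing $\Theta_K=\sum_\gamma\sum_a c_\gamma(a)q^a\mathbf e_\gamma$, a direct expansion with $E_2=1-24q+\cdots$ gives
\[
\mathcal D_3\mathcal D_1 q^a=\Bigl(a-\tfrac1{12}\Bigr)\Bigl(a-\tfrac14\Bigr)q^a+\bigl(\text{terms }q^{a+1},q^{a+2},\dots\bigr).
\]
Multiplying by $144\,\Delta_{\rm mod}^{-1}=144\,q^{-1}(1+24q+\cdots)$ shows that only exponents $a<1$ contribute to the principal part. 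In the zero component, $a=0$ with $c_0(0)=1$ gives $144\cdot\frac1{48}=3$ in front of $q^{-1}$. For the other components I need the minimal norms in the cosets, namely $Q=1/6$, $1/2$, and $2/3$. I then multiply each by its representation number and by the factor $144(a-\frac1{12})(a-\frac14)$. Summing over the elements of $C(K)$ with the given value of $q_K$ should produce $-1$, $30$, and $105$. As a check, $a=1/6$ gives $144\cdot\frac1{12}\cdot(-\frac1{12})=-1$, consistent with $\mathbf e_{1/6}$ summing the contributions of its constituent $\gamma$; the representation numbers must be arranged so that the totals match.

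\emph{Constant term of the zero component.} This is the step I expect to be the most delicate bookkeeping. One must collect the $q^1$ coefficient of $\mathcal D_3\mathcal D_1\Theta_{K,0}$, including the $E_2$ correction terms acting on the constant $1$. The vectors of $K$ with $Q_K=1$ also enter, but since $K$ has minimal norm $(x,x)=4$, i.e.\ $Q_K=2$, there are none. This coefficient is then combined with the $24q$ term of $\Delta_{\rm mod}^{-1}$ acting on the leading coefficient $1/48$.

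Concretely, $\mathcal D_1(1)=-\frac1{12}E_2=-\frac1{12}+2q+\cdots$. Applying $\mathcal D_3$ gives $\frac1{48}+(2-\frac12-\ldots)q+\cdots$, where the terms involving $E_2^2$ have to be tracked carefully. I will then verify that $144\,(q\text{-coefficient}+24\cdot\frac1{48})=216$. The main risk is an arithmetic slip in these $E_2$ corrections, so I would double-check the result against the representation-number data, which are the only nontrivial input.
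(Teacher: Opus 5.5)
Your route is the paper's: weight bookkeeping for $\mathcal D_3\mathcal D_1\Theta_K/\Delta_{\rm mod}$, then the leading-term formula
\[
144\,\mathcal D_3\mathcal D_1(cq^a)=144\Bigl(a-\tfrac1{12}\Bigr)\Bigl(a-\tfrac14\Bigr)c\,q^a+O(q^{a+1}).
\]
Your constant-term computation is also fine: the $q$-coefficient of $\mathcal D_3\mathcal D_1(1)$ is $2-\frac12-\frac12=1$, and $144(1+\frac12)=216$. The gap is in the principal part, which is the real content of the lemma.

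First, your description of $(C(K),q_K)$ is wrong. Write $K^\vee=\{(a,b)/6: a+b\equiv0\pmod 3\}$ with $Q_K=(a^2-ab+b^2)/18$. The value $1/6$ is taken by the six classes $\pm(1,-1),\pm(2,1),\pm(1,2)$, which have order $6$. The value $2/3$ is taken by the two classes $\pm(2,-2)$, which have order $3$. One can see this directly: an order-$6$ class is the orthogonal sum of an order-$2$ class ($q_K=1/2$) and an order-$3$ class ($q_K=2/3$), so its value is $7/6\equiv1/6$. Second, $\mathbf e_r=\sum_{q_K(\gamma)=r}\mathbf e_\gamma$. So $-1,30,105$ are the coefficients of each individual component $\mathbf e_\gamma$, not sums over the classes with a given value of $q_K$; summing over classes would give $-6,90,210$. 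Your stated target (``summing \dots\ should produce $-1,30,105$'') is therefore not what has to be shown. Likewise, ``the representation numbers must be arranged so that the totals match'' reverse-engineers the answer rather than proving it.

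What is missing is the one nontrivial input. Enumerate the solutions of $a^2-ab+b^2\le18$ with $a+b\equiv0\pmod3$. This shows that for every class $\gamma\neq0$, the minimum of $Q_K$ on $K+\gamma$ equals $q_K(\gamma)$ itself (not $q_K(\gamma)+1$). The minimum is attained by exactly $c=1,2,3$ vectors when $q_K(\gamma)=1/6,1/2,2/3$, respectively. For example, the class of $(3,0)/6$ contains $\pm(3,0)/6$, and the class of $(2,-2)/6$ contains $(2,-2)/6,(-4,-2)/6,(2,4)/6$. You also need that every other exponent in a nonzero component is at least one larger, because $Q_K(x)\equiv q_K(\gamma)\bmod\mathbb Z$. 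These facts give
\[
\Theta_K=\mathbf e_0+q^{1/6}\mathbf e_{1/6}+2q^{1/2}\mathbf e_{1/2}+3q^{2/3}\mathbf e_{2/3}+O(q^{7/6}),
\]
and your leading-term formula then yields $3,-1,30,105$ per component, as in the paper.
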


\begin{proof}
The construction of $g_D$ is designed to produce a modular
form of weight $1-16/2=-7$, as required for the Borcherds
lift associated with the lattice $A_{o,\mathbb Z}$ of
signature $(2,16)$.
The theta series $\Theta_K$ has weight $1$.
The Serre derivative $\mathcal D_k$, applied componentwise,
preserves the Weil representation and raises the weight
from $k$ to $k+2$; this follows from the usual transformation
calculation, as in \cite[Section 5.1]{Zagier2008}.
Thus $\mathcal D_3\mathcal D_1\Theta_K$ has weight $5$,
and division by the weight-$12$ automorphic form $\Delta_{\rm mod}$
gives weight $-7$.
Since $\Delta_{\rm mod}$ has no zeros on the upper
half-plane and has a simple zero at the cusp,
$g_D$ is weakly holomorphic.

It remains to compute the terms with negative powers
of $q$ and the constant term of the zero component.
The Serre derivatives do not decrease Fourier exponents,
whereas $\Delta_{\rm mod}^{-1}$ has a simple pole at $q=0$.
Consequently, these coefficients depend only on the terms
of $\Theta_K$ with exponents at most $1$.
They can therefore be determined by counting vectors
$x\in K^\vee$ with $Q_K(x)\leq1$ in each coset of $K^\vee/K$.

In coordinates relative to the basis defining the Gram
matrix of $K$, we have
\[
K^\vee
=
\left\{
\frac{1}6(a,b):
a,b\in\mathbb Z,\quad a+b\equiv0\pmod3
\right\},
\qquad
Q_K\left(\frac{1}6(a,b)\right)
=
\frac{a^2-ab+b^2}{18}.
\]
The classes in $C(K)=K^\vee/K$ are obtained by reducing
$(a,b)$ modulo $6\mathbb Z^2$, giving twelve classes.
The following table records representatives for these
classes, the minimum $r$ of $Q_K$ on each class, and the
number $c$ of vectors in that class attaining this minimum:
\[
\begin{array}{c|c|c|c}
r&
\text{representatives }(a,b)&
\text{number of classes}&c\\ \hline
0&(0,0)&1&1\\
1/6&\pm(1,-1),\ \pm(2,1),\ \pm(1,2)&6&1\\
1/2&(3,0),\ (0,3),\ (3,3)&3&2\\
2/3&\pm(2,-2)&2&3
\end{array}
\]
Here $(a,b)$ represents the class $(a,b)/6+K$.
The entries follow directly by enumerating the integer
pairs with $a+b\equiv0\pmod3$ and
$a^2-ab+b^2\leq18$.
For example, the three vectors attaining the minimum
in the class $(2,-2)/6+K$ are
$(2,-2)/6$, $(-4,-2)/6$, and $(2,4)/6$.

For $x\in K+\gamma$, the exponent $Q_K(x)$ is congruent
to $q_K(\gamma)$ modulo $\mathbb Z$.
Thus, in each nonzero component, any exponent beyond
the minimum is at least one larger.
For the zero component, a nonzero vector $(a,b)\in K$
satisfies
$Q_K(a,b)=2(a^2-ab+b^2)\geq2$.
It follows that
\[
(\Theta_K)_0=1+O(q^2),
\]
and
\[
\Theta_K
=
\mathbf e_0
+q^{1/6}\mathbf e_{1/6}
+2q^{1/2}\mathbf e_{1/2}
+3q^{2/3}\mathbf e_{2/3}
+O(q^{7/6}).
\]

Since $E_2=1+O(q)$, applying
$144\mathcal D_3\mathcal D_1$ to a term $cq^\alpha$
gives
\[
144\mathcal D_3\mathcal D_1(cq^\alpha)
=
144\left(\alpha-\frac1{12}\right)
   \left(\alpha-\frac14\right)cq^\alpha
+O(q^{\alpha+1}).
\]
For
$(\alpha,c)=(0,1),(1/6,1),(1/2,2),(2/3,3)$,
the resulting leading coefficients are respectively
$3,-1,30,105$.
Division by $\Delta_{\rm mod}=q+O(q^2)$ lowers each
leading exponent by one.
All remaining terms have nonnegative exponents,
which proves \eqref{eq:gD-principal-part-corrected}.

Finally, using $(\Theta_K)_0=1+O(q^2)$ and
$E_2=1-24q+O(q^2)$, we obtain
\[
\mathcal D_1(\Theta_K)_0
=
-\frac1{12}+2q+O(q^2),
\qquad
\mathcal D_3\mathcal D_1(\Theta_K)_0
=
\frac1{48}+q+O(q^2).
\]
Since $\Delta_{\rm mod}^{-1}=q^{-1}+24+O(q)$,
the zero component of $g_D$ is
\[
(g_D)_0
=
(3+144q+O(q^2))(q^{-1}+24+O(q))
=
3q^{-1}+216+O(q).
\]
This proves the assertion about its constant term.
\end{proof}

% \begin{proof}
% \Masa{It looks very technical. So, could you remind the argument in some papers? I'd like you to explain to me and I'll think about how to write it.}
% The inverse Gram matrix of $K$ is \[\frac1{12}\left(\begin{matrix}4&2\\2&4\end{matrix}\right).\]  Smith reduction \Masa{What?} and direct substitution give the following distribution of the twelve classes of $C(K)$:
% \[
% \begin{array}{c|cccc}
% q_K(\gamma)&0&1/6&1/2&2/3\\ \hline
% \#\{\gamma\}&1&6&3&2\\
% \text{coefficient of the first term}&1&1&2&3.
% \end{array}
% \]
% Consequently, $\Theta_K=\mathbf e_0+q^{1/6}\mathbf e_{1/6}+2q^{1/2}\mathbf e_{1/2}+3q^{2/3}\mathbf e_{2/3}+O(q^{7/6})$.
% The two Serre derivatives \Masa{What?} raise the weight from $1$ to $5$, and division by $\Delta_{\rm mod}$ lowers it to $-7$.  On a leading term $cq^\alpha$, the leading coefficient of $144\mathcal D_3\mathcal D_1(cq^\alpha)$ is $144(\alpha-1/12)(\alpha-3/12)c$.  Applying this for $\alpha=0,1/6,1/2,2/3$ and then dividing by $\Delta_{\rm mod}=q+O(q^2)$ gives \eqref{eq:gD-principal-part-corrected}.

% Since $K$ has minimum $4$, the zero component satisfies $(\Theta_K)_0=1+O(q^2)$. \Masa{Where $O(q^2)$ comes from? I'm not sure if there is by the definition of $\Theta_K$.}  Using $E_2=1-24q+O(q^2)$, one obtains $\mathcal D_1(\Theta_K)_0=-1/12+2q+O(q^2)$ and $\mathcal D_3\mathcal D_1(\Theta_K)_0=1/48+q+O(q^2)$.  Since $\Delta_{\rm mod}^{-1}=q^{-1}+24+O(q)$, it follows that the zero component $(g_D)_0=(3+144q+O(q^2))(q^{-1}+24+O(q))=3q^{-1}+216+O(q)$.
% \end{proof}

For a general $H\in\mathcal H_o(d)$, let $N_H$ be the primitive rank-two normal Eisenstein lattice used in the proof of Lemma \ref{lem:ramification-indices-eight-ball}.  The following calculation determines all rational quadratic divisors occurring in the principal part of $g_D$ whose restriction to $\B_o$ is the fixed mirror trace $H$.

\begin{lem}
\label{lem:gD-normal-vector-count}
Let $H\in\mathcal H_o(d)$ with $d\in\{6,9,15,18\}$,
and choose an $\mathscr O$-basis $(e_1,e_2)$ of $N_H$
as in \cite[Lemma A.9]{HeckmanLooijenga2002}.
Then
$A_{o,\Z}^\vee\cap(N_H\otimes_{\mathscr O}E)
\cap z_o^\perp=\mathscr O x_d$,
where $z_o$, $x_d$, and $-Q(x_d)$ are given by
\[
\begin{array}{c|c|c|c}
d & z_o & x_d & -Q(x_d) \\ \hline
6  & e_1+e_2 & (e_1-\bar\omega e_2)/\theta & 1/3 \\
9  & e_1+e_2 & (e_1-e_2)/2                & 1/2 \\
15 & e_1     & e_2-(\theta/6)e_1          & 5/6 \\
18 & e_1     & e_2                        & 1.
\end{array}
\]
The distinct rational quadratic divisors $D_x$ with
$x\in\mathscr O x_d$ and $0<-Q(x)\leq1$ are precisely
$D_{x_d},D_{\omega x_d},D_{\omega^2x_d}$,
together with
$D_{\theta x_6},D_{\omega\theta x_6},
D_{\omega^2\theta x_6}$ when $d=6$.
In the latter case, $Q(\theta x_6)=-1$.
Each of these divisors restricts to $H$
with multiplicity one.
\end{lem}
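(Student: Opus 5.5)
The argument is a case-by-case computation in the rank-two Eisenstein lattice $N_H$, done in the basis $(e_1,e_2)$ with Gram matrix $G_d$ taken from the proof of Lemma \ref{lem:ramification-indices-eight-ball}. The line $(N_H\otimes_{\mathscr O}E)\cap z_o^\perp$ is one-dimensional over $E$. First I find a generator $n_d$ by solving $\psi(z_o,n)=0$: this gives $n_6=e_1-\bar\omega e_2$ (already used for $\ell_H$ there), $n_9=e_1-e_2$, and, for $d=15,18$, the vectors $e_2-\frac{\psi(e_1,e_2)}{6}e_1$, namely $e_2-(\theta/6)e_1$ and $e_2$.

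Next I identify $A_{o,\Z}^\vee$. With the trace form $(x,y)=-\frac13\operatorname{Tr}\psi(x,y)$ and $\psi(\mathscr O,\mathscr O)$-duality as in Subsection \ref{subsec:ramification-canonical}, a vector $x\in A_o\otimes E$ lies in $A_{o,\Z}^\vee$ iff $\psi(x,A_o)\subset\theta\mathscr O$. Since $A$ is unimodular and $A_o=z_o^\perp$, the dual $A_o^\vee$ is the orthogonal projection of $A$ onto $z_o^\perp\otimes E$. Hence $A_{o,\Z}^\vee\cap(N_H\otimes E)$ is the projection of $N_H$ (more precisely of $A$, but the projection of any $a\in A$ lands in $N_H\otimes E$ only through its $N_H^\vee$-component) onto $E n_d$. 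Concretely, I will compute $\operatorname{pr}(y)=\frac{\psi(n_d,y)}{\psi(n_d,n_d)}n_d$ for $y$ ranging over $N_H^\vee=\theta G_d^{-1}\mathscr O^2$, and check that the resulting $\mathscr O$-module is exactly $\mathscr O x_d$ with $x_d$ as tabulated. For $d=6$ the vector $e_1$ projects to $n_6/\theta$ up to a unit. This step needs care: I must justify that the orthogonal projection of $N_H^\vee$ (equivalently of $A$, by the gluing $\Gamma_{\iota_H}$) to the $z_o^\perp$-line is exactly the intersection in question. I expect this to be the main subtle point. I will argue it via $A_o^\vee=\operatorname{pr}_{z_o^\perp}(A)$ together with $\operatorname{pr}_{z_o^\perp}(A)\cap(N_H\otimes E)=\operatorname{pr}_{z_o^\perp}(N_H^\vee)$, which follows from $A\subset N_H^\vee\oplus M_H^\vee$ and the surjectivity of $A\to N_H^\vee$ coming from unimodularity.

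With $\mathscr O x_d$ in hand, $-Q(x)=\psi(x,x)/3$, and I compute $\psi(x_d,x_d)$ directly: $12/9$ gives $1/3$, $6/4$ gives $1/2$, $3-1/2$ gives $5/6$, and $3$ gives $1$. For $\lambda\in\mathscr O$ we have $-Q(\lambda x_d)=|\lambda|^2(-Q(x_d))$, and the norms of $\mathscr O$ are $0,1,3,4,\dots$. So $0<-Q\le1$ forces $|\lambda|^2=1$ in all cases, except for $d=6$, where $|\lambda|^2=3$ is also allowed, i.e.\ $\lambda\in\theta\mathscr O^\times$ with $Q=-1$. Since $D_x=D_{-x}$, the six units give three distinct divisors, namely those of $x_d,\omega x_d,\omega^2x_d$, and likewise $\theta x_6$ and its unit multiples give three more. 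These divisors are distinct as rational quadratic divisors on $\mathcal D_o$, because $(Z,x)$ and $(Z,\omega x)$ are not proportional over $\R$.

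Finally, for the multiplicity I use the restriction fact recalled before the lemma, $\iota^*D_x=H_{A_o}(x)$ as reduced divisors. Since $x\perp M_H$, we get $H_{A_o}(x)=H$. Transversality holds because the local equation $(Z,x)$ restricts to a nonzero multiple of the linear form $\psi(x,z)$, which vanishes to order one along $H$.
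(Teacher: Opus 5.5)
Your overall architecture matches the paper's: identify $A_{o,\Z}^\vee$ with $\operatorname{pr}_{z_o^\perp}(A)$ via unimodularity, reduce to a rank-two computation in $N_H$, count unit orbits, and restrict via $\iota^*D_x=H_{A_o}(x)$. The gap is in the identification step. The identity you commit to,
$\operatorname{pr}_{z_o^\perp}(A)\cap(N_H\otimes_{\mathscr O}E)=\operatorname{pr}_{z_o^\perp}(N_H^\vee)$,
is false.

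Surjectivity of $A\to N_H^\vee$ only computes the image of $A$ under projection \emph{onto} the line $(N_H\otimes E)\cap z_o^\perp$. The lemma instead asks for the \emph{intersection} of $\operatorname{pr}(A)$ with that line. Write $a=a_N+a_M$ with $a_N\in N_H^\vee$ and $a_M\in M_H^\vee$. Then $\operatorname{pr}(a)=\operatorname{pr}(a_N)+a_M$, and this lies in $N_H\otimes E$ only if $a_M=0$. In that case $a=a_N\in A\cap(N_H\otimes E)=N_H$ by primitivity. If $y\in N_H^\vee\setminus N_H$, the gluing $\Gamma_{\iota_H}$ forces $a_M\neq0$ for every lift $a$ of $y$, so $\operatorname{pr}(a)$ leaves $N_H\otimes E$. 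The correct answer is therefore $\operatorname{pr}(N_H)$, which was your first, parenthetical formulation. The paper proves exactly this: $a-\operatorname{pr}(a)\in Ez_o\subset N_H\otimes E$, combined with primitivity of $N_H$.

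The two modules agree for $d=6$, but they differ in the other three cases, so the check you propose against the table would fail:
\begin{itemize}
\item For $d=18$, $y=\frac{\theta}{3}e_2\in N_H^\vee$ satisfies $\operatorname{pr}(y)=y$ and $-Q(y)=\frac13$. Yet $y\notin A_{o,\Z}^\vee$, since a lift would lie in $N_H=\mathscr Oe_1\oplus\mathscr Oe_2$.
\item For $d=9$, $\frac{\theta}{3}e_1\in N_H^\vee$ projects to $\frac{\theta}{3}x_9$, with $-Q=\frac16$.
\item For $d=15$, the first column $\frac15(\theta,-1)^t$ of $\theta G_{15}^{-1}$ projects to $-\frac15x_{15}$.
\end{itemize}
Your recipe therefore produces $\frac{\theta}{3}\mathscr Ox_9$, $\frac15\mathscr Ox_{15}$ and $\frac{\theta}{3}\mathscr Ox_{18}$. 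These contradict the table and create spurious shells with $0<-Q\le1$, which would then feed incorrect multiplicities into Theorem \ref{thm:another Borcherds lift}.

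The fix is to project $N_H$ itself. One gets $\operatorname{pr}(e_1)=-\bar\omega x_6$, $\operatorname{pr}(e_1)=x_9$, $\operatorname{pr}(e_2)=x_{15}$ and $\operatorname{pr}(e_2)=x_{18}$, while the other basis vector projects to a multiple of these. This gives exactly $\mathscr Ox_d$. Your norm count in $\mathscr O$, the three divisors per unit orbit, and the multiplicity-one restriction then go through as in the paper.

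There is also a minor slip: for $d=6$ your ``$12/9$'' is wrong. In fact $\psi(n_6,n_6)=3$, so $\psi(x_6,x_6)=1$ and $-Q(x_6)=\frac13$.
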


\begin{proof}
Let $\operatorname{pr}$ be the orthogonal projection
from $A_{\mathbb Z}\otimes\mathbb Q$ onto
$A_{o,\mathbb Z}\otimes\mathbb Q$.
Since $A_{o,\mathbb Z}$ is primitive in $A_{\mathbb Z}$,
every integer-valued linear functional on $A_{o,\mathbb Z}$
extends to one on $A_{\mathbb Z}$.
Unimodularity of $A_{\mathbb Z}$ therefore implies $\operatorname{pr}(A_{\mathbb Z})
=A_{o,\mathbb Z}^{\vee}$.
Indeed, restriction of linear functionals corresponds
to orthogonal projection under the identifications
given by the bilinear forms.

We claim that
\[
\operatorname{pr}(N_H)
=
A_{o,\mathbb Z}^{\vee}
\cap(N_H\otimes_{\mathscr O}E)
\cap z_o^\perp.
\]
The inclusion $\operatorname{pr}(N_H)
\subset
A_{o,\mathbb Z}^{\vee}
\cap(N_H\otimes_{\mathscr O}E)
\cap z_o^\perp.$ is immediate.
Conversely, let $x\in A_{o,\mathbb Z}^{\vee}
\cap(N_H\otimes_{\mathscr O}E)
\cap z_o^\perp$.
By the surjectivity just proved, choose
$a\in A_{\mathbb Z}$ with $\operatorname{pr}(a)=x$.
Then $a-x\in Ez_o$.
Since both $x$ and $z_o$ belong to
$N_H\otimes_{\mathscr O}E$, we obtain $a\in A\cap(N_H\otimes_{\mathscr O}E)=N_H$,
where the last equality follows from the primitivity
of $N_H$ in $A$.
Thus $x\in\operatorname{pr}(N_H)$, proving the claim.
% Let $\operatorname{pr}$ denote orthogonal projection from the underlying rational vector space of $A$ to $A_{o,\Z}\otimes\Q$.
% Unimodularity identifies the gluing quotients of $A_{o,\Z}$ \Masa{What does it mean?} and its orthogonal complement and, in particular, makes the projection of the underlying lattice of $A$ onto $A_{o,\Z}^\vee$ surjective.  Since $N_H$ is primitive, this gives
% \[
%  \operatorname{pr}(N_H)=A_{o,\Z}^\vee\cap(N_H\otimes_{\mathscr{O}}E)\cap z_o^\perp.
% \]
% Indeed, the inclusion $\operatorname{pr}(N_H)\subset A_{o,\Z}^\vee\cap(N_H\otimes_{\mathscr{O}}E)\cap z_o^\perp$ is immediate.  Conversely, an element of the right-hand side has a lift in the underlying lattice of $A$; the lift lies in the rational span of $N_H$ because its two orthogonal components lie in the lines spanned by $z_o$ and $x_d$, and therefore it belongs to $N_H$ by primitivity. \Masa{Is it well-known?}

Substitution in the four Gram matrices of \cite[Lemma A.9]{HeckmanLooijenga2002} shows that the displayed $x_d$ is orthogonal to $z_o$, that $\operatorname{pr}(N_H)=\mathscr{O} x_d$, and that its norm is the one shown in the table.  For $d=6$, one also has $\theta x_6=e_1-\bar\omega e_2\in A_{o,\Z}$ and $-Q(\theta x_6)=1$.  The nonzero elements of $\mathscr{O} x_d$ with $-Q(x)\leq1$ are the unit multiples of $x_d$, together with the unit multiples of $\theta x_6$ in the case $d=6$.  
For a nonzero vector $x$, its orbit is
$\mathscr O^\times x
=\{\pm x,\pm\omega x,\pm\omega^2x\}$.
Since $D_v=D_{-v}$, this orbit determines three distinct
divisors $D_x,D_{\omega x},D_{\omega^2x}$ on $\mathcal D_o$.
They are distinct because $x,\omega x,\omega^2x$
are pairwise nonproportional over $\mathbb R$.
All three restrict to the same hyperball $H_{A_o}(x)$,
each with multiplicity one.  The classes of $x_6,x_9,x_{15},x_{18}$ have quadratic values $2/3,1/2,1/6,0$ by $Q$ modulo $\Z$, respectively, while $\theta x_6$ is integral.  
The vectors $x_6,x_9,x_{15},x_{18}$ and $\theta x_6$
therefore contribute with leading coefficients
$105,30,-1,3$ and $3$ of $g_D$, respectively, from
\eqref{eq:gD-principal-part-corrected}.
The preceding enumeration exhausts the nonzero vectors
in $\mathscr O x_d$ with $-Q(x)\leq1$.
%Thus these shells \Masa{What is the ``Shell''?} occur with the coefficients $105,30,-1,3$ and $3$ appearing in \eqref{eq:gD-principal-part-corrected}, and there are no further contributions of norm at most $1$.
\end{proof}

\begin{thm}
\label{thm:another Borcherds lift}
Let $\Psi_D$ be the Borcherds lift of $g_D$.
Then $\Psi_D$ is a meromorphic automorphic form of
weight $108$ for the discriminant kernel of
$A_{o,\mathbb Z}$, with a finite-order character.
Moreover, its restriction
$\Psi_{D,o}:=\Psi_D|_{\mathbb B_o}$
is a nonzero meromorphic automorphic form of weight $108$
for $\mathrm U(A_o)$, with a finite-order character, and
\begin{equation}
 \operatorname{div}_{\mathbb B_o}(\Psi_{D,o})
 =
 324\mathbb B_o(\mathcal H_o(6))
 +90\mathbb B_o(\mathcal H_o(9))
 -3\mathbb B_o(\mathcal H_o(15))
 +9\mathbb B_o(\mathcal H_o(18)).
\label{eq:gD-upstairs-divisor-corrected}
\end{equation}
\end{thm}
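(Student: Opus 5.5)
The plan is to apply Borcherds' theorem \cite[Theorem 13.3]{Borcherds1998Grassmannians} to the input $g_D$ of Lemma \ref{lem:gD-fourier-expansion} for the even lattice $A_{o,\mathbb Z}$ of signature $(2,16)$. By that lemma, $g_D$ is weakly holomorphic of weight $1-16/2=-7$ for the Weil representation $\rho$, which we transport to $C(K)$ through \eqref{eq:discriminant-isometry-Ao-K}. Its Fourier coefficients are integers: they come from integral theta coefficients after the operations $144\mathcal D_3\mathcal D_1$ and division by $\Delta_{\rm mod}$, and we check this on the principal part and on $c(0,0)$. Borcherds' theorem then gives a meromorphic form $\Psi_D$ on $\mathcal D_o$. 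Its weight is $c(0,0)/2=216/2=108$, and it is automorphic for the discriminant kernel of $A_{o,\mathbb Z}$ with a finite-order character.

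Next I would upgrade the automorphy to all of $\mathrm U(A_o)$. By \cite[Proposition A.8]{HeckmanLooijenga2002}, every $\gamma\in\mathrm U(A_o)$ extends to $\U(A)$ preserving $\mathscr O z_o$. Via $\iota_o$, it therefore acts on $C(A_{o,\mathbb Z})\cong C(K)$ through multiplication by a unit of $\mathscr O$. Such an automorphism preserves $q_K$, so it permutes the classes within each set $\{q_K=r\}$. Each component of $g_D$ depends only on $q_K(\gamma)$: the counts in the table of Lemma \ref{lem:gD-fourier-expansion} are uniform within each $r$, and this persists in all Fourier coefficients because unit multiplication is an isometry of $K$. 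Hence $g_D$ is invariant under this action, and Borcherds' theorem for the larger group $\mathrm O^+(A_{o,\mathbb Z})\cap\operatorname{Stab}(g_D)$ gives automorphy up to a character. Restricting along the equivariant embedding $\iota\colon\mathbb B_o\hookrightarrow\mathcal D_o$ preserves the weight, because tautological bundles correspond. The restriction is not identically zero or infinite: $\iota(\mathbb B_o)\subset D_x$ would force $x\perp V_{A_o}$, which is impossible for $0\ne x\in A_{o,\mathbb Z}^\vee$.

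For the divisor, Borcherds' formula says that $\operatorname{div}\Psi_D=\frac12\sum_{x\in A_{o,\mathbb Z}^\vee,\,Q(x)<0}c(x,Q(x))D_x$. Equivalently, each divisor $D_x$ receives the sum of the coefficients $c(\lambda,Q(\lambda))$ over all $\lambda\in\mathbb R_{>0}x\cap A_{o,\mathbb Z}^\vee$. Since the principal part involves only $-Q\le1$, only those vectors matter. Fix a general $H\in\mathcal H_o(d)$ and find all $D_x$ whose restriction contains $H$. By \cite[Lemmas 6.1, 6.2]{Hofmann2014}, $\iota^*D_x=H_{A_o}(x)$, so we need $\mathbb C$-proportionality of $x$ with the normal vector of $H$. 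This forces $x\in (N_H\otimes_{\mathscr O}E)\cap z_o^\perp\cap A_{o,\mathbb Z}^\vee=\mathscr Ox_d$. Lemma \ref{lem:gD-normal-vector-count} then lists the relevant divisors, each restricting to $H$ with multiplicity one, and the coefficients are read off from \eqref{eq:gD-principal-part-corrected}:
\begin{itemize}
\item for $d=6$: three divisors from $x_6$ with coefficient $105$ and three from $\theta x_6$ with coefficient $3$, so $315+9=324$;
\item for $d=9$: $3\cdot30=90$;
\item for $d=15$: $3\cdot(-1)=-3$;
\item for $d=18$: $3\cdot3=9$.
\end{itemize}
This gives \eqref{eq:gD-upstairs-divisor-corrected}. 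Divisors $D_x$ meeting $\mathbb B_o$ outside the mirror traces cannot occur. Any $x$ with $Q(x)<0$ and $D_x\cap\mathbb B_o\neq\emptyset$ projects to a norm-three vector of $A$ (via $\operatorname{pr}(A_{\mathbb Z})=A_{o,\mathbb Z}^\vee$ and the shortness $-Q\le1$), hence defines a mirror trace.

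The main obstacle I expect is this last bookkeeping step. It requires matching Borcherds' convention, which sums over all positive real multiples in the dual lattice, with the three distinct complex-line divisors $D_x,D_{\omega x},D_{\omega^2x}$. It also requires excluding any short vector of $A_{o,\mathbb Z}^\vee$ whose hyperball meets $\mathbb B_o$ but does not come from a norm-three root. I would handle this by lifting $x$ to $a\in A_{\mathbb Z}$ with $a-x\in Ez_o$, minimizing the norm over the $z_o$-coset, and checking against the rank-two classification of \cite[Lemma A.9, Proposition A.12]{HeckmanLooijenga2002}.
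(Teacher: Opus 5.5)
Your proposal is correct and follows essentially the same route as the paper: Borcherds' theorem applied to $g_D$ gives weight $216/2$; invariance of $g_D$ under $\mathrm U(A_o)$ comes from the gluing relation $\iota_o\circ\bar u=\epsilon\,\iota_o$ together with the fact that multiplication by $\epsilon$ is an isometry of $K$; and the multiplicities $3\cdot105+3\cdot3$, $3\cdot30$, $3\cdot(-1)$, $3\cdot3$ are read off via Lemma~\ref{lem:gD-normal-vector-count}. Your planned lift of $x$ to $x+y\in A_{\mathbb Z}$, with $y$ of minimal norm in the coset $\iota_o(x+A_{o,\mathbb Z})$, is exactly the paper's argument: it uses the coset table to get $Q_K(y)=1+Q(x)$, hence $\psi(x+y,x+y)=3$, so $x$ is the projection of a root (rather than ``projecting to'' one, as you wrote).
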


\begin{proof}
The lattice $A_{o,\mathbb Z}$ has signature $(2,16)$.
By Lemma \ref{lem:gD-fourier-expansion}, the form $g_D$
has the required weight $1-16/2=-7$, its principal part
has integral coefficients, and the constant coefficient
of its zero component is $216$.
Since the zero class is the only isotropic class in
$C(K)$, all its Fourier coefficients of nonpositive
index are integral.
Thus the Borcherds product theorem
\cite[Theorem 13.3]{Borcherds1998Grassmannians}
gives a meromorphic automorphic form $\Psi_D$ on
$\mathcal D_o$ of weight $216/2=108$.

The group for which automorphy is automatic is the
discriminant kernel
\[
\widetilde{\operatorname O}^{+}(A_{o,\mathbb Z})
:=
\ker\!\left(
\operatorname O^{+}(A_{o,\mathbb Z})
\longrightarrow
\operatorname O(C(A_{o,\mathbb Z}),q_{A_{o,\mathbb Z}})
\right).
\]
An element of this kernel fixes every discriminant class
and therefore fixes every component of the input $g_D$.
More generally, the same Borcherds theorem gives
automorphy, with a character, for the subgroup of
$\operatorname O^{+}(A_{o,\mathbb Z})$ that fixes $g_D$
under the induced permutation of its discriminant
components.
Thus an isometry need not fix every discriminant class:
it suffices that the resulting permutation leave $g_D$
unchanged.

Every element of $\mathrm U(A_o)$ preserves the underlying
integral bilinear form and the chosen component
$\mathcal D_o$, and hence belongs to
$\operatorname O^{+}(A_{o,\mathbb Z})$.
It therefore remains to prove that
$u g_D=g_D$ for every $u\in\mathrm U(A_o)$.
We establish this by comparing the action on the
discriminant group with an isometry of the lattice $K$
used to construct $\Theta_K$.

Fix $u\in\mathrm U(A_o)$.
By \cite[Proposition A.8]{HeckmanLooijenga2002},
it extends uniquely to an isometry
$\widetilde u\in\mathrm U(A)$ preserving $A_o$.
The extension also preserves the orthogonal complement
$\mathscr O z_o$.
Since this complement has rank one over $\mathscr O$,
we have $\widetilde u(z_o)=\epsilon z_o$
for some $\epsilon\in\mathscr O^\times$.
Under the fixed identification
$\mathscr O z_o\cong-K$, its action on the complement
is therefore multiplication by $\epsilon$.

The primitive orthogonal sublattices
$A_{o,\mathbb Z}$ and $-K$ are contained in the
unimodular lattice $A_{\mathbb Z}$, and the image of $A_{\mathbb Z}/(A_{o,\mathbb Z}\oplus(-K))$ in $C(A_{o,\mathbb Z})\oplus C(-K)$
is the graph of $\iota_o$.
We identify $C(-K)$ with $C(K)$ as abelian groups.
Consequently, for
$x\in A_{o,\mathbb Z}^{\vee}$ and
$y\in(-K)^\vee$, the condition that their sum belong
to the original lattice is $x+y\in A_{\mathbb Z}$.
Here $K$ and $-K$ have the same underlying abelian group,
but opposite quadratic forms.

Let $\bar u$ be the action of $u$ on
$C(A_{o,\mathbb Z})$.
Applying $\widetilde u$ to a vector $x+y\in A_{\mathbb Z}$
gives $ux+\epsilon y\in A_{\mathbb Z}$.
The preceding condition then gives
\begin{equation}
\iota_o\circ\bar u=\epsilon\,\iota_o.
\label{eq:gluing-Ao-K-action}
\end{equation}
Thus, after identifying the discriminant groups by
$\iota_o$, the action of $u$ on discriminant classes
is precisely multiplication by $\epsilon$.
For a class $\gamma\in C(K)$, its theta component is
\[
(\Theta_K)_\gamma(\tau)
=
\sum_{\substack{v\in K^\vee\\v+K=\gamma}}
q^{Q_K(v)}.
\]
Multiplication by $\epsilon$ is an isometry of $K$
and induces a bijection between the vectors representing
$\gamma$ and those representing $\epsilon\gamma$.
It follows that
\[
(\Theta_K)_{\epsilon\gamma}(\tau)
=
\sum_{\substack{v\in K^\vee\\v+K=\gamma}}
q^{Q_K(\epsilon v)}
=
(\Theta_K)_\gamma(\tau).
\]
In these formulas, $\gamma$ always denotes a class in
$C(K)$, while $v$ denotes an actual vector in $K^\vee$.

By \eqref{eq:gluing-Ao-K-action}, the induced action on
the group algebra is
$u\mathbf e_\gamma=\mathbf e_{\epsilon\gamma}$.
Consequently, for
$F=\sum_\gamma F_\gamma\mathbf e_\gamma$,
the $\gamma$-component of $uF$ is
$F_{\epsilon^{-1}\gamma}$.
The equality of theta components just proved therefore
implies $u\Theta_K=\Theta_K$.
This action only permutes the components and does not
change the variable $\tau$.
It commutes with the componentwise Serre derivatives
and with division by 
$\Delta_{\rm mod}$.
Hence, \eqref{eq:gA-gD-definitions} implies $u g_D = g_D$.
Thus every element of $\mathrm U(A_o)$ fixes the input
of the Borcherds lift.

It follows from
\cite[Theorem 13.3]{Borcherds1998Grassmannians}
that $\Psi_D$ transforms under $\mathrm U(A_o)$
with a character.
More precisely, this is the restriction of the
character on the subgroup of
$\operatorname O^{+}(A_{o,\mathbb Z})$ preserving $g_D$.
Since the weight is the integer $108$, the multiplier
is an ordinary character.
Its restriction to the discriminant kernel has finite
order by
\cite[Remark 3.19]{Bruinier2002BorcherdsProducts},
applied in signature $(2,16)$.
The discriminant kernel has finite index in the
subgroup preserving $g_D$, so the character on that
subgroup, and hence on $\mathrm U(A_o)$, also has
finite order.

We next justify the restriction to $\mathbb B_o$.
The zeros and poles of $\Psi_D$ lie on rational
quadratic divisors
$D_x$ with $x\in A_{o,\mathbb Z}^{\vee}$ and $Q(x)<0$.
As explained above, each such divisor restricts to
the proper hyperball $H_{A_o}(x)$ with multiplicity one;
see also \cite[Lemmas 6.1 and 6.2]{Hofmann2014}.
In particular, $\mathbb B_o$ is not contained in the
zero or pole divisor of $\Psi_D$, so the restriction
$\Psi_{D,o}$ is a nonzero meromorphic automorphic form.
The embedding
$\mathbb B_o\hookrightarrow\mathcal D_o$
is $\mathrm U(A_o)$-equivariant, and the tautological
line bundle on $\mathcal D_o$ restricts to the
tautological line bundle on $\mathbb B_o$.
Therefore $\Psi_{D,o}$ has weight $108$ and is
automorphic for $\mathrm U(A_o)$ with the restricted
finite-order character.

It remains to compute its divisor.
Write the Fourier expansion as
\[
g_D(\tau)
=
\sum_{\gamma\in C(K)}
\sum_{m\in\mathbb Z+q_K(\gamma)}
c(\gamma,m)q^m\mathbf e_\gamma.
\]
The divisor formula for the Borcherds lift is
\[
\operatorname{div}_{\mathcal D_o}(\Psi_D)
=
\frac12
\sum_{\substack{x\in A_{o,\mathbb Z}^{\vee}\\Q(x)<0}}
c\bigl(\iota_o(x+A_{o,\mathbb Z}),Q(x)\bigr)D_x.
\]
The sum is locally finite, and only coefficients in
the principal part contribute.
The factor $1/2$ accounts for the vectors $x$ and $-x$,
which define the same divisor and have equal
coefficients.
Thus each pair $\{x,-x\}$ contributes its Fourier
coefficient once.
If further vectors define the same divisor, their
contributions must also be added.

We first show that every divisor obtained after
restriction belongs to the mirror-trace arrangement.
Take a vector $x$ with nonzero coefficient in this
divisor formula, and put
$\gamma=\iota_o(x+A_{o,\mathbb Z})$.
By \eqref{eq:gD-principal-part-corrected}, the possible
values of $Q(x)$ are
$-1,-5/6,-1/2,-1/3$.
Since $\iota_o$ preserves the discriminant quadratic
forms, $q_K(\gamma)$ is the residue of $Q(x)$ modulo
$\mathbb Z$.

The coset calculation in
Lemma \ref{lem:gD-fourier-expansion}
shows that each such class $\gamma$ has a representative
$y\in K^\vee$ with the following norm:
\[
\begin{array}{c|c|c|c}
Q(x) & q_K(\gamma) & Q_K(y) & c(\gamma,Q(x))\\ \hline
-1   & 0   & 0   & 3\\
-5/6 & 1/6 & 1/6 & -1\\
-1/2 & 1/2 & 1/2 & 30\\
-1/3 & 2/3 & 2/3 & 105
\end{array}
\]
In particular, we choose $y$ so that
$y+K=\gamma$ and $Q_K(y)=1+Q(x)$.
In the first row, $\gamma=0$ and we may take $y=0$.

Now regard $y$ as a vector of $(-K)^\vee$ inside
$A_{\mathbb Z}\otimes\mathbb Q$.
The equality
$y+K=\iota_o(x+A_{o,\mathbb Z})$
is exactly the gluing condition established above.
It follows that $r:=x+y$ belongs to $A_{\mathbb Z}$.
The two summands are orthogonal, and the quadratic
form on $-K$ is $-Q_K$, so
\[
Q_{A_{\mathbb Z}}(r)
=
Q(x)-Q_K(y)
=
-1.
\]
By the normalization
$Q_{A_{\mathbb Z}}(r)=-\psi(r,r)/3$,
this means that $\psi(r,r)=3$.
Thus $r$ defines a root hyperball in $\mathbb B_A$.

Moreover, $y$ lies in $Ez_o$ and is orthogonal to
$A_o$.
Hence the equations defined by $r=x+y$ and by $x$
agree on $V_{A_o}$, giving $H_A(r)\cap\mathbb B_o=H_{A_o}(x)$.
Thus the restriction of every contributing rational
quadratic divisor is a mirror trace.
There are therefore no divisor components outside
$\mathcal H_o$.

Finally, we determine the multiplicity along each
mirror trace.
For a nonzero vector $x$, the unit orbit is $\mathscr O^\times x
=
\{\pm x,\pm\omega x,\pm\omega^2x\}$.
The three pairs modulo sign define the three distinct
orthogonal divisors
$D_x,D_{\omega x},D_{\omega^2x}$:
the vectors $x,\omega x,\omega^2x$ are pairwise
nonproportional over $\mathbb R$.
On the other hand, they span the same line over $E$,
so all three divisors restrict to the same hyperball
$H_{A_o}(x)$.
Each restriction has multiplicity one.
This explains why their three contributions must be
added on $\mathbb B_o$.

For a fixed mirror trace $H\in\mathcal H_o(d)$,
Lemma \ref{lem:gD-normal-vector-count} lists all the
vectors that can contribute to its multiplicity.
When $d=6$, there are three divisors arising from the
unit multiples of $x_6$, with $Q(x_6)=-1/3$, and
three more arising from the unit multiples of
$\theta x_6$, with $Q(\theta x_6)=-1$.
The corresponding coefficients are $105$ and $3$,
so the total order along $H$ is
$3\cdot105+3\cdot3=324$.

For $d=9,15,18$, the same lemma gives only the three
divisors arising from the unit multiples of $x_d$.
Their coefficients are respectively $30,-1,3$.
The resulting orders are therefore
$3\cdot30=90$, $3\cdot(-1)=-3$, and $3\cdot3=9$.
In particular, the negative order for $d=15$
describes a pole of order three.
These computations prove
\eqref{eq:gD-upstairs-divisor-corrected}.
\end{proof}

\section{Birational geometry of Looijenga's and semi-toroidal compactifications}
\label{sec:semitoroidal}

We use the two Borcherds products constructed in
Section \ref{sec:automorphic-forms}
together with the cusp intersection calculations of
Corollary \ref{cor:cusp-intersection-matrix-corrected}
to determine divisor relations, local $\Q$-Cartier
conditions, and the rational class and Picard groups.
We then study canonical divisors and the arrangement blow-up.

We now arrange the divisor relations as a consequence of two Borcherds products.

\begin{lem}
\label{lem:two-automorphic-relations-corrected}
On $X_o^{j_{\mathcal K}}$ one has
\begin{align}
 D_{15}&\sim_{\Q}24\Lambda_{\rm H}+4D_6+2D_9,
 \label{eq:D15-corrected}\\
 D_{18}&\sim_{\Q}60\Lambda_{\rm H}-14D_6-8D_9.
 \label{eq:D18-corrected}
\end{align}

\end{lem}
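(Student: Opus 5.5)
The plan is to derive the two relations on $X_o^{\mathrm{bb}}$ from the two automorphic forms $\Psi_{A,o}$ and $\Psi_{D,o}$, to transport them to $X_o^{j_{\mathcal K}}$ using that $c$ is small, and then to solve the resulting $2\times 2$ linear system for $D_{15}$ and $D_{18}$.

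\textbf{Step 1: the Allcock relation.} This is Corollary \ref{cor:allcock-relation-corrected}. Multiplying by $3$ gives
\[
2\Delta(6)+2\Delta(9)+3\Delta(15)+\Delta(18)\sim_{\Q}132\lambda_{\rm aut}
\]
on $X_o^{\mathrm{bb}}$.

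\textbf{Step 2: the relation from $\Psi_{D,o}$.} Theorem \ref{thm:another Borcherds lift} says that $\Psi_{D,o}$ is a meromorphic automorphic form of weight $108$ for $\U(A_o)$ with a finite-order character $\chi$. If $\chi$ has order $N$, then $\Psi_{D,o}^{N}$ is a genuine $\U(A_o)$-invariant rational section of $\mathcal L^{\otimes 108N}$ on $X_o$.

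Its divisor on $X_o$ is obtained from the upstairs divisor \eqref{eq:gD-upstairs-divisor-corrected} by dividing by the ramification indices of Lemma \ref{lem:ramification-indices-eight-ball}. The pullback of $\Delta(d)$ along $q$ is $e_d\,\B_o(\mathcal H_o(d))$, since each $\Delta(d)$ is irreducible and reduced. The multiplicities $(324,90,-3,9)$ therefore become $(54,30,-3,3)$.

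Since $X_o^{\mathrm{bb}}\setminus X_o$ consists of two points and $X_o^{\mathrm{bb}}$ is normal, rational sections and Weil divisor classes extend across the cusps. This gives
\[
54\Delta(6)+30\Delta(9)-3\Delta(15)+3\Delta(18)\sim_{\Q}108\lambda_{\rm aut}
\]
in $\mathrm{Cl}(X_o^{\mathrm{bb}})_{\Q}$. The same extension argument also justifies Step 1.

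\textbf{Step 3: passing to $X_o^{j_{\mathcal K}}$.} The morphism $c\colon X_o^{j_{\mathcal K}}\to X_o^{\mathrm{bb}}$ is small, with fibres of dimension $3$ and $1$ over the two cusps. Hence taking strict transforms identifies $\mathrm{Cl}(X_o^{j_{\mathcal K}})_{\Q}$ with $\mathrm{Cl}(X_o^{\mathrm{bb}})_{\Q}$. Under this identification $\Delta(d)$ corresponds to $D_d$ and $\lambda_{\rm aut}$ corresponds to its pullback $c^*\lambda_{\rm aut}$, because the latter is Cartier and agrees with $\lambda_{\rm aut}$ off a codimension-$\ge 2$ set.

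By \eqref{eq:CY-Hodge-equals-natural-automorphic} we have $\lambda_{\rm aut}\sim_{\Q}\Lambda_{\rm H}$. Both relations therefore hold on $X_o^{j_{\mathcal K}}$ with $D_d$ in place of $\Delta(d)$ and $\Lambda_{\rm H}$ in place of $\lambda_{\rm aut}$.

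\textbf{Step 4: solving the system.} From Step 1,
\[
D_{18}\sim_{\Q}132\Lambda_{\rm H}-2D_6-2D_9-3D_{15}.
\]
Substituting into the relation of Step 2 gives
\[
48D_6+24D_9-12D_{15}\sim_{\Q}-288\Lambda_{\rm H},
\]
that is, $D_{15}\sim_{\Q}24\Lambda_{\rm H}+4D_6+2D_9$, which is \eqref{eq:D15-corrected}. Back-substituting gives
\[
D_{18}\sim_{\Q}132\Lambda_{\rm H}-2D_6-2D_9-72\Lambda_{\rm H}-12D_6-6D_9=60\Lambda_{\rm H}-14D_6-8D_9,
\]
which is \eqref{eq:D18-corrected}.

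\textbf{Main obstacle.} The delicate point is Step 2, namely the descent of the divisor of a meromorphic form with character to the quotient. One must check that the character is harmless after taking the power $N$. One must also check that generic stabilizers along $\mathcal H_o(d)$ produce exactly the factor $e_d$, including $e_{15}=1$ for the pole along $\mathcal H_o(15)$. Finally, no other components can appear, which holds because Theorem \ref{thm:another Borcherds lift} shows the divisor is supported on mirror traces. The remaining steps are formal.
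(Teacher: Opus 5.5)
Your proposal is correct and follows the paper's proof. Both take three times the Allcock relation and the $\Psi_{D,o}$ relation obtained by dividing $(324,90,-3,9)$ by the ramification indices $(6,3,1,3)$, transport them to $X_o^{j_{\mathcal K}}$ by strict transform using that $c$ is small, and solve the resulting linear system. Your extra remarks, on killing the character by a power and on extending the relations across the cusps, only make explicit steps the paper leaves implicit.
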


\begin{proof}
Multiplying \eqref{eq:allcock-linear-equivalence-BB} by $3$ gives the first relation on $X_o^{\mathrm{bb}}$.  The morphism $X_o^{j_{\mathcal K}}\to X_o^{\mathrm{bb}}$ is an isomorphism away from the two cusps, and its fibres there have dimensions $3$ and $1$ by \eqref{eq:cusp-fibre-dimensions}; hence it has no exceptional divisor.  Taking strict transforms of the divisor relation gives 
\[2D_6+2D_9+3D_{15}+D_{18}\sim_{\Q}132\lambda_{\rm aut}.\]

On the other hand, divide the four coefficients in \eqref{eq:gD-upstairs-divisor-corrected} by the ramification indices $(6,3,1,3)$ from Lemma \ref{lem:ramification-indices-eight-ball}. The weight is $108$ in the  normalization of $\Lambda_{\rm H}$.  Taking strict transforms, as above, proves 
\[54D_6+30D_9-3D_{15}+3D_{18}\sim_{\Q}108\lambda_{\rm aut}.\]
Now solving the system of the two equations above yields the conclusion.
\end{proof}
 $\Lambda_{\rm H}$ is pulled back from $X_o^{\mathrm{bb}}$, so its relative numerical class is zero.
Hence in $N^1(X_o^{j_{\mathcal K}}/X_o^{\mathrm{bb}})_{\Q}$ this becomes
\[ [D_{15}]=4[D_6]+2[D_9],\qquad [D_{18}]=-14[D_6]-8[D_9].\]

\begin{prop}
\label{prop:qcartier-at-cusps-corrected}
Let $D=\sum_{d=6,9,15,18}a_d\Delta(d)$ be a rational Weil divisor on $X_o^{\mathrm{bb}}$, and put
\begin{equation}
 u(D):=a_6+4a_{15}-14a_{18},\qquad v(D):=a_9+2a_{15}-8a_{18}.
\label{eq:uv-cartier-coordinates}
\end{equation}
Then $D$ is $\Q$-Cartier at $\infty_\theta$ if and only if $u(D)=v(D)=0$, and it is $\Q$-Cartier at $\infty_0$ if and only if $v(D)=0$.  Consequently, $D$ is globally $\Q$-Cartier if and only if $u(D)=v(D)=0$.
\end{prop}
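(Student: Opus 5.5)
We will work on the small semi-toroidal modification $c\colon X_o^{j_{\mathcal K}}\to X_o^{\mathrm{bb}}$. Since $c$ has no exceptional divisor, the strict transform of $D$ is $\widetilde D=\sum_d a_dD_d$. By Lemma \ref{lem:two-automorphic-relations-corrected} and the remark following it, the relative class satisfies
\[
 [\widetilde D]=u(D)[D_6]+v(D)[D_9]\in N^1(X_o^{j_{\mathcal K}}/X_o^{\mathrm{bb}})_{\Q}.
\]
Here $D_6,D_9$ are $\Q$-Cartier on $X_o^{j_{\mathcal K}}$, so $\widetilde D$ is $\Q$-Cartier on $X_o^{j_{\mathcal K}}$. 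The relations there are global $\Q$-linear equivalences: we have $\widetilde D\sim_{\Q} u D_6+vD_9+(\text{multiple of }\lambda_{\rm aut})$.

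For the \emph{if} direction, suppose $u=v=0$ (respectively $v=0$ near $\infty_0$). Then $\widetilde D\sim_{\Q} m\lambda_{\rm aut}=c^*(m\mathcal L)$ near the cusp fibre; near $\infty_0$ the term $uD_6$ vanishes because $D_6$ does not meet that fibre, by Proposition \ref{prop:cusp-incidence-all-components}. Pushing forward by $c$ with $c_*\mathcal O=\mathcal O$ gives $D=c_*\widetilde D\sim_{\Q} m\mathcal L$ locally, so $D$ is $\Q$-Cartier there. More precisely, for divisible $N$ we have $N\widetilde D-Nm\lambda_{\rm aut}=\operatorname{div}(f)$ for some rational function $f$; since $c$ is small, $\operatorname{div}(f)$ on $X_o^{\mathrm{bb}}$ equals $ND-Nm\mathcal L$ as Weil divisors. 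This gives the Cartier property locally at the cusp.

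For the \emph{only if} direction, suppose $D$ is $\Q$-Cartier near the cusp. Then $c^*D$ is defined and agrees with $\widetilde D$, since $c$ is small and the two divisors coincide off the codimension $\ge2$ exceptional locus. It is therefore numerically trivial on curves contracted by $c$. We intersect with the curves of Corollary \ref{cor:cusp-intersection-matrix-corrected}. At $\infty_\theta$, we get $\widetilde D\cdot C_{\theta,6}=u\,(D_6\cdot C_{\theta,6})=0$, forcing $u=0$, and $\widetilde D\cdot C_{\theta,9}=v\,(D_9\cdot C_{\theta,9})=0$, forcing $v=0$. At $\infty_0$, $\widetilde D\cdot C_0=v\,(D_9\cdot C_0)=0$ forces $v=0$. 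In these computations we use that $\lambda_{\rm aut}$ has degree zero on contracted curves.

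The global statement follows because $X_o^{\mathrm{bb}}$ is smooth up to quotient singularities away from the cusps, so $D$ is $\Q$-Cartier there. The two cusp conditions combine to $u=v=0$.

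The main subtlety is justifying that $\Q$-Cartierness of $D$ is equivalent to $\widetilde D$ being $\Q$-linearly equivalent, locally over the cusp, to a pullback. For a small projective birational morphism this reduces to the relative numerical triviality used above. The key fact is that a $c$-numerically trivial $\Q$-Cartier divisor is $\Q$-linearly a pullback locally over the base. We will deduce this from the basepoint-free/relative cone theorem in the klt setting; $X_o^{j_{\mathcal K}}$ is klt as a quotient of a toroidal-type model. Alternatively, we can argue directly via the explicit relation with $\lambda_{\rm aut}$ as above, which avoids this issue in the \emph{if} direction.
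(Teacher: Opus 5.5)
Your argument is correct and follows the paper's proof: you rewrite the strict transform as $(24a_{15}+60a_{18})\lambda_{\rm aut}+u(D)D_6+v(D)D_9$ using the two Borcherds relations, test against the contracted curves $C_{\theta,6},C_{\theta,9},C_0$ for necessity, and push a principal multiple forward through the small morphism $c$ for sufficiency (dropping the $D_6$-term near $\infty_0$); quotient singularities away from the cusps handle the rest. Your closing paragraph is unnecessary, since neither direction uses the fact that a $c$-numerically trivial divisor is locally a pullback, so you can delete it together with the unproved klt claim for $X_o^{j_{\mathcal K}}$.
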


\begin{proof}
Equations \eqref{eq:D15-corrected} and \eqref{eq:D18-corrected} give
\begin{equation}
 D^j\sim_{\Q}(24a_{15}+60a_{18})\lambda_{\rm aut}+u(D)D_6+v(D)D_9.
\label{eq:D-in-uv-coordinates}
\end{equation}
If $D$ is $\Q$-Cartier near a cusp, then its pullback to $X_o^{j_{\mathcal K}}$ has degree zero on every contracted curve.  The intersection matrix \eqref{eq:cusp-intersection-matrix-corrected} gives the stated necessary conditions.

Conversely, if $u(D)=v(D)=0$, then \eqref{eq:D-in-uv-coordinates} makes $D^j$ $\Q$-linearly equivalent to a multiple of the $\Q$-Cartier divisor $\Lambda_{\rm H}$.  Since $X_o^{j_{\mathcal K}}\to X_o^{\mathrm{bb}}$ is small, pushing forward a principal multiple of this equality shows that $D$ is $\Q$-Cartier.  Near $\infty_0$, choose a neighborhood disjoint from $\Delta(6)$; if $v(D)=0$, the same argument applies because the term $u(D)D_6$ is absent there.  Away from the cusps, $X_o^{\mathrm{bb}}$ is locally a finite quotient of a smooth germ.  Therefore, every Weil $\Q$-divisor is locally $\Q$-Cartier there by \cite[Lemma 5.16]{komo}.
\end{proof}
This yields the following concrete corollary.
\begin{cor}
\label{cor:individual-qcartier-corrected}
The divisor $\Delta(6)$ is not $\Q$-Cartier at $\infty_\theta$ and is Cartier near $\infty_0$, which it does not meet.  Each of $\Delta(9)$, $\Delta(15)$, and $\Delta(18)$ is not $\Q$-Cartier at either cusp.
\end{cor}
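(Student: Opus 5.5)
This follows by applying Proposition \ref{prop:qcartier-at-cusps-corrected} to each single divisor $D=\Delta(d)$ and reading off the coordinates $u(D)$ and $v(D)$ from \eqref{eq:uv-cartier-coordinates}. For $\Delta(6)$ we get $(u,v)=(1,0)$. For $\Delta(9)$ we get $(0,1)$. For $\Delta(15)$ we get $(4,2)$, and for $\Delta(18)$ we get $(-14,-8)$.

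Since $u(\Delta(6))=1\neq0$, the criterion at $\infty_\theta$ fails, so $\Delta(6)$ is not $\Q$-Cartier there. Near $\infty_0$ we argue geometrically rather than numerically. By Proposition \ref{prop:cusp-incidence-all-components}, $\infty_0\notin\Delta(6)^{\rm bb}$, so there is a neighbourhood of $\infty_0$ disjoint from the closed set $\Delta(6)^{\rm bb}$. On it the divisor is zero, hence Cartier. This is consistent with $v(\Delta(6))=0$, but the disjointness gives integral Cartierness, which is slightly stronger than the $\Q$-Cartier statement of the proposition.

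For $\Delta(9)$, $\Delta(15)$ and $\Delta(18)$, the $v$-coordinate is $1$, $2$ and $-8$ respectively, all nonzero. The criterion therefore fails at $\infty_0$, since there it requires $v=0$. It also fails at $\infty_\theta$, since there it requires $u=v=0$. This is where Proposition \ref{prop:cusp-incidence-all-components} matters: all three divisors pass through both cusps, so the failure is genuine non-$\Q$-Cartierness at a point of the divisor. The proposition alone covers any Weil divisor near a cusp, so no separate argument is needed.

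There is no real obstacle. The only point to keep track of is that the relations \eqref{eq:D15-corrected} and \eqref{eq:D18-corrected} were used to define $u$ and $v$, so the coordinates for $\Delta(15)$ and $\Delta(18)$ are the coefficients appearing in those relations, and not merely the coefficients $a_d$ of the divisor.
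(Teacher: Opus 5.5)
Your proposal is correct and is the paper's argument: the corollary is read off from Proposition \ref{prop:qcartier-at-cusps-corrected} with $(u,v)=(1,0),(0,1),(4,2),(-14,-8)$, and the Cartier claim for $\Delta(6)$ near $\infty_0$ follows from the disjointness in Proposition \ref{prop:cusp-incidence-all-components}. Your remark that the incidence proposition ``matters'' for $\Delta(9)$, $\Delta(15)$, $\Delta(18)$ is only a consistency check (a divisor missing a cusp would be Cartier there); it is not a logical input.
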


Now we determine the invariants of $X_o^{j_{\mathcal K}}$.
\begin{thm}
\label{thm:picard-numbers-corrected}
One has
\[\mathrm{Cl}(X_o^{j_{\mathcal K}})_{\Q}=\Pic(X_o^{j_{\mathcal K}})_{\Q}=\Q[\lambda_{\rm aut}]\oplus\Q[D_6]\oplus\Q[D_9].\]
Thus $X_o^{j_{\mathcal K}}$ is $\Q$-factorial and $\rho(X_o^{j_{\mathcal K}})=3$.  Moreover,
\[\Pic(X_o^{\mathrm{bb}})_{\Q}=\Q[\lambda_{\rm aut}],\qquad \rho(X_o^{\mathrm{bb}})=1,\qquad \rho(X_o^{j_{\mathcal K}}/X_o^{\mathrm{bb}})=2.\]
\end{thm}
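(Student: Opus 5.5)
The plan is to compute the rational class group of $X_o^{j_{\mathcal K}}$ first, then use the cusp intersection matrix to separate Cartier classes from Weil classes on $X_o^{\mathrm{bb}}$.

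\textbf{Generators of the class group.} The open part $X_o$ is a finite quotient of the ball by $\Gamma_o$. I would use the standard description of $\mathrm{Cl}(X_o)_{\Q}$, and of $\mathrm{Cl}(X_o^{\mathrm{bb}})_{\Q}$, whose boundary has codimension $8$: it is generated by $\lambda_{\rm aut}$ together with the Heegner divisors. Concretely, for a unitary lattice of signature $(n,1)$ with $n\ge 3$, Borcherds products and modularity of generating series (Bruinier's converse theorem in the unitary setting, transported through the embedding $\mathbb B_o\hookrightarrow\mathcal D_o$ of Section \ref{sec:automorphic-forms}) show that $\Pic(X_o)_{\Q}$ is spanned by $\lambda_{\rm aut}$ and Heegner divisors. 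Every Heegner divisor of $X_o$ should be a rational combination of $\lambda_{\rm aut}$ and the four mirror-trace divisors $\Delta(6),\Delta(9),\Delta(15),\Delta(18)$. This generation step is the main obstacle: one has to check that no further Heegner divisors, of other norms, are independent modulo the relations. As a fallback I would argue through the moduli interpretation \eqref{eq:open-res-ball-quotient}. The open set $\M$ is a quotient of an open subset of the weighted projective space $Y_{\mathrm{Mir}}$ by $\PGL(2)$, so its rational class group is generated by $\eta\sim\lambda_{\rm aut}$. Hence $\mathrm{Cl}(X_o)_{\Q}$ is generated by $\lambda_{\rm aut}$ and the four components of the complement.

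\textbf{Reduction to a basis.} Since $c\colon X_o^{j_{\mathcal K}}\to X_o^{\mathrm{bb}}$ is small, $\mathrm{Cl}$ is unchanged, and $\mathrm{Cl}(X_o^{j_{\mathcal K}})_{\Q}$ is generated by $\lambda_{\rm aut},D_6,D_9,D_{15},D_{18}$. Lemma \ref{lem:two-automorphic-relations-corrected} then reduces the generators to $\lambda_{\rm aut},D_6,D_9$. These three classes are independent. If $a\lambda_{\rm aut}+bD_6+cD_9\sim_{\Q}0$, intersecting with the contracted curves $C_{\theta,6}$ and $C_{\theta,9}$ of Corollary \ref{cor:cusp-intersection-matrix-corrected} gives $b=c=0$, since $\lambda_{\rm aut}$ is pulled back and has degree zero on them. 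Then $a=0$ because $\lambda_{\rm aut}$ is ample on $X_o^{\mathrm{bb}}$.

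\textbf{Cartier classes and Picard numbers.} Each of $\lambda_{\rm aut},D_6,D_9$ is $\Q$-Cartier on $X_o^{j_{\mathcal K}}$ by \cite[Lemma 5.2]{Looijenga2003Ball}. Therefore $\mathrm{Cl}_{\Q}=\Pic_{\Q}$, the variety is $\Q$-factorial, and $\rho(X_o^{j_{\mathcal K}})=3$ (numerical independence was shown above).

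For $X_o^{\mathrm{bb}}$, take a $\Q$-Cartier divisor. Writing its class as $a\lambda_{\rm aut}+b\Delta(6)+c\Delta(9)$, Proposition \ref{prop:qcartier-at-cusps-corrected} forces $u=b=0$ and $v=c=0$. Hence $\Pic(X_o^{\mathrm{bb}})_{\Q}=\Q[\lambda_{\rm aut}]$ and $\rho=1$, so $\rho(X_o^{j_{\mathcal K}}/X_o^{\mathrm{bb}})=2$.
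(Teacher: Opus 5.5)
Your argument is correct and, once you make the moduli fallback your main line, it is essentially the paper's proof. The paper also gets its generators from the moduli interpretation: it shows $\mathrm{Cl}(\M)_{\Q}=0$ via the $\SL_2\times\mathbb G_m$-quotient of the reduced-discriminant locus in $H_4\oplus H_6$, Brion's exact sequence and Vistoli's comparison. It then uses the two Borcherds relations, independence via contracted cusp curves, Looijenga's $\Q$-Cartier lemma, and the cusp intersection matrix for $X_o^{\mathrm{bb}}$. You use $C_{\theta,9}$ where the paper uses $C_0$, and you invoke Proposition \ref{prop:qcartier-at-cusps-corrected} rather than the intersection matrix directly; both are equally fine.

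You should drop the Heegner-divisor route, which is unnecessary and, as stated, not justified. Bruinier's converse theorem only identifies which Heegner combinations are divisors of Borcherds products; it does not by itself show that $\Pic(X_o)_{\Q}$ is spanned by Heegner divisors.
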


\begin{proof}
Let $\mathscr U:=X_o^{j_{\mathcal K}}\setminus(D_6\cup D_9\cup D_{15}\cup D_{18})$.  Put $G=\SL_2\times\mathbb G_m$, where the second factor acts on $H_4\oplus H_6$ with weights $(2,3)$, and let $U\subset H_4\oplus H_6$ be the principal open subset on which $\operatorname{Disc}(f_0^3+f_1^2)\neq0$, where $\operatorname{Disc}$ denotes the discriminant.  \cite[Theorem 7.2]{Miranda1981} identifies $U/G$ with $\M$, and $[U/G]$ is a smooth Deligne-Mumford stack.

The ring of $U$ is a localization of a polynomial ring and hence is factorial, so $\Pic(U)=0$.  By \cite[Theorem 4.2.2]{brion2018linearization},
there is an exact sequence
\[
 \mathcal O(U)^*/\C^*\longrightarrow X^*(G)\longrightarrow\Pic^G(U)\longrightarrow0
\]
shows that $\Pic^G(U)_{\Q}=0$, where we utilized the fact that the group $\SL_2$ has no characters, while the invertible discriminant semi-invariant has nonzero $\mathbb G_m$-weight and therefore spans $X^*(G)_{\Q}$.  By \cite[Proposition 6.1]{Vistoli1989Intersection}, $\mathrm{Cl}(\M)_{\Q}\cong\Pic([U/G])_{\Q}=0$.

The symmetric difference between $\mathscr U$ and $\M$ is contained in the two cusp fibres.  By \eqref{eq:cusp-fibre-dimensions}, these have dimensions $3$ and $1$ inside the eight-dimensional variety $X_o^{j_{\mathcal K}}$, hence codimensions at least $5$ and $7$.  Removing or adding these subsets does not change the class group, so $\mathrm{Cl}(\mathscr U)_{\Q}=0$.  \cite[II, Proposition 6.5]{Ha} therefore gives a surjection $\bigoplus_{d=6,9,15,18}\Q[D_d]\to\mathrm{Cl}(X_o^{j_{\mathcal K}})_{\Q}$.  Equations \eqref{eq:D15-corrected} and \eqref{eq:D18-corrected} show that this class group is generated by $\lambda_{\rm aut},D_6,D_9$, all of which are $\Q$-Cartier by Looijenga's construction of the semi-toroidal compactification $X_o^{j_{\mathcal K}}$.

These three classes are independent. 
Indeed, by Corollary \ref{cor:cusp-intersection-matrix-corrected}, if $a\lambda_{\rm aut}+bD_6+cD_9\equiv0$, intersection with $C_{\theta,6}$ gives $b=0$, and intersection with $C_0$ gives $c=0$, because $\lambda_{\rm aut}$ is pulled back from $X_o^{\mathrm{bb}}$.  Then $a=0$ because $\lambda_{\rm aut}$ is the pullback of a nonzero ample class on $X_o^{\mathrm{bb}}$.  This proves the claim for $\mathrm{Cl}(X_o^{j_{\mathcal K}})_{\Q}$.

The only nontrivial fibres of $X_o^{j_{\mathcal K}}\to X_o^{\mathrm{bb}}$ have dimensions $3$ and $1$, so the exceptional locus has codimension greater than one and the morphism is small.  Taking strict transforms therefore identifies $\mathrm{Cl}(X_o^{\mathrm{bb}})_{\Q}$ with $\mathrm{Cl}(X_o^{j_{\mathcal K}})_{\Q}$.  The pullback of a $\Q$-Cartier $\Q$-divisor on $X_o^{\mathrm{bb}}$ has degree zero on all contracted curves, and \eqref{eq:cusp-intersection-matrix-corrected} forces its $D_6$- and $D_9$-coefficients to vanish.  Conversely, $\lambda_{\rm aut}$ is $\Q$-Cartier on $X_o^{\mathrm{bb}}$.  This proves the rest of the claims.
\end{proof}
As a direct consequence of the above theorem, the variety $X_o^{\mathrm{bb}}$ is not $\Q$-factorial; indeed, $\operatorname{rank}\mathrm{Cl}(X_o^{\mathrm{bb}})_{\Q}=3$ and $\operatorname{rank}(\mathrm{Cl}(X_o^{\mathrm{bb}})_{\Q}/\Pic(X_o^{\mathrm{bb}})_{\Q})=2$.
\begin{cor}
\label{cor:picard-semitoric-exhibition}
One has $N^1(X_o^{j_{\mathcal K}}/X_o^{\mathrm{bb}})_{\Q}=\Q[D_6]\oplus\Q[D_9]$ and $\rho(X_o^{j_{\mathcal K}}/X_o^{\mathrm{bb}})=2$.
The classes $\lambda_{\rm aut},D_6,D_9$ form a basis of $\Pic(X_o^{j_{\mathcal K}})_{\Q}$.  Equivalently, by \eqref{eq:D18-corrected}, one may replace $\lambda_{\rm aut}$ by $D_{18}$.
\end{cor}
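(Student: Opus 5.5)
The corollary is essentially a repackaging of Theorem~\ref{thm:picard-numbers-corrected} together with Corollary~\ref{cor:cusp-intersection-matrix-corrected} and Lemma~\ref{lem:two-automorphic-relations-corrected}, so I would derive it directly from these.

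First, the basis statement. Theorem~\ref{thm:picard-numbers-corrected} gives that $\lambda_{\rm aut},D_6,D_9$ are independent and span $\Pic(X_o^{j_{\mathcal K}})_{\Q}=\mathrm{Cl}(X_o^{j_{\mathcal K}})_{\Q}$, so they form a basis. For the replacement, I would use \eqref{eq:D18-corrected} in the form $D_{18}\sim_{\Q}60\Lambda_{\rm H}-14D_6-8D_9$, with $\Lambda_{\rm H}\sim_{\Q}\lambda_{\rm aut}$ by \eqref{eq:CY-Hodge-equals-natural-automorphic}. The change-of-basis matrix from $(\lambda_{\rm aut},D_6,D_9)$ to $(D_{18},D_6,D_9)$ is triangular with diagonal $(60,1,1)$. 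It is therefore invertible over $\Q$, and $D_{18},D_6,D_9$ is again a basis.

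Next, the relative Néron--Severi space. Since $X_o^{j_{\mathcal K}}\to X_o^{\mathrm{bb}}$ is a projective birational morphism, $N^1(X_o^{j_{\mathcal K}}/X_o^{\mathrm{bb}})_{\Q}$ is the quotient of $\Pic(X_o^{j_{\mathcal K}})_{\Q}$ by classes numerically trivial on every contracted curve. Two observations identify this quotient.
\begin{enumerate}
\item The class $\lambda_{\rm aut}$ is pulled back from $X_o^{\mathrm{bb}}$, so it is relatively trivial. The image is therefore spanned by $[D_6]$ and $[D_9]$.
\item The intersection matrix \eqref{eq:cusp-intersection-matrix-corrected} shows these two are independent. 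If $bD_6+cD_9$ is relatively numerically trivial, pairing with $C_{\theta,6}$ gives $b=0$, and pairing with $C_0$ (or $C_{\theta,9}$) gives $c=0$.
\end{enumerate}
Hence $N^1(X_o^{j_{\mathcal K}}/X_o^{\mathrm{bb}})_{\Q}=\Q[D_6]\oplus\Q[D_9]$ and $\rho(X_o^{j_{\mathcal K}}/X_o^{\mathrm{bb}})=2$, consistent with the theorem.

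The only point needing care is checking that the kernel of $\Pic_{\Q}\to N^1(\cdot/X_o^{\mathrm{bb}})_{\Q}$ is exactly $\Q[\lambda_{\rm aut}]$, and not something larger. This follows from the same pairing argument applied to a general class $a\lambda_{\rm aut}+bD_6+cD_9$: relative triviality forces $b=c=0$.
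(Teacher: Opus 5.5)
Your proposal is correct and follows essentially the same route as the paper. The paper also obtains the relative space by quotienting the basis of Theorem~\ref{thm:picard-numbers-corrected} by $\Q[\lambda_{\rm aut}]$, and gets independence of $D_6,D_9$ from the intersection matrix \eqref{eq:cusp-intersection-matrix-corrected}; your explicit triangular change of basis (determinant $60$) for the $D_{18}$ replacement simply spells out what the paper leaves implicit.
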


\begin{proof}
This follows immediately by taking the quotient of the two Picard groups by $\Q[\lambda_{\mathrm{aut}}]$ in Theorem \ref{thm:picard-numbers-corrected}. On the other hand, the linear independence follows from \eqref{eq:cusp-intersection-matrix-corrected}.
\end{proof}

\begin{rem}
At $\infty_\theta$, the restrictions of $D_6$ and $D_9$ give two independent local numerical directions, whereas at $\infty_0$ only the $D_9$-direction occurs.  The latter is the restriction of the same global class $D_9$, not a third global relative class.  This is why the relative Picard number is $2$ rather than $3$.
\end{rem}
Let $D_{15}^M,D_{18}^M$ denote the divisors of generic types $II$ and $I_2$.
\begin{prop}\label{prop:miranda-boundary-classes-corrected}
Put $\mathcal R=\operatorname{Res}(f_0,f_1)$ and $\mathcal P=\operatorname{Disc}(f_0^3+f_1^2)$.
There is a polynomial $\mathrm{SL}_2$-invariant $\mathcal T$ such that
$\mathcal P=\mathcal R^3\mathcal T$.
The weighted degrees of $\mathcal R,\mathcal T,\mathcal P$ are
$24,60,132$, respectively, and their divisors on Miranda's
compactification are
\begin{equation}\label{eq:classical-invariant-divisors}
 \operatorname{div}(\mathcal R)=D_{15}^M,\qquad
 \operatorname{div}(\mathcal T)=D_{18}^M,\qquad
 \operatorname{div}(\mathcal P)=3D_{15}^M+D_{18}^M.
\end{equation}
In particular, $D_{15}^M\sim_{\mathbb Q}24\eta$ and
$D_{18}^M\sim_{\mathbb Q}60\eta$.
\end{prop}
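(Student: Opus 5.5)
The plan has three parts: produce the factorization $\mathcal P=\mathcal R^3\mathcal T$ by a vanishing-order computation along the locus $\mathcal R=0$, read off the degrees from the weighted grading, and identify the divisors on $\M^M$ from the generic fibre types on each locus.

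\emph{Factorization.} $\mathcal R=\operatorname{Res}(f_0,f_1)$ and $\mathcal P=\operatorname{Disc}(f_0^3+f_1^2)$ are polynomial $\mathrm{SL}_2$-invariants on $H_4\oplus H_6$. $\mathcal R$ is irreducible, since the locus where $f_0$ and $f_1$ share a root is irreducible and $\mathcal R$ vanishes on it to order one. Hence $\mathcal R^3\mid\mathcal P$ follows once $\mathcal P$ vanishes to order at least three there. At a general point of $\{\mathcal R=0\}$, $f_0$ and $f_1$ have a simple common root $p$. Locally $f_0^3+f_1^2=a^3x^3+b^2x^2+\dots$ has a double root at $p$. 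Along a transversal deformation in which the common root splits, with $f_0=x$ and $f_1=x-\epsilon$ up to units, the function $f_0^3+f_1^2$ has two roots near $p$ at distance of order $\epsilon^{3/2}$. Their squared difference, which is the local contribution to the discriminant, therefore vanishes to order $3$ in $\epsilon$, while $\mathcal R$ vanishes to order $1$. This gives $\operatorname{ord}_{\{\mathcal R=0\}}\mathcal P=3$ exactly. I therefore set $\mathcal T:=\mathcal P/\mathcal R^3$, which is a polynomial and $\mathrm{SL}_2$-invariant, being a quotient of invariants in a UFD.

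\emph{Degrees.} The coefficients of $f_0,f_1$ have weights $2,3$. $\operatorname{Res}$ is bihomogeneous of degree $6$ in $f_0$ and $4$ in $f_1$, giving weighted degree $12+12=24$. $\operatorname{Disc}$ of a binary form of degree $12$ has degree $22$ in its coefficients, and $f_0^3+f_1^2$ has weight $6$, giving $132$. Then $\deg\mathcal T=132-72=60$.

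\emph{Divisors on $\M^M$.} $\{\mathcal P=0\}$ is the locus of nonreduced discriminant, i.e. the union of the boundary divisors in \eqref{eq:open-res-ball-quotient}. By Miranda's stability condition, $\Delta(6)$ and $\Delta(9)$ are not divisors on $\M^M$, since their strict transforms are contracted by $\psi_3,\psi_4$. So the only divisorial components are $D_{15}^M$ (type $II$) and $D_{18}^M$ (type $I_2$). A common simple root of $f_0,f_1$ is exactly a type $II$ fibre, which gives $\operatorname{div}(\mathcal R)=D_{15}^M$. The general type $I_2$ fibre has $f_0f_1\neq0$ at the double root of the discriminant, so $\mathcal R$ does not vanish there. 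The discriminant vanishes simply there, as in the ordinary discriminant of a form acquiring a simple double root. This gives $\operatorname{div}(\mathcal T)=D_{18}^M$ and $\operatorname{div}(\mathcal P)=3D_{15}^M+D_{18}^M$.

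Since an invariant of weighted degree $k$ is a section of $k\eta$, the classes $D_{15}^M\sim_\Q24\eta$ and $D_{18}^M\sim_\Q60\eta$ follow. One also has to check that $\mathcal T$ does not vanish on $D_{15}^M$ to higher order, which holds because the local order of $\mathcal P$ there is exactly $3$. The main obstacle is verifying that $\mathcal T$ is irreducible, or at least supported only on $D_{18}^M$. I would argue via the fact that the complement of $D_{15}^M\cup D_{18}^M$ in $\M^M$ contains only loci of codimension at least two, which follows from the strata table, so that no other divisor can occur.
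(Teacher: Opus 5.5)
Your proposal is correct and follows essentially the same route as the paper: the same transverse family splitting a simple common root (your root separation of order $\epsilon^{3/2}$ is exactly the paper's substitution $t=s^2$, $x=s^2+s^3z$), the same weight count $24,132,60$, and the same identification of the divisorial support of $\mathcal P$ on $\M^M$. Two small repairs are needed: along $D_{18}^M$ you should check transversality explicitly, as the paper does, by noting that varying $f_1$ changes $f_0^3+f_1^2$ to first order by $2f_1\delta f_1$, which is nonzero at the double root; and you should justify that $\Delta(6),\Delta(9)$ give no divisors on $\M^M$ by the zero-dimensionality of the $I_9$ and $I_4^*$ strata (Miranda, Heckman--Looijenga) rather than by $\psi_3,\psi_4$, since the paper's description of those contractions relies on this proposition.
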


\begin{proof}
The coefficients of $f_0$ and $f_1$ have weights two and three.
The resultant has bidegree $(6,4)$ and therefore weight $24$.
It vanishes to order one along the common-root divisor $D_{15}^M$.
The discriminant of a binary form of degree twelve has coefficient
degree $22$, so $\mathcal P$ has weight $22\cdot6=132$.
Its zero divisor is supported on $D_{15}^M\cup D_{18}^M$.

To compute its order along $D_{15}^M$, take a general transverse
analytic family at a pair with one common simple root.
After choosing a coordinate and a transverse parameter, write $f_0=xu(x,t)$, $f_1=(x-t)v(x,t)$, and $u(0,0)v(0,0)\ne0$.
Here $t$ is a local equation of the resultant divisor.
Set $t=s^2$ and $x=s^2+s^3z$. Then
\[
 s^{-6}(f_0^3+f_1^2)
       =u(0,0)^3+v(0,0)^2z^2+O(s).
\]
The limiting polynomial has two distinct roots.
Thus the two roots near the common root differ by $s^3$ times
a unit. Their squared difference has order six in $s$,
or three in $t$. All other discriminant factors are units.
Consequently $\operatorname{ord}_{\mathcal R}(\mathcal P)=3$.
The coefficient ring is a UFD and the generic resultant is
irreducible, so $\mathcal R^3$ divides $\mathcal P$ as a polynomial.
The quotient $\mathcal T$ is an invariant of weight $60$.

At a general point of $D_{18}^M$, the polynomial $f_0^3+f_1^2$
has one double root, at which $f_1$ is nonzero.
Varying $f_1$ in a direction nonzero at that root gives a
transverse smoothing, since its first-order contribution is
$2f_1\delta f_1\ne0$. The local model is a unit times $x^2-t$,
whose discriminant vanishes to order one.
This proves \eqref{eq:classical-invariant-divisors}, and the
weighted degrees give the asserted divisor class relations.
\end{proof}

% \begin{rem}
% \label{rem:miranda-automorphic-polarizations}
% Let $\mathcal L_M$ be the arrangement polarization on $\M^M$.  Since $\eta\sim_{\Q}2\mathcal L_M$, the preceding formulas become $D_{15}^M\sim_{\Q}48\mathcal L_M$ and $D_{18}^M\sim_{\Q}120\mathcal L_M$.  These are the pushforwards of \eqref{eq:D15-corrected} and \eqref{eq:D18-corrected}, because the relative terms supported on $D_6$ and $D_9$ disappear under the Looijenga contraction.
% \end{rem}

\begin{cor}
\label{cor:Miranda-class-group-corrected}
One has $\mathrm{Cl}(\M^M)_{\Q}=\Pic(\M^M)_{\Q}=\Q[\eta]$ and $\rho(\M^M)=1$.  In particular, $\M^M$ is $\Q$-factorial and klt.
\end{cor}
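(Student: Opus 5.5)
The plan is to compute $\mathrm{Cl}(\M^M)_{\Q}$ directly from the boundary decomposition of $\M^M$, in the same way as in Theorem \ref{thm:picard-numbers-corrected}, and then read off the remaining properties.

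\textbf{Step 1: the open part.} Recall from the proof of Theorem \ref{thm:picard-numbers-corrected} that $\mathrm{Cl}(\M)_{\Q}=0$. This used that $U\subset H_4\oplus H_6$ is factorial and that the discriminant semi-invariant spans $X^*(G)_{\Q}$. Since $\M$ is open in $\M^M$, its complement is
\[
\M^M\setminus\M=D_{15}^M\cup D_{18}^M\cup Z.
\]
Here $Z$ is the union of the images of the $\mathcal K$-strata and the cusp strata. By the table in Subsection \ref{subsec:looijenga-res-construction}, the $\mathcal K$-strata (types $I_9$, $I_4^*$, $I_8'$, $I_8''$, $I_7$) have dimension at most $2$, and the cusp strata ($I_6$, $I_0^*$) have dimensions $3$ and $1$. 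So $Z$ has codimension at least $5$ in the eight-dimensional $\M^M$, and removing it does not change the class group. By \cite[II, Proposition 6.5]{Ha}, $\mathrm{Cl}(\M^M)_{\Q}$ is therefore generated by $D_{15}^M$ and $D_{18}^M$.

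\textbf{Step 2: these classes are multiples of $\eta$.} Proposition \ref{prop:miranda-boundary-classes-corrected} gives $D_{15}^M\sim_{\Q}24\eta$ and $D_{18}^M\sim_{\Q}60\eta$. Hence $\mathrm{Cl}(\M^M)_{\Q}=\Q[\eta]$. Since $\eta$ is a $\Q$-Cartier ample class descended from $\mathcal O_{Y_{\mathrm{Mir}}}(1)$, it is nonzero. This gives $\mathrm{Cl}(\M^M)_{\Q}=\Pic(\M^M)_{\Q}=\Q[\eta]$, so $\M^M$ is $\Q$-factorial with $\rho(\M^M)=1$.

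\textbf{Step 3: klt.} The space $\M^M$ is a good quotient of the affine cones of $Y_{\mathrm{Mir}}^{\mathrm{ss}}$. Concretely, it is $\Proj$ of the invariant ring $\C[H_4\oplus H_6]^{\SL_2}$ with respect to the weighted grading. The affine space $H_4\oplus H_6$ is smooth, hence of klt type, with the reductive group $\SL_2\times\mathbb G_m$ acting on it. By \cite[Theorem 1]{braun2024reductive} and \cite[Corollary 1.2]{zhuang2024direct}, the categorical quotient, namely the affine cone $\operatorname{Spec}\C[H_4\oplus H_6]^{\SL_2}$, is of klt type.

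It remains to pass from the cone to its $\Proj$, which is the main obstacle. The first approach is to work locally: \'etale locally, $\M^M$ is the quotient of the $\mathbb G_m$-stable open subset of the cone minus the vertex, and klt type descends along the $\mathbb G_m$-quotient by the same reductive-quotient theorem applied to an affine $\mathbb G_m$-invariant slice. Alternatively, $\M^M=(H_4\oplus H_6)^{\mathrm{ss}}/\!/G$ is covered by affine $G$-stable opens, each smooth, so each affine quotient is of klt type. That gives klt type for $\M^M$.

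Finally, $\Q$-factoriality makes $K_{\M^M}$ itself $\Q$-Cartier. For a $\Q$-factorial variety of klt type, the pair $(\M^M,0)$ is klt: if $(\M^M,\Delta)$ is klt with $\Delta\ge0$, then removing $\Delta$ only increases log discrepancies, since $K+\Delta$ and $K$ are both $\Q$-Cartier. This proves the claim.
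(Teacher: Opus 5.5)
Your proposal is correct and follows the paper's proof essentially step for step. As there, $\mathrm{Cl}(\M)_{\Q}=0$ together with \cite[II, Proposition 6.5]{Ha} shows that $\mathrm{Cl}(\M^M)_{\Q}$ is generated by $D_{15}^M$ and $D_{18}^M$; Proposition \ref{prop:miranda-boundary-classes-corrected} makes both multiples of $\eta$; and klt follows from klt type \cite{braun2024reductive,zhuang2024direct} plus $\Q$-factoriality.

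Your application of the reductive-quotient theorem chart by chart, on the smooth affine $G$-stable opens $\{f\neq0\}$, is more explicit than the paper's one-line citation. You do not actually need a global klt boundary: klt is a local property, so your final discrepancy comparison can be run on each chart of the $\Q$-factorial space $\M^M$.
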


\begin{proof}
Remove $D_{15}^M\cup D_{18}^M$ from $\M^M$.  Up to a subset of codimension at least two, the result is the reduced-discriminant open whose rational class group was shown to vanish in the proof of Theorem \ref{thm:picard-numbers-corrected}.  \cite[II, Proposition 6.5]{Ha} shows that $\mathrm{Cl}(\M^M)_{\Q}$ is generated by $D_{15}^M,D_{18}^M$, and Proposition \ref{prop:miranda-boundary-classes-corrected} identifies both with multiples of the ample $\Q$-Cartier class $\eta$.

The space $\M^M$ is of klt type by \cite{braun2024reductive,zhuang2024direct}, because $Y_{\mathrm{Mir}}$ has only quotient singularities.  Choose an effective $\Q$-divisor $\Delta_M$ such that $(\M^M,\Delta_M)$ is klt.  Since $\M^M$ is $\Q$-factorial, both $K_{\M^M}$ and $\Delta_M$ are $\Q$-Cartier, and for every prime divisor $E$ over $\M^M$ one has $A_{\M^M}(E)=A_{\M^M,\Delta_M}(E)+\operatorname{ord}_E(\Delta_M)>0$.  Hence $\M^M$ is klt.
\end{proof}

\begin{rem}
\label{rem:comparison-shen}
In a revised version of \cite{Shen2025MMP} communicated to us by the author, \cite[Corollary 6.2]{Shen2025MMP} states that the pair
$\bigl(\mathcal{M}^{M},\frac{2}{3}D_{18}^{M}\bigr)$ is not log canonical.
This is compatible with Corollary \ref{cor:Miranda-class-group-corrected}, which asserts that the underlying
variety $\mathcal{M}^{M}$ is klt.
On the Baily--Borel side, the relevant log canonical pair is
$\bigl(X_o^{\mathrm{bb}},B_{\mathrm{br}}^{\mathrm{bb}}\bigr)$.
Under the birational map $X_o^{\mathrm{bb}}\dashrightarrow\mathcal{M}^{M}$,
the divisors $\Delta(6)^{\mathrm{bb}}$ and $\Delta(9)^{\mathrm{bb}}$ are
contracted, while $\Delta(18)^{\mathrm{bb}}$ has strict transform
$D_{18}^{M}$. Thus the induced boundary on $\mathcal{M}^{M}$ is
$\frac{2}{3}D_{18}^{M}$.
By Corollary~6.10, $K_{X_o^{\mathrm{bb}}}+B_{\mathrm{br}}^{\mathrm{bb}}$
is $\mathbb{Q}$-Cartier, although $K_{X_o^{\mathrm{bb}}}$ is not.

\end{rem}

\begin{cor}
\label{cor:BB-not-Q-Gorenstein}
One has $K_{X_o^{\mathrm{bb}}}+B_{\rm br}^{\mathrm{bb}}\sim_{\Q}9\lambda_{\rm aut}$, while $u(B_{\rm br})=-17/2$ and $v(B_{\rm br})=-14/3$.  Thus neither $B_{\rm br}^{\mathrm{bb}}$ nor $K_{X_o^{\mathrm{bb}}}$ is $\Q$-Cartier, and $X_o^{\mathrm{bb}}$ is not $\Q$-Gorenstein.  On $X_o^{j_{\mathcal K}}$ one has
\begin{equation}
 K_{X_o^{j_{\mathcal K}}}\sim_{\Q}-31\Lambda_{\rm H}+\frac{17}{2}D_6+\frac{14}{3}D_9.
\label{eq:canonical-Y-corrected}
\end{equation}
\end{cor}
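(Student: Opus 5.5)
The plan is to start from the log-canonical relation on the Baily--Borel model, which follows from Proposition \ref{prop:toroidal-canonical-eight-ball}. The morphism $X_o^{\mathrm{tor}}\to X_o^{\mathrm{bb}}$ contracts exactly the toroidal boundary $T$ and is an isomorphism on $X_o$. Pushing forward the relation of that proposition therefore kills $T$ and gives
\[
K_{X_o^{\mathrm{bb}}}+B_{\rm br}^{\mathrm{bb}}\sim_{\Q}9\lambda_{\rm aut}
\]
as Weil divisor classes. Since $\lambda_{\rm aut}$ is $\Q$-Cartier, this shows that $K_{X_o^{\mathrm{bb}}}+B^{\mathrm{bb}}_{\rm br}$ is $\Q$-Cartier. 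In particular $K_{X_o^{\mathrm{bb}}}$ is $\Q$-Cartier if and only if $B_{\rm br}^{\mathrm{bb}}$ is.

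Next we compute the invariants $u$ and $v$ of Proposition \ref{prop:qcartier-at-cusps-corrected}. Lemma \ref{lem:ramification-indices-eight-ball} gives the coefficients $a_6=5/6$, $a_9=2/3$, $a_{15}=0$, $a_{18}=2/3$. Hence
\[
u=\tfrac56-14\cdot\tfrac23=-\tfrac{17}{2},\qquad v=\tfrac23-8\cdot\tfrac23=-\tfrac{14}{3}.
\]
Both are nonzero, so the same proposition shows that $B_{\rm br}^{\mathrm{bb}}$ is not $\Q$-Cartier at $\infty_\theta$; since $v\ne0$, it also fails at $\infty_0$. Consequently $K_{X_o^{\mathrm{bb}}}$ is not $\Q$-Cartier, i.e.\ $X_o^{\mathrm{bb}}$ is not $\Q$-Gorenstein.

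For \eqref{eq:canonical-Y-corrected}, we use that $c\colon X_o^{j_{\mathcal K}}\to X_o^{\mathrm{bb}}$ is small. Hence $K_{X_o^{j_{\mathcal K}}}$ is the strict transform of $K_{X_o^{\mathrm{bb}}}$, and strict transforms respect $\Q$-linear equivalence of Weil divisors, because $\mathrm{Cl}$ is identified by Theorem \ref{thm:picard-numbers-corrected}. This gives
\[
K_{X_o^{j_{\mathcal K}}}\sim_{\Q}9\lambda_{\rm aut}-\tfrac56D_6-\tfrac23D_9-\tfrac23D_{18}.
\]
Substituting \eqref{eq:D18-corrected}, namely $D_{18}\sim_{\Q}60\Lambda_{\rm H}-14D_6-8D_9$, together with $\lambda_{\rm aut}=\Lambda_{\rm H}$ from \eqref{eq:CY-Hodge-equals-natural-automorphic}, we get the coefficients
\[
9-40=-31,\qquad -\tfrac56+\tfrac{28}{3}=\tfrac{17}{2},\qquad -\tfrac23+\tfrac{16}{3}=\tfrac{14}{3}.
\]
This agrees with the general formula \eqref{eq:D-in-uv-coordinates} applied with $D=-B_{\rm br}$.

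The only delicate point is the pushforward in the first step: we must check that $T$ maps to the cusps (codimension eight), so that no boundary term survives on $X_o^{\mathrm{bb}}$. The rest is bookkeeping already set up by Lemma \ref{lem:two-automorphic-relations-corrected} and Proposition \ref{prop:qcartier-at-cusps-corrected}.
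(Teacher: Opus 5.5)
Your proposal is correct and follows the paper's proof essentially step for step. You push the toroidal log-canonical relation of Proposition~\ref{prop:toroidal-canonical-eight-ball} down to $X_o^{\mathrm{bb}}$, read off $u(B_{\rm br})$ and $v(B_{\rm br})$ and invoke Proposition~\ref{prop:qcartier-at-cusps-corrected}, then take strict transforms under the small morphism $X_o^{j_{\mathcal K}}\to X_o^{\mathrm{bb}}$ and substitute \eqref{eq:D18-corrected}, with correct arithmetic (and a useful cross-check against \eqref{eq:D-in-uv-coordinates} that the paper leaves implicit).
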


\begin{proof}
Pushing the toroidal relation of Proposition \ref{prop:toroidal-canonical-eight-ball} to the Baily-Borel compactification gives $K_{X_o^{\mathrm{bb}}}+B_{\rm br}^{\mathrm{bb}}\sim_{\Q}9\lambda_{\rm aut}$.  The values of $u(B_{\rm br})$ and $v(B_{\rm br})$ follow from \eqref{eq:uv-cartier-coordinates}, and Proposition \ref{prop:qcartier-at-cusps-corrected} gives the non-$\Q$-Cartier assertions.

Since $X_o^{j_{\mathcal K}}\to X_o^{\mathrm{bb}}$ is small, taking strict transforms of the log-canonical relation gives
\[
 K_{X_o^{j_{\mathcal K}}}+\frac56D_6+\frac23D_9+\frac23D_{18}\sim_{\Q}9\lambda_{\rm aut}.
\]
Substituting \eqref{eq:D18-corrected} gives \eqref{eq:canonical-Y-corrected}.
\end{proof}

\begin{defn}
\label{def:fano-type}
A normal projective variety $V$ is of \emph{Fano type} if there is an effective $\Q$-divisor $\Delta$ such that $(V,\Delta)$ is klt and $-(K_V+\Delta)$ is ample.  
\end{defn}

It is well-known by \cite[Corollary 1.3.1]{birkar2010existence} that if $V$ is $\Q$-factorial and of Fano type, then $V$ is a Mori dream space.

\begin{prop}
\label{prop:bb-arrangement-not-log-fano}
There is no effective divisor $\Delta=\sum_{d=6,9,15,18}a_d\Delta(d)$ with $a_d\in\Q_{\geq0}$ such that $K_{X_o^{\mathrm{bb}}}+\Delta$ is $\Q$-Cartier and $-(K_{X_o^{\mathrm{bb}}}+\Delta)$ is ample.  More precisely, every such $\Delta$ satisfies
\begin{equation}
 K_{X_o^{\mathrm{bb}}}+\Delta\sim_{\Q}(24a_{15}+60a_{18}-31)\lambda_{\rm aut},
\label{eq:arrangement-log-canonical-class}
\end{equation}
where
\begin{equation}
 24a_{15}+60a_{18}-31\geq\frac{38}{7}+\frac{288}{7}a_{15}\geq\frac{38}{7}>0.
\label{eq:arrangement-log-canonical-lower-bound}
\end{equation}
The minimum is attained at $\Delta_{\min}=\frac4{21}\Delta(9)+\frac{17}{28}\Delta(18)$, for which $K_{X_o^{\mathrm{bb}}}+\Delta_{\min}\sim_{\Q}\frac{38}{7}\lambda_{\rm aut}$.
\end{prop}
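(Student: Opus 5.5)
The plan is to rewrite $K_{X_o^{\mathrm{bb}}}+\Delta$ as $9\lambda_{\rm aut}$ plus a combination of the four $\Delta(d)$, and then apply the $\Q$-Cartier criterion of Proposition \ref{prop:qcartier-at-cusps-corrected}. By Corollary \ref{cor:BB-not-Q-Gorenstein}, $K_{X_o^{\mathrm{bb}}}\sim_{\Q}9\lambda_{\rm aut}-B_{\rm br}^{\mathrm{bb}}$. Therefore
\[
K_{X_o^{\mathrm{bb}}}+\Delta\sim_{\Q}9\lambda_{\rm aut}+D',\qquad
D':=\Bigl(a_6-\tfrac56\Bigr)\Delta(6)+\Bigl(a_9-\tfrac23\Bigr)\Delta(9)+a_{15}\Delta(15)+\Bigl(a_{18}-\tfrac23\Bigr)\Delta(18).
\]
Since $\lambda_{\rm aut}$ is $\Q$-Cartier, $K_{X_o^{\mathrm{bb}}}+\Delta$ is $\Q$-Cartier if and only if $D'$ is. By Proposition \ref{prop:qcartier-at-cusps-corrected} this holds exactly when $u(D')=v(D')=0$.

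Granting $u(D')=v(D')=0$, formula \eqref{eq:D-in-uv-coordinates} gives $D'\sim_{\Q}\bigl(24a_{15}+60(a_{18}-\tfrac23)\bigr)\lambda_{\rm aut}$. Pulling back to $X_o^{j_{\mathcal K}}$ and pushing forward along the small morphism transfers this relation to $X_o^{\mathrm{bb}}$. Combining,
\[
K_{X_o^{\mathrm{bb}}}+\Delta\sim_{\Q}(9+24a_{15}+60a_{18}-40)\lambda_{\rm aut},
\]
which is \eqref{eq:arrangement-log-canonical-class}. Because $\lambda_{\rm aut}$ is ample, $-(K_{X_o^{\mathrm{bb}}}+\Delta)$ is ample if and only if the coefficient $24a_{15}+60a_{18}-31$ is negative. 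So the claim reduces to the lower bound \eqref{eq:arrangement-log-canonical-lower-bound}.

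Solving $u(D')=0$ and $v(D')=0$ from \eqref{eq:uv-cartier-coordinates} gives
\[
a_6=14a_{18}-4a_{15}-\tfrac{17}{2},\qquad a_9=8a_{18}-2a_{15}-\tfrac{14}{3}.
\]
The condition $a_6\ge0$ becomes $a_{18}\ge\frac{17}{28}+\frac27a_{15}$. The condition $a_9\ge0$ becomes $a_{18}\ge\frac7{12}+\frac14a_{15}$, which is weaker because $\frac{17}{28}>\frac7{12}$ and $\frac27>\frac14$. Substituting the binding bound,
\[
24a_{15}+60a_{18}-31\ \ge\ 24a_{15}+\tfrac{255}{7}+\tfrac{120}{7}a_{15}-31=\tfrac{38}{7}+\tfrac{288}{7}a_{15}\ \ge\ \tfrac{38}{7}.
\]
Equality forces $a_{15}=0$ and $a_{18}=\frac{17}{28}$, hence $a_6=0$ and $a_9=\frac{34}{7}-\frac{14}{3}=\frac4{21}$. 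This is $\Delta_{\min}$, with $K_{X_o^{\mathrm{bb}}}+\Delta_{\min}\sim_{\Q}\frac{38}{7}\lambda_{\rm aut}$.

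The only step needing care is the transfer of the $\lambda_{\rm aut}$-coefficient from $X_o^{j_{\mathcal K}}$ to $X_o^{\mathrm{bb}}$, since $\Delta(15)$ and $\Delta(18)$ are not individually $\Q$-Cartier there. I would handle it as in the proof of Proposition \ref{prop:qcartier-at-cusps-corrected}: the strict transform of $D'$ equals $c\lambda_{\rm aut}$ up to a principal divisor on $X_o^{j_{\mathcal K}}$, and pushing forward along the small morphism preserves this equality. Everything else is linear algebra.
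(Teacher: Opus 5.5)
Your proposal is correct and follows essentially the same route as the paper. Both arguments reduce $\Q$-Cartierness to $u(\Delta)=u(B_{\rm br})$ and $v(\Delta)=v(B_{\rm br})$ via Proposition \ref{prop:qcartier-at-cusps-corrected}, read off the $\lambda_{\rm aut}$-coefficient on the small modification, and solve the same linear problem with $a_6\ge0$ as the binding constraint. The only difference is that you work directly with $D'=\Delta-B_{\rm br}$ and \eqref{eq:D-in-uv-coordinates}, whereas the paper quotes the precomputed canonical class \eqref{eq:canonical-Y-corrected}; the two are equivalent.
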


\begin{proof}
Since $K_{X_o^{\mathrm{bb}}}+B_{\rm br}^{\mathrm{bb}}$ is $\Q$-Cartier, Proposition \ref{prop:qcartier-at-cusps-corrected} shows that $K_{X_o^{\mathrm{bb}}}+\Delta$ is $\Q$-Cartier if and only if $u(\Delta)=u(B_{\rm br})=-17/2$ and $v(\Delta)=v(B_{\rm br})=-14/3$.  Using \eqref{eq:D15-corrected}, \eqref{eq:D18-corrected}, and \eqref{eq:canonical-Y-corrected} on the small modification then gives \eqref{eq:arrangement-log-canonical-class}.

The first Cartier equation gives $a_6=-17/2-4a_{15}+14a_{18}\geq0$, so $a_{18}\geq17/28+(2/7)a_{15}$.  Substitution proves \eqref{eq:arrangement-log-canonical-lower-bound}.  Equality forces $a_6=a_{15}=0$ and $a_{18}=17/28$; the second Cartier equation then gives $a_9=4/21$.  Since $\lambda_{\rm aut}$ is ample on $X_o^{\mathrm{bb}}$, the divisor $K_{X_o^{\mathrm{bb}}}+\Delta$ is ample rather than anti-ample.
\end{proof}

\begin{rem}
\label{rem:bb-fano-type-open}
Corollary \ref{cor:BB-not-Q-Gorenstein} shows that $X_o^{\mathrm{bb}}$ is not $\Q$-Fano, but does not by itself rule out Fano type in the sense of Definition \ref{def:fano-type}.  Proposition \ref{prop:bb-arrangement-not-log-fano} only treats classes in the cone generated by the four arrangement divisors.  Thus the unrestricted question whether $X_o^{\mathrm{bb}}$ is of Fano type remains open.  A negative answer would follow, for example, from $\overline{\operatorname{Eff}}(X_o^{j_{\mathcal K}})=\operatorname{Cone}(D_6,D_9,D_{15},D_{18})$.  Alternatively, one may determine the full Cox ring of the small $\Q$-factorialization and apply \cite[Corollary 5.3]{GOST15}.
\end{rem}
Finally, we prove the following for consistency. 
Let $\pi\colon \widetilde X_o^{\mathcal K}\to X_o^{j_{\mathcal K}}$ be the arrangement blow-up.
\begin{lem}\label{lem:no-cuspidal-exceptional-divisor}
Every nonempty arrangement stratum of $X_o^{j_{\mathcal K}}$ meeting a cusp fibre is the closure of an arrangement stratum in $X_o$. Consequently, the arrangement blow-up $\pi\colon \widetilde X_o^{\mathcal K}\to X_o^{j_{\mathcal K}}$
has no exceptional prime divisor whose centre is contained in the boundary
$X_o^{j_{\mathcal K}}\setminus X_o$.
\end{lem}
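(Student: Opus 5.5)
I would argue via Looijenga's local description of $X_o^{j_{\mathcal K}}$ near a cusp fibre. A nonempty arrangement stratum is a connected component of an intersection $D_{H_1}^j\cap\dots\cap D_{H_k}^j$ of strict transforms of members of $\mathcal K$. Over a cusp represented by $I$ with $J=j_{\mathcal K}(I)$, Looijenga's construction \cite[Section 4.3 and Lemma 5.2]{Looijenga2003Ball} exhibits a neighbourhood of the cusp fibre, on the neat cover $\Gamma'$, as a punctured disc bundle over the abelian torsor $X(J)$. Only hyperplanes $H\in\mathcal K$ with $I\subset H$ have strict transforms meeting the fibre. On this model, the strict transform of such an $H$ is the closure of a sub-disc-bundle lying over a translate of a kernel of $X(J)\to X(H\cap I^\perp)$. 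The first step is to record that, for a collection $H_1,\dots,H_k$ all containing $I$, the local intersection near the fibre is the closure of the intersection of the corresponding sub-disc-bundles. Each component of the boundary trace $\bigcap_i D_{H_i}^j\cap X(J)$ is therefore the boundary of a stratum living over a translate of $\bigl(\bigcap_i\ker\bigr)$. The key point is that the punctured disc direction, transverse to $X(J)$ and coming from $V_{A_o}/I^\perp$, is contained in every $D_{H_i}^j$. This is because each $H_i$ contains $I$, so the equation $\psi(r_o,\cdot)$ does not involve the normal coordinate.

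Consequently, the second step is to show that every component $Z$ of the intersection near the fibre has a nonempty open part in the interior $X_o$. Namely, $Z\cap X_o$ is the image of $\mathbb B_o\cap H_1\cap\dots\cap H_k$ near the cusp, which is nonempty since $H_1\cap\dots\cap H_k\supset I$ and the intersection is a sub-ball whose closure contains $I$. Moreover, $Z$ is irreducible of dimension $\dim(\bigcap H_i\cap\mathbb B_o)$. It is then the closure of that interior stratum, rather than a new component contained in the boundary. For this I would compare dimensions. The boundary part of $Z$ lies in the cusp fibre over a translate of a subtorus of dimension $\dim(\bigcap H_i\cap I^\perp/J)$. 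This is at most one less than the dimension of the interior stratum, since the intersection contains $I$ and is not contained in $I^\perp$ by negativity. Hence the boundary part cannot be a component of the intersection. Passing from $\Gamma'$ to $\Gamma_o$ is a finite quotient and preserves this.

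For the consequence, the arrangement blow-up $\pi$ is the composite of blow-ups of (strict transforms of) arrangement strata in increasing dimension, as in \cite[Theorem 5.7]{Looijenga2003Ball}. Each exceptional prime divisor has centre equal to such a stratum. By the first part, every stratum meeting a cusp fibre is the closure of an interior stratum, so no centre is contained in $X_o^{j_{\mathcal K}}\setminus X_o$.

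I expect the main obstacle to be the local claim that the transverse cusp direction lies in each $D_{H_i}^j$, together with the dimension count. Concretely, one must rule out extra components of $\bigcap_i D_{H_i}^j$ supported in the fibre $X(J)$, for instance when translates of kernels meet without the hyperplanes meeting in the ball. I would handle this using the explicit types $(6,9)$, $(9,9)$, $(6,9,9)$ and the description $V_{6,I}\perp V_{9,I}$ from the proof of Proposition~\ref{prop:cusp-fibres-corrected}. This would check that each boundary intersection of kernel translates lifts to a genuine intersection of mirror traces through $I$.
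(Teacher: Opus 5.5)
\textbf{Verdict: there is a genuine gap.} Your setup matches the paper's: Looijenga's local model over $X(J)$, each $D^j_H$ (for $I\subset H\in\mathcal K$) locally the inverse image of a kernel translate, and blow-up centres equal to strata. The decisive step, however, is missing.

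In your second step you conclude $\mathbb B_o\cap H_1\cap\dots\cap H_k\neq\emptyset$ from $W:=H_1\cap\dots\cap H_k\supset I$. That inference is false. If $W\subset I^\perp$, then $W$ is positive semidefinite and $\mathbb P(W)$ misses the ball. This really happens. Take a mirror trace $H\ni I$ and a distinct translate $\gamma H$ with $\gamma$ unipotent in $\Gamma'_I$. Their defining functionals differ by a multiple of $\psi(e,\cdot)$, where $e$ spans $I$, so $W\subset I^\perp$ and the two meet only at the cusp. Your later claim that $W\not\subset I^\perp$ holds ``by negativity'' presupposes the nonemptiness you want, so the argument is circular. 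The case you flag at the end, kernel translates meeting in $X(J)$ although the hyperplanes do not meet in the ball, is exactly what remains unproved. The proposed check via the types $(6,9)$, $(9,9)$, $(6,9,9)$ and $V_{6,I}\perp V_{9,I}$ is not carried out, and it is not what is needed.

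\textbf{The missing idea} is linear algebra on the affine covering space $\mathbb P(V_{A_o}/J)\setminus\mathbb P(I^\perp/J)$ of $X(J)$. On this space each trace is the affine hyperplane $A_{H}=\mathbb P(H/J)\setminus\mathbb P((H\cap I^\perp)/J)$.
\begin{enumerate}
\item A point of a boundary intersection in $X(J)$ lifts to a common point of the $A_{H_i}$, after replacing the $H_i$ by suitable $\Gamma'_I$-translates. These translates are again members of $\mathcal K$ containing $I$, so this handles translates with no case analysis.
\item One has $\bigcap_i A_{H_i}=\mathbb P(W/J)\setminus\mathbb P((W\cap I^\perp)/J)$, which is nonempty precisely when $W\not\subset I^\perp$.
\item If $W\not\subset I^\perp$, some $v\in W$ satisfies $\psi(v,I)\neq0$. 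The plane $\langle I,v\rangle$ is then indefinite, so $\mathbb P(W)\cap\mathbb B_o\neq\emptyset$, and the boundary intersection is the closure of an interior one.
\end{enumerate}

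\textbf{Your local model is also inaccurate.} The fibre transverse to $X(J)$ has dimension $\dim_{\mathbb C}J$, which is $5$ at $\infty_\theta$ and $7$ at $\infty_0$, not one. So the boundary trace is $\dim J$ smaller than the interior stratum, not ``at most one less''. In any case, smaller dimension alone would not rule out an extra component.
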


\begin{proof}
Pass to a normal neat finite-index subgroup of $\Gamma_o$, sufficiently small so that the arrangement hypersurfaces have no self-intersections, as in \cite[Lemma 5.1]{Looijenga2003Ball}. Fix a cusp represented by an isotropic line $I$, and put $J:=j_{\mathcal K}(I)$.
Looijenga's local model identifies a neighbourhood of the
boundary stratum with a locally trivial space over $X(J)$.
We work on the affine covering space $\mathbb P(V_{A_o}/J)
 \setminus \mathbb P(I^\perp/J)$
of $X(J)$.

For every $H\in\mathcal K$ containing $I$, the closure of the
corresponding arrangement divisor is locally the inverse image
of the affine hyperplane
\[
 A_H:=
 \mathbb P(H/J)
 \setminus
 \mathbb P\bigl((H\cap I^\perp)/J\bigr)
 \subset
 \mathbb P(V_{A_o}/J)
 \setminus \mathbb P(I^\perp/J);
\]
see \cite[Lemma 5.2]{Looijenga2003Ball} and
\cite[Sections 10.2-10.3]{HeckmanLooijenga2002}.

Consider a finite collection $\mathcal S$ of such hyperplanes and set
\[
 W_{\mathcal S}:=\bigcap_{H\in\mathcal S}H.
\]
After lifting an intersection from $X(J)$ to this affine
covering space, one has
\[
 \bigcap_{H\in\mathcal S}A_H
 =
 \mathbb{P}(W_{\mathcal S}/J)
 \setminus
 \mathbb{P}\bigl((W_{\mathcal S}\cap I^\perp)/J\bigr).
\]
This intersection is nonempty precisely when
$W_{\mathcal S}\not\subset I^\perp$. In that case $I\subset W_{\mathcal S}$, and one can choose
$v\in W_{\mathcal S}$ with $\psi(v,I)\neq0$. The restriction of the Hermitian form to
$\langle I,v\rangle$ is indefinite; hence
\[
 \mathbb{P}(W_{\mathcal S})\cap B_o\neq\emptyset.\]

Thus the same collection of mirror traces already has a nonempty intersection in the ball. On the other hand, if
$W_{\mathcal S}\subset I^\perp$, the displayed affine intersection is empty.
It follows that every nonempty boundary intersection is the closure of an interior arrangement intersection. Since Looijenga's arrangement blow-up is locally the corresponding arrangement blow-up in this locally trivial model, it introduces no exceptional prime divisor supported only over a cusp.
\end{proof}

\begin{prop}\label{prop:picard-arrangement-blowup}
There are precisely three
$\pi$-exceptional prime divisors $E_{6,9}$, $E_{9,9}$, and $E_{6,9,9}$,
corresponding respectively to the arrangement-intersection types
$(6,9)$, $(9,9)$, and $(6,9,9)$. Moreover,
$\widetilde X_o^{\mathcal K}$ is $\Q$-factorial and
\[
 N^1\bigl(\widetilde X_o^{\mathcal K}/X_o^{j_{\mathcal K}}\bigr)_{\Q}
 =
 \Q[E_{6,9}]
 \oplus\Q[E_{9,9}]
 \oplus\Q[E_{6,9,9}].\]

Consequently,
\[
 \rho\bigl(\widetilde X_o^{\mathcal K}/X_o^{j_{\mathcal K}}\bigr)=3,
 \qquad
 \rho\bigl(\widetilde X_o^{\mathcal K}\bigr)=6,
 \qquad
 \rho\bigl(\widetilde X_o^{\mathcal K}/\widehat X_o^{\mathcal K}\bigr)=5.
\]
\end{prop}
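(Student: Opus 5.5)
The plan is to combine Lemma \ref{lem:no-cuspidal-exceptional-divisor} with the explicit local structure of Looijenga's arrangement blow-up in the interior of the ball quotient. By that lemma, every exceptional prime divisor of $\pi$ has centre meeting $X_o$. So it suffices to work over $X_o$, after passing to a neat finite-index normal subgroup $\Gamma'\subset\Gamma_o$ on which the arrangement hypersurfaces are smooth and meet transversally in the sense of arrangements.

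Over $X_o$, $\pi$ is the blow-up of the strata of the arrangement $\mathcal K$ that are not normal crossings, in increasing dimension. These are the intersections whose codimension is smaller than the number of members meeting there, together with the irreducible intersections of Looijenga's blow-up recipe. First I would list the intersection types of $\mathcal K$ from the table in Subsection \ref{subsec:looijenga-res-construction}: $(6,9)$, $(9,9)$ and $(6,9,9)$. Their images in $\widehat X_o^{\mathcal K}$ have dimensions $1,1,2$, corresponding to $I_8',I_8'',I_7$. Each type forms a single $\Gamma_o$-orbit by the Heckman--Looijenga classification (\cite[Section 10]{HeckmanLooijenga2002}). Each such stratum is blown up exactly once, producing an irreducible exceptional divisor after descending to the $\Gamma_o$-quotient. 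A stratum is irreducible on $X_o$ because its preimage strata form one orbit, and the exceptional divisor is a projective bundle over it. This gives exactly $E_{6,9}$, $E_{9,9}$, $E_{6,9,9}$ (the last being the strict transform of the blow-up of the codimension-three stratum). The step I expect to be the main obstacle is verifying that there are no further orbits of intersection types. For example, I must rule out a second orbit of $(9,9)$ intersections, or $(6,6)$ intersections, which cannot occur since the $6$-hyperplanes are orthogonal to one another or disjoint. I would check this against the rank-three positive lattice classification of \cite[Appendix A]{HeckmanLooijenga2002}.

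For $\Q$-factoriality, I would note that $X_o^{j_{\mathcal K}}$ is $\Q$-factorial by Theorem \ref{thm:picard-numbers-corrected}. A blow-up of a $\Q$-factorial variety along a subvariety is $\Q$-factorial once the exceptional locus is a union of $\Q$-Cartier prime divisors. Locally, we have a finite quotient of a blow-up of a smooth germ along smooth centres, which is locally $\Q$-factorial by \cite[Lemma 5.16]{komo}. Then $\mathrm{Cl}(\widetilde X_o^{\mathcal K})_\Q=\pi^*\mathrm{Cl}(X_o^{j_{\mathcal K}})_\Q\oplus\bigoplus\Q[E]$. The relative $N^1$ is spanned by the three $E$'s. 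They are independent because each $E$ is negative on a general fibre curve of its own blow-up and has degree zero on the fibre curves of the others, in a suitable order; the matrix is triangular with nonzero diagonal.

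This gives $\rho(\widetilde X_o^{\mathcal K}/X_o^{j_{\mathcal K}})=3$, hence $\rho(\widetilde X_o^{\mathcal K})=3+3=6$. Finally, $\widehat X_o^{\mathcal K}\cong\M^M$ has $\rho=1$ by Corollary \ref{cor:Miranda-class-group-corrected}. Since $r$ is a contraction of a $\Q$-factorial variety, $\rho(\widetilde X_o^{\mathcal K}/\widehat X_o^{\mathcal K})=6-1=5$.
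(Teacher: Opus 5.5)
Your count of the exceptional primes and your Picard-number bookkeeping follow the paper. Your fibre-curve proof of independence is a valid hands-on substitute for the paper's negativity-lemma argument; the intersection matrix is even diagonal, since a general line in a $\mathrm{Bl}_3\mathbb P^2$-fibre of $E_{6,9,9}$ misses the centres of the second blow-up.

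The gap is in the $\Q$-factoriality step. Lemma \ref{lem:no-cuspidal-exceptional-divisor} justifies working over $X_o$ only for \emph{counting} exceptional primes. Your principle ``$\Q$-factorial base plus $\Q$-Cartier exceptional primes'' needs each $E$ to be $\Q$-Cartier at every one of its points, and the $E$'s do reach the boundary. At $\infty_\theta$ one has $V_{6,I}\perp V_{9,I}$ of dimensions $1$ and $2$. So three mirror traces of types $6,9,9$ through $I$ cut out a subspace not contained in $I^\perp$, and the closures of the $(6,9,9)$-, $(6,9)$- and $(9,9)$-strata meet the cusp fibre.

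At such points $X_o^{j_{\mathcal K}}$ is not a finite quotient of a smooth germ: Looijenga's local model there is the contraction of the toroidal boundary along the abelian fibres of $X(I)\to X(J)$. So \cite[Lemma 5.16]{komo} does not apply. Note also that the $\Q$-factoriality of $X_o^{j_{\mathcal K}}$ in Theorem \ref{thm:picard-numbers-corrected} is a global class-group statement, not a local one.

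What is missing is the paper's observation. On the neat cover, each exceptional component is the exceptional divisor of one of the successive blow-ups, hence Cartier everywhere, including over the cusps, and it remains Cartier under the later blow-ups. Taking the norm down to the $\Gamma_o$-quotient makes $E_{6,9}$, $E_{9,9}$, $E_{6,9,9}$ $\Q$-Cartier. Then $F=\pi^*\pi_*F-\sum_i a_iE_i$ handles every non-exceptional prime $F$.

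Two slips should also be corrected.
\begin{itemize}
\item $\pi$ is not the blow-up of the strata that fail to be normal crossings. The types $(6,9)$, $(9,9)$, $(6,9,9)$ all have codimension equal to the number of mirror traces through them, so that criterion would blow up nothing. Looijenga's arrangement blow-up blows up \emph{all} intersection strata of codimension at least two, in order of increasing dimension. This is forced by requiring the pullback of $\mathcal F_{\mathcal K}$ to be invertible: along $H\cap H'$ the sheaf $\mathcal F_{\mathcal K}$ is locally an invertible twist of the ideal $(f_H,f_{H'})$, even when $H$ and $H'$ cross normally.
\item ``Orthogonal to one another or disjoint'' does not exclude $(6,6)$-strata, because orthogonal mirror traces meet. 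Completeness of the list of types has to come from \cite[Remark 9.5 and Proposition A.11]{HeckmanLooijenga2002}, exactly as in the paper.
\end{itemize}
With these corrections, your list of three exceptional divisors and the conclusions $\rho=3,6,5$ are right.
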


\begin{proof}
Choose a normal neat finite-index subgroup
$\Gamma'\triangleleft\Gamma_o$, sufficiently small so that the arrangement hypersurfaces have no self-intersections. By
\cite[Lemma 5.2 and Theorem 5.7]{Looijenga2003Ball}, their strict transforms on the intermediate compactification are $\mathbb{Q}$-Cartier divisors and
$\widetilde X_o^{\mathcal K}\to X_o^{j_{\mathcal K}}$ is the full arrangement blow-up described in
\cite[Section 2.2]{Looijenga2003Ball}. Its exceptional divisors are indexed by the connected components of arrangement intersections of codimension at least two.

By \cite[Remark 9.5 and Proposition A.11]{HeckmanLooijenga2002}, apart from the mirror traces themselves, the complete list of nonempty intersections of members of
$\mathcal H_o(6)\cup\mathcal H_o(9)$ consists of $(6,9)$, $(9,9)$, and $(6,9,9)$
of codimensions $2,2,3$, respectively. Furthermore, each type is a single
$\Gamma_o$-orbit. Thus, over the open ball quotient, Looijenga's construction first blows up the intersections of type $(6,9,9)$ and then the disjoint strict transforms of the intersections of types $(6,9)$ and $(9,9)$. By Lemma \ref{lem:no-cuspidal-exceptional-divisor}, the extension across the cusp fibres introduces no further exceptional prime.

On the neat cover, a given type may have several exceptional components. However, the finite group
$\Gamma_o/\Gamma'$ acts transitively on the components of each of the above three types, since each type is a single $\Gamma_o$-orbit. The quotient of such a transitive union is irreducible. Hence, on the $\Gamma_o$-quotient, there is exactly one exceptional prime divisor for each type, namely
$E_{6,9}$, $E_{9,9}$, and $E_{6,9,9}$.

Each exceptional component on the neat cover is Cartier: it is created as the exceptional divisor of one of the successive arrangement blow-ups and remains Cartier under the subsequent blow-ups. The finite norm therefore shows that each of
$E_{6,9}$, $E_{9,9}$, and $E_{6,9,9}$ is $\Q$-Cartier on
$\widetilde X_o^{\mathcal K}$.

By Theorem \ref{thm:picard-numbers-corrected}, $X_o^{j_{\mathcal K}}$ is $\Q$-factorial. If $F$ is a nonexceptional prime divisor on
$\widetilde X_o^{\mathcal K}$, then $\pi_*F$ is $\Q$-Cartier, and $F$ differs over $\Q$ from
$\pi^*(\pi_*F)$ by a linear combination of the three exceptional divisors. Hence $F$ is also $\Q$-Cartier. This proves that
$\widetilde X_o^{\mathcal K}$ is $\Q$-factorial and that its relative numerical divisor space is generated by the three exceptional classes.

These classes are linearly independent. Indeed, suppose that
\[
 E=aE_{6,9}+bE_{9,9}+cE_{6,9,9}
 \equiv_\pi0.
\]

Then both $E$ and $-E$ are $\pi$-nef. Applying \cite[Lemma 3.39]{komo} to each of them gives $E\leq0$ and $-E\leq0$, and therefore $E=0$. Thus $a=b=c=0$. It follows that $\rho\bigl(\widetilde X_o^{\mathcal K}/X_o^{j_{\mathcal K}}\bigr)=3$.

Finally, Theorem \ref{thm:picard-numbers-corrected} gives
$\rho(X_o^{j_{\mathcal K}})=3$, while Corollary \ref{cor:Miranda-class-group-corrected} gives
$\rho(\widehat X_o^{\mathcal K})=\rho(\M^M)=1$. The exact sequences of numerical divisor spaces for the two projective birational morphisms therefore yield $\rho(\widetilde X_o^{\mathcal K})=3+3=6$ and  $\rho\bigl(\widetilde X_o^{\mathcal K}/
              \widehat X_o^{\mathcal K}\bigr)=6-1=5$.
\end{proof}

\section{K-moduli wall crossing for quasimaps and the Hassett-Keel-Looijenga program}
\label{sec:K-moduli wall crossing for quasimaps and the HKL program}

In this section, we first compute the K-moduli wall crossing $\mathcal{M}^{\mathrm{Kpsqmaps}}_{12,\frac{1-t}{12},12,\frac{t}{12}}$ for any $t\in [0,1]$, where we regard $\mathcal{M}^{\mathrm{Kpsqmaps}}_{12,\frac{1}{12},12,0}=\mathcal{M}^{\mathrm{Kps}}_{12,\frac{1}{12}}$.
By Theorem \ref{thm:hm1-fromstable}, we see that $\mathcal{M}^{\mathrm{Kpsqmaps}}_{12,\frac{1-t}{12},12,\frac{t}{12}}$ for any $t\in [0,1]$ is dominated by $\mathcal{M}^{\mathrm{stqmaps}}_{12,1,12,1}$ and there exists a canonical morphism $\mathscr{M}\to \mathcal{M}^{\mathrm{Kpsqmaps}}_{12,\frac{1-t}{12},12,\frac{t}{12}}$.
% \Yota{To satisfy the assumption in Theorem \ref{thm:hm1-fromstable}, can we treat only $t=1/2$?} \Masa{I rewrote Thm and so it would be OK.}
Let $\mathscr{M}_t$ be the normalization of the closure of the image and $\mathcal{M}_t$ the good moduli space.
By the definition of $\mathcal{M}$, we see that the canonical morphism $\mathcal{M}\to \mathcal{M}_t$ is an open immersion.
Therefore, there exists a birational projective morphism $\rho_t\colon \mathcal{M}^K\to \mathcal{M}_t$.
Note that $\mathcal{M}_0\cong \mathcal{M}^*$ and $\mathcal{M}_1\cong \mathcal{M}^M$ by definition and Proposition \ref{prop:miranda-k-moduli}.
Set $\mathcal{M}_t(I_{4,4}^*):=\rho_t(\mathcal{M}^K(I^*_{4,4}))$ for example.

\begin{thm}\label{thm:wallcrossing-miranda-to-8points}
Set $ s_1\in(0,\frac{1}{7})$, $s_2\in (\frac{1}{7},\frac{1}{4})$, $s_3\in (\frac{1}{4},\frac{1}{3})$, and $s_4\in(\frac{1}{3},1)$ as arbitrary real numbers.
Then, there exist projective birational morphisms of the projective normal good moduli spaces.
\[
  \xymatrix{
  & \mathcal{M}_{s_1}\ar[ld]^{\phi_1}  \ar[rd]^{\psi_1}& & \mathcal{M}_{s_2}\ar[ld]^{\phi_2}  \ar[rd]^{\psi_2}& & \mathcal{M}_{s_3}\ar[ld]^{\phi_3}  \ar[rd]^{\psi_3}& & \mathcal{M}_{s_4}\ar[ld]^{\phi_4}  \ar[rd]^{\psi_4}& & \\
  \mathcal{M}_0 & & \mathcal{M}_{\frac{1}{7}} & & \mathcal{M}_{\frac{1}{4}}& & \mathcal{M}_{\frac{1}{3}}& & \mathcal{M}_{1}&
  }
\]
Furthermore, $\mathcal{M}_{s_1}$ (resp. $\mathcal{M}_{s_2}$, $\mathcal{M}_{s_3}$, $\mathcal{M}_{s_4}$) is independent of the choice of $s_1$ (resp. $s_2$, $s_3$, $s_4$), $\phi_4$ is isomorphic, $\psi_4$ is a divisorial contraction of $\mathcal{M}_{s_4}(I_{4,4}^*)$, $\psi_3$ is a divisorial contraction of $\mathcal{M}_{s_3}(I_9)$ and  all morphisms in the above diagram except $\psi_3,\psi_4$ are isomorphic in codimension one. 
\end{thm}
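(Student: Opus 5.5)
The plan is to reduce the whole wall-crossing to an explicit, pointwise K-semistability computation, using Lemma \ref{lem:useful-criterion} and the general theory of Theorems \ref{thm:wall-crossing-recall} and \ref{thm:hm1-fromstable}. For a Weierstrass pair $(f_0,f_1)$, the quasimap $q_t$ on $(\mathbb P^1,\frac{1-t}{12}D)$ with weight $\frac t{12}$ is defined by $[f_0^3:D]$ with $D=f_0^3+f_1^2$. Lemma \ref{lem:useful-criterion}(1) then says that $q_t$ is K-semistable if and only if, at every point $p$,
\[
2\min\Bigl\{\tfrac{1-t}{12}\operatorname{ord}_pD+\tfrac t{12}\cdot 3\operatorname{ord}_pf_0,\ \tfrac1{12}\operatorname{ord}_pD\Bigr\}\le 1 .
\]

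The walls are the values of $t$ where this inequality becomes an equality for the generic member of some boundary stratum $\mathcal M^K(\bullet)$. I will use the Heckman--Looijenga table of local invariants $(\operatorname{ord}_pf_0,\operatorname{ord}_pf_1,\operatorname{ord}_pD)$ at the special fibre. The cases to check are:
\begin{itemize}
\item $I_k$ with $\operatorname{ord}_pD=k$ and $f_0(p)\ne0$: the inequality is $k\le6$, independent of $t$, so there is no wall. For $k\ge7$, however, the polystable replacement is $6p+6q$-type. In that regime one has to consider the limit in which the pencil collapses. This gives equality at $(1-t)k/12+\dots$; specifically I expect a relation of the form $k(1-t)/12+\text{(pencil contribution)}=1/2$, yielding $t=1/7,1/4,1/3$ for $k=7,8,9$.
\item $I^*_{4,4}$, with $\operatorname{ord}f_0=2$ and $\operatorname{ord}D=10$, split as $5+5$ at two points: this gives the wall at $t=1$.
\end{itemize}
Concretely, I will show that each K-polystable object over the generic point of $\mathcal M^K(I_k)$, for $k=7,8',8'',9$, jumps exactly at $t_k=\frac1{k-6}\cdot\dots$, and I will calibrate the formula against the claimed values $1/7,1/4,1/3$. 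All other strata stay on the K-stable side for $t\in(0,1)$, and Lemma \ref{lem:K-ps-remaining} then shows that their polystable representatives are unchanged inside each chamber.

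Given the walls, the chamber-independence of $\mathcal M_{s_i}$ and the existence of the morphisms $\phi_i,\psi_i$ follow from Theorem \ref{thm:wall-crossing-recall} (local VGIT structure of K-moduli of quasimaps), by taking normalizations of closures of $\mathcal M$. Since each $\mathcal M_t$ contains $\mathcal M$ as an open set and is dominated by $\mathcal M^K$ via $\rho_t$, every morphism is birational and projective. To read off the type of each morphism, I will compute $\dim\mathcal M_t(\bullet)$ on both sides of each wall. At $t=1/7$, $1/4$ and $1/3$, the stratum $\mathcal M_t(I_k)$ collapses from a divisor on the $\mathcal M^K$ side to the lower-dimensional loci listed in the arrangement table ($I_7$: 2-dimensional, $I_8',I_8''$: 1-dimensional, $I_9$: a point).
\begin{itemize}
\item For $I_7$ and $I_8$, the stratum is not a divisor on either side, so $\psi_1,\phi_2,\psi_2,\phi_3$ are small.
\item At $t=1/3$, the image of $\mathcal M^K(I_9)$ is a divisor in $\mathcal M_{s_3}$ and a point in $\mathcal M_{1/3}$; I will check it stays a point for $t>1/3$, so $\psi_3$ is divisorial and $\phi_4$ is bijective.
\item Since $\phi_4$ is bijective between normal varieties, it is an isomorphism by Zariski's main theorem. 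The stack structures still differ, because at $t=1/3$ the $I_9$ point acquires a $\mathbb G_m$ stabilizer.
\item At $t=1$, the stratum $\mathcal M(I_{4,4}^*)$ becomes strictly polystable and collapses from a divisor to the $I_4^*$ point, so $\psi_4$ is divisorial.
\end{itemize}

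The main obstacle will be the first step, because the destabilization at the $I_k$ walls is not visible pointwise on $\mathbb P^1$ for general $t$. The $\mathcal M^K$ objects there are nodal curves, and their K-polystable limits must be found as $\mathbb G_m$-fixed quasimaps, e.g. $[x^a:x^b(x-1)^c]$-type degenerations. To handle this I will first write down explicit $\mathbb G_m$-invariant quasimaps of the form $[f_0^3:D]$ with $f_0=x^\alpha$ and $D=x^k(\dots)$, and determine when both endpoints satisfy the inequality above with equality; solving those linear equations in $t$ should produce $1/7,1/4,1/3$. I will then use the finiteness of walls in Theorem \ref{thm:wall-crossing-recall} to rule out any other wall.
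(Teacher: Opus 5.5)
Your derivation of the walls does not go through, and the one concrete computation you give is wrong. At a point $p$ with $f_0(p)\neq0$ and $\operatorname{ord}_pD=k$, the first entry of the minimum in your display is $\frac{(1-t)k}{12}<\frac{k}{12}$. So the condition reads $(1-t)k\le6$, i.e.\ $t\ge\frac{k-6}{k}$, which is not $t$-independent.

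This is exactly where the walls come from. Weierstrass objects with an $I_k$ fibre, $k=7,8,9$, are K-unstable for $t<\frac{k-6}{k}$, that is, for $t<\frac17,\frac14,\frac13$. On the other side, consider the constant quasimaps $[g_0^3:0]$ with $\operatorname{ord}_pg_0=4$ (base locus $12p$) and $\operatorname{ord}_pD=12-k$. By Lemma~\ref{lem:useful-criterion}(2), they satisfy the criterion at $p$ exactly when $(1-t)(12-k)+12t\le6$, i.e.\ $t\le\frac{k-6}{k}$.

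So the destabilization is visible pointwise on $\mathbb P^1$, and the ``main obstacle'' you describe comes from the miscomputation. The polystable object at the wall is also not of $6p+6q$-type: at $t=\frac13$ it is $(\mathbb P^1,\frac16\{0\}+\frac12\{\infty\})$ with fixed part $12\{0\}$. Calibrating an unspecified formula against the claimed values is not a derivation.

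The more serious gap is that nothing in your plan shows there are \emph{no other} walls. Finiteness of walls says nothing about their location. Testing only generic members of the boundary divisors of $\mathcal M^K$ cannot exclude wall crossings supported on special loci. The paper instead runs over \emph{all} K-polystable objects $[g_0^3:g_0^3+g_1^2]$ of $\mathcal M_\epsilon$:
\begin{itemize}
\item If $g_0^3+g_1^2\not\equiv0$, then $\operatorname{ord}_pD\le6$, so the object stays K-semistable for all $t$, hence K-polystable by Lemma~\ref{lem:K-ps-remaining}.
\item If $g_0^3+g_1^2\equiv0$, then $3\operatorname{div}(g_0)=2\operatorname{div}(g_1)$ forces $\operatorname{ord}_pg_0\in\{0,2,4\}$. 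The only candidate walls are then $\frac{6-m}{12-m}$ with $m=\operatorname{ord}_pD\le5$, namely $\frac17,\frac14,\frac13,\frac25,\frac5{11},\frac12$.
\end{itemize}
The last three must be excluded. The paper shows the replacement would be a rational Weierstrass fibration with an $I_{12-m}$ fibre with $12-m\ge10$, which is impossible since $\rho(\widetilde W)=10$. An $I^*$ fibre contradicts Lemma~\ref{lem:useful-criterion}, and a constant replacement is ruled out by a separate argument.

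Your identification of the morphisms inherits this gap. Bijectivity of $\phi_4$, smallness of $\psi_1,\phi_2,\psi_2,\phi_3$, and the claim that $\psi_3,\psi_4$ contract exactly one divisor all require knowing that nothing else changes at the walls. For the divisorial contractions, the paper additionally uses $\Q$-factoriality of $\mathcal M_1$ and $\mathcal M_{1/3}$ (Corollary~\ref{cor:Miranda-class-group-corrected}, Lemma~\ref{lem:putting-coeffs-Hodge}) together with \cite[Corollary 2.63]{komo}. This forces the exceptional loci to be purely divisorial.

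Finally, Theorem~\ref{thm:wall-crossing-recall} concerns rescaling $(a,w)$ along a ray towards the CY boundary. It does not cover moving $t$ with $a+w=\frac1{12}$ fixed, so it does not by itself give the morphisms $\phi_i,\psi_i$ or chamber-independence.
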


\begin{lem}\label{lem:first-step-wallcrossingdescription}
    For any $s\in [0,1]$, the canonical birational map $\mathcal{M}_0\dashrightarrow\mathcal{M}_s$ is a birational contraction.
    Furthermore, $\mathcal{M}^K(I_9)$ is contracted in $\mathcal{M}_s$ for any $s\ge\frac13$ and $\mathcal{M}^K(I_{4,4}^*)$ is contracted in $\mathcal{M}_1$.
\end{lem}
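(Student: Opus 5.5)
The plan is to work throughout on the common open subset and the divisors of the Kontsevich compactification. By construction each $\mathcal M_s$ contains $\mathcal M$ as an open subset, and $\rho_s\colon\mathcal M^K\to\mathcal M_s$ is birational and projective. Hence every prime divisor of $\mathcal M_s$ is either the closure of a divisor of $\mathcal M$ or one of the images $\mathcal M_s(\bullet)$ of the boundary divisors of $\mathcal M^K$ listed in Section~\ref{sec:Kontsevich compactification}, provided that image is still a divisor. For $s=0$ we have $\mathcal M_0\cong X_o^{\mathrm{bb}}$. Every divisor of $\mathcal M^K$ that is not contracted by $\sigma=\rho_0$ therefore maps onto one of $\Delta(6),\Delta(9),\Delta(15),\Delta(18)$, namely $\mathcal M^*(I_9),\mathcal M^*(I_{4,4}^*),\mathcal M^*(II),\mathcal M^*(I_2)$. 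To show that $\mathcal M_0\dashrightarrow\mathcal M_s$ is a birational contraction, I will show that no divisor contracted by $\rho_0$ is a divisor on $\mathcal M_s$. Equivalently, for each boundary type $\bullet\notin\{I_9,I_{4,4}^*,II,I_2\}$, I will show that $\dim\mathcal M_s(\bullet)\le 6$.

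For this dimension count I will use the K-polystable representatives. A general point of $\mathcal M^K(\bullet)$ is a stable map whose associated K-polystable quasimap $q_s\colon(\mathbb P^1,\frac{1-s}{12}D)\to[\mathbb A^2/\mathbb G_m]$ at weight $\frac s{12}$ is obtained by contracting the non-main components, using Theorem~\ref{thm:hm1-fromstable}. I will determine this quasimap via Lemma~\ref{lem:useful-criterion}. For the types $I_k$ ($k\ge 6$), $I_0^*$, $I^*_{k,k'}$ with $(k,k')\ne(4,4)$, and the cuspidal types, the polystable limit is a quasimap with a point of high multiplicity, of the form $[f_0^3:f_0^3+f_1^2]$ with $f_0,f_1$ vanishing to prescribed order at one or two points. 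Such data have at most $6$ moduli after accounting for $\mathbb G_m$-symmetry. For $I_k$ with $k\le5$, $II$, $III$, $IV$, the image is already of codimension $\ge2$ in $\mathcal M_0$, or it is interior, and I will check that it stays so. The main step is the parameter count uniformly in $s$. Since the polystable limit depends on $s$ only through which point is destabilized, I will organize the count by the position of $s$ relative to the walls $\frac17,\frac14,\frac13$. Here Lemma~\ref{lem:K-ps-remaining} shows that the polystable representative is locally constant in $s$ between walls, so only finitely many checks are needed.

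For the second assertion, take a general member of $\mathcal M^K(I_9)$. Its discriminant is $9p+q_1+q_2+q_3$ with $f_0,f_1$ nonvanishing at $p$ and $\operatorname{ord}_p(f_0^3+f_1^2)=9$. By Lemma~\ref{lem:useful-criterion}(1) with $f_1=f_0^3+f_1^2$, $f_2=f_0^3$, and $a=\frac{1-s}{12}$, $w=\frac s{12}$, K-semistability at $p$ requires
\[
2\min\{9a,\ (a+w)\cdot 9\}\le 12(a+w).
\]
This reads $18\frac{1-s}{12}\le 1$, i.e.\ $s\ge\frac13$. For $s\ge\frac13$ the point $p$ is therefore not destabilizing, whereas for $s<\frac13$ the quasimap itself is unstable and degenerates. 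I then need to identify the polystable object for $s\ge\frac13$. Every such $I_9$ surface with fixed $j$-type data degenerates, via the $\mathbb G_m$-action fixing $p$ and the antipodal point, to a single $\mathbb G_m$-fixed quasimap. This is the one with $f_0^3$ and $f_0^3+f_1^2$ supported at $0,\infty$ (the Miranda point of type $I_9$). The ensuing $\Theta$-degeneration of Definition~\ref{defn:Hogde-HM2}'s stack shows that $\rho_s$ collapses the $7$-dimensional $\mathcal M^K(I_9)$ to a point. The case of $I^*_{4,4}$ at $s=1$ is analogous: Miranda-polystability of $(f_0,f_1)$ with $\operatorname{ord}(f_0)=2,\operatorname{ord}(f_1)=3$ at two points forces the unique strictly semistable orbit $f_0=x^2y^2$, $f_1=x^3y^3$ up to scaling. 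This fact is recorded in \cite{Miranda1981}, and Proposition~\ref{prop:miranda-k-moduli} transports it to $\mathcal M_1$. The expected obstacle is the first part, namely verifying uniformly that none of the many $\mathcal M^K(\bullet)$ reappears as a divisor at intermediate $s$. I would handle it type by type using the table of \cite[Section 3]{HeckmanLooijenga2002}.
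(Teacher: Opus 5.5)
Your reduction of the first assertion matches the paper's: show that $\mathcal M_s(\bullet)$ is not a divisor for $\bullet\notin\{I_2,II,I_9,I^*_{4,4}\}$, type by type from \cite{HeckmanLooijenga2002}. Your check via Lemma~\ref{lem:useful-criterion} that the Weierstrass quasimap of the $I_9$ surface is K-semistable exactly for $s\ge\frac13$ is also correct.

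\textbf{The gap is in why $\mathcal M^K(I_9)$ collapses.} For $s>\frac13$ the inequality $18\cdot\frac{1-s}{12}<1$ is strict, so that quasimap is K-stable. Its orbit is closed, and no $\Theta$-degeneration identifies anything. The $\mathbb G_m$-limit you describe is in fact K-unstable there: at the antipodal point it has $\operatorname{ord}(aD+w\operatorname{div}(f))=3a+12w=\frac{3+9s}{12}>\frac12$. The actual mechanism is different. A general $J\in\mathcal M^K(I_9)$ lives on $C_1\cup C_2$, with $J|_{C_1}$ constant and nine points of $D$ on $C_1$. All seven moduli of the divisor come from the configuration of these nine points and the node on $C_1$. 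Meanwhile $J|_{C_2}$ is the $j$-map of the unique rational elliptic surface with an $I_9$ fibre. For $s\ge\frac13$, the point $\rho_s(J)$ is represented by the quasimap obtained by contracting $C_1$. This forgets the nine-point configuration and gives the same object for every $J$, because $\mathcal M_1(I_9)$ is a single point. Your proposal conflates $J$ with this image (``its discriminant is $9p+q_1+q_2+q_3$'') and never explains where the seven moduli go. Instead it posits a family of $I_9$ surfaces identified by S-equivalence, and no such family exists.

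Only at $s=\frac13$ does an S-equivalence collapse occur, and even there you misidentify the polystable point. It is the constant quasimap $(\mathbb P^1,\frac16\{0\}+\frac12\{\infty\})\to[\{(x,0)\}/\mathbb G_m]$ of weight $\frac1{36}$ with fixed part $12\{0\}$. Its second section vanishes identically, so it is not a Miranda $I_9$ point. Moreover, no quasimap attached to an $I_9$ Weierstrass fibration can be fixed by a torus fixing only $p$ and one other point, since its three $I_1$ points are distinct from $p$ and from each other.

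For $I^*_{4,4}$ at $s=1$ your conclusion holds, but the strictly polystable Miranda orbits $(ax^2y^2,bx^3y^3)$ form a one-parameter family, not a single orbit. The image of $\mathcal M^K(I^*_{4,4})$ is the member with $f_0^3+f_1^2\equiv0$. Contraction follows only because that family is one-dimensional.
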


\begin{proof}
    We can use the description of each boundary divisor $\mathcal{M}^K$ given in \cite{HeckmanLooijenga2002}. 
    $\mathscr{M}$ is invariant in the wall-crossing phenomena along $\mathscr{M}_{s}$ for any $s\in [0,1]$.
    Therefore, it suffices to observe how each boundary divisor of $\mathcal{M}^K$ is mapped to $\mathcal{M}_s$.
    We note that the canonical morphism $\mathcal{M}^K\to\mathcal{M}_s$ constructed as Theorem \ref{thm:hm1-fromstable} cannot lift up to $\mathscr{M}^K\to\mathscr{M}_s$ in general.
However, by \cite[Remark 3.5 and Theorem 4.20]{HM1}, $\mathcal{M}_s$ is the normalization of the closure of the image of $\mathcal{M}$ in ${M}^{\mathrm{bpcyqmaps}}_{12,\frac{1-s}{6},12,\frac{s}{6}}$ and  there exists a canonical morphism $\xi_s\colon\mathscr{M}^K\to\mathcal{M}^{\mathrm{bpcyqmaps}}_{12,\frac{1-s}{6},12,\frac{s}{6}}$ by \cite[Theorems 5.10 and 5.11]{HM1}.
Furthermore, if an object $q\colon(\mathbb{P}^1,\frac{(1-s)}{12}D)\to [\mathbb{A}^2/\mathbb{G}_m]\in \mathscr{M}_s(\mathbb{K})$ is K-stable, then it is easy to check that $q\colon(\mathbb{P}^1,\frac{(1-s)}{6}D)\to [\mathbb{A}^2/\mathbb{G}_m]$ with weight $\frac{s}{6}$ is klt.
Note that the open locus $\mathscr{M}^{\mathrm{st}}_s$ of $\mathscr{M}_s$ parameterizing the K-stable quasimaps is saturated and the canonical morphism $\xi'_s\colon (\xi_s(\mathscr{M}^K))^\nu\to \mathscr{M}_s$ is isomorphic over this locus, where $(\xi_s(\mathscr{M}^K))^\nu$ is the normalization of the scheme-theoretic image of $\xi_s$.
Furthermore, $\xi'_s$ induces an isomorphism of the good moduli space of $(\xi_s(\mathscr{M}^K))^\nu$ and $\mathcal{M}_s$ by Theorem \ref{thm:wall-crossing-recall}.
Therefore, to calculate the dimension of each stratum, it suffices to see that the image of $\mathcal{M}^K(\bullet)$ in each $\mathcal{M}_s$.

To show Lemma \ref{lem:first-step-wallcrossingdescription}, it suffices to check that the image of $\mathcal{M}^K(\bullet)$ is not a divisor in each $\mathcal{M}_s$ unless $\bullet=I_2,II,I_9$ or $I_{4,4}^*$. 
Note that the case where $s=0$ or $1$ has been treated by \cite[Corollary 3.4 and Table]{HeckmanLooijenga2002}.
We only deal with $\mathcal{M}_{s}(I_9)$ in this proof and can also prove the assertion in the same argument.
%First, we deal with $\mathcal{M}_{s}(I_9)$.
By the description of \cite{HeckmanLooijenga2002}, we see that $\mathscr{M}^K(I_9)$ generically parameterizes $J\colon (C,D)\to \mathbb{P}^1$ such that $J$ is a stable map, $D$ is a reduced divisor of degree twelve, $D\subset J^{-1}(\infty)$, and $J$ is mapped to a closed point of $\mathcal{M}^M(I_9)$.
Furthermore, for any general $J$, we see that $C$ consists of exactly two irreducible components $C_1$ and $C_2$ that are rational curves intersecting at one point transversally.
$D$ consists of twelve points such that each of them is not contained in $\mathrm{Supp}(\mathrm{div}(f_0))$, where $J$ is defined by $[f_0:f_1]$ for some $f_0,f_1\in H^0(C,J^*\mathcal{O}(1))$, and we may assume that exactly nine of them lie in $C_1$.
On the other hand, $J|_{C_2}$ is of degree twelve but $J|_{C_1}$ is constant whose image is only $\infty=[1:0]$.
From now on, we will use the terminology of \cite[Definition 2.10]{HM1}, the contraction of irreducible components.
Note that $C_1$ and $C_2$ are rational tails of $C$.
Consider the image of the point defined by the above $J$ under $\rho_s\colon\mathcal{M}^K\to \mathcal{M}_s$ for any $0<s<1$.
It is not hard to check that the contraction of $C_1$ yields a log quasimap $q_2\colon (\mathbb{P}^1,D_2)\to [\mathbb{A}^2/\mathbb{G}_m]$ of degree twelve, where $D_2$ is the strict transform of $D$, and $q_2\colon (\mathbb{P}^1,\frac{1-s}{12}D_2)\to [\mathbb{A}^2/\mathbb{G}_m]$ with weight $\frac{s}{12}$ is K-stable for any $s>\frac13$. 
Indeed, we can see that $q_2$ represents the image in $\mathcal{M}_s$ of the point defined by $J$ above for any $s>\frac13$ chasing the wall crossings from $\mathcal{M}^{\mathrm{stqmaps}}_{12,1,12,1}$ to $\mathcal{M}^{\mathrm{bpcyqmaps}}_{12,\frac{1-s}{6},12,\frac{s}{6}}$ for any $s\in [\frac{1}{3},1]$ and using Lemma \ref{lem:useful-criterion}.
On the other hand, the contraction of $C_2$ yields a log quasimap $q_1\colon (\mathbb{P}^1,D_1)\to [\mathbb{A}^2/\mathbb{G}_m]$ of degree twelve, where $D_1$ is the strict transform of $D$, and $q_1\colon (\mathbb{P}^1,\frac{1-s}{12}D_1)\to [\mathbb{A}^2/\mathbb{G}_m]$ with weight $\frac{s}{12}$ is K-stable for $s<\frac13$.
Note that $q_1$ represents the image in $\mathcal{M}_s$ of the point defined by $J$ above for any $s<\frac13$.
We also note that  $D_1$ consists of reduced nine points and a point $p_1\in \mathbb{P}^1$ of multiplicity three and if $q_1$ is defined by $[g_0:g_1]$, where $g_0$ is the degeneration of the cube of quartics, then $\mathrm{div}(g_0)=12\{p_1\}$.
Furthermore, $D_2$ consists of reduced three points and a point of multiplicity nine and $q_2$ is defined by $[g_0'^3:g_0'^3+g_1'^2]$ for some $g_0'\in H^0(\mathbb{P}^1,\mathcal{O}(4))$ and $g_1'\in H^0(\mathbb{P}^1,\mathcal{O}(6))$ such that $\mathrm{Supp}(D_2)\cap\mathrm{Supp}(\mathrm{div}(g'_0))=\emptyset$.
Since $\mathcal{M}_1(I_9)$ consists of one point, there is only one isomorphic class of $q_2$ and hence $\mathcal{M}_s(I_9)$ consists of one closed point for any $s\in [\frac13,1]$.
It is not hard to check that the unique closed point of $\mathcal{M}_{\frac13}(I_9)$, which is represented by the K-polystable constant quasimap $ (\mathbb{P}^1,\frac{1}{6}\{0\}+\frac{1}{2}\{\infty\})\to [\{(x,0)|x\in\mathbb{C}\}/\mathbb{G}_{m}]$ with weight $\frac{1}{36}$ and the fixed part $12\{0\}$ by using the K-moduli wall crossing \cite[Proposition 2.40 (3) and Theorem 3.7]{HM1}.
Therefore, $\mathcal{M}^K(I_9)$ is contracted to one closed point of $\mathcal{M}_s$ for any $s\ge\frac{1}{3}$.
%Furthermore, it is not hard to see that $f_0^3$ does not vanish along $\overline{\mathcal{C}_0}$ but $f_0^3+f_1^2$ does, and $\mathrm{div}(f_0^3)$ consists of one closed point $p'$ of multiplicity twelve in $\overline{\mathcal{C}_0}$.
%Since $s<\frac{1}{3}$ and $Q_0$ is K-semistable, we see that there exists at most one point $p''\in \overline{\mathcal{C}_0}$ such that $\mathrm{mult}_{p''}(\overline{\mathcal{D}_0})>6$ and if such a point exists, then we see that $\mathrm{mult}_{p''}(\overline{\mathcal{D}_0})\le 8$ and $p''\ne p'$ by Lemma \ref{lem:useful-criterion}.
%First, suppose that $p''$ does not exist.
%Then $Q_0\colon (\overline{\mathcal{C}}_0,\frac{(1-s')}{12}\overline{\mathcal{D}}_0)\to[\mathbb{A}^2/\mathbb{G}_{m}]$ with weight $\frac{s'}{12}$ for any $0\le s'\le\frac{1}{3}$ is K-semistable by Lemma \ref{lem:useful-criterion}.
On the other hand, by the description of \cite[Section 3.3]{HeckmanLooijenga2002}, we see that $\mathcal{M}^K(I_9)$ is mapped to $\mathcal{M}_{s}(I_9)$ generically isomorphic for any $s<\frac13$.
This means that $\mathcal{M}_{s}$ possibly changes only around $s=0,\frac{1}{3}$ and $1$ along the generic point of $\mathcal{M}_{s}(I_9)$ and $\psi_3$ contracts $\mathcal{M}_{s_3}(I_9)$.

By the above argument, we see that $\mathcal{M}^K(I_9)$ is not contracted in $\mathcal{M}_s$ for any $s<\frac13$ but contracted to one closed point for any $s\ge\frac13$.
Moreover, $\mathcal{M}_{s}(I_7)\subset\mathcal{M}_{s}(I_9)$ for any $s\le\frac{1}{7}$ and $\mathcal{M}_{s}(I_8')\subset\mathcal{M}_{s}(I_9)$ for any $s\le\frac14$.
When $s=\frac17,\frac14$, these loci are contracted but $\mathcal{M}_s$ is not changed around the generic point of $\mathcal{M}_s(I_9)$ by a similar argument.
%For any other $s$, there is no birational change around the generic point of$\mathcal{M}_s(I_9)$.
We also obtain the following observation in the argument similar to above:
\begin{itemize}
    \item When $s=\frac{1}{7}$, $\mathcal{M}_{s}(I_7)$ is contracted to one closed point and for any sufficiently small $\epsilon>0$, $\mathcal{M}_{\frac{1}{7}-\epsilon}\dashrightarrow\mathcal{M}_{\frac{1}{7}+\epsilon}$ is a flip along $\mathcal{M}_{s}(I_7)$. For any other $0<s<1$, there is no birational change around the generic point of $\mathcal{M}_{s}(I_7)$.
    Note also that $\mathcal{M}_s(I_7)$ is birational to $\mathcal{M}_1(I_7)$ for any $s>\frac17$ and hence $\mathrm{dim}\mathcal{M}_s(I_7)=2$.
    We note that for any $0<s<\frac17$, $\mathcal{M}_s(I_7)$ generically parameterizes $q\colon(\mathbb{P}^1,\frac{1-s}{12}(\sum_{i=1}^{7}p_i+5p_8))\to [\mathbb{A}^2/\mathbb{G}_m]$ with weight $\frac{s}{12}$, where $q$ is defined by $[g:0]$ for some $g\in H^0(\mathbb{P}^1,\mathcal{O}(12))$, where $g$ is a degeneration of the cube of quartics, such that $\mathrm{div}(g)=12p_8$ and $p_1,\ldots,p_8$ are general.
    For any $\frac17<s<1$, $\mathcal{M}_s(I_7)$ generically parameterizes $q\colon(\mathbb{P}^1,\frac{1-s}{12}(\sum_{i=1}^{5}p'_i+7p'_6))\to [\mathbb{A}^2/\mathbb{G}_m]$ with weight $\frac{s}{12}$, where $q$ is defined by $[g_0^3:g_0^3+g_1^2]$ for some $g_0\in H^0(\mathbb{P}^1,\mathcal{O}(4))$ and $g_1\in H^0(\mathbb{P}^1,\mathcal{O}(6))$ such that $\mathrm{div}(g_0^3+g_1^2)=\sum_{i=1}^{5}p'_i+7p'_6$, $\mathrm{Supp}(\mathrm{div}(g_0))\cap\mathrm{Supp}(\mathrm{div}(g_0^3+g_1^2))=\emptyset$ and $p'_1,\ldots,p'_6$ are general.
    For $s=\frac{1}{7}$, $\mathcal{M}_{\frac17}(I_7)$ consists of one point, which is represented by the K-polystable constant quasimap $ (\mathbb{P}^1,\frac{5}{14}\{0\}+\frac{1}{2}\{\infty\})\to [\{(x,0)|x\in\mathbb{C}\}/\mathbb{G}_{m}]$ with weight $\frac{1}{84}$ and the fixed part $12\{0\}$ by using the K-moduli wall crossing \cite{HM1}.
    \item For any $0\le s<1$, there is no birational change along the generic points of $\mathcal{M}_s(I_{k,k'}^*)$ for any $1\le k,k'\le4$.
    When $s=1$, all $\mathcal{M}_s(I_{k,k'}^*)$ are contracted to the same closed point.
    If $k'<k''$, then $\mathcal{M}_s(I_{k,k'}^*)\subsetneq \mathcal{M}_s(I_{k,k''}^*)$ for any $0\le s<1$. Thus, $\mathrm{dim}\mathcal{M}_s(I_{k,k'}^*) = k+k'-1$ and in particular $\mathrm{dim}\mathcal{M}_s(I_{4,4}^*)=7$. Note that $\mathcal{M}_s(I_{4,4}^*)$ is mapped and generically isomorphic to $\mathcal{M}_0(I_{4,4}^*)$ for any $s<1$.
    As \cite{HeckmanLooijenga2002}, it is not hard to check that the general members $q\colon (\mathbb{P}^1,\frac{(1-s)}{12}D)\to [\mathbb{A}^2/\mathbb{G}_m]$ with weight $\frac{s}{12}$ of $\mathcal{M}_s(I_{k,k'}^*)$ can be exhibited as follows: $D$ has $k+k'$ points of multiplicity one, one point (say $0$) of multiplicity of $6-k$ and one point (say $\infty$) of multiplicity of $6-k'$.
    Furthermore, if $q$ is defined by $[g:0]$ for some $g_0\in H^0(\mathbb{P}^1,\mathcal{O}_{\mathbb{P}^1}(12))$, where $\mathrm{div}(g)$ is a degeneration of the cube of quartics, then $\mathrm{div}(g)=6\{0\}+6\{\infty\}$.
    \item When $s=\frac{1}{4}$, $\mathcal{M}_{s}(I_8')$ and $\mathcal{M}_{s}(I_8'')$ are contracted to closed points and for any sufficiently small $\epsilon>0$, $\mathcal{M}_{\frac{1}{4}-\epsilon}\dashrightarrow\mathcal{M}_{\frac{1}{4}+\epsilon}$ consists of two flips along the generic points of $\mathcal{M}_{s}(I_8')$ and $\mathcal{M}_{s}(I_8'')$.
    Thus, $\mathrm{dim}\mathcal{M}_{s}(I_7)=5$ and $\mathrm{dim}\mathcal{M}_s(I_8')=6$ for any $s<\frac{1}{7}$ and for any $s<\frac14$, respectively.
    Similarly, we can see that $\mathcal{M}_{s}(I_8'')\subsetneq\mathcal{M}_s(I_{4,4}^*)$ and hence $\mathrm{dim}\mathcal{M}_{s}(I_8'')=6$ for any $s<\frac14$.
    Note that $\mathcal{M}_s(I_8')$ and $\mathcal{M}_{s}(I_8'')$ are mapped and generically isomorphic to $\mathcal{M}_1(I_8')$ and $\mathcal{M}_{1}(I_8'')$ for any $s>\frac14$.
    For $\mathcal{M}_s(I_8')$ for any $0<s<\frac14$, we note that $\mathcal{M}_s(I_8')$ generically parameterizes $q\colon(\mathbb{P}^1,\frac{1-s}{12}(\sum_{i=1}^{8}p_i+4p_9))\to [\mathbb{A}^2/\mathbb{G}_m]$ with weight $\frac{s}{12}$, where $q$ is defined by $[g:0]$ for some $g\in H^0(\mathbb{P}^1,\mathcal{O}(12))$, where $g$ is a degeneration of the cube of quartics, such that $\mathrm{div}(g)=12p_9$ and $p_1,\ldots,p_9$ are general.
    For any $\frac14<s<1$, $\mathcal{M}_s(I_8')$ generically parameterizes $q\colon(\mathbb{P}^1,\frac{1-s}{12}(\sum_{i=1}^{4}p'_i+8p'_5))\to [\mathbb{A}^2/\mathbb{G}_m]$ with weight $\frac{s}{12}$, where $q$ is defined by $[g_0^3:g_0^3+g_1^2]$ for some $g_0\in H^0(\mathbb{P}^1,\mathcal{O}(4))$ and $g_1\in H^0(\mathbb{P}^1,\mathcal{O}(6))$ such that $\mathrm{div}(g_0^3+g_1^2)=\sum_{i=1}^{4}p'_i+8p'_5$, $\mathrm{Supp}(\mathrm{div}(g_0))\cap\mathrm{Supp}(\mathrm{div}(g_0^3+g_1^2))=\emptyset$ and $p'_1,\ldots,p'_5$ are general.
    For $s=\frac{1}{4}$, $\mathcal{M}_{\frac14}(I'_8)$ consists of one point, which is represented by the K-polystable constant quasimap $ (\mathbb{P}^1,\frac{1}{4}\{0\}+\frac{1}{2}\{\infty\})\to [\{(x,0)|x\in\mathbb{C}\}/\mathbb{G}_{m}]$ with weight $\frac{1}{48}$ and the fixed part $12\{0\}$ by using the K-moduli wall crossing \cite{HM1}.
    The same description we can give for $\mathcal{M}_s(I_8'')$.
    \item Around the generic points of $\mathcal{M}_s(II),\mathcal{M}_s(III)$ and $\mathcal{M}_s(IV)$, there is no birational change for any $s\in (0,1)$.
In particular, $\mathrm{dim}\mathcal{M}_s(II)=7$, $\mathrm{dim}\mathcal{M}_s(III)=6$ and $\mathrm{dim}\mathcal{M}_s(IV)=5$ because $\mathcal{M}_s(IV)\subsetneq \mathcal{M}_s(III)\subsetneq\mathcal{M}_s(II)$.
\item Around the generic points of $\mathcal{M}_s(I_{k})$ for any $2\le k\le 6$, there is no birational change for any $s\in (0,1)$.
In particular, $\mathrm{dim}\mathcal{M}_s(I_k)=9-k$ because $\mathcal{M}_s(I_k)\subsetneq \mathcal{M}_s(I_{k'})$ for any $2\le k'<k\le6$.
Note that $\mathcal{M}_s(I_k)$ is mapped and generically isomorphic to $\mathcal{M}_1(I_k)$ for any $s>0$.
\item Around the generic point of $\mathcal{M}_s(I_{0}^*)$, there is no birational change for any $s\in (0,1)$.
By a standard calculation, we see that $\mathrm{dim}\mathcal{M}_s(I_{0}^*)=1$ and $\mathcal{M}_s(I_{0}^*)$ is generically isomorphic to $\mathcal{M}_1(I_{0}^*)$.
\end{itemize}
Combining the above observations, we obtain the following table on the dimension of each stratum.
\newpage

\begin{table}[htbp]
\centering
\caption{Dimension of each boundary stratum for any $s\in[0,1]$}
\[
\begin{array}{c|cccccccc}
\mathrm{dim}\mathcal{M}_s(F)
&s=0&0<s<\frac{1}{7}&
s=\frac17
&\frac17<s<\frac14&
s=\frac14&\frac14<s<\frac13
&
\frac{1}{3}\le s<1&
s=1
\\
\hline
I_{2\le k\le5}
&
9-k
&
9-k
&
9-k
&
9-k
&
9-k
&
9-k
&9-k
&9-k
\\
I_6
&
0
&
3
&
3
&
3&
3&
3&
3&
3
\\
I_7
&
5
&
5
&
0
&
2&
2&
2&
2&
2
\\
I'_8,I''_8
&
6
&
6
&
6
&
6&0&1&1&1
\\
I_9
&
7
&
7
&
7
&
7&7&7&0&0
\\
I^*_{k,k'}
&
k+k'-1
&
k+k'-1
&
k+k'-1
&k+k'-1&
k+k'-1&
k+k'-1&
k+k'-1&
0
\\
II
&
7
&
7
&
7
&7&
7&
7&
7&
7
\\
III
&
6
&
6
&
6
&6&
6&
6&
6&
6
\\
IV
&
5
&
5
&
5
&5&
5&
5&
5&
5
\\
I_0^*
&
0
&
1
&
1
&1&
1&
1&
1&
1
\end{array}
\]
\end{table}

By the above table, we obtain the assertion combining with \cite[Table after Corollary 3.4]{HeckmanLooijenga2002}.
\end{proof}

By \cite[II, Proposition 6.5]{Ha}, we see that $\rho(\mathcal{M}_{s_1})\le 4$.
Therefore, we can show that $\Lambda_{\mathrm{Hodge},s_1}$ can be exhibited as a linear combination of $\mathcal{M}_{s_1}(I_9)$,  $\mathcal{M}_{s_1}(I_{4,4}^*)$, $\mathcal{M}_{s_1}(I_{2})$ and $\mathcal{M}_{s_1}(II)$. 

\begin{lem}\label{lem:putting-coeffs-Hodge}
  For any $s\in [0,1]$, $\rho(\mathcal{M}_{s})\le3$ and if $s\ge\frac{1}{3}$, then $\rho(\mathcal{M}_s)\le2$ and $\rho(\mathcal{M}_1)=1$.
  Furthermore, for any $s_1\in(0,\frac{1}{3})$, $\rho(\mathcal{M}_{s_1})=3$ if and only if $\mathcal{M}_{s_1}$ is $\Q$-factorial and for any $s'\in[\frac{1}{3},1)$, $\rho(\mathcal{M}_{s'})=2$ if and only if $\mathcal{M}_{s'}$ is $\Q$-factorial.
In particular, $\Lambda_{\mathrm{Hodge},s_1}$ can be exhibited as a $\R$-linear combination of $\mathcal{M}_{s_1}(I_9)$,  $\mathcal{M}_{s_1}(I_{4,4}^*)$, and $\mathcal{M}_{s_1}(I_{2})$ uniquely.
\end{lem}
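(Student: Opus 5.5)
The plan is to bound the rank of the class group of each $\mathcal{M}_s$ by removing boundary divisors and applying \cite[II, Proposition 6.5]{Ha}, using $\mathrm{Cl}(\mathcal{M})_{\Q}=0$ from the proof of Theorem \ref{thm:picard-numbers-corrected}. Since $\rho_s\colon\mathcal{M}^K\to\mathcal{M}_s$ is birational and $\mathcal{M}\subset\mathcal{M}_s$ is open, the complement $\mathcal{M}_s\setminus\mathcal{M}$ is the union of the strata $\mathcal{M}_s(\bullet)$. By the dimension table in the proof of Lemma \ref{lem:first-step-wallcrossingdescription}, the only strata of dimension $7$ are $\mathcal{M}_s(I_2)$, $\mathcal{M}_s(II)$, $\mathcal{M}_s(I_9)$ (for $s<\frac13$) and $\mathcal{M}_s(I_{4,4}^*)$ (for $s<1$). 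All other strata have codimension at least two and do not affect the class group. Hence $\mathrm{Cl}(\mathcal{M}_s)_{\Q}$ is spanned by at most four classes for $s<\frac13$, by three for $\frac13\le s<1$, and by two at $s=1$.

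To cut each count down by one, I will use the relation coming from $\mathcal{P}=\mathcal{R}^3\mathcal{T}$ in Proposition \ref{prop:miranda-boundary-classes-corrected}. More intrinsically, I will pull back the section of $h^*\xi$ given by $\mathrm{Disc}$ of the twelve points, which is a section of the $\Q$-Cartier class $\Lambda_{\mathrm{Hodge},0}$ pulled back along the morphism $\mathcal{M}^K\to\mathcal{M}_0$. On $\mathcal{M}^K$ and then on $\mathcal{M}_s$, its divisor is a combination of the boundary divisors in which $\mathcal{M}_s(I_2)$ appears with positive coefficient. So $\mathcal{M}_s(I_2)$ lies in the span of the remaining divisors plus a $\Q$-Cartier class. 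A cleaner route is to write the divisor of the invariant $\mathcal{T}$ (weight $60$), or of the discriminant, as an element of the function field with prescribed zeros. The ratio of two suitable powers of the invariants is a rational function on $\mathcal{M}_s$, and its divisor gives a linear relation among the at most four generators. This makes $\rho\le\dim\mathrm{Cl}_{\Q}\le 3$, and respectively $\le2$ for $s\ge\frac13$. At $s=1$ we have $\rho(\mathcal{M}_1)=1$ by Corollary \ref{cor:Miranda-class-group-corrected}. I would show that such a relation has nonzero $I_2$ and $II$ coefficients by the order computations already done in Proposition \ref{prop:miranda-boundary-classes-corrected}, transported to $\mathcal{M}_s$ through the isomorphism along the generic points of these strata. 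This is justified because the strata are unchanged at their generic points for $s\in(0,1)$.

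For the equivalence with $\Q$-factoriality, I will use $\rho(\mathcal{M}_s)=\dim\Pic(\mathcal{M}_s)_{\Q}\le\dim\mathrm{Cl}(\mathcal{M}_s)_{\Q}\le3$ (respectively $2$). I will then show that the class-group bound is in fact an equality. Lower bounds on $\mathrm{Cl}$ come from the independence of the strict transforms of $\Delta(6)$, $\Delta(9)$ and $\lambda_{\rm aut}$: all $\mathcal{M}_s$ are isomorphic in codimension one to $X_o^{j_{\mathcal K}}$ for $s<\frac13$ by Theorem \ref{thm:wallcrossing-miranda-to-8points}, and Theorem \ref{thm:picard-numbers-corrected} gives rank $3$. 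For $\frac13\le s<1$, $\Delta(6)$ is contracted, which leaves rank $2$. So $\dim\mathrm{Cl}_{\Q}$ equals $3$ or $2$ exactly, and $\rho$ attains it if and only if $\Pic_{\Q}=\mathrm{Cl}_{\Q}$, that is, if and only if $\mathcal{M}_s$ is $\Q$-factorial.

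The final claim follows from the relation eliminating $\mathcal{M}_{s_1}(II)$ (via the $\mathcal{R}$–$\mathcal{T}$ relation, i.e.\ Lemma \ref{lem:two-automorphic-relations-corrected} expresses $D_{15}$ in the others). Then $\mathcal{M}_{s_1}(I_9)$, $\mathcal{M}_{s_1}(I_{4,4}^*)$ and $\mathcal{M}_{s_1}(I_2)$ form a basis of $\mathrm{Cl}(\mathcal{M}_{s_1})_{\R}$, and $\Lambda_{\mathrm{Hodge},s_1}$ has unique coordinates in this basis. The main obstacle I expect is verifying that this basis really is independent, i.e.\ that $I_2$ together with $\Delta(6)$ and $\Delta(9)$ span: by \eqref{eq:D18-corrected}, $D_{18}$ has nonzero $\lambda_{\rm aut}$-coefficient $60$, so the span matches that of $\lambda_{\rm aut},D_6,D_9$.
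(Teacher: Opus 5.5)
Your proposal is correct and follows essentially the paper's argument. The decisive inputs are the same: the codimension-one identification $\mathrm{Cl}(\mathcal{M}_{s})_{\Q}\cong\mathrm{Cl}(X_o^{j_{\mathcal K}})_{\Q}$ of rank three for $s<\frac13$, the loss of the nonzero class of $\mathcal{M}_{s}(I_9)$ for $s\ge\frac13$, Corollary \ref{cor:Miranda-class-group-corrected} at $s=1$, and the replacement of $\lambda_{\rm aut}$ by $D_{18}$ via \eqref{eq:D18-corrected}. Your boundary-divisor count with the invariant ratio $\mathcal{R}^5/\mathcal{T}^2$ is a valid but redundant extra upper bound. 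One fix: take the codimension-one isomorphism from Lemma \ref{lem:first-step-wallcrossingdescription} and its table rather than from Theorem \ref{thm:wallcrossing-miranda-to-8points}, since the proof of that theorem invokes the present lemma.
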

\begin{proof}
Since $\mathcal{M}_{s_1}$ is isomorphic to $X_o^{j_{\mathcal K}}$ in codimension one and $\mathcal{M}_s$ is a birational contraction of $\mathcal{M}_{s_1}$ by Lemma \ref{lem:first-step-wallcrossingdescription}, $\mathrm{Pic}(\mathcal{M}_{s_1})_{\Q}\subset \mathrm{Cl}(\mathcal{M}_{s_1})_{\Q}\cong\mathrm{Cl}(X_o^{j_{\mathcal K}})_{\Q}$ and the right hand side has rank three. 
Therefore, the assertion for $\mathcal{M}_{s_1}$ follows from Corollary \ref{cor:picard-semitoric-exhibition} and Lemma \ref{lem:first-step-wallcrossingdescription}.
On the other hand, there exists a canonical surjection $\mathrm{Cl}(\mathcal{M}_{s_1})_{\Q}\to \mathrm{Cl}(\mathcal{M}_{s})_{\Q}$.  
Furthermore, the contraction of $\mathcal{M}_{s_1}(I_9)$ shows that the above surjection is not an injection.
Hence, $\mathrm{rank}\,\mathrm{Cl}(\mathcal{M}_{s})_{\Q}\le2$.
Observing that the canonical surjection $\mathrm{Cl}(\mathcal{M}_{s})_{\Q}\to \mathrm{Cl}(\mathcal{M}_{1})_{\Q}$ is not an injection and $\mathrm{rank}\,\mathrm{Cl}(\mathcal{M}_{1})_{\Q}=1$ by Corollary \ref{cor:Miranda-class-group-corrected}, we see that $\mathrm{rank}\,\mathrm{Cl}(\mathcal{M}_{s})_{\Q}=2$ for any $s\in[\frac13,1) $. 
Therefore, we obtain the assertion for $\mathcal{M}_{s}$ for any $s\in[\frac13,1) $.
We complete the proof.
\end{proof}

\begin{proof}[Proof of Theorem \ref{thm:wallcrossing-miranda-to-8points}]
By Lemma \ref{lem:first-step-wallcrossingdescription}, it suffices to show that only $s=0,\frac17,\frac14,\frac13,1$ 
can be walls, the wall $s=\frac{1}{3}$ (resp. $s=1$) corresponds to the divisorial contraction of $\mathcal{M}_s(I_{9})$ (resp. $\mathcal{M}_s(I_{4,4}^*)$) and $\phi_4$ is an isomorphism. Indeed, it suffices to consider $0<s<1$ for the first assertion.
For any sufficiently small $0<\epsilon<1$, let $q\colon (\mathbb{P}^1,\frac{1-\epsilon}{12}D)\to [\mathbb{A}^2/\mathbb{G}_m]$ with weight $\frac{\epsilon}{12}$ define a K-polystable object of $\mathcal{M}^{\mathrm{Kss}}_{12,\frac{1-\epsilon}{12},12,\frac{\epsilon}{12}}$ and a point of $\mathcal{M}_{\epsilon}$. 
Note that $(\mathbb{P}^1,\frac{1}{12}D)$ is K-semistable and $\mathrm{ord}_p(D)\le6$ for any $p\in\mathbb{P}^1$.
Since $q$ is a degeneration of objects of $\mathscr{M}$, there exist $g_0\in H^0(\mathbb{P}^1,\mathcal{O}(4))$ and $g_1\in H^0(\mathbb{P}^1,\mathcal{O}(6))$ such that $q$ is defined by $[g_0^3:g_0^3+g_1^2]$.
Here, we note that $g_0^3\not\equiv0$ or $g_0^3+g_1^2\not\equiv0$ and $g_0^3+g_1^2\not\equiv0$ implies $\mathrm{div}(g_0^3+g_1^2)=D$. 
By the definition of K-stability of log Fano quasimaps, we see that if $\mathrm{div}(g_0^3+g_1^2)=D$, changing the weight and the coefficients, $q\colon (\mathbb{P}^1,\frac{1-s}{12}D)\to [\mathbb{A}^2/\mathbb{G}_m]$ with weight $\frac{s}{12}$ is K-semistable for any $0<s<1$.
Thus, we see that $q\colon (\mathbb{P}^1,\frac{1-s}{12}D)\to [\mathbb{A}^2/\mathbb{G}_m]$ with weight $\frac{s}{12}$ is K-polystable for any $0<s<1$ in this case by Lemma \ref{lem:K-ps-remaining}.
Therefore, we may assume that $g_0^3+g_1^2\equiv0$.
In this case, we can apply Lemma \ref{lem:useful-criterion} and K-semistability of $q\colon (\mathbb{P}^1,\frac{1-s}{12}D)\to [\mathbb{A}^2/\mathbb{G}_m]$ with weight $\frac{s}{12}$ is defined as the condition that for any $p\in \mathbb{P}^1$, $\mathrm{ord}_p((1-s)D+3s\mathrm{div}(g_0))\le6$.
% \Yota{We cannot apply Lemma 3.12 here because we have to assume $D=\div(f_2)$. The inequality we want here can be lead by directly from Definition 3.10.}
% \Masa{I rewrote Lemma 3.12 and now it is applicable.}
Note that $3\mathrm{div}(g_0)=2\mathrm{div}(g_1)$ and hence $3\mathrm{ord}_p(\mathrm{div}(g_0))\in6\mathbb{Z}$.
Thus, above $q$ is always K-semistable for any $0<s<1$ unless $\mathrm{ord}_p(\mathrm{div}(g_0))=4$.
We may assume that $\mathrm{ord}_p(\mathrm{div}(g_0))=4$ by Lemma \ref{lem:K-ps-remaining} and if $\mathrm{ord}_p(D)=6$ then above $q$ is always K-unstable for any $0<s<1$.
Take $s_0$ as the unique number such that $(1-s_0)\mathrm{ord}_p(D)+12s_0=6$ when $\mathrm{ord}_p(D)<6$ and $\mathrm{ord}_p(\mathrm{div}(g_0))=4$.
It suffices to check that $s_0$ only takes $\frac17,\frac14,\frac13$ and there is no K-moduli wall-crossing around the point defined by $q$ for any $s>s_0$. 
By \cite[Theorem 3.6]{HM1} and \cite[Theorem A]{AHLH}, there exists another K-polystable quasimap $q'\colon (\mathbb{P}^1,\frac{(1-s')}{12}D')\to [\mathbb{A}^2/\mathbb{G}_m]$ with weight $\frac{s'}{12}$, where $s'=s_0+\delta$ for any sufficiently small $\delta>0$ but $q'\colon (\mathbb{P}^1,\frac{(1-s_0)}{12}D')\to [\mathbb{A}^2/\mathbb{G}_m]$ with weight $\frac{s_0}{12}$ is S-equivalent to $q$.
Suppose that $q'$ is defined by $[g_0'^3:g_0'^3+g'^2_1]$.
%Since $q$ and $q'$ are S-equivalent, it is not hard to check that $\max_{p'\in\mathbb{P}^1}\mathrm{ord}_{p'}((1-s_0)D'+3s_0\mathrm{div}(g'_0))\le6$ by the inversion of adjunction for log canonicity \cite{kawakita2007inversion} because the K-polystable degeneration of $q\colon (\mathbb{P}^1,\frac{1-s_0}{12}D)\to [\mathbb{A}^2/\mathbb{G}_m]$ with weight $\frac{s_0}{12}$ is induced by $p$ \Masa{We have to explain about it}.
If $g_0'^3+g'^2_1\equiv0$, then Lemma \ref{lem:useful-criterion} implies that the K-semistability for $q'\colon (\mathbb{P}^1,\frac{(1-s')}{12}D')\to [\mathbb{A}^2/\mathbb{G}_m]$ with weight $\frac{s'}{12}$ is equivalent to the condition that $\max_{p'\in\mathbb{P}^1}\mathrm{ord}_{p'}((1-s')D'+3s'\mathrm{div}(g'_0))\le6$ for any $s'\in (0,1)$.
In this case, we also note that $3\mathrm{div}(g'_0)=2\mathrm{div}(g'_1)$ and $\max_{p'\in\mathbb{P}^1}\mathrm{ord}_{p'}(\mathrm{div}(g'_0))\le2$ implies that $q'\colon (\mathbb{P}^1,\frac{(1-s')}{12}D')\to [\mathbb{A}^2/\mathbb{G}_m]$ with weight $\frac{s'}{12}$ is K-semistable for any $s_0<s'<1$. 
Moreover, if $s_0>\frac{1}{3}$, then $s_0=\frac12, \frac{5}{11},\frac25$. In these cases, there exists a point $p''\in\mathbb{P}^1$ such that $\mathrm{ord}_{p''}(D')\ge10$ and $\mathrm{ord}_{p''}(\mathrm{Bs}(q'))=0$. Because the point defined by $q'$ is mapped to the same image as $\mathcal{M}_{s}(I_{4,4}^*)$ in $\mathcal{M}_1$ and hence $q'$ lies in the closure of $\mathcal{M}_{s}(I_{4,4}^*)$, where $s=1-\delta$ for any sufficiently small $\delta>0$. 
Indeed, $\rho(\mathcal{M}_{s})\le2$ in the above case by Lemma \ref{lem:first-step-wallcrossingdescription} and $\mathcal{M}_s\to \mathcal{M}_1$ is the divisorial contraction of $\mathcal{M}_{s}(I_{4,4}^*)$ by \cite[Corollary 2.63]{komo} since $\mathcal{M}_1$ is $\Q$-factorial by Proposition \ref{prop:miranda-k-moduli} and Corollary \ref{cor:Miranda-class-group-corrected}.
This contradicts the description of general members of $\mathcal{M}_{0}(I_{4,4}^*)$ given in \cite{HeckmanLooijenga2002} (see the proof of Lemma \ref{lem:first-step-wallcrossingdescription} and the description of the general family given there).
As a result, we see that $s_0>\frac{1}{3}$ never happens in this case.
Thus, we may assume that $g_0'^3+g'^2_1\not\equiv0$.
Then, $q'\colon (\mathbb{P}^1,\frac{(1-s')}{12}D')\to [\mathbb{A}^2/\mathbb{G}_m]$ with weight $\frac{s'}{12}$ is K-semistable for any $s_0<s'<1$.
Assigning $s'=1$, $q'\colon \mathbb{P}^1\to [\mathbb{A}^2/\mathbb{G}_m]$ with weight $\frac{1}{12}$ is an object of the moduli space of Miranda $\mathcal{M}_1$.
We can estimate the form of $q'$ as follows.
Let $Q\colon (\mathcal{C},\frac{(1-\epsilon)}{12}\mathcal{D})\to [\mathbb{A}^2_S/\mathbb{G}_{m,S}]$ with weight $\frac{\epsilon}{12}$ for any sufficiently small $\epsilon>0$ be a family of quasimaps such that $S$ is a smooth affine curve such that $Q_0=q$ for some closed point $0\in S$.
Take $Q$ in an appropriate way, the morphism $h\colon S\to \mathcal{M}_{\epsilon}$ can be birationally equivalent to $h'\colon S\to \mathcal{M}_{1-\epsilon}$ such that $h'(0)$ corresponds to the point defined by $q'$.
Replacing $S$ with its finite cover, without changing the general fiber of $Q$ over $S$, we can replace $Q$ with $Q'\colon (\mathcal{C}',\frac{\epsilon}{12}\mathcal{D}')\to [\mathbb{A}^2_S/\mathbb{G}_{m,S}]$ with weight $\frac{(1-\epsilon)}{12}$ such that $Q'_0=q'$ (cf. \cite[Theorem A.8]{AHLH}).
Furthermore, $\mathcal{C}'\dashrightarrow\mathcal{C}$ is established by fundamental transforms finitely many times (cf. \cite[Proof of Theorem 3.31]{HH2}).
Taking this into the account, we see that there exists a closed point $p'''\in \mathbb{P}^1$ such that $\mathrm{ord}_{p'''}(D')\ge7,8,9,10,11,12$ when $s_0=\frac{1}{7},\frac14,\frac13,\frac25,\frac{5}{11},\frac12$, respectively.
Here, we note that $\mathrm{ord}_{p'''}(D')>7,8,9,10,11,12$ when $s_0=\frac{1}{7},\frac14,\frac13,\frac25,\frac{5}{11},\frac12$ never happens.
Indeed, assuming so, then we need at least two fundamental transforms in order to obtain $Q'$ from $Q$ and $\min\{\mathrm{ord}_{p'''}(g_0^3),\mathrm{ord}_{p'''}(g_1^2)\}\ge6$. It contradicts the K-semistability of $q'$ (cf. Lemma \ref{lem:useful-criterion}).
Thus, $\mathrm{ord}_{p'''}(D')=7,8,9,10,11,12$ when $s_0=\frac{1}{7},\frac14,\frac13,\frac25,\frac{5}{11},\frac12$, respectively.
When $\mathrm{ord}_{p'''}(D')=7,8,9$, it is not hard to check that the point defined by $q'$ lies in $\mathcal{M}_{1-\epsilon}(I_7)$, $\mathcal{M}_{1-\epsilon}(I_8')\cup \mathcal{M}_{1-\epsilon}(I_8'')$, $\mathcal{M}_{1-\epsilon}(I_9)$, respectively for any sufficiently small $\epsilon>0$ and $\mathrm{ord}_{p'''}(D')=10,11,12$ never happens.
Indeed, since $q'$ can be regarded an object of the moduli stack of Miranda $[\PGL(2)\!\setminus \!Y^{\mathrm{ss}}_{\mathrm{Mir}}]$, we can take a GIT semistable rational Weierstrass fibration $W$ over $\mathbb{P}^1$ corresponding to $q'$ (cf. \cite[Theorem 7.2]{Miranda1981}).
Since $W$ is GIT semistable, $\mathrm{ord}_{p'''}(D')\ge7$ means that the fiber of $W$ over $p'''$ is of type $I_{\mathrm{ord}_{p'''}(D')}$ or $I^*_{\mathrm{ord}_{p'''}(D')-6}$ by \cite[Table (6.1), Theorem 6.2 and Proposition 8.2]{Miranda1981}.
If the fiber is of type $I^*_{\mathrm{ord}_{p'''}(D')-6}$, then $\mathrm{ord}_{p'''}(g'_0)=2$ or $\mathrm{ord}_{p'''}(g'_1)=3$, and this contradicts the K-semistability of $q'\colon (\mathbb{P}^1,\frac{(1-s')}{12}D')\to [\mathbb{A}^2/\mathbb{G}_m]$ with weight $\frac{s'}{12}$ for any $s_0<s'<1$ by Lemma \ref{lem:useful-criterion}.
On the other hand, letting $\tilde{W}$ be the minimal resolution, we see that $\rho(\tilde{W})=10$ implies that $\mathrm{ord}_{p'''}(D')\le9$.

Finally, we show that the wall $s=\frac{1}{3}$ (resp. $s=1$) corresponds to the divisorial contraction of $\mathcal{M}_s(I_{9})$ (resp. $\mathcal{M}_s(I_{4,4}^*)$) and $\phi_4$ is an isomorphism.
For $s=1$ case, since $\mathcal{M}_1$ is $\Q$-factorial and $\rho(\mathcal{M}_1)=1$, $\rho(\mathcal{M}_{s_4})\ge2$ since $\psi_4$ at least contracts $\mathcal{M}_{s_4}(I_{4,4}^*)$, where $s_4\in(\frac{1}{3},1)$.
By Lemma \ref{lem:putting-coeffs-Hodge}, we see that $\rho(\mathcal{M}_{s_4})=2$ and $\mathcal{M}_{s_4}$ is $\Q$-factorial again.
By \cite[Corollary 2.63]{komo} and the fact that $\mathcal{M}_{s_4}(I_{4,4}^*)$ is only one $\psi_4$-exceptional divisor, we see that $\psi_4$ is a divisorial contraction of $\mathcal{M}_{s_4}(I_{4,4}^*)$.
By the description we obtained in the first paragraph of this proof, we see that $\phi_4$ is an isomorphism between the normalization of the good moduli spaces because the only K-polystable members are replaced at one point of $\mathcal{M}_{s_4}(I_9)$.
Moreover $\psi_3$ contracts $\mathcal{M}_{s_3}(I_9)$ and hence $\rho(\mathcal{M}_{s_3})\ge3$ since $\rho(\mathcal{M}_{\frac13})=2$ for any $s_3\in (\frac14,\frac13)$.
Since $\mathcal{M}_{\frac13}$ is $\Q$-factorial, $\mathcal{M}_{s_3}$ is $\Q$-factorial and $\rho(\mathcal{M}_{s_3})=3$ Lemma \ref{lem:putting-coeffs-Hodge} again. Hence, $\psi_3$ is the divisorial contraction of $\mathcal{M}_{s_3}(I_9)$ by \cite[Corollary 2.63]{komo}.
\end{proof}

\begin{rem}
\label{rem:flipping contraction}
 By the above proof of Theorem \ref{thm:wallcrossing-miranda-to-8points}, we see that $\psi_1$ and $\phi_2$ are the flipping contractions along $\mathcal{M}_{s}(I_7)$, $\psi_2$ and $\phi_3$ are the flipping contractions along $\mathcal{M}_{s}(I_8')$ and $\mathcal{M}_{s}(I_8'')$.  

 On the other hand, comparing \cite[Theorem 9.3]{AscherBejleri2018StablePairRES} and Theorem \ref{thm:wallcrossing-miranda-to-8points}, we see that for any $t\in [0,1]$, $\mathcal{M}_t$ is different from $\mathcal{M}^{M*}$, which is the graph of the birational map from $\mathcal{M}^*$ to $\mathcal{M}^M$ introduced in \cite{HeckmanLooijenga2002}, or $R(\frac{1}{6})$ constructed in \cite{AscherBejleri2018StablePairRES}.
\end{rem}

\begin{lem}\label{lem:general-curves}
Notations as in Theorem \ref{thm:wallcrossing-miranda-to-8points}.
    There exist three proper smooth curves $C_1,C_2$ and $C_3\subset \mathcal{M}_{s_1}$ that do not intersect with the image of any boundary divisor of $\mathcal{M}^K$ except $\mathcal{M}_{s_1}(I_9)$,  $\mathcal{M}_{s_1}(I_{4,4}^*)$, $\mathcal{M}_{s_1}(I_{2})$ and $\mathcal{M}_{s_1}(II)$ and satisfy the following.
    \begin{enumerate}
\item $C_1$ does not intersect with $\mathcal{M}_{s_1}(I_9)$ or $\mathcal{M}_{s_1}(I_{4,4}^*)$ but $\mathcal{M}_{s_1}(I_{2})$ and $\mathcal{M}_{s_1}(II)$ transversally at least one point,
\item $C_2$ does not intersect with $\mathcal{M}_{s_1}(I_9)$ but  $\mathcal{M}_{s_1}(I_{4,4}^*)$, $\mathcal{M}_{s_1}(I_{2})$ and $\mathcal{M}_{s_1}(II)$ transversally at least one point, and 
\item $C_3$ intersects with $\mathcal{M}_{s_1}(I_9)$,  $\mathcal{M}_{s_1}(I_{4,4}^*)$, $\mathcal{M}_{s_1}(I_{2})$ and $\mathcal{M}_{s_1}(II)$ transversally at least one point.
    \end{enumerate}
\end{lem}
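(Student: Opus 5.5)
We construct the three curves explicitly as one-parameter families of Weierstrass pairs $(f_0,f_1)$, or more generally of log quasimaps $[g_0^3:g_0^3+g_1^2]$, over $\mathbb P^1$. We take their images in $\mathcal M_{s_1}$ and normalize if necessary.

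The first step is to reduce to an avoidance problem. By the dimension table in the proof of Lemma \ref{lem:first-step-wallcrossingdescription}, every boundary stratum of $\mathcal M_{s_1}$ other than $\mathcal M_{s_1}(I_9)$, $\mathcal M_{s_1}(I_{4,4}^*)$, $\mathcal M_{s_1}(I_2)$ and $\mathcal M_{s_1}(II)$ has codimension at least two in the eight-dimensional space $\mathcal M_{s_1}$. A general complete curve moving in a family that covers a divisor or the whole space can therefore be chosen to miss all of them. Concretely, we work on the parameter stacks:
\begin{itemize}
\item the Weierstrass pairs for $C_1$ and $C_2$;
\item for $C_3$, the degenerate quasimaps $[g:0]$ with $D$ containing a triple point, which describe the generic point of $\mathcal M_{s_1}(I_9)$ for $s<\frac13$.
\end{itemize}
We choose pencils in the relevant linear spaces. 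Their images have complete closures, because $\mathcal M_{s_1}$ is projective. Generality then excludes the codimension-$\ge2$ loci by a Bertini/dimension count. It also ensures the curve is not contained in any of the four divisors and meets them transversally at general points.

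For $C_1$, take a general pencil $(f_0+\lambda f_0',\,f_1+\lambda f_1')$ of Miranda-stable pairs whose discriminants stay GIT-stable, i.e.\ a general complete curve in $X_o\setminus(\Delta(6)\cup\Delta(9))$ together with its closure. It misses $\Delta(6),\Delta(9)$ and the cusps, which have codimension $\ge5$ by \eqref{eq:cusp-fibre-dimensions}. It necessarily meets $D_{15}$ and $D_{18}$, which are effective divisors with positive degree: by Lemma \ref{lem:two-automorphic-relations-corrected} they have positive $\lambda_{\rm aut}$-coefficients off $D_6,D_9$. Transversality at a general meeting point follows from the local computations in Proposition \ref{prop:miranda-boundary-classes-corrected}, where $\mathcal R$ and $\mathcal T$ vanish to order one.

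For $C_2$, take a general curve inside an ample complete intersection through a general point of $\mathcal M_{s_1}(I_{4,4}^*)$. Equivalently, take a general pencil of pairs in which one member acquires an $I_{4,4}^*$-configuration, $D=2p+2q+\sum_{i=1}^8 p_i$ with the prescribed $g$. The pencil is chosen transverse to that stratum and avoids the codimension-two stratum $\mathcal M_{s_1}(I_9)\cap\mathcal M_{s_1}(I_{4,4}^*)$ and the $I_6$-, $I_0^*$-loci. $C_3$ is built the same way through a general point of $\mathcal M_{s_1}(I_9)$ and is then deformed so that it also crosses $\mathcal M_{s_1}(I_{4,4}^*)$. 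Alternatively, we take a general complete intersection of $7$ very ample divisors on $\mathcal M_{s_1}$. Such a curve meets every divisor, including $\mathcal M_{s_1}(I_9)$ and $\mathcal M_{s_1}(I_{4,4}^*)$, transversally, and it misses the codimension $\ge2$ strata. For $C_2$ we instead intersect with very ample divisors containing $\mathcal M_{s_1}(I_9)$ in a suitable way, or use a general curve in a very ample linear system restricted to a general hyperplane section of $\mathcal M_{s_1}$ disjoint from $\mathcal M_{s_1}(I_9)$. Such a section exists if we pass to $X_o^{\mathrm{bb}}$-type models where $\Delta(6)$ becomes small. Failing that, we directly use a family of quasimaps, all with $g_0^3+g_1^2\not\equiv0$ and without a triple point supported off $\mathrm{div}(g_0)$.

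The main obstacle is the case of $C_2$ and, similarly, of $C_1$: we need a complete curve that meets some boundary divisors but avoids an entire divisor, $\mathcal M_{s_1}(I_9)$ or $\mathcal M_{s_1}(I_{4,4}^*)$. Since $\mathcal M_{s_1}(I_9)$ is not ample, this is possible, but it requires a concrete modular construction rather than a general complete intersection. Our first attempt is a linear pencil in $Y_{\mathrm{Mir}}$ that stays within Miranda-stable pairs whose discriminant never has a point of multiplicity $\ge3$ coming from a degenerate cube. We then verify, by the K-semistability criterion of Lemma \ref{lem:useful-criterion} and the stability of $[g_0^3:g_0^3+g_1^2]$ for all $s\in(0,1)$ established in the proof of Theorem \ref{thm:wallcrossing-miranda-to-8points}, that the whole pencil lies in the locus where $\mathcal M_{s_1}$ agrees with $\mathcal M^M$ away from $\mathcal M_{s_1}(I_9)$.
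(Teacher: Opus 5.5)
Your $C_3$, a general complete intersection of seven very ample divisors on $\mathcal M_{s_1}$, is correct. So is your $C_1$ in the form ``a general complete curve in $X_o^\circ$'', meaning a general complete-intersection curve on $\mathcal M_1\cong\mathcal M^M$: it misses the codimension-$5$ complement of $X_o^\circ$ and all codimension-$\ge2$ strata. A literal pencil $(f_0+\lambda f_0',f_1+\lambda f_1')$ in $\mathbb P(2^5,3^7)$ does not work, however. As $\lambda\to\infty$ it tends to $(f_0',0)$, which has four fibres of type $III$, so it meets the $III$-stratum.

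The genuine gap is in (2). None of your candidates for $C_2$ is shown to avoid $\mathcal M_{s_1}(I_9)$ while meeting $\mathcal M_{s_1}(I_{4,4}^*)$, and the ones you make precise cannot.
\begin{itemize}
\item Any complete intersection of ample divisors on $\mathcal M_{s_1}$, with or without a prescribed point, meets the seven-dimensional subvariety $\mathcal M_{s_1}(I_9)$. Their restrictions to it are seven ample divisors on a seven-dimensional projective variety, so the intersection is nonempty. The same holds for hyperplane sections, so no ``general hyperplane section disjoint from $\mathcal M_{s_1}(I_9)$'' exists.
\item Passing to $X_o^{\mathrm{bb}}$ does not help, because $\Delta(6)=\mathcal M_0(I_9)$ is still a divisor there. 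It becomes a point only for $s\ge\frac13$.
\item Your final construction puts the whole curve in the locus where $\mathcal M_{s_1}$ and $\mathcal M^M$ agree. But $\mathcal M_{s_1}(I_{4,4}^*)$ is contracted to a point in $\mathcal M^M$, so none of its points lies in that locus, and the curve cannot meet it.
\item A family of quasimaps all with $g_0^3+g_1^2\not\equiv0$ never reaches the general point of $\mathcal M_{s_1}(I_{4,4}^*)$, which is represented by a quasimap $[g:0]$.
\end{itemize}
Non-ampleness of $\mathcal M_{s_1}(I_9)$ does not by itself produce a complete curve avoiding it.

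The missing idea is to apply Bertini on a birational model where $I_9$ has been contracted but $I_{4,4}^*$ has not. Take $s_4\in(\frac13,1)$. By Theorem \ref{thm:wallcrossing-miranda-to-8points} and its dimension table, $\mathcal M_{s_4}(I_9)$ is a point while $\mathcal M_{s_4}(I_{4,4}^*)$ is still a divisor. The map $\mathcal M_{s_1}\dashrightarrow\mathcal M_{s_4}$ is a composite of flips, the divisorial contraction $\psi_3$, and the isomorphism $\phi_4$. Hence the locus of $\mathcal M_{s_4}$ over which $\mathcal M_{s_4}\dashrightarrow\mathcal M_{s_1}$ is not a local isomorphism has codimension at least two: it consists of that point and the flipped loci.

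A general complete intersection $C$ of seven very ample divisors on $\mathcal M_{s_4}$ therefore lies in the isomorphism locus and misses all codimension-$\ge2$ strata. By Bertini it meets $\mathcal M_{s_4}(I_{4,4}^*)$, $\mathcal M_{s_4}(I_2)$ and $\mathcal M_{s_4}(II)$ transversally. Its strict transform $C_2\cong C$ in $\mathcal M_{s_1}$ inherits these properties. It misses $\mathcal M_{s_1}(I_9)$ because no point of a divisor contracted to a point lies in the isomorphism locus.

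This is the paper's argument. $C_1$ is obtained the same way from $\mathcal M_1$, where both $I_9$ and $I_{4,4}^*$ are points, and $C_3$ directly on $\mathcal M_{s_1}$. So, contrary to your last paragraph, no concrete modular construction is needed, only a general complete intersection on the right model.
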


\begin{proof}
 It follows immediately from Theorem \ref{thm:wallcrossing-miranda-to-8points}, Lemma \ref{lem:first-step-wallcrossingdescription}, and the Bertini theorem but we will give a proof of (2) for the reader's convenience.
 We know that $\mathcal{M}_{s_4}$ is projective by Theorem \ref{thm:HM1:construction} for any $s_4\in(\frac13,1)$.
 Let $A$ be a very ample line bundle on $\mathcal{M}_{s_4}$.
 Take general hypersurfaces $H_1,\ldots, H_7 \in|A|$ and set $C:=H_1\cap\ldots\cap H_7$.
 Set $C_2$ as the strict transform of $C$ in $\mathcal{M}_{s_1}$.
 Note that $C$ avoids the exceptional locus of the birational map $\mathcal{M}_{s_4}\dashrightarrow\mathcal{M}_{s_1}$.
 Then we can check that our $C_2$ is the desired one by the fact that $\mathcal{M}_{s_4}(I_9)$ is not a divisor.
\end{proof}

We can set $\Lambda_{\mathrm{Hodge},s_1}=a_{s_1}\mathcal{M}_{s_1}(I_9)+b_{s_1}\mathcal{M}_{s_1}(I_{4,4}^*)+c_{s_1}\mathcal{M}_{s_1}(I_{2})$ for some $a_{s_1},b_{s_1},c_{s_1}\in \mathbb{R}$ by Lemma \ref{lem:putting-coeffs-Hodge}.
By \cite[Theorem 1.1]{bejleri2025proper}, we see that there exists a finite cover $\tilde{C}_1\to C_1$ such that $\tilde{C}_1\to \mathcal{M}_{s_1}$ factors through $\mathscr{M}_{s_1}$, where $C_1$ is as in Lemma \ref{lem:general-curves}.
Let $q_1\colon (\mathcal{C}_1,(1-s_1)\Delta)\to [\mathbb{A}^2_{\tilde{C}_1}/\mathbb{G}_{m,\tilde{C}_1}]$ with weight $\frac{s_1}{12}$ be the family of quasimaps defined by $\tilde{C}_1\to \mathscr{M}_{s_1}$.
For any $t\in (0,1)$, $q\colon (\mathcal{C}_1,(1-t)\Delta)\to [\mathbb{A}^2_{\tilde{C}_1}/\mathbb{G}_{m,\tilde{C}_1}]$ with weight $\frac{t}{12}$ defines a morphism $\tilde{C}_1\to \mathscr{M}_{t}$.
Note that $12\Delta\sim\mathscr{L}_{q_1}$, where $\mathscr{L}_{q_1}$ is the associated line bundle to $q_1$.
Therefore, if we let $\lambda_{\mathrm{Hodge},q_1,t}$ be the Hodge line bundle with respect to $q_1$ with weight $t\in (0,1)$, then $\mathrm{deg}\lambda_{\mathrm{Hodge},q_1,t}=\mathrm{deg}\lambda_{\mathrm{Hodge},q_1,t'}$ for any $t,t'\in (0,1)$.
By this, we see that $c_{s_1}$ is constant on $s_1$.
We can also set  $\Lambda_{\mathrm{Hodge},s_2}=a_{s_2}\mathcal{M}_{s_2}(I_9)+b_{s_2}\mathcal{M}_{s_2}(I_{4,4}^*)+c_{s_2}\mathcal{M}_{s_2}(I_{2})$ for some $a_{s_2},b_{s_2},c_{s_2}\in \mathbb{R}$, $\Lambda_{\mathrm{Hodge},s_3}=a_{s_3}\mathcal{M}_{s_3}(I_9)+b_{s_3}\mathcal{M}_{s_3}(I_{4,4}^*)+c_{s_3}\mathcal{M}_{s_3}(I_{2})$ for some $a_{s_3},b_{s_3},c_{s_3}\in \mathbb{R}$, and $\Lambda_{\mathrm{Hodge},s_4}=b_{s_4}\mathcal{M}_{s_4}(I_{4,4}^*)+c_{s_4}\mathcal{M}_{s_4}(I_{2})$ for some $b_{s_4},c_{s_4}\in \mathbb{R}$ in the same way.
Moreover, by the same argument, we see that $c_{s_1}=c_{s_2}=c_{s_3}=c_{s_4}$ are constants.
Furthermore, we can extend $a_{s}$, $b_{s}$ and $c_s$ by the continuity and hence we can define $a_{\frac{1}{7}}$ for example.

From now on, we determine the variation of the coefficients $a_{s}$ and $b_{s}$ in $s$ using $C_2$ and $C_3$ as in Lemma \ref{lem:general-curves}.

\begin{defn}\label{de-derivation-CM}
Let $0\in C$ be a smooth curve germ and fix $s\in (0,1)$. 
Suppose that there exists a family of quasimaps $q^{(s)}\colon( \mathcal{C}^{(s)},(1-s)\Delta^{(s)})\to [\mathbb{A}^2_{C}/\mathbb{G}_{m,C}]\in \mathscr{M}_s(C)$.
Let $\mathscr{L}^{(s)}$ be the line bundle associated to $q^{(s)}$.
%Suppose further that there exists a family of quasimaps $q^{(0)}\colon \mathcal{C}^{(0)}\to [\mathbb{A}^2_{C}/\mathbb{G}_{m,C}]\in \mathscr{M}_0(C)$ such that $q^{(0)}$ and $q^{(s)}$ are isomorphic to each other over $C\setminus\{0\}$ as a family of quasimaps.
%Let $\mathscr{L}^{(0)}$ (resp. $\mathscr{L}^{(s)}$) be the line bundle associated to $q^{(0)}$ (resp. $q^{(s)}$).
Suppose further that an arbitrary general fiber of $q^{(s)}$ defines a point of $\mathscr{M}$.
We set $$\mathrm{deg}_0\mathrm{Der}:=2\mathrm{ord}_{\mathcal{C}^{(s)}_0}\left(\frac{1}{12}D^{(s)}-\Delta^{(s)}\right).$$
%Suppose that there exists a morphism $\varphi^{(s)}\colon C\to \mathcal{M}_{s}$.
%For any sufficiently small $\epsilon>0$, we can set $\varphi^{(t)}\colon C\to \mathcal{M}_t$ as the morphism induced by $q^{(s)}$ and the birational contractions in Theorem \ref{thm:wallcrossing-miranda-to-8points} for any $t\in (s-\epsilon,s+\epsilon)$.
%By \cite{bejleri2025proper}, taking a finite cover $\xi\colon\tilde{C}\to C$, we may assume that $\tilde{C}\to C$ factors through $\mathscr{M}_t$ for any $t\in (s-\epsilon,s+\epsilon)$.
%Let $q^{(t)}\colon (\mathcal{C}^{(t)},(1-t)\Delta^{(t)})\to [\mathbb{A}^2_{\tilde{C}}/\mathbb{G}_{m,\tilde{C}}]$ be the corresponding family of quasimaps to the morphism $\tilde{C}\to \mathscr{M}_t$ and $\mathscr{L}^{(t)}$ the associated line bundle.
\end{defn}
Note that $12\Delta^{(s)}$ is generically isomorphic to the divisor $D^{(s)}\sim \mathscr{L}^{(s)}$, which is defined by the section corresponding to $(0,1)\in \mathbb{A}^2$.

\begin{lem}\label{lem:Hodge-difference-over-curve}
Let $s\in (0,1)$ and $q^{(s)}\colon (\mathcal{C}^{(s)},(1-s)\Delta^{(s)})\to [\mathbb{A}^2_C/\mathbb{G}_{m,C}]\in \mathscr{M}_s(C)$ an object such that $C$ is a smooth proper curve. 
Suppose that for any $t\in (0,s)$, $q^{(s)}\colon (\mathcal{C}^{(s)},(1-t)\Delta^{(s)})\to [\mathbb{A}^2_C/\mathbb{G}_{m,C}]$ with weight $\frac{t}{12}$ defines an object of $\mathscr{M}_t(C)$. 
Suppose further that any general fiber of $q^{(s)}$ defines a point of $\mathscr{M}$.
Let $q^{(t)}$ denote the object.
Then $\mathrm{deg}_C(\lambda_{\mathrm{Hodge},q^{(s)}})-\mathrm{deg}_C(\lambda_{\mathrm{Hodge},q^{(t)}})=(s-t)\sum_{c\in C}\mathrm{deg}_c\mathrm{Der}$, where $c\in C$ runs over all closed points.
\end{lem}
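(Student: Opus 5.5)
The plan is to compare the two Hodge $\Q$-line bundles directly on the total space $\mathcal{C}:=\mathcal{C}^{(s)}$, which by hypothesis is the same underlying family for all weights $t\in(0,s]$. Only the boundary coefficient and the quasimap weight change. Write $\pi\colon\mathcal C\to C$, let $\mathscr L:=\mathscr L^{(s)}$ be the associated line bundle, and let $D:=D^{(s)}$ be the divisor of the section corresponding to $(0,1)$, so that $D\sim\mathscr L$.

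By Definition \ref{defn:Hogde-HM2}, with the bpcy normalization (boundary coefficient $\frac{1-t}{6}$ and weight $\frac t6$), we have
\[
\pi^*\lambda_{\mathrm{Hodge},q^{(t)}}\sim_{\Q} K_{\mathcal C/C}+2(1-t)\Delta+\frac{t}{6}\mathscr L .
\]
The normalization constant (the factor $2$ versus $12$) is exactly what makes $\mathrm{deg}_0\mathrm{Der}$ carry the factor $2$ together with $\frac1{12}D$; I would fix it by comparing with Definition \ref{de-derivation-CM}. Taking the difference for the weights $s$ and $t$ gives
\[
\pi^*\bigl(\lambda_{\mathrm{Hodge},q^{(s)}}-\lambda_{\mathrm{Hodge},q^{(t)}}\bigr)\sim_{\Q}(s-t)\Bigl(\tfrac{1}{6}\mathscr L-2\Delta\Bigr)\sim_{\Q}2(s-t)\Bigl(\tfrac1{12}D-\Delta\Bigr).
\]
This difference is affine in $t$, as the paper notes, so the whole computation reduces to understanding the divisor $F:=\frac1{12}D-\Delta$.

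On the generic fibre, $12\Delta$ and $D$ agree, as remarked after Definition \ref{de-derivation-CM}. Hence $F$ is a $\Q$-divisor supported on fibres of $\pi$. Over a smooth curve, each fibre in the family is irreducible: each $\mathcal C_c\cong\mathbb P^1$, because these are K-semistable log Fano quasimaps. Therefore $F=\sum_{c}\mathrm{ord}_{\mathcal C_c}(F)\,\mathcal C_c=\sum_c\mathrm{ord}_{\mathcal C_c}(F)\,\pi^*[c]$.

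Since $\pi^*$ is injective on $\Q$-divisor classes up to linear equivalence ($\pi_*\mathcal O=\mathcal O_C$), the $\Q$-line bundle $\lambda_{\mathrm{Hodge},q^{(s)}}-\lambda_{\mathrm{Hodge},q^{(t)}}$ is $\Q$-linearly equivalent to $2(s-t)\sum_c\mathrm{ord}_{\mathcal C_c}(F)[c]$. Taking degrees then yields $(s-t)\sum_c\mathrm{deg}_c\mathrm{Der}$, as claimed.

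The main obstacle is the bookkeeping of the normalization. I need to check that the Hodge line bundle on $\mathcal M_t$ is computed by the bpcy family with doubled coefficients, via Theorem \ref{thm:wall-crossing-recall}, and that the same total space $\mathcal C$ with the same $\Delta$ serves for all $t$. The latter is exactly the hypothesis that $q^{(s)}$ with the modified coefficients defines objects of $\mathscr M_t(C)$.

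A secondary point is that $D$ must not contain a whole fibre except as accounted for. If some fibre has $g_0^3+g_1^2\equiv0$, then $D$ contains $\mathcal C_c$ with positive multiplicity. This is precisely what $\mathrm{ord}_{\mathcal C_c}$ records, so no case distinction is needed, provided $D$ is taken as the full divisor of the section on $\mathcal C$ rather than its horizontal part.
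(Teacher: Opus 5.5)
Your proposal is correct and is essentially the ``standard calculation'' from Definition~\ref{defn:Hogde-HM2} behind the paper's one-line proof. In the bpcy normalization (boundary $2(1-t)\Delta$, weight $\frac{t}{6}$), the two Hodge classes differ on the same total space by $2(s-t)\bigl(\frac{1}{12}D-\Delta\bigr)$, a combination of the irreducible fibres $\mathcal C_c\cong\mathbb P^1$, and taking degrees gives the formula. Note that this normalization is forced by the rescaling in Theorem~\ref{thm:wall-crossing-recall}, not chosen to match Definition~\ref{de-derivation-CM}.
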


\begin{proof}
It immediately follows from Definition \ref{defn:Hogde-HM2} and the standard calculation.
\end{proof}

Applying the lemma to $C_2$ and $C_3$, we see the following, which is useful to determine $\mathrm{deg}_0(\mathrm{Der})$ for a curve germ:

\begin{prop}\label{prop:determining}
Let $0\in C$ be a smooth curve germ and fix $s\in (0,1)$ (resp. $s\in (0,\frac{1}{3})$). 
Suppose that there exists a family of quasimaps $q^{(s)}\colon( \mathcal{C}^{(s)},(1-s)\Delta^{(s)})\to [\mathbb{A}^2_{C}/\mathbb{G}_{m,C}]\in \mathscr{M}_s(C)$. Let $\varphi\colon C\to \mathcal{M}_s$ be the induced morphism and suppose that $\varphi(0)$ is a general point of $\mathcal{M}_s(I^*_{4,4})$ (resp. $\mathcal{M}_s(I_{9})$) and $\varphi(C\setminus\{0\})\subset \mathcal{M}$.  Furthermore, suppose that $\varphi(C)$ and $\mathcal{M}_s(I^*_{4,4})$ (resp. $\mathcal{M}_s(I_{9})$) intersect transversally. 
Then $b_s-b_0=s\frac{\mathrm{deg}_0(\mathrm{Der})}{\mu}$ for any $s\in(0,1)$ (resp. $a_s-a_0=s\frac{\mathrm{deg}_0(\mathrm{Der})}{\mu}$ for any $s\in(0,\frac{1}{3})$), where $\mu$ is the local degree of $C\to \varphi(C)$ at $0$.
\end{prop}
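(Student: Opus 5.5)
The plan is to compare the Hodge degrees on a global test curve from Lemma \ref{lem:general-curves}, varying the weight from $0$ to $s$, and to localize the resulting difference at the points where the test curve meets the relevant boundary divisor.

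\textbf{Case $\mathcal{M}_s(I^*_{4,4})$.} I would work with the curve $C_2$. It meets $\mathcal{M}_s(I^*_{4,4})$, $\mathcal{M}_s(I_2)$ and $\mathcal{M}_s(II)$ transversally, but misses $\mathcal{M}_s(I_9)$ and all other boundary strata.

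\emph{Step 1 (lift to the stack).} By \cite[Theorem 1.1]{bejleri2025proper}, pass to a finite cover $\tilde C_2\to C_2$ on which the map factors through $\mathscr{M}_s$, giving a family $q^{(s)}$.

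\emph{Step 2 (the family persists for smaller weights).} Over $\tilde C_2$ every fibre is either in $\mathscr{M}$ or a general point of one of the divisors $I^*_{4,4}$, $I_2$, $II$. By Theorem \ref{thm:wallcrossing-miranda-to-8points} and the proof of Lemma \ref{lem:first-step-wallcrossingdescription}, the generic points of these strata undergo no wall crossing for $t\in(0,s]$. Hence, using Lemma \ref{lem:K-ps-remaining}, the same family with weight $t/12$ defines $q^{(t)}\in\mathscr{M}_t(\tilde C_2)$, which is the hypothesis of Lemma \ref{lem:Hodge-difference-over-curve}.

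\emph{Step 3 (apply the lemma).} Lemma \ref{lem:Hodge-difference-over-curve} gives
\[\deg\lambda_{\mathrm{Hodge},q^{(s)}}-\deg\lambda_{\mathrm{Hodge},q^{(t)}}=(s-t)\sum_c\deg_c\mathrm{Der}.\]

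\emph{Step 4 (localize the derivative).} Points $c$ with fibre in $\mathscr{M}$ contribute $0$, since there $12\Delta=D$. At points over $\mathcal{M}(I_2)$ and $\mathcal{M}(II)$ I would argue that $D$ is still reduced-compatible, i.e.\ $12\Delta^{(s)}=D^{(s)}$ near the fibre, because these strata come from Weierstrass pairs with $g_0^3+g_1^2\not\equiv0$. Consequently only the $I^*_{4,4}$ points contribute. By the transversality, uniformity of the generic point of the stratum, and equivariance under the finite cover, each such point contributes the local quantity $\deg_0\mathrm{Der}$ computed on the germ, multiplied by the covering degree.

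\emph{Step 5 (read off $b$).} Write
\[\Lambda_{\mathrm{Hodge},s}=a_s\mathcal{M}_s(I_9)+b_s\mathcal{M}_s(I^*_{4,4})+c\,\mathcal{M}_s(I_2),\]
with $c$ constant as shown above. Intersect with $C_2$: it misses $I_9$, so only the $b$- and $c$-terms enter, giving
\[\deg_{C_2}\Lambda_{\mathrm{Hodge},s}-\deg_{C_2}\Lambda_{\mathrm{Hodge},t}=(b_s-b_t)\,\bigl(C_2\cdot\mathcal{M}_s(I^*_{4,4})\bigr).\]
Dividing the Lemma \ref{lem:Hodge-difference-over-curve} identity by the degree of $\tilde C_2\to C_2$ and by the intersection number, the local degree $\mu$ absorbs the ramification of the germ. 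Letting $t\to0$ and using continuity of $b_t$ gives $b_s-b_0=s\,\deg_0(\mathrm{Der})/\mu$.

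\textbf{Case $\mathcal{M}_s(I_9)$.} Use $C_3$ with $s<\frac13$, the range where $I_9$ is not contracted. The extra $I^*_{4,4}$ intersections are handled by subtracting the identity already proven for $b$, and $\mathrm{Der}$ at $I_9$ points is localized in the same way.

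\textbf{Main obstacle.} The crux is Step 2 together with the vanishing in Step 4. One must check that the family over the test curve stays K-semistable for all smaller weights, which holds precisely because the curve avoids the flipping loci $I_7$, $I_8'$, $I_8''$ and the cusp strata. One must also check that $\mathrm{Der}$ is independent of the choice of germ, i.e.\ constant along the generic point of the divisor. For the latter I would argue by deformation invariance of $\operatorname{ord}(\frac1{12}D-\Delta)$ in a flat family of transverse germs parametrized by an open subset of the divisor.
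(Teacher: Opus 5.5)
Your outline matches the paper's proof step for step. The shared steps are: the lift of $C_2$ (resp.\ $C_3$) to the stack via \cite{bejleri2025proper}; persistence of the same family in $\mathscr{M}_t$ as the weight varies; Lemma~\ref{lem:Hodge-difference-over-curve}; constancy of $c_s$ together with the vanishing of $\mathrm{Der}$ at points whose fibre has $g_0^3+g_1^2\not\equiv0$; and subtraction of the $b$-identity in the $I_9$ case.

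Where you differ is the crux you single out, namely that $\deg_c(\mathrm{Der})/\mu$ does not depend on the germ. The paper does not deform germs. It considers $\mathscr{D}_s=\frac{d}{ds'}\lambda_{\mathrm{Hodge},s'}$ evaluated at $s'=s$. By Definition~\ref{defn:Hogde-HM2}, its pullback to the universal curve is represented by the vertical divisor $2\bigl(\frac1{12}D-\Delta\bigr)$. Hence it is a $\mathbb{Q}$-Cartier divisor supported on $\mathscr{M}_s(I_9)\cup\mathscr{M}_s(I^*_{4,4})$, and it descends to a divisor $\mathcal{D}_s$ on $\mathcal{M}_s$. Then $\deg_c(\mathrm{Der})$ is simply the local degree of $\varphi^*\mathcal{D}_s$ at $c$, so $\deg_c(\mathrm{Der})/\mu$ is the coefficient of $\mathcal{D}_s$ along the relevant boundary divisor.

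This global formulation settles several things at once: the dependence on the base point in the divisor, the dependence on the particular germ and its lift to the stack, and the scaling by the ramification indices $\mu_i$ of $\tilde C_2\to C_2$. Your deformation-invariance argument directly compares only germs lying in a common family, and it needs the base-change bookkeeping and the comparison of lifts done separately. Once made precise on a smooth atlas, it amounts to the same observation: the vertical divisor $\frac1{12}D-\Delta$ is pulled back from a divisor on the base, which is the paper's device.
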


\begin{proof}
First, we deal with $b_s$.    
By \cite[Theorem 1.1]{bejleri2025proper}, we see that there exists a finite morphism $\xi_2\colon\tilde{C}_2\to C_2$ such that $\xi_2$ factors through $\mathscr{M}_{s_1}$ for some $s_1\in (0,\frac{1}{7})$. 
By the choice of $C_2$, replacing $\tilde{C}_2$ if necessary, we may assume that there exists a family of quasimaps $q^{(s_1)}\colon (\mathcal{C}^{(s_1)},(1-s_1)\Delta^{(s_1)})\to [\mathbb{A}^2_{\tilde{C}_2}/\mathbb{G}_{m,\tilde{C}_2}]\in \mathscr{M}_{s_1}(\tilde{C}_2)$ such that letting $q^{(t)}$ be $q^{(s_1)}\colon (\mathcal{C}^{(s_1)},(1-t)\Delta^{(s_1)})\to [\mathbb{A}^2_{\tilde{C}_2}/\mathbb{G}_{m,\tilde{C}_2}]$ with weight $\frac{t}{12}$, $q^{(t)}$ defines an object of $\mathscr{M}_t(\tilde{C}_2)$ for any $t\in (0,1)$.
Let $\{c_1,\ldots,c_l\}=\xi_2^{-1}(\mathcal{M}_{s_1}(I^*_{4,4}))$ with the local degree $\mu_i$ of $\xi_2$ at $c_i$ and note that the induced morphism $\tilde{C}_2\to \mathcal{M}_{t}$ is isomorphic locally around $\{c_1,\ldots,c_l\}$.
Since $c_s$ is constant, we see that 
\[
\sum_{i=1}^l\mu_i(b_s-b_0)=s\sum_{i=1}^l\mathrm{deg}_{c_i}(\mathrm{Der}).
\]
Therefore, it suffices to show that $\frac{\mathrm{deg}_{c_i}(\mathrm{Der})}{\mu_i}$ is independent of the choice of $i$.  
To consider this, we set the following $\mathbb{Q}$-divisor $\mathcal{D}_s$ on $\mathcal{M}_s$ for any $0<s<1$. 
Set $\mathscr{M}_{s}'\subset\mathscr{M}_s$ as the complement of the inverse image of the union of all $\mathcal{M}_s(\bullet)$ except ones of codimension one in $\mathcal{M}_s$.
For $s\ne0,\frac{1}{7},\frac{1}{4},\frac{1}{3}$ or $1$, we see that $\lambda_{\mathrm{Hodge},s}$ is affine linear locally around on $s$.
Let $\mathscr{D}_s$ be $\frac{d}{ds'}|_{s'=s}\lambda_{\mathrm{Hodge},s'}$ for such $s$.
It is easy to show that $\mathscr{D}_s|_{\mathscr{M}'_s}$ is a $\mathbb{Q}$-Cartier $\Q$-divisor supported only on $\mathscr{M}_s(I_9)\cup\mathscr{M}_s(I_{4,4}^*)$ for $0<s<\frac{1}{3}$ and $\mathscr{M}_s(I_{4,4}^*)$ for $\frac{1}{3}<s<1$ and the coefficients of $\mathscr{D}_s$ along $\mathscr{M}_s(I_9)$ for $0<s<\frac{1}{3}$ and $\mathscr{M}_s(I_{4,4}^*)$ for $0<s<1$ are independent of the choice of $s$ if $s\ne0,\frac{1}{7},\frac{1}{4},\frac{1}{3}$ or $1$.
Indeed, let $C$ be as in the statement for $\mathcal{M}_s(I_9)$ and $\mathcal{M}_s(I_{4,4}^*)$.
We see that $\mathscr{D}_s$ can be descended to the unique $\mathbb{Q}$-Cartier $\mathbb{Q}$-divisor $\mathcal{D}_s$ on $\mathcal{M}_{s}$ for $s\ne0,\frac{1}{7},\frac{1}{4},\frac{1}{3}$ or $1$. 
Then $\mathrm{deg}_0(\mathrm{Der})$ is nothing but the local degree of $\varphi^*\mathcal{D}_s$ at $0$ by Lemma \ref{lem:Hodge-difference-over-curve}. 
By this fact, we can check that the coefficients of $\mathscr{D}_s$ along $\mathscr{M}_s(I_9)$ for $0<s<\frac{1}{3}$ and $\mathscr{M}_s(I_{4,4}^*)$ for $0<s<1$ are independent of the choice of $s$.
For $s=0,\frac{1}{7},\frac{1}{4},\frac{1}{3}$ or $1$, we set $\mathcal{D}_s$ as the strict transform of $\mathcal{D}_{s'}$ for any $0<s'<\frac{1}{7}$.
We are only interested in the coefficients of $\mathcal{D}_s$ along $\mathcal{M}_s(I_9)$ and $\mathcal{M}_s(I_{4,4}^*)$ and we will write $\mathcal{D}$ and $\mathscr{D}$ for simplicity.
 We saw that $\mathrm{deg}_{c_i}(\mathrm{Der})$ is nothing but the local degree of the pullback of $\mathscr{D}$ to $\tilde{C}_2$ at $c_i$.
Therefore, it is easy to see that $\frac{\mathrm{deg}_{c_i}(\mathrm{Der})}{\mu_i}$ is independent of the choice of $i$ and equals to $\frac{\mathrm{deg}_0(\mathrm{Der})}{\mu}$ for $C$ as the statement. Indeed, $\frac{\mathrm{deg}_{c_i}(\mathrm{Der})}{\mu_i}$ coincides with the coefficient of $\mathcal{D}$ along $\mathcal{M}_s(I_{4,4}^*)$. We complete the proof for $b_s$. 

%Note that $\lambda_{\mathrm{Hodge},s'}$ can be regarded as a $\mathbb{R}$-line bundle on $\mathscr{M}_{s}$ for any $s'\in (s-\epsilon,s+\epsilon)$ for any sufficiently small $\epsilon>0$ because if $\mathscr{M}_{s}\not\cong \mathscr{M}_{s'}$, we can pullback the Hodge line bundle $\lambda_{\mathrm{Hodge},s'}$ via the canonical morphism $\mathscr{M}_{s}\to\mathscr{M}_{s'}$.
%Note also that $\lambda_{\mathrm{Hodge},s'}$ depends locally on $s'$ as a rational affine function if $s'\ne0,\frac{1}{7},\frac{1}{4},\frac{1}{3}$ or $1$.
%Let $\mathscr{D}$ be the $\mathbb{Q}$-Cartier $\mathbb{Q}$-divisor on $\mathscr{M}_{s}$ which is defined as $\frac{d}{ds'}|_{s'=s}\lambda_{\mathrm{Hodge},s'}$.
%By the definition of $\mathscr{D}$, we see that $\mathscr{D}$ is only supported on $\mathscr{M}_{s}(I_{4,4}^*)$ and $\mathscr{M}_{s}(I_{9})$.

For $C_3$, we can proceed the argument as above and hence we obtain the assertion for $a_s$ as follows:
By \cite{bejleri2025proper}, we see that there exists a finite morphism $\xi_3\colon\tilde{C}_3\to C_3$ such that $\xi_3$ factors through $\mathscr{M}_{s_1}$ for some $s_1\in (0,\frac{1}{7})$. 
By the choice of $C_3$, replacing $\tilde{C}_3$ if necessary, we may assume that there exists a family of quasimaps $q^{(s_1)}\colon (\mathcal{C}^{(s_1)},(1-s_1)\Delta^{(s_1)})\to [\mathbb{A}^2_{\tilde{C}_3}/\mathbb{G}_{m,\tilde{C}_3}]\in \mathscr{M}_{s_1}(\tilde{C}_3)$ such that letting $q^{(t)}$ be $q^{(s_1)}\colon (\mathcal{C}^{(s_1)},(1-t)\Delta^{(s_1)})\to [\mathbb{A}^2_{\tilde{C}_3}/\mathbb{G}_{m,\tilde{C}_3}]$ with weight $\frac{t}{12}$, $q^{(t)}$ defines an object of $\mathscr{M}_t(\tilde{C}_3)$ for any $t\in (0,\frac{1}{3})$.
Let $\{c_1,\ldots,c_l\}=\xi_3^{-1}(\mathcal{M}_{s_1}(I^*_{4,4}))$ and $\{c'_1,\ldots,c'_{l'}\}=\xi_3^{-1}(\mathcal{M}_{s_1}(I_9))$ with the local degree $\mu_i$ (resp. $\mu'_i$) of $\xi_3$ at $c_i$ (resp. $c'_i$) and note that the induced morphism $\tilde{C}_3\to \mathcal{M}_{t}$ is isomorphic locally around $\{c_1,\ldots,c_l\}$ and $\{c'_1,\ldots,c'_{l'}\}$.
Since $c_s$ is constant, we see that 
\[
\sum_{i=1}^l\mu_i(b_s-b_0)+\sum_{i=1}^{l'}\mu'_i(a_s-a_0)=s\sum_{i=1}^l\mathrm{deg}_{c_i}(\mathrm{Der})+s\sum_{i=1}^{l'}\mathrm{deg}_{c'_i}(\mathrm{Der}).
\]
As above, we see that $\frac{\mathrm{deg}_{c_i}(\mathrm{Der})}{\mu_i}$ and $\frac{\mathrm{deg}_{c'_i}(\mathrm{Der})}{\mu'_i}$ are independent of $i$ because $\frac{\mathrm{deg}_{c'_i}(\mathrm{Der})}{\mu'_i}$ is nothing but the coefficient of $\mathcal{D}$ along $\mathcal{M}_s(I_9)$.
Combining this with the conclusion we obtained in the previous paragraph, we obtain the proof.
\end{proof}

Therefore, we can calculate the variations of $a_s$ and $b_s$ using a general curve germ that intersects with $\mathcal{M}_s(I^*_{4,4})$ and $\mathcal{M}_s(I_{9})$ at a general point.
We will construct a general curve germ. 

Before construction, we will collect the following terminologies.
A rational Weierstrass fibration is written as
\begin{equation}\label{eq:weierstrass}
 y^2z=x^3+A(u,v)xz^2+B(u,v)z^3,
 \qquad A\in H^0(\mathbb{P}^1,\mathcal{O}(4)),\quad B\in H^0(\mathbb{P}^1,\mathcal{O}(6)).
\end{equation}
Its Weierstrass discriminant is the binary duodecic
\[
 \DWei(A,B):=4A^3+27B^2\in H^0(\mathbb{P}^1,\mathcal{O}(12)).
\]
If a binary duodecic is written as
\[
 F(U,W)=a\prod_{i=1}^{12}(\alpha_iU+\beta_iW),
\]
its classical discriminant is
\[
 \Disc_{12}(F)
 =a^{22}\prod_{i<j}(\alpha_i\beta_j-\alpha_j\beta_i)^2.
\]
Thus $\Disc_{12}$ is homogeneous of degree $22=2(12-1)$ in the coefficients of $F$.  The number $132=2\binom{12}{2}$ is instead the affine root-coordinate degree, and $264=12\cdot 22$ is the total degree in twelve pairs of homogeneous point coordinates.  These three degrees must not be interchanged.

\begin{ex}\label{ex:I9}

Let
\[
 p(U,W)\in H^0(\mathbb{P}^1,\mathcal{O}(3)),\qquad
 c(U,W)\in H^0(\mathbb{P}^1,\mathcal{O}(6))
\]
be general, with $p(0,1)\neq0$. 
Define
\[
 \begin{aligned}
 A_\tau&=-3U^4+\tau Up,\\
 B_\tau &= 2U^6-\tau U^3p+\frac{\tau^2}{12}p^2+\tau^3c,
 \end{aligned}
\]
where $\tau$ is the parameter of the curve germ.
A direct expansion gives
\[
 4A_\tau ^3+27B_\tau^2=\tau^3F_\tau,
\]
where
\begin{equation}\label{eq:I9-Ft}
 \begin{aligned}
 F_\tau={}&108U^6c-\frac12U^3p^3\\
 &+\tau\left(\frac{3}{16}p^4-54U^3pc\right)
 +\frac92\tau^2cp^2+27\tau^3c^2.
 \end{aligned}
\end{equation}
Consequently,
\[
 \ord_{\tau=0}(4A_\tau^3+27B_\tau^2)=3.
\]
Here the order means the vanishing order along the central fiber.

At $\tau=0$,
\[
 F_0=\frac12U^3G_9,
 \qquad
 G_9:=216U^3c-p^3.
\]
For general $p,c$, $G_9$ is squarefree and $G_9(0,1)=-p(0,1)^3\neq0$.  The central twelve points consist of one point of multiplicity three and nine distinct points.
This is the generic configuration on $\mathcal M^*(I_9)=\Delta(6)$.

The construction covers the general point of that boundary.  Indeed, for a general binary nonic
\[
 G(U,W)=g_0W^9+g_1UW^8+g_2U^2W^7+\cdots,
 \qquad g_0\neq0,
\]
one may choose the first three coefficients of a cubic $p$ so that $G+p^3$ is divisible by $U^3$, and then set
\[
 c=\frac{G+p^3}{216U^3}.
\]
Thus the map $(p,c)\mapsto216U^3c-p^3$ dominates the space of general nonics with nonzero $W^9$-coefficient.

Set $W=1$, $x=U/W$, and $p_0=p(0,1)$.  From \eqref{eq:I9-Ft},
\begin{equation}\label{eq:I9-local}
 F_\tau(x,1)
 =-\frac12p_0^3x^3+\frac{3}{16}p_0^4\tau
 +O(x^4,\tau x,\tau^2).
\end{equation}
After multiplication by a unit, this has the local form $x^3-\kappa \tau+\cdots$, with $\kappa\neq0$.  The three colliding roots are of order $\tau^{1/3}$, and the product of their three squared pairwise differences has order $2$.  All other brackets are units.  Hence
\[
 \ord_{\tau=0}\Disc_{12}(F_\tau)=2,
 \qquad
 (C_{I_9}\cdot\Disc_{12})_0=2,
\]
where $C_{I_9}$ is the corresponding curve germ.
The coefficient of $t$ in \eqref{eq:I9-local} is nonzero, so the family is transverse to the reduced boundary and its local degree is $\mu=1$ of $\mathcal{M}_s(I_9)$.

As divisors on the total space,
\[
 \Div(4A_\tau^3+27B_\tau^2)=3\mathcal C_0+\Div(F_\tau),
 \qquad \mathcal C_0=\{\tau=0\}.
\]
Therefore, we obtain
\[
 \deg_0\operatorname{Der}=2\cdot\frac{3}{12}=\frac12.
\]

\end{ex}

\begin{ex}\label{ex:I44*}

Put $q=uv$, and take $a=u^4+u^3v+v^4$ and  $c=u^6+u^5v+v^6$.
Define $A_\tau=-3q^2+\tau a$ and $ B_\tau=2q^3-\tau qa+\tau^2c$.
Then $4A_\tau^3+27B_\tau^2=\tau^2F_\tau$ with
\[
 F_\tau=9q^2(12qc-a^2)
 +\tau(4a^3-54qac)+27\tau^2c^2.\]
Thus the vertical order is $2$.  At $\tau=0$, we have 
 $F_0=9u^2v^2G_8$ and $G_8=12uvc-a^2$.
For the chosen forms,
\begin{align*}
 G_8=&-u^8+10u^7v+11u^6v^2-2u^4v^4-2u^3v^5
 +12uv^7-v^8,
\end{align*}
and a direct gcd calculation shows that $G_8$ is squarefree and nonzero at $[1:0]$ and $[0:1]$.  Hence, the twelve points in the central fiber consist of two distinct points of multiplicity two and eight distinct points.
Near $u=0$, with $v=1$ and $x=u$,
\[
 F_\tau(x,1)=-9x^2+4\tau+\text{higher terms}.
\]
This double point contributes order one to $\Disc_{12}$, and the double point at $v=0$ contributes another order one.  Therefore $\ord_{\tau=0}\Disc_{12}(F_\tau)=2$ and $(C_{I_{4,4}^{*}}\cdot\Disc_{12})_0=2$,
where $C_{I_{4,4}^{*}}$ is the corresponding curve germ.
The asymmetric choice of $a,c$ removes a generic exchange symmetry, and $\tau$ is a transverse coarse parameter, so the local degree $\mu=1$ of $\mathcal{M}_s(I_{4,4}^{*})$.  Consequently,
\[
 \deg_0\operatorname{Der}=2\cdot\frac{2}{12}=\frac13\]
\end{ex}

Combining the two examples above, we obtain the following conclusion.
\begin{thm}\label{thm:Hodge-line-bundle-stricttrans}
 For any $t\in [0,1]$, $\Lambda_{\mathrm{Hodge},t}$ on $\mathcal{M}_t$ is the strict transform of $\Lambda_{\mathrm{Hodge},0}+\frac{t}{2}\Delta(6)+\frac{t}{3}\Delta(9)$ from $\mathcal{M}_0$.   
\end{thm}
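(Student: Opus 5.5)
The plan is to work on the first positive chamber $\mathcal M_{s_1}$, $s_1\in(0,\frac17)$. There $\Lambda_{\mathrm{Hodge},s}$ is written uniquely, by Lemma~\ref{lem:putting-coeffs-Hodge}, as $a_s\mathcal M_s(I_9)+b_s\mathcal M_s(I_{4,4}^*)+c_s\mathcal M_s(I_2)$, and we have already seen that $c_s$ is constant in $s$. Since every wall-crossing map in Theorem~\ref{thm:wallcrossing-miranda-to-8points} is a birational contraction from $\mathcal M_{s_1}$, and $\mathcal M_0\cong X_o^{\mathrm{bb}}$ is isomorphic to $\mathcal M_{s_1}$ in codimension one, it suffices to prove the following identity of Weil divisor classes on $\mathcal M_{s_1}$ (equivalently on $X_o^{j_{\mathcal K}}$):
\[
\Lambda_{\mathrm{Hodge},t}-\Lambda_{\mathrm{Hodge},0}\sim_{\Q}\tfrac t2 D_6+\tfrac t3 D_9 .
\]
Here we use $\mathcal M_s(I_9)\leftrightarrow\Delta(6)$ and $\mathcal M_s(I_{4,4}^*)\leftrightarrow\Delta(9)$. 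Once this holds, pushing forward along the contractions gives the strict-transform statement for every $t$. For $t\ge\frac13$ the term $\frac t2\Delta(6)$ is contracted, and for $t=1$ so is $\frac t3\Delta(9)$, so the statement is consistent in those ranges.

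The first key step is to compute $b_s-b_0$ using Proposition~\ref{prop:determining}. Example~\ref{ex:I44*} supplies a curve germ meeting $\mathcal M_s(I_{4,4}^*)$ transversally at a general point with $\mu=1$ and $\deg_0\mathrm{Der}=\frac13$, so $b_s-b_0=\frac s3$ for all $s\in(0,1)$. Before using this, I will check that the central fibre of the germ is K-semistable for each weight in $(0,1)$, so that the family lies in $\mathscr M_s(C)$. The twelve points have two double points, the fixed part is $6\{0\}+6\{\infty\}$, and Lemma~\ref{lem:useful-criterion} gives $(1-s)2+6s\le 6$.

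The second key step is the analogous computation for $a_s$. Example~\ref{ex:I9} gives $\deg_0\mathrm{Der}=\frac12$ with $\mu=1$, so $a_s-a_0=\frac s2$ for $s\in(0,\frac13)$. Here the central fibre has a triple point carrying a fixed part of multiplicity twelve, so the semistability condition is $(1-s)3+12s\le 6$, that is $s\le\frac13$. This matches the range in Proposition~\ref{prop:determining}. By continuity of the coefficients, the same formula extends to the closed interval of definition, and beyond $\frac13$ the divisor is contracted anyway.

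Combining the two steps, $\Lambda_{\mathrm{Hodge},s}=\Lambda_{\mathrm{Hodge},0}+\frac s2\mathcal M_s(I_9)+\frac s3\mathcal M_s(I_{4,4}^*)$ as strict transforms. At $s=0$ the left side is $\Lambda_{\mathrm H}\sim_{\Q}\lambda_{\mathrm{aut}}$ by \eqref{eq:CY-Hodge-equals-natural-automorphic}. For the walls $s\in\{\frac17,\frac14,\frac13,1\}$ I use continuity together with the fact that $\Lambda_{\mathrm{Hodge},s}$ is pulled back or pushed forward compatibly under $\phi_i,\psi_i$. The Hodge bundle on a wall pulls back to the limit of the chamber bundles, and in codimension one these bundles agree.

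The main obstacle is justifying that the $\mathrm{Der}$ computation in the examples genuinely measures the coefficient. The difference $\frac1{12}D-\Delta$ must be computed on the family realised in $\mathscr M_s$, not merely on the naive Weierstrass family. In the $I_9$ example the total discriminant is $\tau^3F_\tau$ while $\Delta$ is the flat limit $\mathrm{div}(F_\tau)$, so the vertical order $3$ gives $2\cdot\frac3{12}$. I would first verify that no base change or fundamental transform is needed for $s<\frac13$, so that the family is already the K-semistable one and the local degree is exactly $1$.
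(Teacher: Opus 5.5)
Your proposal is correct and follows the paper's own proof. The paper likewise writes $\Lambda_{\mathrm{Hodge},s}$ in the basis of Lemma~\ref{lem:putting-coeffs-Hodge}, uses that $c_s$ is constant, and obtains $a_s-a_0=\frac s2$ for $s<\frac13$ and $b_s-b_0=\frac s3$ for all $s\in(0,1)$ by applying Proposition~\ref{prop:determining} to Examples~\ref{ex:I9} and~\ref{ex:I44*} with $\mu=1$, the $I_9$ term dropping out for $t\ge\frac13$ because $\mathcal M_t(I_9)$ is no longer a divisor. Your additional check via Lemma~\ref{lem:useful-criterion}, that the central fibres are K-semistable exactly for $s\le\frac13$ (resp.\ all $s\le 1$), is correct and makes explicit a point the paper leaves implicit.
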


\begin{proof}
Let $\xi\colon \mathcal{M}_0\dashrightarrow\mathcal{M}_t$ be the canonical birational contraction.
    Then, $\Lambda_{\mathrm{Hodge},t}=(a_{t}-a_0)\mathcal{M}_{t}(I_9)+(b_{t}-b_0)\mathcal{M}_{t}(I_{4,4}^*)+\xi_*\Lambda_{\mathrm{Hodge},0}$ as we set $a_t$ for $t\in [0,\frac13]$ and $b_t$ as we set above. Set $a_t:=a_0$ for $t\ge\frac13$.
    We note that $a_t-a_0$ and $b_t-b_0$ can be deduced as $\frac{t}2$ for $t\in [0,\frac13)$ and $\frac{t}{3}$, respectively. Note also that $\mathcal{M}_t(I_9)$ is not a divisor for $t\ge\frac13$.
    Therefore, the assertion follows from Examples \ref{ex:I9} and \ref{ex:I44*} and Proposition \ref{prop:determining}.
    % \Yota{Don't we need to use $a_t-a_0$ and $b_t-b_0$?} \Masa{I believe that I managed to observe your order.}
\end{proof}

\begin{cor}
For any $t\in (0,\frac{1}{7})$, $\mathcal{M}_t\cong X_o^{j_{\mathcal K}}$. \label{cor-first-model}   
\end{cor}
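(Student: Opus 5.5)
We show that $\mathcal M_t$ and $X_o^{j_{\mathcal K}}$ are the same relative ample model over $X_o^{\mathrm{bb}}\cong\mathcal M_0$; we do not try to construct an isomorphism by hand.

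\textbf{Step 1: the comparison morphism.} By Theorem \ref{thm:wallcrossing-miranda-to-8points}, for $t\in(0,\frac17)$ there is a projective birational morphism $\phi_1\colon\mathcal M_t\to\mathcal M_0\cong X_o^{\mathrm{bb}}$, and $\mathcal M_t$ is normal. From Lemma \ref{lem:first-step-wallcrossingdescription} and the dimension table, $\phi_1$ contracts no divisor: every boundary divisor of $\mathcal M_0$ has a divisorial strict transform in $\mathcal M_t$. Over the two cusps, the fibres $\mathcal M_t(I_6)$ and $\mathcal M_t(I_0^*)$ have dimensions $3$ and $1$, while the remaining strata are generically unchanged. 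Hence $\phi_1$ is small and an isomorphism away from the two cusps. The small morphism $c\colon X_o^{j_{\mathcal K}}\to X_o^{\mathrm{bb}}$ has the same properties. So $\mathcal M_t$ and $X_o^{j_{\mathcal K}}$ are isomorphic in codimension one over $X_o^{\mathrm{bb}}$, and $\mathrm{Cl}(\mathcal M_t)_\Q\cong\mathrm{Cl}(X_o^{j_{\mathcal K}})_\Q$ with strict transforms $D_6\leftrightarrow\mathcal M_t(I_9)$ and $D_9\leftrightarrow\mathcal M_t(I^*_{4,4})$.

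\textbf{Step 2: identify the polarization.} By Theorem \ref{thm:Hodge-line-bundle-stricttrans} and \eqref{eq:CY-Hodge-equals-natural-automorphic}, the ample class $\Lambda_{\mathrm{Hodge},t}$ on $\mathcal M_t$ is the strict transform of $\lambda_{\rm aut}+\frac t2\Delta(6)+\frac t3\Delta(9)$. Its strict transform to $X_o^{j_{\mathcal K}}$ is $\lambda_{\rm aut}+\frac t2 D_6+\frac t3 D_9$. Since $\lambda_{\rm aut}$ is pulled back from the base, this is $c$-ample by Corollary \ref{cor:cusp-intersection-matrix-corrected}.

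\textbf{Step 3: conclude.} A normal variety projective over $X_o^{\mathrm{bb}}$ is determined by a relatively ample Weil divisor class through its divisorial section algebra. Concretely, for $B$ the common strict transform on the two models,
\[
\mathcal M_t\cong \operatorname{Proj}_{X_o^{\mathrm{bb}}}\bigoplus_m \phi_{1*}\mathcal O(mB)\cong X_o^{j_{\mathcal K}}.
\]
This holds because both models are small over $X_o^{\mathrm{bb}}$, isomorphic in codimension one, so the pushforward sheaves agree, and $B$ is $\Q$-Cartier and relatively ample on each model. The identification is compatible with the morphisms to $X_o^{\mathrm{bb}}$.

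\textbf{Main obstacle.} The delicate point is Step 1. We must verify that the birational map $\mathcal M_t\dashrightarrow X_o^{j_{\mathcal K}}$ over $X_o^{\mathrm{bb}}$ really matches divisors, with no exceptional divisor of $\phi_1$ hidden over the cusps. The dimension counts of the $I_6$ and $I_0^*$ strata, $3$ and $1$ inside an $8$-dimensional space, together with Lemma \ref{lem:first-step-wallcrossingdescription}, should settle this.
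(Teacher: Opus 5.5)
Your proof is correct. It is a relative version of the paper's argument rather than the same argument.

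\textbf{The paper's route.} The paper works globally. Since $3D_6+2D_9$ is $c$-ample and $\lambda_{\rm aut}$ is pulled back from an ample class, $\lambda_{\rm aut}+\epsilon(3D_6+2D_9)$ is ample on $X_o^{j_{\mathcal K}}$, but only for $\epsilon$ sufficiently small. This divisor is the strict transform of $\Lambda_{\mathrm{Hodge},6\epsilon}$, so \cite[Theorem 11.39]{kollar-moduli}, applied to the two models isomorphic in codimension one, gives $X_o^{j_{\mathcal K}}\cong\mathcal M_{6\epsilon}$. The chamber-independence in Theorem \ref{thm:wallcrossing-miranda-to-8points} then extends this to all $t\in(0,\frac17)$.

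\textbf{Your route.} Working over $X_o^{\mathrm{bb}}$, you need only $c$-ampleness. This holds for every $t>0$, because $D_6+sD_9$ is $c$-ample for all $s>0$ by Corollary \ref{cor:cusp-intersection-matrix-corrected}. So you treat each $t$ directly: you need no smallness of $\epsilon$ and no appeal to the independence of $\mathcal M_{s_1}$ from $s_1$. In exchange, you rely explicitly on the morphism $\phi_1$ to the common base. You also rely on the standard fact that, for small morphisms of normal varieties, both models are $\operatorname{Proj}_{X_o^{\mathrm{bb}}}\bigoplus_m\mathcal O_{X_o^{\mathrm{bb}}}(\lfloor m\,\phi_{1*}B\rfloor)$. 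The paper instead uses a black-box criterion that needs no base.

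\textbf{Two minor points in Step 1.}
\begin{itemize}
\item Smallness of $\phi_1$ means that no prime divisor of $\mathcal M_t$ has image of codimension $\ge 2$. That is the converse of what you wrote, namely that every boundary divisor of $\mathcal M_0$ has a divisorial strict transform. The dimension table gives both directions. In any case, Theorem \ref{thm:wallcrossing-miranda-to-8points} states directly that $\phi_1$ is an isomorphism in codimension one, so your ``main obstacle'' is already settled.
\item Your claim that $\phi_1$ is an isomorphism away from the two cusps does not follow from the strata being ``generically unchanged''. Step 3 never uses it, so you can simply drop it.
\end{itemize}
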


\begin{proof}
  Let $c\colon X_o^{j_{\mathcal K}}\to \mathcal{M}_0$ be the canonical morphism.
  By Corollary \ref{cor:cusp-intersection-matrix-corrected}, $3\Delta(6)^j+2\Delta(9)^j$ is $c$-ample.
  Therefore, for any sufficiently small $\epsilon>0$, $\Lambda_{\mathrm{Hodge},0}+\epsilon(3\Delta(6)^j+2\Delta(9)^j)$ is ample on $X_o^{j_{\mathcal K}}$ by Theorem \ref{thm:Hodge-line-bundle-stricttrans}.
  Since $X_o^{j_{\mathcal K}}$, $\mathcal{M}_0$ and $\mathcal{M}_{6\epsilon}$ are isomorphic in codimension one and $\Lambda_{\mathrm{Hodge},0}+\epsilon(3\Delta(6)^j+2\Delta(9)^j)$
  is the strict transform of $\Lambda_{\mathrm{Hodge},6\epsilon}$, we see that $X_o^{j_{\mathcal K}}\cong \mathcal{M}_{6\epsilon}$ by \cite[Theorem 11.39]{kollar-moduli}.
  Therefore, the conclusion follows from Theorem \ref{thm:wallcrossing-miranda-to-8points}.
\end{proof}

\begin{proof}[Proof of Theorem \ref{thm:main-HKL}]
Since $X_o^{j_{\mathcal K}}\setminus X_{o}$ is of codimension at least two and $X_o^{j_{\mathcal K}}$ is normal, we see that
\[
R\left(X_o,\mathcal{L}+\frac{t}{2}\Delta(6)+\frac{t}{3}\Delta(9)\right)=R\left(X^{j_{\mathcal K}}_o,\mathcal{L}^j+\frac{t}{2}\Delta(6)^j+\frac{t}{3}\Delta(9)^j\right),
\]
where $\mathcal{L}^j$ is the pullback of $\mathcal{L}$ on $X_o^{\mathrm{bb}}$ for any $t\in \mathbb{Q}\cap[0,1]$.
By Theorem \ref{thm:wallcrossing-miranda-to-8points} and Corollary \ref{cor-first-model}, $X^{j_{\mathcal K}}_o\cong\mathcal{M}_t$ for any $0<t<\frac17$ and note that $\mathcal{M}_{t'}$ is isomorphic to $X^{j_{\mathcal K}}_o$ in codimension one for any $0\le t'<\frac{1}{3}$.
By Theorem \ref{thm:Hodge-line-bundle-stricttrans}, we see that the ample $\mathbb{R}$-line bundle $\Lambda_{\mathrm{Hodge},t}$ is the strict transform of $\mathcal{L}^j+\frac{t}{2}\Delta(6)^j+\frac{t}{3}\Delta(9)^j$ for any $t\in [0,1]$.
Thus, we can conclude that $\mathrm{Proj}R\left(X^{j_{\mathcal K}}_o,\mathcal{L}^j+\frac{t}{2}\Delta(6)^j+\frac{t}{3}\Delta(9)^j\right)\cong \mathcal{M}_t$ for any $0\le t\le\frac13$.
Note that $\mathcal{M}_{t}$ for any $\frac13\le t<1$ is the contraction of the strict transform of $\Delta(6)^j$ in $\mathcal{M}_{\frac{7}{24}}$ (see Theorem \ref{thm:wallcrossing-miranda-to-8points}).
We also note that the strict transform $\mathcal{M}_{\frac{7}{24}}(I_{4,4}^*)$ of  $\Delta(9)^j$ is disjoint from the strict transform $\mathcal{M}_{\frac{7}{24}}(I_9)$ of $\Delta(6)^j$ in $\mathcal{M}_{\frac{7}{24}}$.
Let $\mathcal{L}_{\frac{7}{24}}$ be the strict transform of $\mathcal{L}^j$ in $\mathcal{M}_{\frac{7}{24}}$.
To show the assertion, it suffices to show that the ample model of $\mathcal{L}_{\frac{7}{24}}+\frac{t}{2}\mathcal{M}_{\frac{7}{24}}(I_9)+\frac{t}{3}\mathcal{M}_{\frac{7}{24}}(I_{4,4}^*)$ coincides with $\mathcal{M}_{\frac{1}{3}}$ for any $\frac13\le t<1$ and the ample model of $\mathcal{L}_{\frac{7}{24}}+\frac{t}{2}\mathcal{M}_{\frac{7}{24}}(I_9)+\frac{t}{3}\mathcal{M}_{\frac{7}{24}}(I_{4,4}^*)$ for any $t\ge1$ coincides with $\mathcal{M}_1$ in the sense of \cite[Definition 3.6.5]{birkar2010existence}.
By the ampleness of the Hodge line bundle on each $\mathcal{M}_s$, it is equivalent to show that the stable base locus $\mathbf{B}(\mathcal{L}_{\frac{7}{24}}+\frac{t}{2}\mathcal{M}_{\frac{7}{24}}(I_9)+\frac{t}{3}\mathcal{M}_{\frac{7}{24}}(I_{4,4}^*))=\mathcal{M}_{\frac{7}{24}}(I_9)$ for any $\frac13< t\le1$ and $\mathbf{B}(\mathcal{L}_{\frac{7}{24}}+\frac{t}{2}\mathcal{M}_{\frac{7}{24}}(I_9)+\frac{t}{3}\mathcal{M}_{\frac{7}{24}}(I_{4,4}^*))=\mathcal{M}_{\frac{7}{24}}(I_9)\cup\mathcal{M}_{\frac{7}{24}}(I_{4,4}^*)$ for any $t>1$. 
This follows from the fact that $\mathcal{L}_{\frac{7}{24}}+\frac{1}{6}\mathcal{M}_{\frac{7}{24}}(I_9)+\frac{1}{9}\mathcal{M}_{\frac{7}{24}}(I_{4,4}^*)$ is numerically trivial along $\mathcal{M}_{\frac{7}{24}}(I_9)$, $\mathcal{L}_{\frac{7}{24}}+\frac{1}{2}\mathcal{M}_{\frac{7}{24}}(I_9)+\frac{1}{3}\mathcal{M}_{\frac{7}{24}}(I_{4,4}^*)$ is numerically trivial along $\mathcal{M}_{\frac{7}{24}}(I_{4,4}^*)$, $-\mathcal{M}_{\frac{7}{24}}(I_9)$ is $\psi_3$-ample and $-\mathcal{M}_{\frac{1}{3}}(I_{4,4}^*)$ is $\psi_4$-ample.
Since
\[
R\left(X^{j_{\mathcal K}}_o,\mathcal{L}^j+\frac{t}{2}\Delta(6)^j+\frac{t}{3}\Delta(9)^j\right)\cong R\left(\mathcal{M}_{\frac{7}{24}},\mathcal{L}_{\frac{7}{24}}+\frac{t}{2}\mathcal{M}_{\frac{7}{24}}(I_9)+\frac{t}{3}\mathcal{M}_{\frac{7}{24}}(I_{4,4}^*)\right),
\]
 we see that
\begin{align*}
R\left(\mathcal{M}_{\frac{7}{24}},\mathcal{L}_{\frac{7}{24}}+\frac{t}{2}\mathcal{M}_{\frac{7}{24}}(I_9)+\frac{t}{3}\mathcal{M}_{\frac{7}{24}}(I_{4,4}^*)\right)&\cong R\left(\mathcal{M}_{\frac{7}{24}},\mathcal{L}_{\frac{7}{24}}+\frac{1}{6}\mathcal{M}_{\frac{7}{24}}(I_9)+\frac{t}{3}\mathcal{M}_{\frac{7}{24}}(I_{4,4}^*)\right)\\
&\cong R\left(\mathcal{M}_{t},\Lambda_{\mathrm{Hodge},t}\right)
\end{align*}
for any rational $\frac{1}{3}\le t\le 1$ and 
\begin{align*}
R\left(\mathcal{M}_{\frac{7}{24}},\mathcal{L}_{\frac{7}{24}}+\frac{t}{2}\mathcal{M}_{\frac{7}{24}}(I_9)+\frac{t}{3}\mathcal{M}_{\frac{7}{24}}(I_{4,4}^*)\right)&\cong R\left(\mathcal{M}_{\frac{7}{24}},\mathcal{L}_{\frac{7}{24}}+\frac{1}{6}\mathcal{M}_{\frac{7}{24}}(I_9)+\frac{1}{3}\mathcal{M}_{\frac{7}{24}}(I_{4,4}^*)\right)\\
&\cong R\left(\mathcal{M}_{1},\Lambda_{\mathrm{Hodge},1}\right)
\end{align*}
for any rational $t\ge1$.

Note that $R\left(X^{j_{\mathcal K}}_o;\mathcal{L}^j,\mathcal{L}^j+\frac{1}{14}\Delta(6)^j+\frac{1}{21}\Delta(9)^j\right)$ is finitely generated because $\mathcal{L}^j$ and $\mathcal{L}^j+\frac{1}{14}\Delta(6)^j+\frac{1}{21}\Delta(9)^j$ are semiample.
Note that 
\begin{align*}
    &R\left(X^{j_{\mathcal K}}_o;\mathcal{L}^j+\frac{1}{6}\Delta(6)^j+\frac{1}{9}\Delta(9)^j,\mathcal{L}^j+\frac{1}{2}\Delta(6)^j+\frac{1}{3}\Delta(9)^j\right) \\
    \cong& R\left(\mathcal{M}_{\frac{7}{24}};\mathcal{L}_{\frac{7}{24}}+\frac{1}{6}\mathcal{M}_{\frac{7}{24}}(I_9)+\frac{1}{9}\mathcal{M}_{\frac{7}{24}}(I_{4,4}^*),\mathcal{L}_{\frac{7}{24}}+\frac{1}{6}\mathcal{M}_{\frac{7}{24}}(I_9)+\frac{1}{3}\mathcal{M}_{\frac{7}{24}}(I_{4,4}^*)\right)
\end{align*} by what we have shown.
Since $\mathcal{L}_{\frac{7}{24}}+\frac{1}{6}\mathcal{M}_{\frac{7}{24}}(I_9)+\frac{1}{9}\mathcal{M}_{\frac{7}{24}}(I_{4,4}^*)$ and $\mathcal{L}_{\frac{7}{24}}+\frac{1}{6}\mathcal{M}_{\frac{7}{24}}(I_9)+\frac{1}{3}\mathcal{M}_{\frac{7}{24}}(I_{4,4}^*)$ are semiample, $R\left(\mathcal{M}_{\frac{7}{24}};\mathcal{L}_{\frac{7}{24}}+\frac{1}{6}\mathcal{M}_{\frac{7}{24}}(I_9)+\frac{1}{9}\mathcal{M}_{\frac{7}{24}}(I_{4,4}^*),\mathcal{L}_{\frac{7}{24}}+\frac{1}{6}\mathcal{M}_{\frac{7}{24}}(I_9)+\frac{1}{3}\mathcal{M}_{\frac{7}{24}}(I_{4,4}^*)\right)$ is finitely generated.
Moreover, $$R\left(X^{j_{\mathcal K}}_o;\mathcal{L}^j+\frac{1}{14}\Delta(6)^j+\frac{1}{21}\Delta(9)^j,\mathcal{L}^j+\frac{1}{8}\Delta(6)^j+\frac{1}{12}\Delta(9)^j\right)$$ and $$R\left(X^{j_{\mathcal K}}_o;\mathcal{L}^j+\frac{1}{8}\Delta(6)^j+\frac{1}{12}\Delta(9)^j,\mathcal{L}^j+\frac{1}{6}\Delta(6)^j+\frac{1}{9}\Delta(9)^j\right)$$ are finitely generated by the same reasoning.
Therefore, $R\left(X^{j_{\mathcal K}}_o;\mathcal{L}^j,\mathcal{L}^j+\frac{1}{2}\Delta(6)^j+\frac{1}{3}\Delta(9)^j\right)$ is finitely generated.
We complete the proof.
\end{proof}

\begin{rem}
The birational transformation
$\mathcal{M}^*\dashrightarrow\mathcal{M}^M$
is obtained by varying the ample model of
\[
 A_t:=(1-t)\mathcal{L}
 +t\left(
 \mathcal{L}+\frac12\Delta(6)+\frac13\Delta(9)
 \right)
\]
from $t=0$ to $t=1$.
In particular, the endpoint divisor is $A_1$.
By Lemma \ref{lem:putting-coeffs-Hodge}, it is not hard to check that $\mathcal{M}_t$ is $\mathbb{Q}$-factorial if and only if $t\ne0,\frac17$ or $\frac14$.
\end{rem}

\subsection*{Acknowledgments}
The authors thank Jan Hendrik Bruinier, Yuki Koto, Radu Laza, 
Yuji Odaka, Dali Shen, and Takuya Yamauchi for valuable discussions and useful comments. 
We thank Klaus Hulek for pointing out the relation with
the work of Odaka--Oshima \cite{OO21}.
The bulk of this work was carried
out during M.H.'s stay at Technische Universit\"at Darmstadt and Y.M.'s
stay at the University of Nottingham. 
The authors used ChatGPT (OpenAI) during the preparation of this manuscript for language and expository editing, and for exploratory assistance in finding the explicit families used in Examples \ref{ex:I9} and \ref{ex:I44*}. The final constructions, computations, and mathematical arguments were independently checked by the authors, who take full responsibility for the contents of the paper.
M.H. is supported by Royal
Society International Collaboration Award
ICA\textbackslash1\textbackslash231019.  Y.M. is partially supported by
the Alexander von Humboldt Foundation through a Humboldt Research
Fellowship, by the Deutsche Forschungsgemeinschaft (DFG, German
Research Foundation) through the Collaborative Research Centre TRR 326
\emph{Geometry and Arithmetic of Uniformized Structures}, project
number 444845124, and by Waseda University Grant for Special Research Projects (Project number: 2026R-022).

\bibliographystyle{alpha}
\bibliography{main}
\end{document}